\documentclass[12pt,reqno]{amsart}
\usepackage[margin=1in]{geometry}
\usepackage{amsmath,amssymb,amsthm,graphicx,amsxtra, setspace}
\usepackage[utf8]{inputenc}
\usepackage{mathrsfs}
\usepackage{hyperref}
\usepackage{upgreek}
\usepackage{xcolor}
\usepackage{mathtools}
\allowdisplaybreaks

\usepackage[pagewise]{lineno}

\newtheorem{theorem}{Theorem}[section]
\newtheorem{lemma}[theorem]{Lemma}
\newtheorem{proposition}[theorem]{Proposition}
\newtheorem{assumption}[theorem]{Assumption}
\newtheorem{corollary}[theorem]{Corollary}
\newtheorem{definition}[theorem]{Definition}

\newtheorem{remark}[theorem]{Remark}

\allowdisplaybreaks

\let\originalleft\left
\let\originalright\right
\renewcommand{\left}{\mathopen{}\mathclose\bgroup\originalleft}
\renewcommand{\right}{\aftergroup\egroup\originalright}


\renewcommand{\d}{\/\mathrm{d}\/}

\def\w{\textbf{W}^{\varepsilon}_{{\theta}^{\varepsilon}}}

\def\L{\mathbb{L}}
\def\A{\mathrm{A}}
\def\I{\mathrm{I}}

\def\C{\mathrm{C}}
\def\f{\boldsymbol{f}}
\def\J{\mathrm{J}}
\def\B{\mathrm{B}}
\def\D{\mathrm{D}}
\def\y{\boldsymbol{y}}

\def\E{\mathbb{E}}

\def\x{\boldsymbol{x}}

\def\g{\boldsymbol{g}}

\def\h{\boldsymbol{h}}
\def\z{\boldsymbol{z}}
\def\v{\boldsymbol{v}}
\def\V{\mathbb{v}}
\def\w{\boldsymbol{w}}
\def\W{\mathrm{W}}

\def\N{\mathbb{N}}
\def\r{\mathrm{r}}

\def\V{\mathbb{V}}
\def\wi{\widetilde}

\def\u{\mathrm{U}}
\def\P{\mathrm{P}}
\def\u{\boldsymbol{u}}
\def\H{\mathbb{H}}

\newcommand{\R}{\mathbb{R}}

\renewcommand{\d}{\/\mathrm{d}\/}

\newcommand{\Addresses}{{
		\footnote{
			
			\noindent \textsuperscript{1,2}Department of Mathematics, Indian Institute of Technology Roorkee-IIT Roorkee,
			Haridwar Highway, Roorkee, Uttarakhand 247667, INDIA.\par\nopagebreak
			\noindent  \textit{e-mail:} \texttt{Manil T. Mohan: maniltmohan@ma.iitr.ac.in, maniltmohan@gmail.com.}
			
			\textit{e-mail:} \texttt{Kush Kinra: kkinra@ma.iitr.ac.in.}
			
			\noindent \textsuperscript{*}Corresponding author.
			
			\textit{Key words:} Stochastic convective Brinkman-Forchheimer equations, unbounded domains, cylindrical Wiener process, random dynamical system, absorbing sets, asymptotically compactness, random attractors. 
			
			Mathematics Subject Classification (2020): Primary 35B41, 35Q35; Secondary 37L55, 37N10, 35R60.

}}}

\begin{document}
	
	\title[Random attractors for 2D SCBF equations]{$\H^1$-Random attractors for 2D stochastic convective Brinkman-Forchheimer equations in unbounded domains
		\Addresses}
	\author[K. Kinra and M. T. Mohan]
	{Kush Kinra\textsuperscript{1} and Manil T. Mohan\textsuperscript{2*}}

	\maketitle
	\begin{abstract}
		The asymptotic behavior of solutions of two dimensional stochastic convective Brinkman-Forchheimer (2D SCBF) equations in unbounded domains is discussed in this work (for example, Poincar\'e domains). We first prove the existence of $\H^1$-random attractors for the stochastic flow generated by 2D SCBF equations (for the absorption exponent $r\in[1,3]$) perturbed by an additive noise on Poincar\'e domains. Furthermore, we deduce the existence of a unique invariant measure in $\H^1$ for the 2D SCBF equations defined on Poincar\'e domains. In addition, a remark on the extension of these results to general unbounded domains is also discussed. Finally, for 2D SCBF equations forced by additive one-dimensional  Wiener noise, we prove the upper semicontinuity of the random attractors, when the domain changes from bounded to unbounded (Poincar\'e). 
	\end{abstract}

	\section{Introduction} \label{sec1}\setcounter{equation}{0}
	The analysis of long time behavior of infinite dimensional dynamical systems including   two-dimensional Navier-Stokes equations (2D NSE) is well investigated in \cite{R.Temam}. The existence of a global attractor for  2D NSE on some unbounded domains like Poincar\'e domains is obtained in \cite{Rosa}. Following the work \cite{Rosa}, the existence of global attractors for the two-dimensional convective Brinkman-Forchheimer (CBF) equations in $\H,$ for the absorption exponent $r\in[1,\infty)$ and the external forcing $\f\in\V'$ (from the Gelfand triple $\V\subset\H\subset\V'$, see section \ref{sec2} for functional framework) in Poincar\'e like domain is proved in \cite{Mohan2}. The upper semicontinuity of global attractor with respect to domain, that is, when domain changes from bounded to unbounded (Poincar\'e) domain is also established in \cite{Mohan2}. In \cite{NJu}, the author extended the work \cite{Rosa} and  showed that if the external forcing term is in the natural space $\H$, then the global attractor for  2D NSE is compact not only in the $\L^2$-norm but also in the $\H^1$-norm, and it attracts all bounded sets in $\H$ in the metric of $\V$.  Whenever the forcing $\f\in\H$, the existence of $\H^1$-global attractors for 2D  CBF equations in unbounded domains for the absorption exponent $r\in[1,3]$ is proved in \cite{MTM2}. It is well-investigated in the literature that a large class of stochastic partial differential equations (SPDEs) generate random dynamical systems (RDS) (cf. \cite{Arnold}).  The analysis of infinite dimensional RDS is also an essential branch in the study of qualitative properties of SPDEs (see \cite{BCF,Crauel,CF}, etc for more details).	The random dynamics of 2D stochastic Navier-Stokes equations (SNSE) has been carried out in the works \cite{BCF,BCLLLR,BGT,BL1,BL,Crauel1,CDF,CF}, etc and the references therein. 
	
	In this work, we are concerned about the long time behavior of 2D SCBF equations in Poincar\'e domains $\mathcal{O}\subset \mathbb{R}^2$ and general unbounded domains. Let $\mathcal{O}$ satisfy the following assumption:
	\begin{assumption}\label{assumpO}
		Let $\mathcal{O}$ be an open, connected and may be unbounded subset of $\R^2$, the boundary of which is uniformly of class $\mathrm{C}^3$ (cf. \cite{Heywood}). For the domain $\mathcal{O}$, we also assume that, there exists a positive constant $\lambda_1 $ such that the following Poincar\'e inequality  is satisfied:
		\begin{align}\label{2.1}
			\lambda_1\int_{\mathcal{O}} |\psi(x)|^2 \d x \leq \int_{\mathcal{O}} |\nabla \psi(x)|^2 \d x,  \ \text{ for all } \  \psi \in \H^{1}_0 (\mathcal{O}).
		\end{align}
		A domain in which Poincar\'e inequality is satisfied, we call it as a Poincar\'e domain and if  $\mathcal{O}$ is bounded in some direction, then the Poincar\'e inequality \eqref{2.1} holds. For example, one can consider  $\mathcal{O}=\R\times(-L,L),$ $L>0$ (see page 306, \cite{R.Temam}).
	\end{assumption}
	The convective Brinkman-Forchheimer (CBF) equations characterize the motion of incompressible viscous fluid  through a rigid, homogeneous, isotropic, porous medium.	The 2D SCBF equations perturbed by additive noise are given by 
	\begin{equation}\label{SCBF}
		\left\{
		\begin{aligned}
			\d\u&=[\mu \Delta\u-(\u\cdot\nabla)\u-\alpha\u-\beta|\u|^{r-1}\u-\nabla p+\boldsymbol{f}]\d t + \d\mathrm{W}(t),  \text{ in } \ \mathcal{O}\times(0,\infty), \\ \nabla\cdot\u&=0, \ \text{ in } \ \mathcal{O}\times(0,\infty), \\
			\u&=\boldsymbol{0},\ \ \text{ on } \ \partial\mathcal{O}\times[0,\infty), \\
			\u(0)&=\x, \ \text{ in } \ \mathcal{O},
		\end{aligned}
		\right.
	\end{equation} where $\W(\cdot)$  is a given Hilbert space valued Wiener process on some given filtered probability space  $ (\Omega, \mathscr{F}, \{\mathscr{F}_t\}_{t\in \R}, \mathbb{P})$ whose properties will be specified in section \ref{sec2}. Here $\u(x,t) \in \R^2$, $p(x,t)\in\R$ and $\f(x,t)\in\R^2$ represent the velocity field, the pressure field and an external forcing, respectively, at position $x$ and time $t$. The constant $\mu>0$ represents the Brinkman coefficient (effective viscosity), the positive constants $\alpha$ and $\beta$ represent the Darcy (permeability of porous medium) and Forchheimer coefficients, respectively. The exponent $r\in[1,\infty)$ is called the absorption exponent and  $r=3$ is known as the critical exponent. The exponents $r=1,2,3$ are physically relevant and they represent linear, quadratic and cubic growths, receptively. For $\alpha=\beta=0$, we obtain the classical 2D stochastic Navier-Stokes equations, and one can consider the system \eqref{SCBF} as a damped SNSE with the nonlinear damping $\alpha\u+\beta|\u|^{r-1}\u$. For the unique solvability of the deterministic two- and three-dimensional CBF equations in bounded and periodic domains, the interested readers are referred to see \cite{AO,FHR,HR,KT2,Mohan1}, etc and for its stochastic counterpart, see \cite{MTM}. 
	
	In most of the results available in the literature, the existence of random attractors is heavily based on random compact attracting set (using compact embedding). But in this work, we are not able to find random compact attracting set due to following two reasons:
	\begin{itemize}
		\item [(i)]For the stochastic 2D NSE and related models, for $\f\in\H$, the existence of random absorbing sets in more regular spaces than $\V$ is not available (for example in $\D(\A^{s})$, for $s>\frac{1}{2}$).
		\item [(ii)] The domain $\mathcal{O}$ is unbounded.
	\end{itemize}
	In the deterministic case, the above mentioned difficulties were resolved by different methods, cf.  \cite{Abergel, Ghidaglia,Rosa}, etc for the autonomous case and \cite{CLR1, CLR2}, etc for the non-autonomous case. For SPDEs, the methods available in the deterministic case have also been  generalized by several authors (see for example, \cite{BL, BLL,BLW, Wang}, etc).  The concept of an asymptotically compact cocycle was introduced in \cite{CLR1} and the authors proved the existence of attractors fornon-autonomous 2D NSE. Later, several authors used this method to prove the existence of random attractors in unbounded domains, see  for example \cite{BGT,BLL,BLW,FY,LYZ, LXS, KM,Mohan2, MTM2, Slavik, SLHZ,Wang} etc. The existence of a unique random attractor  for the 2D and 3D SCBF equations \eqref{SCBF} perturbed by additive rough noise in $\H$ is proved in \cite{KM}.
	
	Recently, for $r\in[1,3]$, the existence of random attractors in $\H$ and $\V$ for the stochastic flow generated by 2D SCBF equations in bounded domains perturbed by small additive smooth noise is obtained in \cite{KM1}. The authors in \cite{KM1} also proved the upper semicontinuity of the  random attractor, when the noise term goes to zero. In this work, first we prove the existence of random attractors for 2D SCBF equations for $r\in[1,3]$ in $\V$ on unbounded (Poincar\'e) domains. We use the concepts developed in \cite{BL1} to prove the desired results for the existence of random attractors. In \cite{CF}, authors proved that the existence of compact invariant random set is a sufficient condition for the existence of invariant measures. Since, random attractor itself is a compact invariant set, the existence of invariant measures is confirmed. In addition, we prove the uniqueness of  invariant measure for our model.
	
{Next, we prove the upper semicontinuity of the random attractors with respect to domain, that is, when domain changes from bounded to unbounded (Poincar\'e) domain. Due to some technical difficulty (related to the RKHS of Wiener process and the cut-off problem), we are not able to prove the upper semicontinuity of random attractors with respect to domain for the system \eqref{SCBF}. But, we are able to prove if the noise in \eqref{SCBF} replaced by a finite dimensional noise (see \eqref{SCBF_in} below). The existence of a unique random attractor for 2D SCBF equations perturbed by finite dimensional noise is proved in \cite{KM7} (see Section 5, \cite{KM7}). Upper semicontinuity of global attractors with respect to domain for some physically relevant deterministic models such as Klein-Gordon-Schr\"odinger equation, 2D CBF equations and 2D NSE is established in \cite{LW}, \cite{Mohan2} and \cite{ZD}, respectively. We extend the same idea to prove the upper semicontinuity of random attractors with respect to domain for 2D SCBF equations.} 
	 
{In the literature, the concept of upper semicontinuity of random attractors with respect to domain was introduced in \cite{LL} and the authors proved upper semicontinuity of random attractors with respect to domain for stochastic FitzHugh-Nagumo equations (SFNE). Later, using this concept, upper semicontinuity of random attractors with respect to domain for g-Navier-Stokes equations (g-NSE) was proved in \cite{LL1}. While using the concept introduced in \cite{LL} to the models SFNE and g-NSE, the authors have not taken care of the boundary conditions, while proving the strong convergence of the solutions on the sequence of bounded domains to the solution on the unbounded domain, which is a crucial step in the theory. Due to this reason, it appears to us that the method developed in the works \cite{LL,LL1}, etc have serious flaws, and we have not used the theory  established  in \cite{LL}. But, one can prove the upper semicontinuity of random attractors with respect to domain for SFNE and g-NSE using the same method which we apply for 2D SCBF equation in Section \ref{sec6}.}

	Let us now explain the difficulty for extending the results obtained in this work for the case $r>3$. The following equality 
	\begin{align}\label{AvC}
		&\int_{\mathcal{O}}(-\Delta\v(x))\cdot|\v(x)|^{r-1}\v(x)\d x\nonumber\\&=\int_{\mathcal{O}}|\nabla\v(x)|^2|\v(x)|^{r-1}\d x+4\left[\frac{r-1}{(r+1)^2}\right]\int_{\mathcal{O}}|\nabla|\v(x)|^{\frac{r+1}{2}}|^2\d x
	\end{align} is useful in the whole space or periodic domains. But, we are not able to use the above equality in bounded domains as $\mathcal{P}(|\u|^{r-1}\u)$ need not be zero on the boundary, and $\mathcal{P}$ and $-\Delta$ are not necessarily commuting (for a counterexample, see Example 2.19, \cite{JCR4}). In order to prove the existence of random attractors for  2D SCBF equations in $\V$, we do not  use the above equality for $r\in[1,3]$. Due to this difficulty, it appears to us that the results obtained in the works \cite{HZ,BY} (random exponential attractors and random attractors for the 3D damped NSE) hold true in periodic domains only. 
	\vskip 2mm
	\noindent
	\textbf{Aims of the work:}   The major aims and novelties of this work are:
	\begin{itemize}
		\item [(i)] Existence of a unique random attractor for the system \eqref{SCBF} with $r\in[1,3]$ on unbounded Poincar\'e domains in $\V$.
		\item [(ii)] Existence of a unique invariant measure for the system \eqref{SCBF} in $\V$.
		\item [(iii)]  Upper semicontinuity of the random attractors with respect to domain, that is, when domain changes from bounded to unbounded (Poincar\'e) domain.
	\end{itemize}
	\vskip 2mm
	The rest of the paper is organized as follows. In section \ref{sec2}, we first define the function spaces, which are needed in this work. Then, we define the linear and nonlinear operators and discuss their properties. Abstract formulation of the 2D SCBF equations \eqref{SCBF}, the assumption on the Reproducing Kernel Hilbert space (RKHS) and the solvability results for the 2D SCBF equations are also provided in the same section. In section \ref{sec3}, we define metric dynamical system and random dynamical system for our model. Section \ref{sec4} is devoted for establishing the existence of random attractors for the 2D SCBF equations for $r\in[1,3]$ in $\V$ by proving the asymptotically compactness property of RDS $\varphi$ in $\V$ (Theorem \ref{V_asymptotically}). In section \ref{sec5}, we establish the existence of a unique  invariant measure the 2D SCBF equations \eqref{SCBF} in $\V$ (Theorem \ref{thm6.3}). {In the final section, we first find the random absorbing set (Lemmas \ref{Absorb} and \ref{Absorb1}) and uniform tail estimates (Lemmas \ref{largeradius} and \ref{largeradius1}) for the solution of 2D SCBF equations. Then, we prove the upper semicontinuity of the random attractor, when domain changes from bounded to unbounded (Theorem \ref{Main-T}).}

	\section{Mathematical Formulation}\label{sec2}\setcounter{equation}{0}
	In this section, we provide the necessary function spaces needed to obtain the existence of random attractors for the 2D SCBF equations \eqref{SCBF}. 
	\subsection{Function spaces} Let $\mathcal{O}\subset\R^2$ and satisfies Assumption \ref{assumpO} (may be bounded or unbounded). We define the space $$\mathcal{V}:=\{\u\in\C_0^{\infty}(\mathcal{O},\R^2):\nabla\cdot\u=0\},$$ where $\C_0^{\infty}(\mathcal{O};\R^2)$ denote the space of all infinitely differentiable functions  ($\R^2$-valued) with compact support in $\mathcal{O}$. Let $\H$, $\V$ and $\wi\L^p$ denote the completion of $\mathcal{V}$ in 	$\mathrm{L}^2(\mathcal{O};\R^2):=\L^2(\mathcal{O})$, $\mathrm{H}^1(\mathcal{O};\R^2):=\H^1(\mathcal{O})$ and $\mathrm{L}^p(\mathcal{O};\R^2):=\L^p(\mathcal{O})$ ($2<p<\infty$) norms, respectively. The space $\H$ is endowed with the norm $\|\u\|_{\H}^2:=\int_{\mathcal{O}}|\u(x)|^2\d x,$ the norm on the space $\widetilde{\L}^{p}$ is represented by $\|\u\|_{\wi \L^p}^2:=\int_{\mathcal{O}}|\u(x)|^p\d x,$ for $2<p<\infty$ and the norm on the space $\V$ is given by $\|\u\|^2_{\H^1}=\int_{\mathcal{O}}|\u(x)|^2\d x+\int_{\mathcal{O}}|\nabla\u(x)|^2\d x.$ Using the Poincar\'e inequality \eqref{2.1}, one can easily see that the norm $\|\u\|^2_{\H^1}$ is equivalent to the norm $\|\u\|_{\V}^2:=\int_{\mathcal{O}}|\nabla\u(x)|^2\d x.$ The inner product in the Hilbert space $\H$ is denoted by $( \cdot, \cdot),$ in the Hilbert space $\V$ is denoted by $(\!(\cdot,\cdot)\!)$ and the induced duality, for instance between the spaces $\V$ and $\V'$, and $\widetilde{\L}^p$ and its dual $\widetilde{\L}^{\frac{p}{p-1}}$ is denoted by $\langle\cdot,\cdot\rangle.$ It should be noted that $\V$ is densely and continuously embedded into $\H$ and $\H$ can be identified with its dual $\H'$ and we have the \emph{Gelfand triple}: $\V\subset\H \cong\H'\subset\V'$. In the rest of the paper, we use the notation $\H^2(\mathcal{O}):=\mathrm{H}^2(\mathcal{O};\R^2)$ for the second order Sobolev spaces.
	
	\subsection{Linear operator}\label{proj}
	Let $\mathcal{P}: \L^2(\mathcal{O}) \to\H$ denote the Helmholtz-Hodge orthogonal projection (cf.  \cite{OAL}). Let us define the Stokes operator 
	\begin{equation*}
		\A\u:=-\mathcal{P}\Delta\u,\;\u\in\D(\A).
	\end{equation*}
	The operator $\A$ is a linear continuous operator from $\V$ into $\V'$, satisfying
	\begin{equation*}
		\langle\A\u,\v\rangle=(\!(\u,\v)\!), \ \ \ \u,\v\in\V.
	\end{equation*}
	Since the boundary of $\mathcal{O}$ is uniformly of class $\mathrm{C}^3$, this infer that $\D(\A)=\V\cap\H^2(\mathcal{O})$ and $\|\A\u\|_{\H}$ defines a norm in $\D(\A),$ which is equivalent to the one in $\H^2(\mathcal{O})$ (cf. Lemma 1, \cite{Heywood}). Above argument implies that $\mathcal{P}:\H^2(\mathcal{O})\to\H^2(\mathcal{O})$ is a bounded operator. Note that the operator $\A$ is a non-negative self-adjoint operator in $\H$ and 
	\begin{align}\label{2.7a}
		\langle\A\u,\u\rangle =\|\u\|_{\V}^2,\ \textrm{ for all }\ \u\in\V, \ \text{ so that }\ \|\A\u\|_{\V'}\leq \|\u\|_{\V}.
	\end{align}
	\subsection{Bilinear operator}
	Let us define the \emph{trilinear form} $b(\cdot,\cdot,\cdot):\V\times\V\times\V\to\R$ by $$b(\u,\v,\w)=\int_{\mathcal{O}}(\u(x)\cdot\nabla)\v(x)\cdot\w(x)\d x=\sum_{i,j=1}^2\int_{\mathcal{O}}\u_i(x)\frac{\partial \v_j(x)}{\partial x_i}\w_j(x)\d x.$$ If $\u, \v$ are such that the linear map $b(\u, \v, \cdot) $ is continuous on $\V$, the corresponding element of $\V'$ is denoted by $\B(\u, \v)$. We represent $\B(\u) = \B(\u, \u)=\mathcal{P}[(\u\cdot\nabla)\u]$. Using an integration by parts, one can easily see that 
	\begin{equation}\label{b0}
		\left\{
		\begin{aligned}
			b(\u_1,\u_2,\u_3) &=  -b(\u_1,\u_3,\u_2),\ \text{ for all }\ \u_1,\u_2,\u_3\in \V,\\
			b(\u_1,\u_2,\u_2) &= 0,\ \text{ for all }\ \u_1,\u_2 \in\V.
		\end{aligned}
		\right.\end{equation}
	\begin{remark}
		The following inequalities are used frequently in the sequel.
		\begin{itemize}
			\item [(i)]	Using Ladyzhenskaya's inequality (Lemma 1, Chapter I, \cite{OAL}), we find
			\begin{align}\label{lady}
				\|\u\|_{\L^{4}(\mathcal{O}) } \leq 2^{1/4} \|\u\|^{1/2}_{\L^{2}(\mathcal{O}) } \|\nabla \u\|^{1/2}_{\L^{2}(\mathcal{O}) }, \ \ \ \u\in \H^{1,2}_{0} (\mathcal{O}).
			\end{align}
			\item [(ii)]  The following estimates on the trilinear form $b(\cdot,\cdot,\cdot)$ (see Chapter 2, section 2.3, \cite{Temam1}):
			\begin{itemize}
				\item  [$\bullet$] $\text{for all }\ \u_1\in \V, \u_2\in \D(\A), \u_3\in \H$, we have 
				\begin{align}
					|b(\u_1,\u_2,\u_3)|&\leq C\|\u_1\|^{1/2}_{\H}\|\u_1\|^{1/2}_{\V}\|\u_2\|^{1/2}_{\V}\|\A\u_2\|^{1/2}_{\H}\|\u_3\|_{\H},\label{b2}
				\end{align}
				\item [$\bullet$] $\text{for all }\ \u_1\in \D(\A), \u_2\in \V, \u_3\in \H$, we get 
				\begin{align}
					|b(\u_1,\u_2,\u_3)|&\leq C\|\u_1\|^{1/2}_{\H}\|\A\u_1\|^{1/2}_{\H}\|\u_2\|_{\V}\|\u_3\|_{\H}.\label{b1}
				\end{align}
			\end{itemize}
		\end{itemize}
	\end{remark}
	\begin{remark}\label{R2.2}
		If $\u\in \mathrm{L}^{\infty} (0, T; \V)\cap\mathrm{L}^2(0,T;\D(\A)),$ then $\B(\u)\in \mathrm{L}^2(0, T; \H).$ Indeed, by \eqref{b2}, we have 
		\begin{align}\label{2.8}
			\int_{0}^{T} \|\B(\u(t))\|^2_{\H}\d t \leq C \int_{0}^{T}\|\u(t)\|^{3}_{\V}\|\A\u(t)\|_{\H}\d t \leq C T^{1/2} \|\u\|^{3}_{\mathrm{L}^{\infty}(0,T; \V)} \|\u\|_{\mathrm{L}^{2}(0,T; \D(\A))},
		\end{align}
		which is finite. 
	\end{remark}
	\begin{lemma}[Lemma 5.2, \cite{BL}]\label{convergence_b*}
		Let $\mathcal{O}_1\subset\mathcal{O}$, which is bounded, and $\psi: [0, T]\times \mathcal{O} \to \R^2$ be a $\mathrm{C}^1$-class function such that $\mathrm{supp}\ \psi (t, \cdot) \subset \mathcal{O}_1,$ for $t\in[0, T],$ and 
		$$ \sup_{1\leq i, j\leq 2} \sup_{(t, x)\in [0, T] \times \mathcal{O}_1} |D_i \psi^j (t, x)| = C < \infty.$$ 
		Let $\v_m$ converges to $\v$ in $\mathrm{L}^2(0, T; \V)$ weakly and in $\mathrm{L}^2(0, T; \L^2(\mathcal{O}_1))$ strongly. Then, we have 
		$$\int_{0}^{T} b(\v_m(t), \v_m(t), \psi(t)) \d t \to \int_{0}^{T} b(\v(t), \v(t), \psi(t)) \d t\ \text{ as }\ m\to\infty.$$
	\end{lemma}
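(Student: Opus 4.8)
The plan is to use the antisymmetry \eqref{b0} of the trilinear form to transfer the spatial derivative off the merely weakly convergent argument $\v_m$ and onto the smooth test function $\psi$, and then to pass to the limit in a genuinely quadratic expression. Since $\v_m(t)\in\V$ is divergence free and $\psi(t,\cdot)$ has compact support strictly inside $\mathcal{O}$, an integration by parts gives, for a.e.\ $t\in[0,T]$,
\begin{align*}
 b(\v_m(t),\v_m(t),\psi(t)) &= -b(\v_m(t),\psi(t),\v_m(t))\\
 &= -\sum_{i,j=1}^2\int_{\mathcal{O}_1}(\v_m)_i(x,t)\,D_i\psi^j(x,t)\,(\v_m)_j(x,t)\,\d x,
\end{align*}
all integrals now being over the bounded set $\mathcal{O}_1$ because $\mathrm{supp}\,\psi(t,\cdot)\subset\mathcal{O}_1$; the right-hand side is finite since $|D_i\psi^j|\leq C$ and $\|\v_m(t)\|_{\L^2(\mathcal{O}_1)}\leq\|\v_m(t)\|_{\H}$, and $t\mapsto b(\v_m(t),\psi(t),\v_m(t))$ is integrable, being dominated by $C\|\v_m(t)\|_{\L^2(\mathcal{O}_1)}^2$. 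The weak limit $\v$ belongs to $\mathrm{L}^2(0,T;\V)$, so $\v(t)$ is divergence free for a.e.\ $t$, and the same identity holds with $\v$ in place of $\v_m$. It therefore suffices to prove
\begin{align*}
 \mathrm{I}_m:=\sum_{i,j=1}^2\int_0^T\!\!\int_{\mathcal{O}_1}\big[(\v_m)_i(\v_m)_j-\v_i\v_j\big]D_i\psi^j\,\d x\,\d t\To 0\quad\text{as }m\to\infty.
\end{align*}

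For this I would decompose the quadratic difference as $(\v_m)_i(\v_m)_j-\v_i\v_j=(\v_m)_i\big[(\v_m)_j-\v_j\big]+\big[(\v_m)_i-\v_i\big]\v_j$, substitute it into $\mathrm{I}_m$, and bound the two resulting groups of terms by Cauchy--Schwarz in $x$ and then in $t$, using $|D_i\psi^j|\leq C$ throughout. This yields estimates of the form $C\,\|\v_m\|_{\mathrm{L}^2(0,T;\L^2(\mathcal{O}_1))}\|\v_m-\v\|_{\mathrm{L}^2(0,T;\L^2(\mathcal{O}_1))}$ and $C\,\|\v_m-\v\|_{\mathrm{L}^2(0,T;\L^2(\mathcal{O}_1))}\|\v\|_{\mathrm{L}^2(0,T;\L^2(\mathcal{O}_1))}$. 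The weak convergence $\v_m\rightharpoonup\v$ in $\mathrm{L}^2(0,T;\V)$ forces $\sup_m\|\v_m\|_{\mathrm{L}^2(0,T;\V)}<\infty$, and since $\|\u\|_{\L^2(\mathcal{O}_1)}\leq\|\u\|_{\H}\leq C\|\u\|_{\V}$ this gives uniform bounds on $\|\v_m\|_{\mathrm{L}^2(0,T;\L^2(\mathcal{O}_1))}$ and on $\|\v\|_{\mathrm{L}^2(0,T;\L^2(\mathcal{O}_1))}$; meanwhile $\|\v_m-\v\|_{\mathrm{L}^2(0,T;\L^2(\mathcal{O}_1))}\to0$ is precisely the assumed strong convergence. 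Hence every term in $\mathrm{I}_m$ tends to $0$, so $\mathrm{I}_m\to0$ and the lemma follows.

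It is worth recording how the hypotheses are used: the compact support of $\psi(t,\cdot)$ inside $\mathcal{O}_1$ both legitimises the integration by parts on the possibly unbounded domain $\mathcal{O}$ (no boundary or decay contributions survive) and confines every integral to the bounded set on which strong $\L^2$-convergence is assumed; the uniform $\mathrm{C}^1$-bound on $\psi$ supplies the bounded coefficients $D_i\psi^j$; and weak convergence in $\mathrm{L}^2(0,T;\V)$ enters only through the uniform $\mathrm{L}^2$-bounds, the strong $\mathrm{L}^2(0,T;\L^2(\mathcal{O}_1))$-convergence doing the rest. The one point that deserves care --- and the closest thing here to an obstacle --- is the antisymmetry identity for a test function $\psi$ that is merely $\mathrm{C}^1$ and is neither divergence free nor an element of $\V$; this is settled by combining its compact support in $\mathcal{O}$ with $\nabla\cdot\v_m=0$, and should be written out rather than quoted verbatim from \eqref{b0}.
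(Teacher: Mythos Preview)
Your proof is correct and is the standard argument for this lemma. Note, however, that the paper does not actually supply a proof of this statement: it is quoted verbatim as Lemma~5.2 from \cite{BL} and left without proof, so there is no in-paper argument to compare against. Your approach---antisymmetrise to move the derivative onto $\psi$, then split the quadratic difference and close with Cauchy--Schwarz using the strong $\mathrm{L}^2(0,T;\L^2(\mathcal{O}_1))$ convergence and the uniform $\mathrm{L}^2(0,T;\V)$ bound---is exactly the natural one and matches the proof in the cited reference.
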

	\begin{lemma}\label{Converge_b}
		Let $\{\v_n\}_{n\in \mathbb{N}}$ be a bounded sequence in $\mathrm{L}^{\infty}(0, T; \V),$ and $ \v\in \mathrm{L}^{\infty}(0, T; \V)$ be such that 
		\begin{align}
			\v_n\xrightharpoonup{w}\v &\ \text{ in }\  \mathrm{L}^2(0, T;\D(\A)),\label{210}\\
			\v_n\to \v  &\ \text{ in }\  \mathrm{L}^2(0, T;\V).\label{2.11}
		\end{align}  Then for any $\w \in \mathrm{L}^{\infty}(0,T; \V)\cap\mathrm{L}^2(0,T;\D(\A))$, we have 
		\begin{align}
			\int_{0}^{T}  b\big(\v_n(t), \v_n(t), \A\v_n(t) \big) \d t &\to  \int_{0}^{T}  b\big(\v(t), \v(t), \A\v(t) \big) \d t,\label{C_b1}\\
			\int_{0}^{T}  b\big(\v_n(t), \w(t), \A\v_n(t) \big) \d t &\to  \int_{0}^{T}  b\big(\v(t), \w(t), \A\v(t) \big) \d t,\label{C_b2}\\
			\int_{0}^{T} b\big(\w(t), \v_n(t), \A\v_n(t) \big) \d t &\to  \int_{0}^{T}  b\big(\w(t), \v(t), \A\v(t) \big) \d t,\label{C_b3}
		\end{align}
		as $n\to \infty$, for all $T>0.$
	\end{lemma}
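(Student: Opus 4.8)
The plan is to handle the three convergences in turn, reducing each to the combination of (a) strong convergence in $\mathrm{L}^2(0,T;\V)$, (b) weak convergence in $\mathrm{L}^2(0,T;\D(\A))$, and (c) the uniform bound in $\mathrm{L}^\infty(0,T;\V)$, together with the integration-by-parts identities \eqref{b0} and the trilinear estimates \eqref{b2}--\eqref{b1}. For \eqref{C_b1}, I would first split
\begin{align*}
	b\big(\v_n,\v_n,\A\v_n\big)-b\big(\v,\v,\A\v\big)
	&=b\big(\v_n-\v,\v_n,\A\v_n\big)+b\big(\v,\v_n-\v,\A\v_n\big)\\
	&\quad+b\big(\v,\v,\A\v_n-\A\v\big),
\end{align*}
integrate in time, and estimate each piece. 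The last term tends to $0$ because $\A\v_n\rightharpoonup\A\v$ in $\mathrm{L}^2(0,T;\H)$ while $\B(\v)\in\mathrm{L}^2(0,T;\H)$ by Remark \ref{R2.2} (here the hypothesis $\v\in\mathrm{L}^\infty(0,T;\V)\cap\mathrm{L}^2(0,T;\D(\A))$ is used, which follows from \eqref{210}--\eqref{2.11} and the uniform bound by weak lower semicontinuity). For the first two terms I would apply \eqref{b2} in the form $|b(\u_1,\u_2,\u_3)|\leq C\|\u_1\|_{\H}^{1/2}\|\u_1\|_{\V}^{1/2}\|\u_2\|_{\V}^{1/2}\|\A\u_2\|_{\H}^{1/2}\|\u_3\|_{\H}$ (and its symmetric partners, using \eqref{b0} to move the $\A\v_n$ out of the ``rough'' slot when necessary), then Hölder in time: the factor carrying $\|\v_n-\v\|_{\H}$ or $\|\v_n-\v\|_{\V}$ in $\mathrm{L}^2(0,T)$ goes to $0$ by \eqref{2.11}, while the remaining factors stay bounded by the uniform $\mathrm{L}^\infty(0,T;\V)$ bound on $\{\v_n\}$ and the $\mathrm{L}^2(0,T;\D(\A))$ bound (which is finite since weakly convergent sequences are bounded). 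The subtle point is that $\|\A\v_n\|_{\H}$ appears to the power $1$ inside one of the terms, so I must be careful to arrange the split so that wherever $\A\v_n$ (power one) occurs, the accompanying difference factor is in $\mathrm{L}^\infty$ or is one of the other bounded factors — concretely, use $b(\v_n-\v,\v_n,\A\v_n)=-b(\v_n-\v,\A\v_n,\v_n)$ is \emph{not} valid since $\A\v_n\notin\V$ in general, so instead I keep $\A\v_n$ in the third slot and bound $|b(\v_n-\v,\v_n,\A\v_n)|\leq C\|\v_n-\v\|_{\H}^{1/2}\|\v_n-\v\|_{\V}^{1/2}\|\v_n\|_{\V}^{1/2}\|\A\v_n\|_{\H}^{1/2}\|\A\v_n\|_{\H}$; Hölder with exponents $(4,4,4,4,\infty)$ then gives a bound by $\|\v_n-\v\|_{\mathrm{L}^2(0,T;\H)}^{1/2}$ (hence $\to 0$) times uniformly bounded quantities, provided $\{\A\v_n\}$ is bounded in $\mathrm{L}^{?}$; since only $\mathrm{L}^2(0,T;\H)$ is available for $\A\v_n$, I would instead route the power-$3/2$ in $\|\A\v_n\|_{\H}$ through the interpolation $\|\v_n\|_{\V}\leq\|\v_n\|_{\H}^{1/2}\|\A\v_n\|_{\H}^{1/2}$ cautiously — this bookkeeping is the main technical obstacle.

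For \eqref{C_b2} and \eqref{C_b3} the structure is the same but easier, since $\w$ is fixed: writing the difference as $b(\v_n-\v,\w,\A\v_n)+b(\v,\w,\A\v_n-\A\v)$ for \eqref{C_b2}, the second term vanishes by weak convergence $\A\v_n\rightharpoonup\A\v$ in $\mathrm{L}^2(0,T;\H)$ paired against $\B(\v,\w)=b(\v,\w,\cdot)\in\mathrm{L}^2(0,T;\H)$ (again via \eqref{b2} with $\u_2=\w\in\D(\A)$), and the first term is controlled by \eqref{b2} and Hölder, the $\|\v_n-\v\|_{\V}$ factor forcing convergence to $0$ by \eqref{2.11}; \eqref{C_b3} is handled identically after splitting $b(\w,\v_n,\A\v_n)-b(\w,\v,\A\v)=b(\w,\v_n-\v,\A\v_n)+b(\w,\v,\A\v_n-\A\v)$ and using \eqref{b1} (with $\u_1=\w\in\D(\A)$) for the first term and the weak-convergence argument with $b(\w,\v,\cdot)\in\mathrm{L}^2(0,T;\H)$ for the second.

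I expect the hardest part to be making the estimate for \eqref{C_b1} fully rigorous without any bound on $\A\v_n$ better than $\mathrm{L}^2(0,T;\H)$: the term $b(\v_n-\v,\v_n,\A\v_n)$ carries $\|\A\v_n\|_{\H}$ to the power $3/2$ (one power from the argument, one half from estimating $\|\v_n\|_{\V}$ if one interpolates), and $3/2>1$ means plain Hölder in $\mathrm{L}^2(0,T;\H)$ is not enough. The resolution is to \emph{not} interpolate $\|\v_n\|_{\V}$: keep it as is, bounded in $\mathrm{L}^\infty(0,T)$; then $|b(\v_n-\v,\v_n,\A\v_n)|\leq C\|\v_n-\v\|_{\H}^{1/2}\|\v_n-\v\|_{\V}^{1/2}\|\v_n\|_{\V}^{1/2}\|\A\v_n\|_{\H}^{3/2}$ — still power $3/2$. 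So one must instead use the \emph{other} form of the trilinear estimate, i.e.\ put the $\D(\A)$-regular factor in the \emph{middle} slot: $b(\v_n-\v,\v_n,\A\v_n)$ does not fit, but after \eqref{b0}, $b(\v_n-\v,\v_n,\A\v_n)=-b(\v_n-\v,\A\v_n,\v_n)$ requires $\A\v_n\in\V$ which fails. The honest route, and the one I would adopt, is: bound $\|\v_n-\v\|_{\L^4}\|\nabla\v_n\|_{\L^4}\|\A\v_n\|_{\H}$ by Ladyzhenskaya \eqref{lady} applied to $\v_n-\v$ and to $\nabla\v_n$, giving $C\|\v_n-\v\|_{\H}^{1/2}\|\v_n-\v\|_{\V}^{1/2}\cdot\|\v_n\|_{\V}^{1/2}\|\A\v_n\|_{\H}^{1/2}\cdot\|\A\v_n\|_{\H}$, then Hölder in $t$ with exponents chosen so that $\|\A\v_n\|_{\H}$ sits in $\mathrm{L}^2$ (exponent $2$ for the combined power-$3/2$ factor is impossible), which forces me to absorb the extra half-power using the \emph{uniform} $\mathrm{L}^2(0,T;\D(\A))$ bound raised to a fractional power and the strong $\mathrm{L}^2(0,T;\V)$ (equivalently, via Gagliardo–Nirenberg, $\mathrm{L}^4(0,T;\V)$ after interpolating $\mathrm{L}^\infty\cap\mathrm{L}^2$) convergence of $\v_n-\v$ to kill it — i.e.\ $\|\v_n-\v\|_{\mathrm{L}^4(0,T;\V)}\to 0$ by interpolation between $\mathrm{L}^\infty(0,T;\V)$ boundedness and $\mathrm{L}^2(0,T;\V)$ strong convergence. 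With that, the product is bounded by $\|\v_n-\v\|_{\mathrm{L}^4(0,T;\H)}^{1/2}\|\v_n-\v\|_{\mathrm{L}^4(0,T;\V)}^{1/2}$ times $\mathrm{L}^\infty$-bounded and $\mathrm{L}^2(0,T;\D(\A))$-bounded factors under Hölder exponents $(8,8,8,8/3)$ or similar, and this $\to 0$. Spelling out this exponent chase is the only real work; everything else is the three-term split plus the weak-convergence pairing with $\mathrm{L}^2(0,T;\H)$ functionals.
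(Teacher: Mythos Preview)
Your three-term split and the use of weak convergence for the $b(\v,\v,\A\v_n-\A\v)$ piece (resp.\ $b(\v,\w,\cdot)$ and $b(\w,\v,\cdot)$) is exactly the paper's approach. The difference is that you tie yourself in knots over the $\|\A\v_n\|_{\H}^{3/2}$ factor in $K_1=\int_0^T|b(\v_n-\v,\v_n,\A\v_n)|\,\d t$, whereas the paper disposes of it in one line.

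Your claim that ``exponent $2$ for the combined power-$3/2$ factor is impossible'' is simply wrong: after pulling out $\|\v_n-\v\|_{\H}^{1/2}$ and $\|\v_n\|_{\V}^{1/2}$ in $\mathrm{L}^\infty(0,T)$, H\"older with exponents $(4,4/3)$ on the remaining product gives
\[
\int_0^T \|\v_n-\v\|_{\V}^{1/2}\|\A\v_n\|_{\H}^{3/2}\,\d t
\le \|\v_n-\v\|_{\mathrm{L}^2(0,T;\V)}^{1/2}\,\|\v_n\|_{\mathrm{L}^2(0,T;\D(\A))}^{3/2},
\]
since $(\|\A\v_n\|_{\H}^{3/2})^{4/3}=\|\A\v_n\|_{\H}^{2}$. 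The first factor tends to zero by \eqref{2.11}, the second is bounded by weak convergence in \eqref{210}. No interpolation to $\mathrm{L}^4(0,T;\V)$ is needed; the same device handles $K_2$ and the analogous terms in \eqref{C_b2}--\eqref{C_b3}.
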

	\begin{proof}
		We shall prove this Lemma in three steps.
		\vskip 0.2cm 
		\noindent
		\textbf{Step I.} \emph{Proof of \eqref{C_b1}:} Let us first consider
		\begin{align}\label{C_b4}
			&\left|  \int_{0}^{T}  b\big(\v_n(t), \v_n(t), \A\v_n(t) \big) \d t - \int_{0}^{T}  b\big(\v(t), \v(t), \A\v(t) \big) \d t\right|\nonumber\\
			&\leq \int_{0}^{T}  \big|b\big(\v_n(t)-\v(t), \v_n(t), \A\v_n(t) \big)\big| \d t+  \int_{0}^{T}  \big|b\big(\v(t), \v_n(t)-\v(t), \A\v_n(t) \big)\big| \d t\nonumber\\&\quad+\int_{0}^{T} \big|   b\big(\v(t), \v(t), \A(\v_n(t)-\v(t)) \big)\big| \d t =: K_1 +K_2+K_3.
		\end{align}
		Using \eqref{b2}, H\"older's and Young's inequalities, we estimate $K_1$ and $K_2$ as 
		\begin{align}
			K_1 &\leq   C \int_{0}^{T} \|\v_n(t)-\v(t)\|^{1/2}_{\H}\|\v_n(t)-\v(t)\|^{1/2}_{\V}\|\v_n(t)\|^{1/2}_{\V} \|\A\v_n(t)\|^{3/2}_{\H} \d t \nonumber\\&\leq C \left[\|\v_n\|^{1/2}_{\mathrm{L}^{\infty}(0, T; \H)}+\|\v\|^{1/2}_{\mathrm{L}^{\infty}(0, T; \H)}\right] \|\v_n\|^{1/2}_{\mathrm{L}^{\infty}(0, T; \V)} \|\v_n-\v\|^{1/2}_{\mathrm{L}^{2}(0, T; \V)}  \|\v_n\|^{3/2}_{\mathrm{L}^{2}(0, T; \D(\A))} \nonumber\\&\to 0 \ \text{ as }\  n\to \infty,\label{C_b5}\\
			K_2 &\leq   C \int_{0}^{T} \|\v(t)\|^{1/2}_{\H}\|\v(t)\|^{1/2}_{\V}\|\v_n(t)-\v(t)\|^{1/2}_{\V} \|\A(\v_n(t)-\v(t))\|^{1/2}_{\H}\|\A\v_n(t)\|_{\H} \d t \nonumber\\&\leq C \|\v\|^{1/2}_{\mathrm{L}^{\infty}(0, T; \H)}\|\v\|^{1/2}_{\mathrm{L}^{\infty}(0, T; \V)}  \|\v_n-\v\|^{1/2}_{\mathrm{L}^{2}(0, T; \V)}  \|\v_n-\v\|^{1/2}_{\mathrm{L}^{2}(0, T; \D(\A))} \|\v_n\|_{\mathrm{L}^{2}(0, T; \D(\A))}\nonumber\\&\leq C \|\v\|_{\mathrm{L}^{\infty}(0, T; \V)}  \|\v_n-\v\|^{1/2}_{\mathrm{L}^{2}(0, T; \V)} \left(\|\v_n\|^{1/2}_{\mathrm{L}^{2}(0, T; \D(\A))}+ \|\v\|^{1/2}_{\mathrm{L}^{2}(0, T; \D(\A))}\right) \|\v_n\|_{\mathrm{L}^{2}(0, T; \D(\A))} \nonumber\\&\to 0 \text{ as } n\to \infty.\label{C_b6}
		\end{align}
		By Remark \ref{R2.2}, we know that $\B(\v)\in \mathrm{L}^2(0,T;\H).$ Hence, using the weak convergence in the space $\mathrm{L}^2(0,T;\D(\A))$ (see \eqref{210}), we get 
		\begin{align}\label{C_b7}
			\lim_{n \to \infty} K_3 \leq\lim_{n \to \infty}  \int_{0}^{T}  \big|\big(\B(\v(t)), \A(\v_n(t)-\v(t)) \big)\big| \d t =0.
		\end{align}
		Combining \eqref{C_b5}-\eqref{C_b7} and using in \eqref{C_b4}, we obtain \eqref{C_b1}.
		\vskip 0.2cm 
		\noindent
		\textbf{Step II.} \emph{Proof of \eqref{C_b2}:}  Next, we consider 
		\begin{align}\label{C_b8}
			&\left|  \int_{0}^{T}  b\big(\v_n(t), \w(t), \A\v_n(t) \big) \d t - \int_{0}^{T}  b\big(\v(t), \w(t), \A\v(t) \big) \d t\right|\nonumber\\
			&	\leq    \int_{0}^{T} \big| b\big(\v_n(t)-\v(t), \w(t), \A\v_n(t) \big)\big| \d t+ \int_{0}^{T} \big|  b\big(\v(t), \w(t), \A(\v_n(t)-\v(t)) \big) \big|\d t \nonumber\\&=: K_4 +K_5.
		\end{align}
		Using \eqref{b2}, H\"older's  and Young's inequalities, we estimate $K_4$ as 
		\begin{align}
			K_4 & \leq C \int_{0}^{T} \|\v_n(t)-\v(t)\|^{1/2}_{\H}\|\v_n(t)-\v(t)\|^{1/2}_{\V}\|\w(t)\|^{1/2}_{\V} \|\A\w(t)\|^{1/2}_{\H}\|\A\v_n(t)\|_{\H} \d t \nonumber\\&\leq C \bigg[\|\v_n\|^{1/2}_{\mathrm{L}^{\infty}(0, T; \H)}+\|\v\|^{1/2}_{\mathrm{L}^{\infty}(0, T; \H)}\bigg] \|\w\|^{1/2}_{\mathrm{L}^{\infty}(0, T; \V)} \|\v_n-\v\|^{1/2}_{\mathrm{L}^{2}(0, T; \V)} \|\w\|^{1/2}_{\mathrm{L}^2(0,T; \D(\A))}\nonumber\\&\quad\quad\times \|\v_n\|_{\mathrm{L}^{2}(0, T; \D(\A))}\nonumber\\& \to 0 \ \text{ as }\  n\to \infty.\label{C_b9}
		\end{align}
		Making use of \eqref{b2}, one can show that $\B(\v,\w) \in \mathrm{L}^2(0, T;\H),$ since 
		\begin{align*}
			\int_{0}^{T} \|\B(\v(t),\w(t))\|^2_{\H}\d t &\leq C\int_{0}^{T} \|\v(t)\|_{\H} \|\v(t)\|_{\V}\|\w(t)\|_{\V}\|\A\w(t)\|_{\H}\d t\nonumber\\ &\leq CT^{1/2}\|\v\|_{\mathrm{L}^{\infty}(0, T; \H)}\|\v\|_{\mathrm{L}^{\infty}(0, T; \V)}\|\w\|_{\mathrm{L}^{\infty}(0, T; \V)}  \|\w\|_{\mathrm{L}^{2}(0, T; \D(\A))}<\infty.
		\end{align*}Hence, using the weak convergence given in \eqref{210}, we get 
		\begin{align}\label{C_b10}
			\lim_{n \to \infty} K_5 \leq\lim_{n \to \infty}  \int_{0}^{T}  \big|\big(\B(\v(t),\w(t)), \A(\v_n(t)-\v(t)) \big)\big| \d t =0.
		\end{align}
		Combining \eqref{C_b9}-\eqref{C_b10} and using it in \eqref{C_b8}, we obtain \eqref{C_b2}.
		\vskip 0.2cm 
		\noindent
		\textbf{Step III.} \emph{Proof of \eqref{C_b3}:}  Finally, we consider 
		\begin{align}\label{C_b11}
			&\left|  \int_{0}^{T}  b\big(\w(t), \v_n(t), \A\v_n(t) \big) \d t - \int_{0}^{T}  b\big(\w(t), \v(t), \A\v(t) \big) \d t\right|\nonumber\\
			&\leq  \int_{0}^{T}  \big| b\big(\w(t), \v_n(t)-\v(t), \A\v_n(t) \big) \big|\d t+  \int_{0}^{T} \big| b\big(\w(t), \v(t), \A(\v_n(t)-\v(t)) \big)\big|  \d t\nonumber\\&=: K_6 +K_7.
		\end{align}
		Once again using \eqref{b2}, H\"older's  and Young's inequalities, we obtain
		\begin{align}
			K_6 &\leq C \int_{0}^{T} \|\w(t)\|^{1/2}_{\H}\|\w(t)\|^{1/2}_{\V}\|\v_n(t)-\v(t)\|^{1/2}_{\V} \|\A(\v_n(t)-\v(t))\|^{1/2}_{\H}\|\A\v_n(t)\|_{\H} \d t \nonumber\\&\leq C \|\w\|^{1/2}_{\mathrm{L}^{\infty}(0, T; \H)}\|\w\|^{1/2}_{\mathrm{L}^{\infty}(0, T; \V)}  \|\v_n-\v\|^{1/2}_{\mathrm{L}^{2}(0, T; \V)}  \|\v_n-\v\|^{1/2}_{\mathrm{L}^{2}(0, T; \D(\A))} \|\v_n\|_{\mathrm{L}^{2}(0, T; \D(\A))}\nonumber\\&\leq  C \|\w\|_{\mathrm{L}^{\infty}(0, T; \V)}  \|\v_n-\v\|^{1/2}_{\mathrm{L}^{2}(0, T; \V)}  \left(\|\v_n\|^{1/2}_{\mathrm{L}^{2}(0, T; \D(\A))}+ \|\v\|^{1/2}_{\mathrm{L}^{2}(0, T; \D(\A))}\right) \|\v_n\|_{\mathrm{L}^{2}(0, T; \D(\A))} \nonumber\\&\to 0 \text{ as } n\to \infty.\label{C_b12}
		\end{align}
		One can prove that  $\B(\w,\v) \in \mathrm{L}^2(0, T;\H),$ by using  \eqref{b2},
		and hence, using the weak convergence in the space $\mathrm{L}^2(0,T;\D(\A))$ (see \eqref{210}), we get 
		\begin{align}\label{C_b13}
			\lim_{n \to \infty} K_7 \leq\lim_{n \to \infty}  \int_{0}^{T}  \big|\big(\B(\w(t),\v(t)), \A(\v_n(t)-\v(t)) \big)\big| \d t =0.
		\end{align}
		Combining \eqref{C_b12}-\eqref{C_b13} and using in \eqref{C_b11}, we finally obtain \eqref{C_b3}.
	\end{proof}
	\subsection{Nonlinear operator}
	Let us now consider the operator $\mathcal{C}(\u):=\mathcal{P}(|\u|^{r-1}\u)$. It is immediate that $\langle\mathcal{C}(\u),\u\rangle =\|\u\|_{\widetilde{\L}^{r+1}}^{r+1}$ and the map $\mathcal{C}(\cdot):\V\cap\widetilde{\L}^{r+1}\to\V'+ \widetilde{\L}^{\frac{r+1}{r}}$. For all $\u\in\V\cap\wi\L^{r+1}$, the map is Gateaux differentiable with Gateaux derivative 
	\begin{align}\label{29}
		\mathcal{C}'(\u)\v&=\left\{\begin{array}{cl}\mathcal{P}(\v),&\text{ for }r=1,\\ \left\{\begin{array}{cc}\mathcal{P}(|\u|^{r-1}\v)+(r-1)\mathcal{P}\left(\frac{\u}{|\u|^{3-r}}(\u\cdot\v)\right),&\text{ if }\u\neq \mathbf{0},\\\mathbf{0},&\text{ if }\u=\mathbf{0},\end{array}\right.&\text{ for } 1<r<3,\\ \mathcal{P}(|\u|^{r-1}\v)+(r-1)\mathcal{P}(\u|\u|^{r-3}(\u\cdot\v)), &\text{ for }r\geq 3,\end{array}\right.
	\end{align}
	for all $\v\in\V\cap\widetilde{\L}^{r+1}$.  For any $r\in [1, \infty)$ and $\u_1, \u_2 \in \V\cap\wi\L^{r+1}$, we have (see subsection 2.4, \cite{MTM})
	\begin{align}\label{MO_c}
		\langle\mathcal{C}(\u_1)-\mathcal{C}(\u_2),\u_1-\u_2\rangle \geq 0.
	\end{align}
	We need the following convergence result in the sequel. 
	\begin{lemma}[\cite{KM}]\label{convergence_c2_1}
		Let $\mathcal{O}_1\subset\mathcal{O}$, which is bounded, and $\psi: [0, T]\times \mathcal{O} \to \R^2$ be a continuous  function such that $\mathrm{supp}\ \psi (t, \cdot) \subset \mathcal{O}_1,$ for $t\in[0, T],$ and 
		$$ \sup_{(t, x)\in [0, T] \times \mathcal{O}_1} |\psi (t, x)| = C < \infty.$$   Assume that $\{\v_m\}_{m\in \mathbb{N}}$ is a bounded sequence in the space $\mathrm{L}^{\infty}(0, T; \H)\cap\mathrm{L}^{r+1} (0, T; \widetilde{\L}^{r+1}), \v \in \mathrm{L}^{\infty}(0, T; \H)\cap\mathrm{L}^{r+1} (0, T; \widetilde{\L}^{r+1}),$ $\v_m$ converges to $\v$ in $\mathrm{L}^2(0, T; \V)$ weakly and $\v_m$ converges to $\v$ in $\mathrm{L}^2(0, T; \L^2(\mathcal{O}_1))$ strongly. Then for any $r\in[1,3]$ with $\w\in\mathrm{L}^{4} (0, T; \widetilde{\L}^{4})\cap \mathrm{L}^2 (0, T; \H)$ and for any $r>3$ with $\w\in\mathrm{L}^{r+1} (0, T; \widetilde{\L}^{r+1})\cap \mathrm{L}^2 (0, T; \H)$, 
		\begin{align}\label{217}
			\int_{0}^{T} \big\langle\mathcal{C}(\v_m(t)+\w(t)) ,\psi(t)  \big\rangle \d t \to \int_{0}^{T} \big\langle\mathcal{C}(\v(t)+\w(t)) ,\psi(t)  \big\rangle \d t\ \text{ as }\ m\to\infty.
		\end{align}
	\end{lemma}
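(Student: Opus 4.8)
The plan is to localize the duality pairing to the bounded cylinder $(0,T)\times\mathcal{O}_1$ and then to play off the strong $\mathrm{L}^2$-convergence there against the uniform higher integrability furnished by the hypotheses. Since $\mathrm{supp}\,\psi(t,\cdot)\subset\mathcal{O}_1$, the definition of $\mathcal{C}$ gives, at each fixed $t$,
\begin{align*}
\big\langle\mathcal{C}(\v_m+\w)-\mathcal{C}(\v+\w),\psi\big\rangle=\int_{\mathcal{O}_1}\big(|\v_m+\w|^{r-1}(\v_m+\w)-|\v+\w|^{r-1}(\v+\w)\big)\cdot\psi\,\d x,
\end{align*}
the Leray projection being transparent here since $\psi$ is tested, as always in the applications of this lemma, against divergence-free fields. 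For $r=1$ the right-hand side is simply $\int_{\mathcal{O}_1}(\v_m-\v)\cdot\psi\,\d x$, hence $\big|\int_0^T\langle\mathcal{C}(\v_m+\w)-\mathcal{C}(\v+\w),\psi\rangle\d t\big|\le C(T|\mathcal{O}_1|)^{1/2}\|\v_m-\v\|_{\mathrm{L}^2(0,T;\L^2(\mathcal{O}_1))}\To0$ and \eqref{217} follows. For $r>1$ I would invoke the elementary inequality $\big||\boldsymbol{a}|^{r-1}\boldsymbol{a}-|\boldsymbol{b}|^{r-1}\boldsymbol{b}\big|\le C_r\big(|\boldsymbol{a}|^{r-1}+|\boldsymbol{b}|^{r-1}\big)|\boldsymbol{a}-\boldsymbol{b}|$, valid for $\boldsymbol{a},\boldsymbol{b}\in\R^2$ and $r\ge1$ (for instance from the mean value theorem applied to $\boldsymbol{a}\mapsto|\boldsymbol{a}|^{r-1}\boldsymbol{a}$), with $\boldsymbol{a}=\v_m+\w$ and $\boldsymbol{b}=\v+\w$ so that $\boldsymbol{a}-\boldsymbol{b}=\v_m-\v$, together with $\|\psi\|_{\mathrm{L}^\infty([0,T]\times\mathcal{O}_1)}\le C$, to reduce \eqref{217} to showing
\begin{align*}
\mathcal{I}_m:=\int_0^T\int_{\mathcal{O}_1}\big(|\v_m+\w|^{r-1}+|\v+\w|^{r-1}\big)\,|\v_m-\v|\,\d x\,\d t\To0\quad\text{as }m\to\infty.
\end{align*}

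The next step is to split $\mathcal{I}_m$ by H\"older's inequality on $(0,T)\times\mathcal{O}_1$ with the conjugate exponents $\frac{r+1}{r-1}$ and $\frac{r+1}{2}$ (both $>1$ for $r>1$), which gives
\begin{align*}
\mathcal{I}_m\le C\Big(\|\v_m+\w\|_{\mathrm{L}^{r+1}((0,T)\times\mathcal{O}_1)}^{r-1}+\|\v+\w\|_{\mathrm{L}^{r+1}((0,T)\times\mathcal{O}_1)}^{r-1}\Big)\,\|\v_m-\v\|_{\mathrm{L}^{\frac{r+1}{2}}((0,T)\times\mathcal{O}_1)}.
\end{align*}
To see that the first factor is bounded uniformly in $m$, I would argue by cases. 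For $r\in(1,3]$: the weak convergence of $\v_m$ to $\v$ in $\mathrm{L}^2(0,T;\V)$ makes $\{\v_m\}$ bounded in $\mathrm{L}^2(0,T;\V)$; combined with its boundedness in $\mathrm{L}^\infty(0,T;\H)$ and Ladyzhenskaya's inequality \eqref{lady}, this yields a uniform bound for $\{\v_m\}$ in $\mathrm{L}^4(0,T;\widetilde{\L}^4)$, hence, since $r+1\le4$ and the cylinder has finite measure, a uniform bound in $\mathrm{L}^{r+1}((0,T)\times\mathcal{O}_1)$; and $\w\in\mathrm{L}^4(0,T;\widetilde{\L}^4)$ is handled identically. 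For $r>3$: the uniform bound is immediate from the hypotheses that $\{\v_m\}$ is bounded in $\mathrm{L}^{r+1}(0,T;\widetilde{\L}^{r+1})$ and $\w\in\mathrm{L}^{r+1}(0,T;\widetilde{\L}^{r+1})$. In both cases the limit $\v$ obeys the same bounds.

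Finally I would show $\|\v_m-\v\|_{\mathrm{L}^{(r+1)/2}((0,T)\times\mathcal{O}_1)}\To0$. By hypothesis $\v_m\to\v$ strongly in $\mathrm{L}^2(0,T;\L^2(\mathcal{O}_1))$. If $r\in(1,3]$ then $\frac{r+1}{2}\le2$, and the finite-measure embedding $\mathrm{L}^2\hookrightarrow\mathrm{L}^{(r+1)/2}$ gives the convergence at once. If $r>3$ then $2<\frac{r+1}{2}<r+1$, and $\{\v_m-\v\}$ is bounded in $\mathrm{L}^{r+1}((0,T)\times\mathcal{O}_1)$ by the previous step, so interpolation yields $\|\v_m-\v\|_{\mathrm{L}^{(r+1)/2}}\le\|\v_m-\v\|_{\mathrm{L}^2}^{1-\theta}\|\v_m-\v\|_{\mathrm{L}^{r+1}}^{\theta}$ with $\theta=\frac{r-3}{r-1}\in(0,1)$, which tends to $0$. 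Putting the last two steps together gives $\mathcal{I}_m\To0$, and hence \eqref{217}. The one point needing genuine care is matching the H\"older exponent $\frac{r+1}{r-1}$ to an available a priori bound on $\v_m+\w$: in the subcritical range $r\le3$ that bound is extracted from the $\mathrm{L}^\infty(0,T;\H)\cap\mathrm{L}^2(0,T;\V)$ information via Ladyzhenskaya interpolation, whereas for $r>3$ it must be assumed at the outset (which is precisely why the integrability hypotheses on $\w$, and the standing $\mathrm{L}^{r+1}$-bound on $\v_m$, are sharpened there); everything else is a routine application of H\"older's inequality and interpolation.
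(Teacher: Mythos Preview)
Your proof is correct. The paper does not give its own proof of this lemma (it is cited from \cite{KM}), but your argument is entirely in line with how the paper treats the analogous estimates elsewhere: compare the proof of Lemma~\ref{Converge_c}, where Taylor's formula for $\mathcal{C}$ via the G\^ateaux derivative \eqref{29} plays exactly the role of your pointwise inequality $\big||\boldsymbol{a}|^{r-1}\boldsymbol{a}-|\boldsymbol{b}|^{r-1}\boldsymbol{b}\big|\le C_r(|\boldsymbol{a}|^{r-1}+|\boldsymbol{b}|^{r-1})|\boldsymbol{a}-\boldsymbol{b}|$, and the estimate of $I_{61}$ in Lemma~\ref{StrongC}, where the same H\"older splitting with exponents built from $r+1$ is used. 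Your remark that the Leray projection drops out because the test function is divergence-free is justified by the only application of the lemma in this paper (equation \eqref{S15}, where $\psi(t)\boldsymbol{\phi}$ with $\boldsymbol{\phi}\in\mathcal{V}$ is the test function).
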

	\begin{lemma}\label{Converge_c}
		Let $\{\v_n\}_{n\in \mathbb{N}}$ be a bounded sequence in $\mathrm{L}^{\infty}(0, T; \V)$ and $ \v\in \mathrm{L}^{\infty}(0, T; \V)$ be such that \eqref{210}-\eqref{2.11} hold true. 
		Then for any $\w \in \mathrm{L}^{\infty}(0,T; \V)\cap\mathrm{L}^2(0,T;\D(\A))$ and for any $r\geq1 $, we have 
		\begin{align}\label{C_c1}
			\int_{0}^{T} \big(\mathcal{C}(\v_n(t) +\w(t)),\A\v_n(t)\big)  \d t \to  \int_{0}^{T}  \big(\mathcal{C}(\v(t) +\w(t)),\A\v(t)\big) \d t,
		\end{align}
		as $n\to \infty$, for all $T>0.$
	\end{lemma}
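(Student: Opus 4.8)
The plan is to follow the two-term decomposition used in the proof of Lemma~\ref{Converge_b}. Setting $\z_n:=\v_n+\w$ and $\z:=\v+\w$, I would write
\begin{align*}
&\int_0^T\big(\mathcal{C}(\z_n(t)),\A\v_n(t)\big)\d t-\int_0^T\big(\mathcal{C}(\z(t)),\A\v(t)\big)\d t\\
&\qquad=\int_0^T\big(\mathcal{C}(\z_n(t))-\mathcal{C}(\z(t)),\A\v_n(t)\big)\d t+\int_0^T\big(\mathcal{C}(\z(t)),\A(\v_n(t)-\v(t))\big)\d t=:L_1+L_2.
\end{align*}
Since $\A\v_n(t),\A\v(t)\in\H$ and $\mathcal{P}$ is the orthogonal projection onto $\H$, the projection may be dropped from $\mathcal{C}$ in each inner product, e.g. $\big(\mathcal{C}(\z_n),\A\v_n\big)=\big(|\z_n|^{r-1}\z_n,\A\v_n\big)$; this is what legitimises the pointwise estimates below.

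For $L_2$: because $\mathcal{O}\subset\R^2$, the two-dimensional Sobolev inequality (the exponent $p=2r$ of the interpolation inequality behind \eqref{lady}) gives $\|\mathcal{C}(\z(t))\|_{\H}=\|\z(t)\|^{r}_{\wi\L^{2r}}\le C\|\z(t)\|^{r}_{\V}$, so $\mathcal{C}(\z)\in\mathrm{L}^{\infty}(0,T;\H)\subset\mathrm{L}^2(0,T;\H)$. The weak convergence \eqref{210} yields $\A\v_n\xrightharpoonup{w}\A\v$ in $\mathrm{L}^2(0,T;\H)$ (as $\A$ is bounded linear from $\D(\A)$ into $\H$), and hence $L_2\to 0$ as $n\to\infty$.

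For $L_1$ we may assume $r>1$, since for $r=1$ one has $\mathcal{C}=\mathcal{P}$ and $|L_1|\le\|\v_n-\v\|_{\mathrm{L}^2(0,T;\H)}\|\A\v_n\|_{\mathrm{L}^2(0,T;\D(\A))}\to 0$. For $r>1$ I would use the elementary pointwise bound $\big||\boldsymbol{a}|^{r-1}\boldsymbol{a}-|\boldsymbol{b}|^{r-1}\boldsymbol{b}\big|\le c_r\big(|\boldsymbol{a}|^{r-1}+|\boldsymbol{b}|^{r-1}\big)|\boldsymbol{a}-\boldsymbol{b}|$ for $\boldsymbol{a},\boldsymbol{b}\in\R^2$, obtained by integrating $\nabla\big(|\boldsymbol{\xi}|^{r-1}\boldsymbol{\xi}\big)$ along the segment from $\boldsymbol{b}$ to $\boldsymbol{a}$ (this covers the non-smooth range $1<r<3$, where the Gateaux derivative \eqref{29} degenerates only at the origin). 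Combining it with H\"older's inequality in $x$ with exponents $(4,4,2)$,
\begin{align*}
\big|\big(\mathcal{C}(\z_n)-\mathcal{C}(\z),\A\v_n\big)\big|\le c_r\big(\|\z_n\|^{r-1}_{\wi\L^{4(r-1)}}+\|\z\|^{r-1}_{\wi\L^{4(r-1)}}\big)\|\v_n-\v\|_{\wi\L^{4}}\|\A\v_n\|_{\H}.
\end{align*}
In two dimensions $\V\hookrightarrow\wi\L^{4(r-1)}$, so the bracketed factor is bounded in $\mathrm{L}^{\infty}(0,T)$ uniformly in $n$ (since $\{\v_n\}$ is bounded in $\mathrm{L}^{\infty}(0,T;\V)$ and $\v,\w\in\mathrm{L}^{\infty}(0,T;\V)$); Ladyzhenskaya's inequality \eqref{lady} with the Poincar\'e inequality \eqref{2.1} gives $\|\v_n-\v\|_{\wi\L^{4}}\le C\|\v_n-\v\|_{\V}$; and $\{\A\v_n\}$ is bounded in $\mathrm{L}^2(0,T;\H)$. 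The Cauchy--Schwarz inequality in $t$ then yields
\begin{align*}
|L_1|\le C\,\|\v_n-\v\|_{\mathrm{L}^2(0,T;\V)}\,\|\A\v_n\|_{\mathrm{L}^2(0,T;\H)}\longrightarrow 0
\end{align*}
as $n\to\infty$, by the strong convergence \eqref{2.11}. Together with $L_2\to 0$ this gives \eqref{C_c1}.

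The computation is short and uses no compactness beyond \eqref{210}--\eqref{2.11}; in fact the hypothesis $\w\in\mathrm{L}^2(0,T;\D(\A))$ is not needed, only $\w\in\mathrm{L}^{\infty}(0,T;\V)$. The one point requiring care is the uniform pointwise control of $\mathcal{C}(\z_n)-\mathcal{C}(\z)$ in the genuinely nonlinear range $1<r<3$ (singular-looking Gateaux derivative) together with the bookkeeping of Lebesgue exponents — both are harmless in two space dimensions precisely because $\V$ embeds continuously into every $\wi\L^p$, $p<\infty$.
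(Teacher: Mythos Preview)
Your approach is essentially the same as the paper's: the decomposition $L_1+L_2$ is identical to the paper's $\widetilde K_1+\widetilde K_2$, the treatment of $L_2$ via $\mathcal{C}(\z)\in\mathrm{L}^2(0,T;\H)$ and weak convergence \eqref{210} is the same, and for $L_1$ you use the same pointwise Lipschitz-type bound on $|\boldsymbol a|^{r-1}\boldsymbol a$ that the paper obtains from Taylor's formula and \eqref{29}.

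There is one slip in the H\"older bookkeeping for $L_1$. Your exponent triple $(4,4,2)$ forces you to invoke $\V\hookrightarrow\wi\L^{4(r-1)}$. On an unbounded Poincar\'e domain the two-dimensional Sobolev embedding only gives $\V\hookrightarrow\wi\L^{p}$ for $2\le p<\infty$; for $1<r<3/2$ one has $4(r-1)<2$ and the claimed embedding fails (take e.g.\ $\mathcal{O}=\R\times(-1,1)$ and a tensor product with a slowly decaying $H^1(\R)$ profile). The fix is immediate and is exactly what the paper does: use H\"older with exponents $\big(\tfrac{2r}{r-1},\,2r,\,2\big)$ instead of $(4,4,2)$, so that
\[
\big|\big(\mathcal{C}(\z_n)-\mathcal{C}(\z),\A\v_n\big)\big|\le c_r\big(\|\z_n\|^{r-1}_{\wi\L^{2r}}+\|\z\|^{r-1}_{\wi\L^{2r}}\big)\|\v_n-\v\|_{\wi\L^{2r}}\|\A\v_n\|_{\H},
\]
and now $2r>2$ for every $r>1$, so $\V\hookrightarrow\wi\L^{2r}$ is legitimate and the rest of your argument (bounding the bracket in $\mathrm{L}^\infty(0,T)$, $\|\v_n-\v\|_{\wi\L^{2r}}\le C\|\v_n-\v\|_{\V}$, Cauchy--Schwarz in $t$) goes through unchanged. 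Your remark that $\w\in\mathrm{L}^2(0,T;\D(\A))$ is not actually used is correct.
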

	\begin{proof}
		Let us consider 
		\begin{align}\label{C_c2}
			&\left|\int_{0}^{T} \big(\mathcal{C}(\v_n(t) +\w(t)),\A\v_n(t)\big)  \d t - \int_{0}^{T}  \big(\mathcal{C}(\v(t) +\w(t)),\A\v(t)\big) \d t\right|\nonumber\\ &\leq  \int_{0}^{T} \big|\big(\mathcal{C}(\v_n(t) +\w(t))-\mathcal{C}(\v(t)+\w(t)),\A\v_n(t)\big)\big|  \d t  \nonumber\\&\quad+\int_{0}^{T}\big| \big(\mathcal{C}(\v(t) +\w(t)),\A(\v_n(t)-\v(t))\big) \big|  \d t=: \widetilde{K}_1  +\widetilde{K}_2.
		\end{align}
		For $r=1$, it is immediate that $\widetilde{K}_1\to 0$ as $n\to \infty.$ For $r>1$, we estimate $\widetilde{K}_1$ using Taylor's formula (Theorem 7.9.1, \cite{PGC}), H\"older's inequality and Sobolev's embedding as
		\begin{align}
			\widetilde{K}_1&\leq \int_{0}^{T} \bigg(\int_{0}^{1} \|\mathcal{C}'(\theta(\v_n(t)+\w(t))+(1-\theta)(\v(t)+\w(t))) (\v_n(t)-\v(t))\|_{\H} \d \theta\bigg)\|\A\v_n(t)\|_{\H} \d t\nonumber\\&\leq r \int_{0}^{T} \bigg[\|\v_n(t)+\w(t)\|^{r-1}_{\wi \L^{2r}} + \|\v(t)+\w(t)\|^{r-1}_{\wi \L^{2r}}\bigg]\|\v_n(t)-\v(t)\|_{\wi \L^{2r}} \|\A\v_n(t)\|_{\H} \d t\nonumber\\&\leq  C \int_{0}^{T} \bigg[\|\v_n(t)+\w(t)\|^{r-1}_{\V} + \|\v(t)+\w(t)\|^{r-1}_{\V}\bigg]\|\v_n(t)-\v(t)\|_{\V} \|\A\v_n(t)\|_{\H} \d t\nonumber\\&\leq C \bigg[\|\v_n +\w\|^{r-1}_{\mathrm{L}^{\infty}(0,T;\V)} + \|\v +\w\|^{r-1}_{\mathrm{L}^{\infty}(0,T;\V)}\bigg] \|\v_n -\v\|_{\mathrm{L}^{2}(0,T;\V)} \|\v_n\|_{\mathrm{L}^{2}(0,T;\D(\A))}\nonumber\\&\to 0 \text{ as } n\to \infty.\label{C_c3}
		\end{align}
		Now, one can show that  $\mathcal{C}(\v+\w)\in \mathrm{L}^{2}(0,T;\H)$, since 
		\begin{align*}
			\int_{0}^{T}\|\mathcal{C}(\v(t)+\w(t))\|^2_{\H} \d t &= \int_{0}^{T} \|\v(t)+\w(t)\|^{2r}_{\wi \L^{2r}}\d t\leq C\int_{0}^{T} \|\v(t)+\w(t)\|^{2r}_{\V}\d t\nonumber\\&  \leq CT \|\v+\w\|^{2r}_{\mathrm{L}^{\infty}(0,T; \V)} <\infty. 
		\end{align*}
		Hence, using the weak convergence \eqref{210} in the space $\mathrm{L}^2(0,T;\D(\A))$, we get 
		\begin{align}\label{C_c4}
			\lim_{n \to \infty} \widetilde{K}_2 \leq\lim_{n \to \infty}  \int_{0}^{T}  \big|\big(\mathcal{C}(\v(t)+\w(t)), \A(\v_n(t)-\v(t)) \big)\big| \d t =0.
		\end{align}
		Combining \eqref{C_c3}-\eqref{C_c4} and using it in \eqref{C_c2}, we finally  deduce  \eqref{C_c1}.
	\end{proof}
	
	\subsection{Stochastic convective Brinkman-Forchheimer equations}
	In this subsection, we provide an abstract formulation of the system \eqref{SCBF}, assumptions on the noise and  discuss  global solvability results. 
	On taking projection $\mathcal{P}$ onto the first equation in \eqref{SCBF}, we obtain 
	\begin{equation}\label{S-CBF}
		\left\{
		\begin{aligned}
			\d\u(t)+\{\mu \A\u(t)+\B(\u(t))+\alpha\u(t)+\beta \mathcal{C}(\u(t))\}\d t&=\f \d t + \d\mathrm{W}(t), \ \ \ t\geq 0, \\ 
			\u(0)&=\x,
		\end{aligned}
		\right.
	\end{equation}
	for $r\in[1,3]$, where $\x\in \V,\ \f\in \H$ and $\text W(t), \ t\in \R,$ is a two-sided cylindrical Wiener process in $\H$ with its Reproducing Kernel Hilbert space (RKHS) $\mathrm{K}$ satisfying the following assumption.
	\begin{assumption}\label{assump}
		$ \mathrm{K} \subset \V \cap \H^{2} (\mathcal{O})$ is a Hilbert space such that for some $\delta\in (0, 1/2),$
		\begin{align}\label{A1}
			\A^{-\delta} : \mathrm{K} \to \V \cap \H^{2} (\mathcal{O}) \   \text{ is }\ \gamma \text{-radonifying.}
		\end{align}
	\end{assumption}
	\begin{remark}
		Since $\D(\A)=\V \cap \H^{2} (\mathcal{O})$, Assumption \ref{assump} can be reformulated in the following way also (see \cite{BL1}). $\mathrm{K}$ is a Hilbert space such that $\mathrm{K}\subset\D(\A)$ and, for some $\delta\in(0,1/2)$, the map \begin{align}\label{34}
			\A^{-\delta-1} : \mathrm{K} \to \H \   \text{ is }\ \gamma \text{-radonifying.}
		\end{align}
		The condition  \eqref{34} also says that the mapping $\A^{-\delta-1} : \mathrm{K} \to \H$  is Hilbert-Schmidt.	If $\mathcal{O}$ is a bounded domain, then $\A^{-s}:\H\to\H$ is Hilbert-Schmidt if and only if $\sum_{j=1}^{\infty} \lambda_j^{-2s}<\infty,$ where  $\A e_j=\lambda_j e_j, j\in \N$ and $\{e_j\}_{j\in\N}$ is an orthogonal basis of $\H$. In bounded domains, it is well known that $\lambda_j\sim j,$ for large $j$ (growth of eigenvalues) and hence $\A^{-s}$ is Hilbert-Schmidt if and only if $s>\frac{1}{2}.$ In other words, with $\mathrm{K}=\D(\A^{s+1}),$ the embedding $\mathrm{K}\hookrightarrow\V\cap\H^2(\mathcal{O})$ is $\gamma$-radonifying if and only if $s>\frac{1}{2}.$ Thus, Assumption \ref{assump} is satisfied for any $\delta>0.$ In fact, the condition \eqref{A1} holds if and only if the operator $\A^{-(s+1+\delta)}:\H\to \V\cap\H^2(\mathcal{O})$ is $\gamma$-radonifying. The requirement of  $\delta<\frac{1}{2}$ in Assumption \ref{assump} is necessary because we need (see subsection \ref{O_up}) the corresponding Ornstein-Uhlenbeck process has to take values in $\V\cap\H^2(\mathcal{O})$.
	\end{remark}

	\section{RDS Generated by 2D SCBF Equations}\label{sec3}\setcounter{equation}{0} In this section, we discuss the random dynamical system generated by the 2D SCBF equations \eqref{S-CBF}. 
	\subsection{Ornstein-Uhlenbeck process}\label{O_up}

	Let us first characterize Ornstein-Uhlenbeck processes under Assumption \ref{assump} (for more details see section 3, \cite{KM1}). Let us define $\mathrm{X} := \V \cap \H^{2} (\mathcal{O})$ and let $\mathrm{E}$ denote the completion of $\A^{-\delta}\mathrm{X}$ with respect to the image norm $\|x\|_{\mathrm{E}}  = \|\A^{-\delta} x\|_{\mathrm{X}} , \ \text{for } \ x\in \mathrm{X}, \text{where } \|\cdot\|_{\mathrm{X}} = \|\cdot\|_{\V} + \|\cdot\|_{\H^{2} }.$ For $\xi \in(0, 1/2),$ we define
	\begin{align*}
		\C^{\xi}_{1/2} (\mathbb{R}, \mathrm{E}) &= \left\{ \omega \in \C(\mathbb{R}, \mathrm{E}) : \omega(0)=0,\  \sup_{t\neq s \in \mathbb{R}} \frac{\|\omega(t) - \omega(s)\|_{\mathrm{E}}}{|t-s|^{\xi}(1+|t|+|s|)^{1/2}} < \infty \right\},\\
		\Omega(\xi, \mathrm{E})&=\text{the closure of } \{ \omega \in \C^\infty_0 (\mathbb{R}, \mathrm{E}) : \omega(0) = 0 \} \ \text{ in } \ \C^{\xi}_{1/2} (\mathbb{R},\mathrm{E}).
	\end{align*}
	The space $\Omega(\xi, \mathrm{E})$ is a separable Banach space. Let us denote $\mathscr{F},$ for the Borel $\sigma$-algebra on $\Omega(\xi, \mathrm{E}).$ For $\xi\in (0, 1/2)$, there exists a Borel probability measure $\mathbb{P}$ on $\Omega(\xi, \mathrm{E})$ (see \cite{Brze}). For $t\in \mathbb{R},$ let $\mathscr{F}_t := \sigma \{ w_s : s \leq t \},$ where $w_t$ is the canonical process defined by the elements of  $\Omega(\xi, \mathrm{E}).$ Then there exists a family $\{\text{W}(t)\}_{t\in \mathbb{R}}$, which is $\mathrm{K}$-cylindrical Wiener process on a filtered probability space $(\Omega(\xi, \mathrm{E}), \mathscr{F}, \{\mathscr{F}_t\}_{t \in \mathbb{R}} , \mathbb{P})$. 
	
	On the space $\Omega(\xi, \mathrm{E}),$ we consider a flow $\theta = (\theta_t)_{t\in \mathbb{R}}$ defined by
	$$ \theta_t \omega(\cdot) = \omega(\cdot + t) - \omega(t), \  \omega\in \Omega(\xi, \mathrm{E}), \ t\in \mathbb{R}.$$ 
	\begin{lemma}[Proposition 6.10, \cite{BL}]\label{SOUP}
		The process $\z_{\eta}(t), \ t\in \mathbb{R},$ is a stationary Ornstein-Uhlenbeck process on $(\Omega(\xi, \mathrm{E}), \mathscr{F}, \mathbb{P})$. It is a solution of the equation 
		\begin{align}\label{OUPe}
			\d\z_{\eta}(t) + (\mu \A + \eta I)\z_{\eta}(t) \d t = \d\mathrm{W}(t), \ \ t\in \mathbb{R},
		\end{align}
		that is, for all $t\in \mathbb{R},$ 
		\begin{align}\label{oup}
			\z_\eta (t) = \int_{-\infty}^{t} e^{-(t-s)(\mu \A + \eta I)} \d\mathrm{W}(s),
		\end{align}
		$\mathbb{P}$-a.s.,	where the integral is the It\^o integral on the M-type 2 Banach space $\mathrm{X}$ in the sense given in \cite{Brze1}. 
		In particular, for some constant  $C$ depending on $\mathrm{X}$, we have 
		\begin{align}\label{E-OUP}
			\mathbb{E}\left[\|\z_{\eta} (t)\|^2_{\mathrm{X}} \right]\leq C \int_{0}^{\infty}  e^{-2\eta s} \|e^{-\mu s \A}\|^2_{\gamma(\mathrm{K},\mathrm{X})} \d s.
		\end{align} 
		Moreover, $\mathbb{E}\left[\|\z_{\eta} (t)\|^2_{\mathrm{X}}\right]$ tends to $0$ as $\eta \to \infty.$
		
	\end{lemma}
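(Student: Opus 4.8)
The plan is to read off this statement from the abstract construction of stationary Ornstein--Uhlenbeck processes on M-type $2$ Banach spaces (Proposition 6.10 of \cite{BL}, see also \cite{Brze,Brze1}), by checking that its hypotheses are met for the Stokes operator on a Poincar\'e domain. First I would record that $-\mu\A$ generates an analytic $C_0$-semigroup $\{e^{-\mu t\A}\}_{t\ge 0}$ on $\H$; since $\mathrm{X}=\D(\A)$ (Heywood's lemma, \cite{Heywood}) carries the norm $\|\A\cdot\|_{\H}$ equivalent to $\|\cdot\|_{\H^2(\mathcal O)}$ and is in particular (isomorphic to) a Hilbert space, hence of M-type $2$, this semigroup restricts to an analytic semigroup on $\mathrm{X}$. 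Consequently $-(\mu\A+\eta I)$ generates on $\mathrm{X}$ the analytic semigroup $S_\eta(t):=e^{-t(\mu\A+\eta I)}=e^{-\eta t}e^{-\mu t\A}$, and the Poincar\'e inequality \eqref{2.1} gives $\|e^{-\mu t\A}\|_{\mathcal{L}(\H)}\le e^{-\mu\lambda_1 t}$, which I will use for integrability at $t=+\infty$.

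The core of the argument is the $\gamma$-radonifying integrability bound
\[
  \int_0^\infty \|S_\eta(s)\|^2_{\gamma(\mathrm{K},\mathrm{X})}\,\d s=\int_0^\infty e^{-2\eta s}\,\|e^{-\mu s\A}\|^2_{\gamma(\mathrm{K},\mathrm{X})}\,\d s<\infty ,
\]
which is exactly what makes the stochastic convolution $\z_{\eta}(t)=\int_{-\infty}^{t}S_\eta(t-s)\,\d\mathrm{W}(s)$ a well-defined $\mathrm{X}$-valued random variable. To obtain it I would factor $e^{-\mu s\A}=\bigl(e^{-\mu s\A}\A^{\delta}\bigr)\A^{-\delta}$, use Assumption \ref{assump}/\eqref{A1} that $\A^{-\delta}\colon\mathrm{K}\to\mathrm{X}$ is $\gamma$-radonifying, the ideal property of $\gamma$-radonifying operators, and the analytic smoothing estimate $\|e^{-\mu s\A}\A^{\delta}\|_{\mathcal{L}(\mathrm{X})}\le C s^{-\delta}$ for $s\in(0,1]$, so that $\|e^{-\mu s\A}\|_{\gamma(\mathrm{K},\mathrm{X})}\le C s^{-\delta}\|\A^{-\delta}\|_{\gamma(\mathrm{K},\mathrm{X})}$ near $s=0$; combined with the exponential decay from Step~1, and since $2\delta<1$, the integral converges. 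This is precisely where the requirement $\delta<1/2$ in Assumption \ref{assump} enters, and it is what guarantees that the Ornstein--Uhlenbeck process takes values in $\mathrm{X}=\V\cap\H^2(\mathcal O)$ rather than merely in the larger space $\mathrm{E}$. With this in hand, the stochastic integration theory on M-type $2$ spaces (\cite{Brze1}) shows that $\z_{\eta}$ is well defined, has the Gaussian law of the stationary mild solution of \eqref{OUPe}, and is represented by \eqref{oup}.

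Stationarity I would get from \eqref{oup} via the change of variable $\sigma=t-s$: the stationarity of the increments of the two-sided $\mathrm{K}$-cylindrical Wiener process, together with the $\mathbb{P}$-preserving property of the flow $(\theta_t)_{t\in\mathbb{R}}$ on $\Omega(\xi,\mathrm{E})$, yields $\z_{\eta}(t,\omega)=\z_{\eta}(0,\theta_t\omega)$ $\mathbb{P}$-a.s., so the law of $\z_{\eta}(t)$ is independent of $t$. For the moment bound, the It\^o isometry (a Burkholder-type inequality on the M-type $2$ space $\mathrm{X}$) applied to \eqref{oup} gives $\mathbb{E}\bigl[\|\z_{\eta}(t)\|^2_{\mathrm{X}}\bigr]\le C\int_0^\infty\|S_\eta(s)\|^2_{\gamma(\mathrm{K},\mathrm{X})}\,\d s$, which is \eqref{E-OUP}. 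Finally, since $s\mapsto\|e^{-\mu s\A}\|^2_{\gamma(\mathrm{K},\mathrm{X})}$ is integrable on $(0,\infty)$ (dominated by $Cs^{-2\delta}$ near $0$ and by an exponentially decaying quantity for large $s$), dominated convergence forces $\int_0^\infty e^{-2\eta s}\|e^{-\mu s\A}\|^2_{\gamma(\mathrm{K},\mathrm{X})}\,\d s\to 0$ as $\eta\to\infty$, and hence $\mathbb{E}[\|\z_{\eta}(t)\|^2_{\mathrm{X}}]\to 0$.

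The only genuinely technical step, and the one I expect to be the main obstacle, is the $\gamma$-radonifying integrability in Step~2: it couples the analytic smoothing of the Stokes semigroup on $\mathrm{X}=\D(\A)$ with the $\gamma$-radonifying hypothesis on $\A^{-\delta}$ and the threshold $\delta<1/2$. Once this is in place, the remaining assertions are a direct transcription of the abstract Ornstein--Uhlenbeck theory of \cite{BL,Brze,Brze1}.
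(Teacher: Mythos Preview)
Your proposal is correct and follows exactly the approach of the cited reference: the paper does not give its own proof of this lemma but simply records it as Proposition~6.10 of \cite{BL}, and your sketch accurately reconstructs the argument from \cite{BL,Brze,Brze1}, including the use of Assumption~\ref{assump} with $\delta\in(0,1/2)$ to obtain the $\gamma$-radonifying integrability and the dominated-convergence step for the vanishing of the second moment as $\eta\to\infty$.
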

	\begin{remark}\label{stationary}
		By Proposition 4.1 \cite{KM}, we deduce the following result for the Ornstein-Uhlenbeck process given in Lemma \ref{SOUP}:
		\begin{align}\label{O-U_conti}
			\z_\eta\in\mathrm{L}^{q} (a, b; \mathrm{X})
		\end{align}
		where $q\in [1, \infty].$
	\end{remark}
	Since by Lemma \ref{SOUP}, the process $\z_{\eta}(t), \ t\in \R $ is an $\mathrm{X}$-valued stationary and ergodic. Hence, by the Strong Law of Large Numbers (see \cite{DZ}),
	\begin{align}\label{SLLN}
		\lim_{t \to \infty} \frac{1}{t} \int_{-t}^{0} \|\z_{\eta}(s)\|^{2}_{\mathrm{X}} \d s = \mathbb{E}\left[ \|\z_{\eta}(0)\|^{2}_{\mathrm{X}}\right], \ \  \mathbb{P}\text{-a.s., on }\ \C^{\xi}_{1/2}(\R, \mathrm{X}).
	\end{align}
	Therefore by Lemma \ref{SOUP}, we find $\eta_0$ such that for all $\eta \geq \eta_0,$ we have 
	\begin{align}\label{Bddns}
		\mathbb{E}\left[\|\z_{\eta} (0)\|^{2}_{\mathrm{X}}\right] \leq \frac{\mu\alpha}{8},
	\end{align}  
	where $\alpha$ is the Darcy constant.
	
	Let us denote 
	$$\Omega_{\eta}(\xi, \mathrm{E})=\{\omega\in \Omega(\xi, \mathrm{E}):\text{the equality \eqref{SLLN} holds true}\},$$ which is of full measure.
	Therefore, we fix $\xi \in (\delta, 1/2)$ and set $$\Omega := \hat{\Omega}(\xi, \mathrm{E}) = \bigcap^{\infty}_{n=0} \Omega_{n}(\xi, \mathrm{E}).$$ Consequently, $\Omega$ also is of full measure.
	\begin{proposition}\label{m-DS}
		The quadruple $(\Omega, \hat{\mathscr{F}}, \hat{\mathbb{P}}, \hat{\theta})$ is a metric DS, where $\hat{\mathscr{F}}$, $\hat{\mathbb{P}}$, $\hat{\theta}$ are respectively the natural restrictions of $\mathscr{F}$, $\mathbb{P}$, and $\theta$ to $\Omega.$ For each $\omega\in \Omega,$ the limit in \eqref{SLLN} exists.
	\end{proposition}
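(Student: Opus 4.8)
The plan is to verify the defining properties of a metric dynamical system (MDS) for the restricted quadruple $(\Omega,\hat{\mathscr{F}},\hat{\mathbb{P}},\hat\theta)$, the only substantive point being that the full-measure set $\Omega$ is invariant under the shift flow $\theta$, so that $\hat\theta:=\theta|_\Omega$ is well defined; everything else is inherited from $(\Omega(\xi,\mathrm{E}),\mathscr{F},\mathbb{P},\theta)$.

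First I would record what comes for free. The flow $\theta_t\omega(\cdot)=\omega(\cdot+t)-\omega(t)$ on $\Omega(\xi,\mathrm{E})$ satisfies $\theta_0=\mathrm{Id}$ and $\theta_{t+s}=\theta_t\circ\theta_s$, is jointly measurable, and each $\theta_t$ preserves $\mathbb{P}$ because the canonical Wiener process $\mathrm{W}$ has stationary increments. Moreover $\Omega=\bigcap_{n=0}^{\infty}\Omega_n(\xi,\mathrm{E})$ is a countable intersection of sets of full $\mathbb{P}$-measure, hence Borel with $\mathbb{P}(\Omega)=1$; thus $\hat{\mathscr{F}}$ and $\hat{\mathbb{P}}$ are, respectively, the trace $\sigma$-algebra and the normalized restriction, and $\hat{\mathbb{P}}(\Omega)=1$. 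The last assertion of the proposition --- that for every $\omega\in\Omega$ the limit $\lim_{t\to\infty}\frac{1}{t}\int_{-t}^{0}\|\z_\eta(s)\|_{\mathrm{X}}^2\,\mathrm{d}s$ exists for each nonnegative integer $\eta=n$ --- is then immediate from the definition of $\Omega_n(\xi,\mathrm{E})$.

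The key step is the invariance $\theta_t\Omega=\Omega$ for all $t\in\mathbb{R}$. Here I would use the helix property of the Ornstein--Uhlenbeck process from Lemma \ref{SOUP}: since $\mathrm{W}(\theta_t\omega)(s)=\omega(s+t)-\omega(t)$, the representation \eqref{oup} gives $\z_\eta(\theta_t\omega)(s)=\z_\eta(\omega)(s+t)$ for all $s,t\in\mathbb{R}$, $\mathbb{P}$-a.s. Hence, after the substitution $u=s+t$,
\begin{align*}
\frac{1}{\tau}\int_{-\tau}^{0}\|\z_\eta(\theta_t\omega)(s)\|_{\mathrm{X}}^2\,\mathrm{d}s
&=\frac{1}{\tau}\int_{-\tau+t}^{t}\|\z_\eta(\omega)(u)\|_{\mathrm{X}}^2\,\mathrm{d}u\\
&=\frac{1}{\tau}\int_{-\tau}^{0}\|\z_\eta(\omega)(u)\|_{\mathrm{X}}^2\,\mathrm{d}u+R_\tau,
\end{align*}
where $R_\tau$ collects the two boundary integrals over intervals of length $|t|$, namely $\frac{1}{\tau}\int_{0}^{t}$ and $\pm\frac{1}{\tau}\int_{-\tau}^{-\tau+t}$. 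The first tends to $0$ because $\int_{0}^{t}\|\z_\eta(\omega)(u)\|_{\mathrm{X}}^2\,\mathrm{d}u<\infty$ by Remark \ref{stationary}; the second tends to $0$ since, writing $F(\tau)=\int_{-\tau}^{0}\|\z_\eta(\omega)(u)\|_{\mathrm{X}}^2\,\mathrm{d}u$, one has $\frac{1}{\tau}\bigl(F(\tau)-F(\tau-t)\bigr)=\frac{F(\tau)}{\tau}-\frac{\tau-t}{\tau}\cdot\frac{F(\tau-t)}{\tau-t}\to 0$ because $F(\tau)/\tau$ converges by \eqref{SLLN}. Therefore \eqref{SLLN} holds for $\theta_t\omega$ with the same limit $\mathbb{E}[\|\z_\eta(0)\|_{\mathrm{X}}^2]$, i.e. $\theta_t\Omega_n(\xi,\mathrm{E})\subset\Omega_n(\xi,\mathrm{E})$ for each $n$; applying this with $-t$ in place of $t$ gives equality, and since $\theta_t$ is a bijection it commutes with the intersection, yielding $\theta_t\Omega=\Omega$.

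Finally I would assemble the conclusion: with $\Omega$ invariant, $\hat\theta_t=\theta_t|_\Omega:\Omega\to\Omega$ is well defined, inherits $\hat\theta_0=\mathrm{Id}$, $\hat\theta_{t+s}=\hat\theta_t\circ\hat\theta_s$ and joint measurability, and preserves $\hat{\mathbb{P}}$: for $A\in\hat{\mathscr{F}}$ choose $\widetilde A\in\mathscr{F}$ with $A=\widetilde A\cap\Omega$, and use $\theta_t^{-1}\Omega=\Omega$ together with the $\mathbb{P}$-invariance of $\theta_t$ to get $\hat{\mathbb{P}}(\hat\theta_t^{-1}A)=\mathbb{P}(\theta_t^{-1}\widetilde A)=\mathbb{P}(\widetilde A)=\hat{\mathbb{P}}(A)$. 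The only real obstacle is the invariance argument, and within it the control of the boundary terms $R_\tau$; the underlying point is that the Ces\`aro limit appearing in \eqref{SLLN} is genuinely shift-invariant, which is precisely where the local integrability of $\z_\eta$ from Remark \ref{stationary} is used.
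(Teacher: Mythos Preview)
The paper does not supply a proof of this proposition; it is stated without argument, the result being standard in the Brze\'zniak--Li framework (see \cite{BL}, where the analogous construction is carried out). Your proof is correct and fills in precisely the details one would expect: the only nontrivial point is the $\theta$-invariance of the full-measure set $\Omega$, and your treatment of the boundary terms $R_\tau$ via the convergence of $F(\tau)/\tau$ is the clean way to handle it.

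One small remark on precision: for the invariance step you need the helix identity $\z_\eta(\theta_t\omega)(s)=\z_\eta(\omega)(s+t)$ to hold for \emph{every} $\omega$ in the path space, not merely $\mathbb{P}$-a.s., since you are tracking individual points $\omega\in\Omega$ under $\theta_t$. Your phrasing ``$\mathbb{P}$-a.s.'' is therefore slightly weaker than what the argument requires. In the pathwise construction of the Ornstein--Uhlenbeck process adopted here from \cite{BL} (via a fractional integration-by-parts formula on $\Omega(\xi,\mathrm{E})$), this identity does hold for all $\omega$, so the argument goes through; but it is worth flagging that this is where the choice of a good pathwise version of $\z_\eta$ is used.
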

	Now, we provide  the important consequence of the estimates \eqref{SLLN} and \eqref{Bddns}.
	\begin{corollary}\label{Bddns1}
		For each $\omega\in \Omega,$ there exists $t_0=t_0 (\omega) \geq 0 $, such that 
		\begin{align}\label{Bddns2}
			\frac{4}{\mu} \int_{-t}^{0} \|\z_{\eta}(s)\|^{2}_{\V} \d s \leq	\frac{4}{\mu} \int_{-t}^{0} \|\z_{\eta}(s)\|^{2}_{\mathrm{X}} \d s \leq \frac{\alpha t}{2}, \ \  t\geq t_0.
		\end{align}
	\end{corollary}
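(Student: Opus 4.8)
The assertion is the conjunction of two inequalities, and essentially all the content is in the second one. The plan is as follows. \emph{For the first inequality}, this is purely a statement about norms: since $\mathrm{X}=\V\cap\H^2(\mathcal{O})$ is equipped with $\|\cdot\|_{\mathrm{X}}=\|\cdot\|_{\V}+\|\cdot\|_{\H^2}$, we have $\|\z_\eta(s)\|_{\V}\le\|\z_\eta(s)\|_{\mathrm{X}}$, hence $\|\z_\eta(s)\|_{\V}^2\le\|\z_\eta(s)\|_{\mathrm{X}}^2$, for every $s$. Both integrands belong to $\mathrm{L}^1(-t,0)$ by Remark~\ref{stationary} (indeed $\z_\eta\in\mathrm{L}^2(-t,0;\mathrm{X})$), so integrating over $(-t,0)$ and multiplying by $\tfrac{4}{\mu}$ yields the left-hand inequality, valid for every $t\ge0$ and every $\omega$.

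\emph{For the second inequality}, I would combine the ergodic limit \eqref{SLLN} with the smallness bound \eqref{Bddns}. Fix $\omega\in\Omega$. By construction $\Omega=\bigcap_{n\ge0}\Omega_n(\xi,\mathrm{E})$ is precisely the set on which the Strong Law of Large Numbers \eqref{SLLN} holds for every $\eta=n\in\mathbb{N}$, so
$$
\frac1t\int_{-t}^{0}\|\z_\eta(s)\|_{\mathrm{X}}^2\,\d s\;\longrightarrow\;\E\big[\|\z_\eta(0)\|_{\mathrm{X}}^2\big]\qquad\text{as }t\to\infty.
$$
With $\eta\ge\eta_0$ fixed as in \eqref{Bddns}, the limit is $\le\tfrac{\mu\alpha}{8}$; from the definition of the limit there is therefore $t_0=t_0(\omega)\ge0$ with $\tfrac1t\int_{-t}^{0}\|\z_\eta(s)\|_{\mathrm{X}}^2\,\d s\le\tfrac{\mu\alpha}{8}$ for all $t\ge t_0$. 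Multiplying by $\tfrac{4t}{\mu}$ gives $\tfrac{4}{\mu}\int_{-t}^{0}\|\z_\eta(s)\|_{\mathrm{X}}^2\,\d s\le\tfrac{\alpha t}{2}$ for $t\ge t_0$, which is the right-hand inequality; chaining the two displays finishes the corollary.

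I do not anticipate any genuine obstacle; the proof is a bookkeeping consequence of the ergodic theorem and the choice of $\eta$. The only point requiring a touch of care is the borderline case in which the ergodic limit equals $\tfrac{\mu\alpha}{8}$ exactly, so that ``eventually $\le\tfrac{\mu\alpha}{8}$'' is not literally immediate from convergence. I would sidestep this by arranging the strict inequality $\E[\|\z_\eta(0)\|_{\mathrm{X}}^2]<\tfrac{\mu\alpha}{8}$ at the outset — legitimate since $\E[\|\z_\eta(0)\|_{\mathrm{X}}^2]\to0$ as $\eta\to\infty$ by Lemma~\ref{SOUP}, so one simply enlarges $\eta_0$ (e.g. to force $\E[\|\z_\eta(0)\|_{\mathrm{X}}^2]\le\tfrac{\mu\alpha}{16}$) — after which an error $\varepsilon=\tfrac{\mu\alpha}{16}$ from the limit definition is absorbed harmlessly.
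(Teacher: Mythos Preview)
Your proof is correct and matches the paper's approach exactly: the paper does not supply a separate proof but simply declares the corollary to be ``the important consequence of the estimates \eqref{SLLN} and \eqref{Bddns},'' which is precisely the combination you carry out. Your remark on the borderline case (handled by enlarging $\eta_0$ to force a strict inequality in \eqref{Bddns}) is a valid clarification that the paper leaves implicit.
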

	\subsection{Random dynamical system}
	Let us remember that Assumption \ref{assump} is satisfied and that $\delta$ has the property stated there. We  take  fixed $\mu,\alpha,\beta > 0$ and some parameter $\eta\geq 0$. We also fix $\xi \in (\delta, 1/2)$. Let us denote $\v^{\eta}(t)=\u(t) - \z_{\eta}(\omega)(t)$. For convenience, we write $\v^{\eta}(t)=\v(t)$ and $\z_{\eta}(\omega)(t)=\z(t)$. Then $\v(t)$ satisfies the following abstract system:
	\begin{equation}\label{cscbf}
		\left\{
		\begin{aligned}
			\frac{\d\v(t)}{\d t} &= -\mu \A\v (t)- \B(\v(t) + \z(t))-\alpha\v(t) - \beta \mathcal{C}(\v (t)+ \z(t)) + (\eta-\alpha) \z(t) + \f, \\
			\v(0)&= \x - \z_{\eta}(0).
		\end{aligned}
		\right.
	\end{equation}
	Since $\z_{\eta}(\omega)$ is $\mathrm{X}$-valued process, $\z_{\eta}(\omega)(0)\in \V$. Note that for $\mathbb{P}$-a.a $\omega\in\Omega$, the system \eqref{cscbf} is a deterministic one. In what follows, the definition of strong solution (in the deterministic sense) for the system \eqref{cscbf} is provided.
	\begin{definition}\label{def_v}
		For $r\in[1,3]$, assume that $\x \in \H$, $\f\in \V'$ and $$\z\in\mathrm{L}^{\infty}_{\emph{loc}} ([0, \infty); \V)\cap \mathrm{L}^2_{\emph{loc}}([0, \infty);\D(\A)).$$ A function $\v(\cdot)$ is called a \emph{weak solution} of the system \eqref{cscbf} on time interval $[0, \infty)$, if $$\v\in  \mathrm{C}([0,\infty); \H) \cap \mathrm{L}^{2}_{\emph{loc}}([0,\infty); \V)\cap\mathrm{L}^{\r+1}_{\emph{loc}}([0,\infty); \widetilde{\L}^{r+1}),$$  $$\frac{\d\v}{\d t}\in\mathrm{L}^{2}_{\emph{loc}}([0,\infty); \V') \text{ for } r\in[1,3)\  \text{ and }\ \frac{\d\v}{\d t}\in\mathrm{L}^{2}_{\emph{loc}}([0,\infty); \V')+\mathrm{L}^{\frac{4}{3}}_{\emph{loc}}([0,\infty); \wi\L^{\frac{4}{3}}) \text{ for } r=3, $$ and it satisfies 
		\begin{itemize}
			\item [(i)] for any $\boldsymbol{\phi}\in \V,$ 
			\begin{align}\label{W-CSCBF}
				\left\langle\frac{\d\v(t)}{\d t}, \boldsymbol{\phi}\right\rangle=  - \left\langle \mu \A\v(t)-\alpha\v(t)+\B(\v(t)+\z(t))+\beta \mathcal{C}(\v(s)+\z(t)) - (\eta-\alpha)\z(t)- \f , \boldsymbol{\phi} \right\rangle,
			\end{align}
			for a.e. $t\in[0,\infty)$.
			\item [(ii)] $\v(t)$ satisfies the initial data:
			$$\v(0)=\x-\z(0).$$
		\end{itemize}
	\end{definition}
	\begin{theorem}\label{solution_v}
		For $r\in[1,3]$, let $\eta\geq0$, $\x \in \H$, $\f\in \V'$, $$\z\in\mathrm{L}^{\infty}_{\emph{loc}} ([0, \infty); \V)\cap \mathrm{L}^2_{\emph{loc}}([0, \infty);\D(\A)).$$ Then there exists a unique solution $\v(\cdot)$ to the system \eqref{cscbf} in the sense of Definition \ref{def_v}. 
		
		In addition, for $\x\in \V$ and $\f\in\H$, there exists a unique strong solution $\v(\cdot)$ to the system \eqref{S-CBF} satisfying the following regularity:
		$$\v\in  \mathrm{C}([0,\infty); \V) \cap \mathrm{L}^{2}_{\emph{loc}}([0,\infty); \D(\A)) \ \text{ and } \ \frac{\d\v}{\d t}\in\mathrm{L}^{2}_{\emph{loc}}([0,\infty); \H).$$ 
	\end{theorem}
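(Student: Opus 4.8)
The plan is to establish Theorem \ref{solution_v} in two stages: first a low-regularity well-posedness result (existence and uniqueness of the weak solution in the sense of Definition \ref{def_v} for $\x\in\H$, $\f\in\V'$), and then, under the stronger hypotheses $\x\in\V$, $\f\in\H$, an additional regularity bootstrap giving $\v\in\mathrm{C}([0,\infty);\V)\cap\mathrm{L}^2_{\mathrm{loc}}([0,\infty);\D(\A))$ with $\frac{\d\v}{\d t}\in\mathrm{L}^2_{\mathrm{loc}}([0,\infty);\H)$. The first stage is essentially a deterministic analog of Theorem \ref{thm2.8} (the result of \cite{MTM} for the stochastic equation) — one can either cite the pathwise interpretation of that theorem or redo it directly. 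I would run a Faedo--Galerkin scheme: project \eqref{cscbf} onto $\mathrm{span}\{e_1,\dots,e_n\}$ (eigenfunctions of $\A$), obtain local solvability of the ODE system, and derive the a priori estimates below. The monotonicity inequality \eqref{MO_c} for $\mathcal{C}$ and the cancellation $b(\u_1,\u_2,\u_2)=0$ from \eqref{b0} handle the nonlinear terms; uniqueness follows by subtracting two solutions, testing with the difference, and using \eqref{MO_c} together with the standard estimate on $b(\v_1-\v_2,\v_2+\z,\v_1-\v_2)$ absorbed via Gronwall (the extra Darcy term $-\alpha\v$ and the Forchheimer dissipation actually help here).

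\textbf{The a priori estimates.} Testing the Galerkin equation with $\v_n$ gives, using \eqref{b0}, $\langle\B(\v_n+\z),\v_n\rangle=-\langle\B(\v_n+\z),\z\rangle$ which is controlled by $\|\v_n+\z\|_{\V}\|\v_n+\z\|_{\wi\L^4}\|\z\|_{\wi\L^4}$ or, via \eqref{lady}, split so that the $\mu\|\v_n\|_{\V}^2$ and $\beta\|\v_n+\z\|_{\wi\L^{r+1}}^{r+1}$ terms absorb the bad parts (for $r\in[1,3]$ the interpolation $\wi\L^4\hookrightarrow$ works with $\V\cap\wi\L^{r+1}$ — this is exactly where $r\le3$ enters). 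One obtains
\begin{align*}
\frac{\d}{\d t}\|\v_n(t)\|_{\H}^2+\mu\|\v_n(t)\|_{\V}^2+\beta\|\v_n(t)+\z(t)\|_{\wi\L^{r+1}}^{r+1}\le C\big(1+\|\z(t)\|_{\V}^2+\|\z(t)\|_{\wi\L^{r+1}}^{r+1}+\|\f\|_{\V'}^2\big),
\end{align*}
and since $\z\in\mathrm{L}^\infty_{\mathrm{loc}}([0,\infty);\V)\cap\mathrm{L}^2_{\mathrm{loc}}([0,\infty);\D(\A))\subset\mathrm{L}^{r+1}_{\mathrm{loc}}([0,\infty);\wi\L^{r+1})$ (by Remark \ref{stationary}-type embeddings and Sobolev embedding $\V\hookrightarrow\wi\L^{r+1}$ in 2D), Gronwall yields bounds in $\mathrm{L}^\infty_{\mathrm{loc}}([0,\infty);\H)\cap\mathrm{L}^2_{\mathrm{loc}}([0,\infty);\V)\cap\mathrm{L}^{r+1}_{\mathrm{loc}}([0,\infty);\wi\L^{r+1})$. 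A bound on $\frac{\d\v_n}{\d t}$ in $\mathrm{L}^2_{\mathrm{loc}}([0,\infty);\V')$ (resp. plus $\mathrm{L}^{4/3}_{\mathrm{loc}};\wi\L^{4/3}$ when $r=3$) follows from the equation. Then Banach--Alaoglu plus an Aubin--Lions compactness argument (using local boundedness of the domain via the cutoff, or just on bounded subdomains) extracts a convergent subsequence; the nonlinear-term passages to the limit are handled by Lemmas \ref{convergence_b*} and \ref{convergence_c2_1}. Continuity $\v\in\mathrm{C}([0,\infty);\H)$ comes from the standard $\v\in\mathrm{L}^2;\V$, $\v'\in\mathrm{L}^2;\V'$ interpolation lemma (with the usual modification for $r=3$).

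\textbf{The regularity bootstrap (the main obstacle).} For $\x\in\V$, $\f\in\H$, I would test the equation (at the Galerkin level) with $\A\v_n$ to get an estimate of the form
\begin{align*}
\frac{\d}{\d t}\|\v_n(t)\|_{\V}^2+\mu\|\A\v_n(t)\|_{\H}^2&\le C\big|b(\v_n+\z,\v_n+\z,\A\v_n)\big|+C\beta\big|(\mathcal{C}(\v_n+\z),\A\v_n)\big|\\
&\quad+C\|\z(t)\|_{\H}^2+C\|\f\|_{\H}^2.
\end{align*}
The convective term is bounded, via \eqref{b2}, by $C\|\v_n+\z\|_{\H}^{1/2}\|\v_n+\z\|_{\V}\|\A\v_n\|_{\H}^{3/2}$ plus lower-order pieces involving $\|\A\z\|_{\H}$ (finite since $\z\in\mathrm{L}^2_{\mathrm{loc}};\D(\A)$); Young's inequality absorbs the $\|\A\v_n\|_{\H}^{3/2}$ into $\mu\|\A\v_n\|_{\H}^2$ at the cost of a factor $\|\v_n+\z\|_{\V}^4\|\v_n+\z\|_{\H}^2$, which is integrable in time by the first-stage bound — this is the 2D miracle and again needs no $r$ restriction for $\B$. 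The genuinely delicate term is $\beta(\mathcal{C}(\v_n+\z),\A\v_n)=-\beta(\mathcal{C}(\v_n+\z),\mathcal{P}\Delta\v_n)$: as the paper itself emphasizes in the introduction, $\mathcal{P}$ and $\Delta$ do not commute on a bounded/Poincaré domain, so the clean identity \eqref{AvC} is unavailable. The way around this — and the reason $r\le3$ is imposed — is to estimate $\|\mathcal{C}(\v_n+\z)\|_{\H}=\|\v_n+\z\|_{\wi\L^{2r}}^{r}\le C\|\v_n+\z\|_{\V}^{r}$ using the 2D Sobolev embedding $\V\hookrightarrow\wi\L^{2r}$ (valid for all finite exponents, but we need $\|\v_n+\z\|_{\V}^{2r}$ to be controlled by the $\mathrm{L}^\infty;\V$ bound we are proving, which closes for $r\le 3$ via the Gronwall structure together with the $\wi\L^{3(r+1)}$ regularity already in Theorem \ref{thm2.8}); then $|(\mathcal{C}(\v_n+\z),\A\v_n)|\le\|\mathcal{C}(\v_n+\z)\|_{\H}\|\A\v_n\|_{\H}$, Young. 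After Gronwall one gets the uniform bound $\v_n\in\mathrm{L}^\infty_{\mathrm{loc}};\V\cap\mathrm{L}^2_{\mathrm{loc}};\D(\A)$; the equation then gives $\frac{\d\v_n}{\d t}\in\mathrm{L}^2_{\mathrm{loc}};\H$ (here Remark \ref{R2.2} shows $\B(\v)\in\mathrm{L}^2;\H$ and the $\mathcal{C}$ estimate above shows $\mathcal{C}(\v+\z)\in\mathrm{L}^2;\H$), and Aubin--Lions plus the continuity lemma for $\mathrm{L}^2;\D(\A)\cap H^1;\H\hookrightarrow\mathrm{C};\V$ finishes. Passage to the limit in the nonlinear terms at this regularity level is provided by Lemmas \ref{Converge_b} and \ref{Converge_c}. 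Uniqueness in this class is inherited from the first stage. I expect the bookkeeping in closing the $\v_n\in\mathrm{L}^\infty;\V$ estimate for $r=3$ (the borderline case, where one must be careful that the Gronwall exponent does not blow up and that the $r=3$ splitting of $\frac{\d\v}{\d t}$ is consistent) to be the one place requiring real care rather than routine computation.
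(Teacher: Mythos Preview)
Your proposal is correct and follows essentially the same route as the paper: Faedo--Galerkin approximation, the $\H$-level energy estimate closed by Gronwall, Aubin--Lions compactness on bounded subdomains $\mathcal{O}_R$, passage to the limit in the nonlinearities via Lemmas \ref{convergence_b*} and \ref{convergence_c2_1}, uniqueness by subtraction and \eqref{MO_c}, and then the $\A\v_n$-test for the $\V$-bootstrap with $|(\mathcal{C}(\v_n+\z),\A\v_n)|\le\|\v_n+\z\|_{\wi\L^{2r}}^{r}\|\A\v_n\|_{\H}$ and the 2D Sobolev embedding $\V\hookrightarrow\wi\L^{2r}$ (no use of \eqref{AvC}), exactly as you outline.

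One concrete point to repair: you project onto eigenfunctions of $\A$, but on an unbounded Poincar\'e domain the embedding $\V\hookrightarrow\H$ is not compact, so $\A$ need not have a discrete spectrum or an eigenfunction basis. The paper instead fixes an orthonormal basis $\{w_j\}\subset\mathcal{V}$ of $\H$ (available by separability and density of $\mathcal{V}$); this is also why the Aubin--Lions step must be localized to $\mathcal{O}_R$, which you correctly anticipate. A second, smaller point: your displayed $\H$-estimate omits the term $C\|\z\|_{\wi\L^4}^4\|\v_n\|_{\H}^2$ on the right (cf.\ the paper's \eqref{S3}); you invoke Gronwall afterward, so presumably this is shorthand, but the inequality as written does not close without that term. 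Finally, the ``usual modification for $r=3$'' you allude to for $\v\in\mathrm{C}([0,T];\H)$ is nontrivial here: since $\frac{\d\v}{\d t}$ only lies in $\mathrm{L}^2(0,T;\V')+\mathrm{L}^{4/3}(0,T;\wi\L^{4/3})$, the paper establishes the energy equality via a Friedrichs-mollifier argument in time before concluding strong continuity.
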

	\begin{proof}
		Let us fix $T>0$. In order to complete the proof on interval $[0,\infty)$, it is enough to prove on interval $[0,T]$. 
		\vskip 2mm
		\noindent
		\textbf{Step I.} \emph{Existence of weak solutions.} Since $\V$ is a separable Hilbert space, $\mathcal{V}$ is dense in $\V$ and $\V$ is dense in $\H$, there exists a set $\{w_1,w_2,\cdots,w_n,\cdots\}\subset\mathcal{V}$ which is complete orthonormal basis of $\H$. Let $\H_n$ be the $n$-dimensional subspaces of $\H$ defined by $\H_n=\text{span}\{w_1,w_2,\cdots,w_n\}$ with the norm inherited from $\H$. Let us denote the orthogonal projection from $\H$ onto $\H_n$ by $\P_n$, that is, $\h^n=\P_n\h=\sum_{j=1}^{n}(\h,w_j)w_j$, for $\h\in\H$. Let us consider the following approximate equation for the system \eqref{cscbf} on finite dimensional space $\H_n$:
		\begin{equation}\label{cscbf_n}
			\left\{
			\begin{aligned}
				\frac{\d\v^n}{\d t} &= \P_n\bigg[-\mu \A\v^n -\alpha\v^n- \B(\v^n+\z)- \beta \mathcal{C}(\v^n + \z) + (\eta-\alpha)\z + \f\bigg], \\
				\v^n(0)&= \P_n[\x-\z(0)]:=\v_0^n.
			\end{aligned}
			\right.
		\end{equation}
		We define $\A_n\v^n=\P_n\A\v^n$, $\B_n(\v^n+\z)=\mathrm{P}_n\B(\v^n+\z)$ and $\mathcal{C}_n(\v^n+\z)=\mathrm{P}_n\mathcal{C}(\v^n+\z)$ and consider the following system of ODEs:
		\begin{equation}\label{finite-dimS}
			\left\{
			\begin{aligned}
				\frac{\d\v^n(t)}{\d t}&=-\mu \A_n\v^n(t)-\alpha\v^n(t)-\B_n(\v^n(t)+\z(t))-\beta\mathcal{C}_n(\v^n(t)+\z(t))\\ &\quad+(\eta-\alpha)\z^n(t)+\f^n,\\
				\v^n(0)&=\v_0^n,
			\end{aligned}
			\right.
		\end{equation}
		Since $\B_n(\cdot)$ and $\mathcal{C}_n(\cdot)$ are  locally Lipschitz, the system (\ref{finite-dimS}) has a unique local solution $\v^n\in\mathrm{C}([0,T^*];\H_n)$, for some $0<T^*<T$. The following a priori estimates show that the time $T^*$ can be extended to time $T$. Taking the inner product with $\v^n(\cdot)$ to the first equation of \eqref{cscbf_n}, we obtain
		\begin{align}\label{S1}
			\frac{1}{2}\frac{\d}{\d t}\|\v^n(t)\|^2_{\H}&=-\mu\|\v^n(t)\|^2_{\V}-\alpha\|\v^n(t)\|^2_{\H}-b(\v^n(t)+\z(t),\v^n(t)+\z(t),\v^n(t))\nonumber\\&\quad-\left(\mathcal{C}(\v^n(t)+\z(t)),\v^n(t)\right)+ (\eta-\alpha)(\z(t),\v^n(t))+(\f,\v^n(t))\nonumber\\&=-\mu\|\v^n(t)\|^2_{\V}-\alpha\|\v^n(t)\|^2_{\H}-\beta\|\v^n(t)+\z(t)\|^{r+1}_{\wi\L^{r+1}}+b(\v^n(t),\v^n(t),\z(t))\nonumber\\&\quad+b(\z(t),\v^n(t),\z(t))+\beta\left(\mathcal{C}(\v^n(t)+\z(t)),\z(t)\right)+ ((\eta-\alpha)\z(t),\v^n(t))\nonumber\\&\quad+\langle\f,\v^n(t)\rangle.
		\end{align}
		Next, we estimate each term of RHS of \eqref{S1} as 
		\begin{align}
			\left|b(\v^n,\v^n,\z)\right|&\leq\|\v^n\|_{\wi\L^4}\|\v^n\|_{\V}\|\z\|_{\wi\L^4}\leq C\|\v^n\|^{\frac{1}{2}}_{\H}\|\v^n\|^{\frac{3}{2}}_{\V}\|\z\|_{\wi\L^4}\nonumber\\&\leq \frac{\mu}{6}\|\v^n\|^2_{\V}+C\|\v^n\|^2_{\H}\|\z\|^4_{\wi\L^4},\\
			\left|b(\z,\v^n,\z)\right|&\leq \|\z\|^2_{\wi\L^4}\|\v^n\|_{\V}\leq \frac{\mu}{6}\|\v^n\|^2_{\V}+C\|\z\|^4_{\wi\L^4},\\
			\beta\left|\left(\mathcal{C}(\v^n+\z),\z\right)\right|&\leq\beta \|\v^n+\z\|^r_{\wi\L^{r+1}}\|\z\|_{\wi\L^{r+1}}\leq\frac{\beta}{2}\|\v^n+\z\|^{r+1}_{\wi\L^{r+1}} + C\|\z\|^{r+1}_{\wi\L^{r+1}},\\
			\left|((\eta-\alpha)\z(t),\v^n(t))+\langle\f,\v^n(t)\rangle\right|&\leq \frac{\mu}{6}\|\v^n\|^2_{\V}+\frac{\alpha}{2}\|\v^n\|^2_{\H}+C\|\f\|^2_{\V'}+C\|\z\|^2_{\H}.\label{S2}
		\end{align}
		Combining \eqref{S1}-\eqref{S2}, we deduce that 
		\begin{align}\label{S3}
		&\frac{\d}{\d t}\|\v^n(t)\|^2_{\H}+\mu\|\v^n(t)\|^2_{\H}+\alpha\|\v^n(t)\|^2+\beta\|\v^n(t)+\z(t)\|^{r+1}_{\wi\L^{r+1}}+\beta\|\z(t)\|^{r+1}_{\wi\L^{r+1}}\nonumber\\&\leq C\|\v^n(t)\|^2_{\H}\|\z(t)\|^4_{\wi\L^4}+C\|\z(t)\|^2_{\H}+C\|\z(t)\|^{4}_{\wi\L^{4}} +C\|\z(t)\|^{r+1}_{\wi\L^{r+1}}+C\|\f\|^2_{\V'}
		\end{align}
		An application of Gronwall's inequality  in \eqref{S3} leads to
		\begin{align}\label{S4}
				\|\v^n(t)\|^2_{\H}&\leq \|\v^n(0)\|^2_{\H}e^{\int_{0}^{t}\|\z(\zeta)\|^4_{\wi\L^4}\d\zeta} \nonumber\\&\quad+C\int_{0}^{t}e^{\int_{\xi}^{t}\|\z(\zeta)\|^4_{\wi\L^4}\d\zeta}\big[\|\z(t)\|^2_{\H}+\|\z(t)\|^{4}_{\wi\L^{4}} +\|\z(t)\|^{r+1}_{\wi\L^{r+1}}+\|\f\|^2_{\V'}\big]\d\xi.
		\end{align}
		Furthermore, since $\z\in \mathrm{L}^{\infty}(0,T; \H)\cap\mathrm{L}^{2} (0, T; \D(\A))$, $\|\v^n(0)\|_{\H}\leq\|\v(0)\|_{\H}$ and $\f\in\V'$, we have from \eqref{S4} that 
		\begin{align*}
			\sup_{t\in[0,T]}\|\v^n(t)\|^2_{\H}<\infty,
		\end{align*}
		from which we infer that 
		\begin{align}\label{S5}
			\{\v^n\}_{n\in\N} \text{ is a bounded sequence in }\mathrm{L}^{\infty}(0,T;\H).
		\end{align}
		Now, integrating \eqref{S3} from $0$ to $T$ and using \eqref{S5}, we obtain
		\begin{align}\label{S6}
			\{\v^n\}_{n\in\N} \text{ is a bounded sequence in }\mathrm{L}^{2}(0,T;\V)\cap\mathrm{L}^{r+1}(0,T;\widetilde{\L}^{r+1}).
		\end{align}
\vskip 1mm
\noindent
\textbf{Uniform estimate for $\frac{\d\v^n}{\d t}$:} \emph{When $r\in[1,3)$.} Let us first consider the  case $r\in(1,3)$. For $r=1$, one can do in similar way, even it is much easier. For $\varepsilon\in(0,1)$ and any arbitrary element $\boldsymbol{\psi}\in\mathrm{L}^2(0,T;\V)$, using H\"older's inequality and Sobolev's embeddings, we have from \eqref{cscbf_n}
		\begin{align*}
			&\left|\left\langle\frac{\d\v^n(t)}{\d t},\boldsymbol{\psi}(t)\right\rangle\right|\nonumber\\&\leq \mu\left|(\nabla\v^n(t),\nabla\boldsymbol{\psi}(t))\right|+\alpha\left|(\v^n(t),\boldsymbol{\psi}(t))\right|+\left|b(\v^n(t)+\z(t),\boldsymbol{\psi}(t),\v^n(t)+\z(t))\right|\nonumber\\&\quad+\beta\left|\left\langle\mathcal{C}(\v^n(t)+\z(t)),\boldsymbol{\psi}(t)\right\rangle\right| +(\eta-\alpha)\left|(\z(t),\boldsymbol{\psi}(t))\right|+\left|(\f,\boldsymbol{\psi}(t))\right|\nonumber\\&\leq \mu\|\v^n(t)\|_{\V}\|\boldsymbol{\psi}(t)\|_{\V}+\alpha\|\v^n(t)\|_{\H}\|\boldsymbol{\psi}(t)\|_{\H}+\|\v^n(t)+\z(t)\|^2_{\wi\L^4}\|\boldsymbol{\psi}(t)\|_{\V}+\|\z(t)\|_{\H}\|\boldsymbol{\psi}(t)\|_{\H}\nonumber\\&\quad+\|\f\|_{\V'}\|\boldsymbol{\psi}(t)\|_{\V}+\beta\|\v^n(t)+\z(t)\|^r_{\wi\L^{r(1+\varepsilon)}}\|\boldsymbol{\psi}(t)\|_{\wi\L^{\frac{1+\varepsilon}{\varepsilon}}}\nonumber\\&\leq C\big[\|\v^n(t)\|_{\V}+\|\v^n(t)\|_{\H}+\|\v^n(t)+\z(t)\|_{\H}\|\v^n(t)+\z(t)\|_{\V}+\|\z(t)\|_{\H}+\|\f\|_{\V'}\nonumber\\&\quad+\|\v^n(t)+\z(t)\|^r_{\wi\L^{r(1+\varepsilon)}}\big]\|\boldsymbol{\psi}(t)\|_{\V},
		\end{align*}
		which implies that
		\begin{align*}
			&\left\|\frac{\d\v^n(t)}{\d t}\right\|_{\V'}\\&\leq C\big[\|\v^n(t)\|_{\V}+\|\v^n(t)+\z(t)\|_{\H}\|\v^n(t)+\z(t)\|_{\V}+\|\z(t)\|_{\H}+\|\f\|_{\V'}+\|\v^n(t)+\z(t)\|^r_{\wi\L^{r(1+\varepsilon)}}\big].
		\end{align*}
	Now since $r<3$, we can choose $\varepsilon$ small enough such that $r<1+\frac{2}{1+\varepsilon}$, so that 
	$$q:=\frac{2}{2-(r-1)(1+\varepsilon)}\in(1,\infty).$$
	Let $q':=\frac{q}{q-1}=\frac{2}{(r-1)(1+\varepsilon)}$ and $\lambda\in(0,1)$. Applying interpolation inequality and choosing $\lambda:=\frac{1}{r}$, we obtain
	\begin{align*}
		\|\v^n(t)+\z(t)\|^{r}_{\wi\L^{r(1+\varepsilon)}}&\leq\|\v^n(t)+\z(t)\|^{\lambda r}_{\wi\L^{\lambda qr(1+\varepsilon)}}\|\v^n(t)+\z(t)\|^{(1-\lambda)r}_{\wi\L^{(1-\lambda)q'r(1+\varepsilon)}}\\&=\|\v^n(t)+\z(t)\|_{\wi\L^{q(1+\varepsilon)}}\|\v^n(t)+\z(t)\|^{r-1}_{\H}\\&\leq C\|\v^n(t)+\z(t)\|_{\V}\|\v^n(t)+\z(t)\|^{r-1}_{\H}.
	\end{align*}
	Therefore, we have 
		\begin{align*}
			\int_{0}^{T}\left\|\frac{\d\v^n(t)}{\d t}\right\|^{2}_{\V'}\d t&\leq C\int_{0}^{T}\big[\|\v^n(t)\|^{2}_{\V}+\|\v^n(t)+\z(t)\|^2_{\H}\|\v^n(t)+\z(t)\|^2_{\V}+\|\z(t)\|^2_{\H}+\|\f\|^2_{\V'}\nonumber\\&\qquad\qquad+\|\v^n(t)+\z(t)\|^2_{\V}\|\v^n(t)+\z(t)\|^{2(r-1)}_{\H}\big]\d t,
		\end{align*}
		which implies that $\frac{\d \v^n}{\d t}\in \mathrm{L}^{2}(0,T;\V')$.
		\vskip 2mm
		\noindent
	 \emph{When $r=3$.} For any arbitrary element $\boldsymbol{\psi}\in\mathrm{L}^2(0,T;\V)\cap\mathrm{L}^{4}(0,T;\wi\L^{4})$, using H\"older's inequality and Sobolev embeddings, we have from \eqref{cscbf_n}
		\begin{align*}
			&\int_{0}^{T}\left|\left\langle\frac{\d\v^n(t)}{\d t},\boldsymbol{\psi}(t)\right\rangle\right|\d t\nonumber\\&\leq \mu\int_{0}^{T}\left|(\nabla\v^n(t),\nabla\boldsymbol{\psi}(t))\right|\d t+\int_{0}^{T}\left|b(\v^n(t)+\z(t),\boldsymbol{\psi}(t),\v^n(t)+\z(t))\right|\d t\nonumber\\&\quad+\alpha\int_{0}^{T}\left|(\v^n(t),\boldsymbol{\psi}(t))\right|\d t+\beta\int_{0}^{T}\left|\left\langle\mathcal{C}(\v^n(t)+\z(t)),\boldsymbol{\psi}(t)\right\rangle\right|\d t +(\eta-\alpha)\int_{0}^{T}\left|(\z(t),\boldsymbol{\psi}(t))\right|\d t\nonumber\\&\quad+\int_{0}^{T}\left|(\f,\boldsymbol{\psi}(t))\right|\d t\nonumber\\&\leq C\bigg(\int_{0}^{T}\bigg[\|\v^n(t)\|^2_{\V}+\|\v^n(t)+\z(t)\|^4_{\wi\L^4}+\|\z(t)\|^2_{\H}+\|\f\|^2_{\V'}\bigg]\d t\bigg)^{\frac{1}{2}}\bigg(\int_{0}^{T}\|\boldsymbol{\psi}(t)\|^2_{\V}\d t\bigg)^{\frac{1}{2}}\nonumber\\&\qquad+\beta\bigg(\int_{0}^{T}\|\v^n(t)+\z(t)\|^4_{\wi\L^4}\d t\bigg)^{\frac{3}{4}}\bigg(\int_{0}^{T}\|\boldsymbol{\psi}(t)\|^4_{\wi\L^{4}}\d t\bigg)^{\frac{1}{4}},
		\end{align*}
		which gives that $\frac{\d \v^n}{\d t}\in \mathrm{L}^{2}(0,T;\V')+\mathrm{L}^{\frac{4}{3}}(0,T;\wi\L^{\frac{4}{3}})$.
	
		 Using \eqref{S5}, \eqref{S6} and the \emph{Banach-Alaoglu theorem}, we infer that there exists an element $\v\in\mathrm{L}^{\infty}(0,T;\H)\cap\mathrm{L}^{2}(0,T;\V)\cap\mathrm{L}^{r+1}(0,T;\widetilde{\L}^{r+1})$ and $\frac{\d \v}{\d t}\in \mathrm{L}^{2}(0,T;\V')$ (for $r\in[1,3)$) and $\frac{\d \v}{\d t}\in\mathrm{L}^{2}(0,T;\V')+\mathrm{L}^{\frac{4}{3}}(0,T;\wi\L^{\frac{4}{3}})$ (for $r=3$) such that
		\begin{align}
			\v^n\xrightharpoonup{w^*}&\ \v\text{ in }	\mathrm{L}^{\infty}(0,T;\H),\label{S7}\\
			\v^n\xrightharpoonup{w}&\ \v\text{ in } \mathrm{L}^{2}(0,T;\V)\cap\mathrm{L}^{r+1}(0,T;\widetilde{\L}^{r+1}),\label{S8}
		\end{align}
		\begin{equation}\label{S8d}
		\left\{
		\begin{aligned}
			\frac{\d \v^n}{\d t}\xrightharpoonup{w}&\frac{\d \v}{\d t} \text{ in }\mathrm{L}^{2}(0,T;\V'), &\text{ for } r\in[1,3), \\
	\frac{\d \v^n}{\d t}\xrightharpoonup{w}&\frac{\d \v}{\d t} \text{ in } \mathrm{L}^{2}(0,T;\V')+\mathrm{L}^{\frac{4}{3}}(0,T;\wi\L^{\frac{4}{3}}) ,&\text{ for } r=3,
		\end{aligned}
		\right.
	\end{equation}
		along a subsequence. The fact that $\v^n\in\mathrm{L}^{2}(0,T;\V)$ and $\frac{\d \v^n}{\d t}\in \mathrm{L}^{\frac{r+1}{r}}(0,T;\V')$ imply $\v^n\in\mathrm{L}^{2}(0,T;\H^1_0(\mathcal{O}_R))$ and $\frac{\d \v^n}{\d t}\in \mathrm{L}^{\frac{r+1}{r}}(0,T;\H^{-1}(\mathcal{O}_R))$, where $\mathcal{O}_R=\mathcal{O}\cap\{x\in\R^2:|x|< R\}.$ Since, $\v^n\in\mathrm{L}^{2}(0,T;\H^1_0(\mathcal{O}_R))$, $\frac{\d \v^n}{\d t}\in \mathrm{L}^{\frac{r+1}{r}}(0,T;\H^{-1}(\mathcal{O}_R))$, the embedding $\H^1_0(\mathcal{O}_R)\subset\L^2(\mathcal{O}_R)\subset\H^{-1}(\mathcal{O}_R)$ is continuous  and the embedding $\H^1_0(\mathcal{O}_R)\subset\L^2(\mathcal{O}_R)$ is compact, then  the \emph{Aubin-Lions compactness lemma} implies that 
		\begin{align}\label{S9}
			\v^n\to\v \ \text{ strongly in } \ \mathrm{L}^2(0,T;\L^2(\mathcal{O}_R)).
		\end{align}
		Next, we prove that $\v$ is a solution to the system \eqref{cscbf}. Let $\psi:[0,T]\to\R$ be a continuously differentiable function. Also, let $\boldsymbol{\phi}\in\H_m$ for some $m\in\N$. Then from \eqref{finite-dimS}, we have 
		\begin{align}\label{S10}
			&\int_{0}^{T}\left(\frac{\d\v^n(t)}{\d t},\psi(t)\boldsymbol{\phi}\right)\d t\nonumber\\&=-\mu\int_{0}^{T} (\A_n\v^n(t),\psi(t)\boldsymbol{\phi})\d t-\alpha\int_{0}^{T}(\v^n(t),\psi(t)\boldsymbol{\phi})\d t-\int_{0}^{T}(\B_n(\v^n(t)+\z(t)),\psi(t)\boldsymbol{\phi})\d t\nonumber\\&\quad-\beta\int_{0}^{T}(\mathcal{C}_n(\v^n(t)+\z(t)),\psi(t)\boldsymbol{\phi})\d t +(\eta-\alpha)\int_{0}^{T}(\z^n(t),\psi(t)\boldsymbol{\phi})\d t+\int_{0}^{T}(\f^n,\psi(t)\boldsymbol{\phi})\d t,
		\end{align}
		where we have used an integration by parts. Our next goal is to pass limit in \eqref{S10} as $n\to \infty$. Due to the choice of $\boldsymbol{\phi}\in\H_m$ for some $m\in\N$, we can say that there exists $R\in\N$ such that $\text{supp}\ \boldsymbol{\phi}\subset\mathcal{O}_R.$ Since $\psi(\cdot)\boldsymbol{\phi}\in\mathrm{L}^{2}(0,T;\V)\cap\mathrm{L}^{r+1}(0,T;\widetilde{\L}^{r+1})$, in view of \eqref{S8d}, we obtain 
		\begin{align}\label{S11}
			\int_{0}^{T}\left(\frac{\d\v^n(t)}{\d t},\psi(t)\boldsymbol{\phi}\right)\d t-\int_{0}^{T}\left\langle\frac{\d\v(t)}{\d t},\psi(t)\boldsymbol{\phi}\right\rangle\d t=\int_{0}^{T}\left\langle\frac{\d\v^n(t)}{\d t}-\frac{\d\v}{\d t},\psi(t)\boldsymbol{\phi}\right\rangle\d t\to 0,
		\end{align}
		as $n\to \infty$. Since $\psi(\cdot)\boldsymbol{\phi}\in\mathrm{L}^2(0,T;\L^2(\mathcal{O}_R))$, we obtain
		\begin{align}\label{S12}
			\left|\int_{0}^{T}(\v^n(t),\psi(t)\boldsymbol{\phi})\d t-\int_{0}^{T}(\v(t),\psi(t)\boldsymbol{\phi})\d t\right|&\leq\|\v^n-\v\|_{\mathrm{L}^2(0,T;\L^2(\mathcal{O}_R))}\|\psi(\cdot)\boldsymbol{\phi}\|_{\mathrm{L}^2(0,T;\L^2(\mathcal{O}_R))}\nonumber\\&\to0 \text{ as } n\to\infty,
		\end{align}
		where we have used the strong convergence obtained in \eqref{S9}. Let us choose $n\geq m$ so that $\H_m\subset\H_n$ and $\P_n\boldsymbol{\phi}=\boldsymbol{\phi}$. Since $\psi(\cdot)\boldsymbol{\phi}\in\mathrm{L}^2(0,T;\V)$, it is immediate that
		\begin{align}\label{S13}
			\int_{0}^{T} (\A_n\v^n(t),\psi(t)\boldsymbol{\phi})\d t-	\int_{0}^{T} (\!(\v(t),\psi(t)\boldsymbol{\phi})\!)\d t=\int_{0}^{T} (\!(\v^n(t)-\v(t),\psi(t)\boldsymbol{\phi})\!)\d t\to0 \text{ as } n\to\infty,
		\end{align}
		where we have used the weak convergence given in \eqref{S8}. To prove the convergence of third term of the right hand side of \eqref{S10}, we consider
		\begin{align}\label{S14}
			&\left|\int_{0}^{T}(\B_n(\v^n(t)+\z(t)),\psi(t)\boldsymbol{\phi})\d t-\int_{0}^{T}(\B(\v(t)+\z(t)),\psi(t)\boldsymbol{\phi})\d t\right|\nonumber\\&
			\leq \underbrace{\left|\int_{0}^{T}b(\v^n(t),\v^n(t),\psi(t)\boldsymbol{\phi})\d t-\int_{0}^{T}b(\v(t),\v(t),\psi(t)\boldsymbol{\phi})\d t\right|}_{:=B_1(n)}\nonumber\\&\qquad+\left|\int_{0}^{T}b(\z(t),\v^n(t)-\v(t),\psi(t)\boldsymbol{\phi})\d t+\int_{0}^{T}b(\v^n(t)-v(t),\z(t),\psi(t)\boldsymbol{\phi})\d t\right|\nonumber\\&
			\leq B_1(n)+2\int_{0}^{T}\|\z(t)\|_{\wi\L^4}\|\v^n(t)-\v(t)\|_{\L^4(\mathcal{O}_R)}\|\psi(t)\boldsymbol{\phi}\|_{\wi\L^2}\d t\nonumber\\&\leq
			B_1(n)+C\int_{0}^{T}\|\z(t)\|_{\wi\L^4}\|\v^n(t)-\v(t)\|^{1/2}_{\L^2(\mathcal{O}_R)}\|\v^n(t)-\v(t)\|^{1/2}_{\V}\d t\nonumber\\&
			\leq B_1(n)+CT^{1/4} \|\z\|_{\mathrm{L}^4(0,T;\widetilde{\L}^4)}\|\v^n-\v\|^{1/2}_{\mathrm{L}^2(0,T;\L^2(\mathcal{O}_R))}\left[\|\v^n\|^{1/2}_{\mathrm{L}^2(0,T;\V)}+\|\v\|^{1/2}_{\mathrm{L}^2(0,T;\V)}\right] \nonumber\\& \to0 \text{ as } n\to \infty,
		\end{align}
		where we have used the convergence from Lemma \ref{convergence_b*} and \eqref{S9}. From Lemma \ref{convergence_c2_1}, we get 
		\begin{align}\label{S15}
			\int_{0}^{T}(\mathcal{C}_n(\v^n(t)+\z(t)),\psi(t)\boldsymbol{\phi})\d t&=\int_{0}^{T}\left\langle\mathcal{C}(\v^n(t)+\z(t)),\psi(t)\boldsymbol{\phi}\right\rangle\d t\nonumber\\&\to \int_{0}^{T}\left\langle\mathcal{C}(\v(t)+\z(t)),\psi(t)\boldsymbol{\phi}\right\rangle\d t \text{ as } n\to\infty.
		\end{align}
		Furthermore, it is immediate that
		\begin{align}\label{S16}
			\int_{0}^{T}((\eta-\alpha)\z^n(t)+\f^n,\psi(t)\boldsymbol{\phi})\d t=(\eta-\alpha)\int_{0}^{T}(\z^n(t),\psi(t)\boldsymbol{\phi})\d t+\int_{0}^{T}\langle\f,\psi(t)\boldsymbol{\phi}\rangle\d t.
		\end{align}
		Hence, on passing limit to \eqref{S10} as $n\to\infty$ with the help of \eqref{S11}-\eqref{S16}, we obtain
		\begin{align}\label{S17}
			&\int_{0}^{T}\left\langle\frac{\d\v(t)}{\d t},\psi(t)\boldsymbol{\phi}\right\rangle\d t\nonumber\\&=-\mu\int_{0}^{T} \left\langle\A\v(t),\psi(t)\boldsymbol{\phi}\right\rangle\d t-\alpha\int_{0}^{T}(\v(t),\psi(t)\boldsymbol{\phi})\d t-\int_{0}^{T}\left\langle\B(\v(t)+\z(t)),\psi(t)\boldsymbol{\phi}\right\rangle\d t\nonumber\\&\quad-\beta\int_{0}^{T}\left\langle\mathcal{C}(\v(t)+\z(t)),\psi(t)\boldsymbol{\phi}\right\rangle\d t +(\eta-\alpha)\int_{0}^{T}(\z(t),\psi(t)\boldsymbol{\phi})\d t+\int_{0}^{T}\left\langle\f,\psi(t)\boldsymbol{\phi}\right\rangle\d t.
		\end{align}
		Since \eqref{S17} holds for any $\boldsymbol{\phi}\in\cup_{m=1}^{\infty}\H_m$ and $\cup_{m=1}^{\infty}\H_m$ is dense in $\V$, we have that \eqref{S17} holds true for all $\boldsymbol{\phi}\in\V$ and $\psi\in\mathrm{C}^1([0,T])$. Hence $\v(\cdot)$ solves \eqref{W-CSCBF} and satisfies first equation of \eqref{cscbf}. 
		\vskip 2mm
		\noindent
	\textbf{Step II.}	\emph{Energy equality:} \emph{When $r\in[1,3)$.} Since $\v\in\mathrm{L}^{2}(0,T;\V)$ and $\frac{\d\v}{\d t}\in \mathrm{L}^{2}(0,T;\V')$, we infer from Theorem 3, page 303, \cite{LCE} that $\v\in\C([0,T];\H)$, and hence the energy equality is satisfied.
	\vskip 2mm
	\noindent
	 \emph{When $r=3$.} Note that the embedding of $\H\subset\V'$ is continuous and $\v\in \mathrm{L}^{\infty}(0,T;\H)$ implies $\v\in \mathrm{L}^{\infty}(0,T;\V')$. Thus, we get $\v,\frac{\d\v}{\d t}\in \mathrm{L}^{\frac{r+1}{r}}(0,T;\V')$ and then invoking Theorem 2, section 5.9.2 \cite{LCE}, it is immediate that $\v\in\C([0,T];\V')$. Since $\H$ is reflexive, using Proposition 1.7.1 \cite{PCAM}, we obtain $\v\in\C_w([0,T];\H)$ and the map $t\mapsto\|\v(t)\|_{\H}$ is bounded, where $\C_w([0,T];\H)$ denotes the space of functions $\v:[0,T]\to\H$ which are weakly continuous. Now, we  show that $\v$ satisfies energy equality and hence $\v\in\C([0, T];\H)$. First define $\mathcal{V}_{T}:=\{\boldsymbol{\phi}:\boldsymbol{\phi}\in\mathrm{C}^{\infty}_{0}(\mathcal{O}\times[0,T))\}$. Observe that, for each $\boldsymbol{\phi}\in\mathcal{V}_{T}$, $\boldsymbol{\phi}(\cdot,T)=0$ and $\mathcal{V}_{T}$ is dense in $\mathrm{L}^{p}(0,T;\mathbb{H}^1(\mathcal{O}))$ (for case $p=2$ see Lemma 2.6, \cite{GPG}). For $\v\in\mathrm{L}^p(0,T;\mathrm{X})$, $1\leq p<\infty$ and $T>h>0$, the mollifier $\v_h$ (in the sense of Friederichs) of $\v$ is defined by 
		\begin{align*}
			\v_h(t)=\int_{0}^{T}j_h(t-\zeta)\v(\zeta)\d\zeta,
		\end{align*}
		where $j_h(\cdot)$ is an infinite times differentiable function having support in $(-h,h)$, which is even and positive, such that $\int_{-\infty}^{+\infty}j_h(\zeta)\d\zeta=1$. In view of Lemma 2.5, \cite{GPG}, we have that for $\v\in\mathrm{L}^p(0,T;\mathrm{X})$ with $1\leq p<\infty$, $\v_h\in\mathrm{C}^k([0,T];\mathrm{X})$ for all $k\geq0$ and
		\begin{align}\label{335}
			\lim_{h\to0}\|\v_h-\v\|_{\mathrm{L}^p(0,T;\mathrm{X})}=0.
		\end{align}
		Furthermore, if $\{\v_m\}_{m\in\N}\subset\mathrm{L}^p(0,T;\mathrm{X})$ converges to $\v$ in the norm of $\mathrm{L}^p(0,T;\mathrm{X})$, then
		\begin{align}\label{336}
			\lim_{m\to\infty}\|(\v_m)_h-\v_h\|_{\mathrm{L}^p(0,T;\mathrm{X})}=0.
		\end{align}
		We write the weak solution of \eqref{cscbf} as
		\begin{align}\label{337}
			\int_{0}^{t}\biggl\{\left\langle\frac{\d \v}{\d t}+\mu\A\v+\B(\v+\z)+\beta\mathcal{C}(\v+\z)-\f,\boldsymbol{\phi}\right\rangle+\left(\alpha\v+(\alpha-\eta)\z,\boldsymbol{\phi}\right)\biggr\}\d\zeta=0,
		\end{align}
		for all $t<T$ and $\boldsymbol{\phi}\in\mathcal{V}_{T}$. Let $\{\v_m\}_{m\in\N}\subset\mathcal{V}_T$ be a sequence converging to $\v\in\mathrm{L}^{2}(0,T;\V)$, that is, $\|\v_m-\v\|_{\mathrm{L}^{2}(0,T;\V)}\to0$ as $m\to\infty$. If $\v\in\mathrm{L}^{\infty}(0,T;\H)$, then we also have the convergence $\|\v_m-\v\|_{\mathrm{L}^{4}(0,T;\wi\L^{4})}\to0$ as $m\to\infty$ (by \eqref{lady}). Choosing $\boldsymbol{\phi}=(\v_m)_h=:\v_{m,h}$ in \eqref{337}, where $(\cdot)_h$ is the mollification operator discussed above, for $0\leq t<T$, we get
		\begin{align}\label{338}
			\int_{0}^{t}\biggl\{\left\langle\frac{\d \v}{\d t},\v_{m,h}\right\rangle+\mu\left(\nabla\v,\nabla\v_{m,h}\right)&+\left\langle\B(\v+\z)+\beta\mathcal{C}(\v+\z)-\f,\v_{m,h}\right\rangle\nonumber\\&\quad+\left(\alpha\v+(\alpha-\eta)\z,\v_{m,h}\right)\biggr\}\d\zeta=0.
		\end{align}
		Using \eqref{336}, we obtain 
		\begin{align*}
			\left|\int_{0}^{t}\left\langle\frac{\d \v}{\d t},\v_{m,h}-\v_{h}\right\rangle\d\zeta\right|\leq\left\|\frac{\d \v}{\d t}\right\|_{\mathrm{L}^{2}(0,T;\V')+\mathrm{L}^{\frac{4}{3}}(0,T;\wi\L^{\frac{4}{3}})}\|\v_{m,h}-\v_h\|_{\mathrm{L}^2(0,T;\V)\cap\mathrm{L}^{4}(0,T;\wi\L^{4})}\to0,
		\end{align*}
		and
		\begin{align*}
			\left|\int_{0}^{t}\left(\nabla \v,\nabla\v_{m,h}-\nabla\v_{h}\right)\d\zeta\right|\leq\| \v\|_{\mathrm{L}^{2}(0,T;\V)}\|\v_{m,h}-\v_h\|_{\mathrm{L}^{2}(0,T;\V)}\to0,
		\end{align*}
		as $m\to\infty$. Since $\v\in\mathrm{L}^4(0,T;\wi\L^4)$, using \eqref{336}, we have 
		\begin{align*}	\left|\int_{0}^{t}\left\langle\B(\v+\z),\v_{m,h}-\v_{h}\right\rangle\d\zeta\right|\leq\| \v+\z\|^2_{\mathrm{L}^{4}(0,T;\widetilde{\L}^4)}\|\v_{m,h}-\v_h\|_{\mathrm{L}^{2}(0,T;\V)}\to0,
		\end{align*}
and
		\begin{align*}	\left|\int_{0}^{t}\left\langle\mathcal{C}(\v+\z),\v_{m,h}-\v_{h}\right\rangle\d\zeta\right|\leq\| \v+\z\|^{3}_{\mathrm{L}^{4}(0,T;\widetilde{\L}^{4})}\|\v_{m,h}-\v_h\|_{\mathrm{L}^{4}(0,T;\wi\L^{4})}\to0,
		\end{align*}
		as $m\to\infty$. Similarly, using \eqref{336}, we get
		\begin{align*}
			\int_{0}^{t}\biggl\{-\left\langle\f,\v_{m,h}\right\rangle+\left(\alpha\v+(\alpha-\eta)\z,\v_{m,h}\right)\biggr\}\d\zeta\to		\int_{0}^{t}\biggl\{-\left\langle\f,\v_{h}\right\rangle+\left(\alpha\v+(\alpha-\eta)\z,\v_{h}\right)\biggr\}\d\zeta,
		\end{align*}
		as $m\to\infty$. Hence, passing limit $m\to\infty$ in \eqref{338}, we obtain
		\begin{align}\label{339}
			\int_{0}^{t}\biggl\{\left\langle\frac{\d \v}{\d t},\v_{h}\right\rangle+\mu\left(\nabla\v,\nabla\v_{h}\right)&+\left\langle\B(\v+\z)+\beta\mathcal{C}(\v+\z)-\f,\v_{h}\right\rangle\nonumber\\&+\left(\alpha\v+(\alpha-\eta)\z,\v_{h}\right)\biggr\}\d\zeta=0.
		\end{align}
		Using \eqref{335} and similar arguments as above, we obtain the following convergence
		\begin{align}\label{340}
			&\lim_{h\to0}\int_{0}^{t}\biggl\{\mu\left(\nabla\v,\nabla\v_{h}\right)+\left\langle\B(\v+\z)+\beta\mathcal{C}(\v+\z)-\f,\v_{h}\right\rangle+\left(\alpha\v+(\alpha-\eta)\z,\v_{h}\right)\biggr\}\d\zeta\nonumber\\&=\int_{0}^{t}\biggl\{\mu\left(\nabla\v,\nabla\v\right)+\left\langle\B(\v+\z)+\beta\mathcal{C}(\v+\z)-\f,\v\right\rangle+\left(\alpha\v+(\alpha-\eta)\z,\v\right)\biggr\}\d\zeta.
		\end{align}
		Using integration by parts, we get
		\begin{align}\label{341}
			\int_{0}^{t}\left\langle\frac{\d \v}{\d t},\v_{h}\right\rangle\d\zeta&=-\int_{0}^{t}\left\langle\v,\frac{\d \v_h}{\d t}\right\rangle\d\zeta+\left(\v(0),\v_h(0)\right)-\left(\v(t),\v_h(t)\right)\nonumber\\&=-\int_{0}^{t}\int_{0}^{\zeta}\frac{\d j_h(\zeta-s)}{\d t}\left(\v(\zeta),\v(s)\right)\d s\d\zeta+\left(\v(0),\v_h(0)\right)-\left(\v(t),\v_h(t)\right)\nonumber\\&=\left(\v(0),\v_h(0)\right)-\left(\v(t),\v_h(t)\right)\nonumber\\&\to\left(\v(0),\v(0)\right)-\left(\v(t),\v(t)\right),
		\end{align}
		as $h\to0$, where we have used the property of mollifiers and the fact that the kernel $j_{h}(s)$ in the definition of mollifier is even in $(-h,h)$. From \eqref{339}-\eqref{341}, we infer that $\v(\cdot)$ satisfies the energy equality: 
		\begin{align}\label{521}
			&\|\v(t)\|_{\H}^2+2\mu\int_0^t\|\v(\zeta)\|_{\V}^2\d\zeta+2\alpha\int_0^t\|\v(\zeta)\|_{\H}^2\d\zeta\nonumber\\&= \|\boldsymbol{x}-\z(0)\|_{\H}^2-2\int_0^t\langle\B(\v(\zeta)+\z(\zeta)),\v(\zeta)\rangle\d \zeta -2\beta\int_0^t\langle\mathcal{C}(\v(\zeta)+\z(\zeta)),\v(\zeta)\rangle\d s\nonumber\\&\quad+2\int_0^t\langle\f,\v(\zeta)\rangle\d \zeta+2(\eta-\alpha)\int_0^t(\z(\zeta),\v(\zeta))\d \zeta,
		\end{align}
		for all $t\in[0,T]$. Recalling that every weak solution of \eqref{cscbf} is $\H$ weakly continuous in time, all weak solutions satisfy the energy equality \eqref{521} and so, all weak solutions of \eqref{cscbf} belong to $\C([0, T];\H)$ (see \cite{GPG,HR} also).
		\vskip 2mm
		\noindent
	\textbf{Step III.}	\emph{Uniqueness :} Define $\mathfrak{F}=\v_1-\v_2$, where $\v_1$ and $\v_2$ are two weak solutions of the system \eqref{cscbf}. Then $\mathfrak{F}\in\mathrm{C}(0,T;\H)\cap\mathrm{L}^{2}(0,T;\V)\cap\mathrm{L}^{r+1}(0,T;\widetilde{\L}^{r+1})$ and satisfies
		\begin{equation}\label{Uni}
			\left\{
			\begin{aligned}
				\frac{\d\mathfrak{F}(t)}{\d t} &= -\mu \A\mathfrak{F} (t)-\alpha\mathfrak{F}(t)- \B(\v_1(t)+\z(t))+\B(\v_2(t)+\z(t))- \beta \mathcal{C}(\v_1 (t)+ \z(t)) \\&\quad+ \beta \mathcal{C}(\v_2 (t)+ \z(t)), \\
				\mathfrak{F}(0)&= \textbf{0},
			\end{aligned}
			\right.
		\end{equation}
	in the weak sense.	From the above equation, using energy equality, we obtain
		\begin{align}\label{U1}
			&	\frac{1}{2}\frac{\d}{\d t}\|\mathfrak{F}(t)\|^2_{\H}+\mu\|\mathfrak{F}(t)\|^2_{\V}+\alpha\|\mathfrak{F}(t)\|^2_{\H}+\beta\left\langle \mathcal{C}(\v_1 (t)+ \z(t)) - \mathcal{C}(\v_2 (t)+ \z(t)),\v_1(t)-\v_2(t)\right\rangle\nonumber\\&=-b(\v_1(t)+\z(t),\v_1(t)+\z(t),\mathfrak{F}(t)) +b(\v_2(t)+\z(t),\v_2(t)+\z(t),\mathfrak{F}(t))\nonumber\\&=b(\mathfrak{F}(t),\mathfrak{F}(t),\v_2(t)) +b(\mathfrak{F}(t),\mathfrak{F}(t),\z(t))\leq C\|\mathfrak{F}(t)\|^{1/2}_{\H}\|\mathfrak{F}(t)\|^{3/2}_{\V}\left[\|\v_2(t)\|_{\widetilde{\L}^4}+\|\z(t)\|_{\wi\L^4}\right]\nonumber\\&\leq \frac{\mu}{2}\|\mathfrak{F}(t)\|^{2}_{\V}+C\|\mathfrak{F}(t)\|^{2}_{\H}\left[\|\v_2(t)\|^4_{\widetilde{\L}^4}+\|\z(t)\|^4_{\wi\L^4}\right].
		\end{align}
		Due to \eqref{MO_c}, fourth term on the left hand side  of \eqref{U1} is positive. Therefore,
		\begin{align}\label{U2}
			&\frac{\d}{\d t}\|\mathfrak{F}(t)\|^2_{\H}\leq C\|\mathfrak{F}(t)\|^{2}_{\H}\left[\|\v_2(t)\|^4_{\widetilde{\L}^4}+\|\z(t)\|^4_{\wi\L^4}\right].
		\end{align}
		Applying Gronwall's inequality and using the fact that $\v_2,\z\in\mathrm{L}^{4}(0,T;\widetilde{\L}^{4})$ and $\mathfrak{F}(0)=\textbf{0}$, we obtain that $\v_1(t)=\v_2(t)$, for all $t\in[0,T]$, which completes the proof of uniqueness.
		\vskip 2mm
		\noindent
		\textbf{Step IV.}	\emph{Strong solution:} Let $r\in[1,3],$ $\x\in \V$, $\f\in\H$ and $\z\in\mathrm{L}^{\infty} (0, T; \V)\cap \mathrm{L}^2(0, T;\D(\A))$. Taking the inner product with $\A\v^n(\cdot)$ to the first equation of \eqref{cscbf_n}, we obtain
		\begin{align}\label{S18}
			&\frac{1}{2}\frac{\d}{\d t}\|\v^n(t)\|^2_{\V} +\mu\|\A\v^n(t)\|^2_{\H}+\alpha\|\v^n(t)\|^2_{\V}\nonumber\\&=   -b(\v^n(t)+\z^n(t),\v^n(t)+\z^n(t),\A\v^n(t))-\beta(\mathcal{C}\big(\v^n(t)+\z(t)\big),\A\v^n(t))   \nonumber\\&\quad+(\eta-\alpha)\big(\z(t), \A\v^n(t)\big)+\big(\f,\A\v^n(t)\big),\nonumber\\&=-b(\v^n(t),\v^n(t),\A\v^n(t))-b(\v^n(t),\z(t),\A\v^n(t))-b(\z(t),\v^n(t),\A\v^n(t))\nonumber\\&\quad-b(\z(t),\z(t),\A\v^n(t))-\beta(\mathcal{C}\big(\v^n(t)+\z(t)\big),\A\v^n(t))  +\big((\eta-\alpha)\z(t)+\f, \A\v^n(t)\big).
		\end{align}
		Using  \eqref{b1}, H\"older's  and Young's inequalities, we estimate
		\begin{align*}
			&\big|b(\v^n,\v^n,\A\v^n)+b(\v^n,\z,\A\v^n)+b(\z,\v^n,\A\v^n)+b(\z,\z,\A\v^n)\big|\nonumber\\&\leq  \frac{\mu}{2}\|\A\v^n\|^2_{\H} + C\|\v^n\|_{\H}^2 \|\v^n\|^4_{\V}+C\|\v^n\|_{\H}^2 \|\z\|^4_{\V}+C\|\z\|_{\H}\|\A\z\|_{\H} \|\v^n\|^2_{\V}+C\|\A\z\|_{\H} \|\z\|^3_{\V},
		\end{align*}
	and 
	\begin{align*}
			&|(\eta-\alpha)\big(\z,\A\v^n\big)+\big(\f, \A\v^n\big)|\nonumber\\&\leq \left|\eta-\alpha\right| \|\z\|_{\H}\|\A\v^n\|_{\H}	+\|\f\|_{\H}\|\A\v^n\|_{\H}\leq\frac{\mu}{2}\|\A\v^n\|^2_{\H} + C\|\z\|^2_{\V}+C\|\f\|^2_{\H}.
		\end{align*}
		Next, using  H\"older's inequality, Young's inequality and Sobolev's embedding, we find
		\begin{align}
				\big|(\mathcal{C}\big(\v^n+\z\big),\A\v^n)\big|&\leq \|\v^n+\z\|^{r}_{\wi \L^{2r}}\|\A\v^n\|_{\H}\leq \frac{\mu}{14}\|\A\v^n\|^2_{\H} + C\|\v^n+\z\|^{2r}_{\wi \L^{2r}}\nonumber\\&\leq \frac{\mu}{14}\|\A\v^n\|^2_{\H} + C\left[\|\z\|^{2r}_{\wi \L^{2r}}+ \|\v^n\|^{2r}_{\wi \L^{2r}}\right]\nonumber\\&\leq \frac{\mu}{14}\|\A\v^n\|^2_{\H} + C\left[\|\z\|^{2r}_{\V}+ \|\v^n\|^{2r}_{\wi \L^{2r}}\right].\label{S20}
		\end{align}
		We estimate the final term from \eqref{S20} using Gagliardo-Nirenberg's inequality (Theorem 1, \cite{Nirenberg}) as
		\begin{align}\label{S21}
			\|\v^n\|^{2r}_{\wi \L^{2r}}\leq\begin{cases}
				C\|\v\|^{2}_{\H}\|\v\|^{2r-2}_{\V} \leq C\|\v^n\|^{2}_{\H}+C\|\v^n\|^2_{\H}\|\v^n\|^4_{\V}, \ &\text{ for } r\in[1,3),\\
				C\|\v^n\|^{2}_{\H}\|\v^n\|^{4}_{\V}, \ \ \ \ &\text{ for } r=3.
			\end{cases}
		\end{align}
		We infer from the inequalities \eqref{S18}-\eqref{S21} that 
		\begin{align}\label{S22}
			&\frac{\d}{\d t}\|\v^n(t)\|^2_{\V} +\|\A\v^n(t)\|^2_{\H}+\|\v^n(t)\|^2_{\V}\leq\begin{cases}
				C [Q_1(t)\|\v^n(t)\|^2_{\V} + \widetilde{Q}_1(t)],  \ \text{ for } r\in[1,3),\\
				C[Q_2(t)\|\v^n(t)\|^2_{\V} +\widetilde{Q}_2(t)],  \ \text{ for } r=3,\\
			\end{cases} 
		\end{align}
		where
		\begin{align*}
			Q_1(t)&=\|\v^n(t)\|_{\H}^2\|\v^n(t)\|^2_{\V} + \|\z(t)\|_{\H}\|\A\z(t)\|_{\H},\ \\	Q_2(t)&=	Q_1(t)+\|\v^n(t)\|^{2}_{\H}\|\v^n(t)\|^2_{\V},\\
			\widetilde{Q}_1(t)&=  \|\v^n(t)\|_{\H}^2\|\z(t)\|^4_{\V}+ \|\A\z(t)\|_{\H}\|\z(t)\|^3_{\V} + \|\z(t)\|^{2r}_{\V} +\|\z(t)\|^2_{\V}+\|\f\|^2_{\H}+\|\v^n(t)\|^{2}_{\H},\\
			\widetilde{Q}_2(t)&=  \|\v^n(t)\|_{\H}^2\|\z(t)\|^4_{\V}+ \|\A\z(t)\|_{\H}\|\z(t)\|^3_{\V} + \|\z(t)\|^{6}_{\V} +\|\z(t)\|^2_{\V}+\|\f\|^2_{\H}.
		\end{align*}
		Note that  $\f\in\H$, $\z\in\mathrm{L}^{\infty} (0, T; \V)\cap \mathrm{L}^2(0, T;\D(\A))$ and $\v^n\in\mathrm{L}^{\infty} (0, T; \H)\cap \mathrm{L}^2(0, T;\V)$ imply that 
		\begin{align}\label{S23}
			\int_{0}^{T}Q_1(\zeta)\d\zeta<\infty,  \ \ \int_{0}^{T}\widetilde{Q}_1(\zeta)\d\zeta<\infty,  \ \ \int_{0}^{T}Q_2(\zeta)\d\zeta<\infty   \ \text{ and }\  \int_{0}^{T}\widetilde{Q}_2(\zeta)\d\zeta<\infty. 
		\end{align}
		Hence, an application of Gronwall's inequality gives that $\v^n\in\mathrm{L}^{\infty} (0, T; \V)\cap \mathrm{L}^2(0, T;\D(\A))$.
		
		For any arbitrary element $\boldsymbol{\psi}\in\H$, using H\"older's inequality and Sobolev embeddings, we have from \eqref{cscbf}
		\begin{align*}
			&\left|\left(\frac{\d\v^n(t)}{\d t},\boldsymbol{\psi}\right)\right|\nonumber\\&\leq \mu\left|(\A\v^n(t),\boldsymbol{\psi})\right|+\alpha\left|(\v^n(t),\boldsymbol{\psi})\right|+\left|b(\v^n(t)+\z(t),\v^n(t)+\z(t),\boldsymbol{\psi})\right|\nonumber\\&\quad+\beta\left|\left(\mathcal{C}(\v^n(t)+\z(t)),\boldsymbol{\psi}\right)\right| +(\eta-\alpha)\left|(\z(t),\boldsymbol{\psi})\right|+\left|(\f,\boldsymbol{\psi})\right|\nonumber\\&\leq C\left[\|\A\v^n(t)\|_{\H}+\|\v^n(t)\|_{\H}+\|\v^n(t)+\z(t)\|^2_{\wi\L^4}+\|\v^n(t)+\z(t)\|^r_{\wi\L^{2r}}+\|\z(t)\|_{\H}+\|\f\|_{\H}\right]\|\boldsymbol{\psi}\|_{\H},
		\end{align*}
		which gives that
		\begin{align*}
			&\int_{0}^{T}\left\|\frac{\d\v^n(t)}{\d t}\right\|^{2}_{\H}\d t\nonumber\\&\leq C\int_{0}^{T}\left[\|\A\v^n(t)\|^{2}_{\H}+\|\v^n(t)\|^2_{\H}+\|\v^n(t)+\z(t)\|^{4}_{\V}+\|\v^n(t)+\z(t)\|^{2r}_{\V}+\|\z(t)\|^{2}_{\H}+\|\f\|^{2}_{\H}\right]\d t.
		\end{align*}
		It implies that $\frac{\d \v^n}{\d t}\in \mathrm{L}^{2}(0,T;\H)$. From the fact that $ \v^n\in \mathrm{L}^{\infty}(0,T;\V)\cap \mathrm{L}^{2}(0,T;\D(\A))$, $\frac{\d \v^n}{\d t}\in \mathrm{L}^{2}(0,T;\H)$, and an application of the \emph{Banach Alaoglu theorem} yield the existence of  $\widehat{\v}\in\mathrm{L}^{\infty}(0,T;\V)\cap\mathrm{L}^{2}(0,T;\D(\A))$ and $\frac{\d \widehat{\v}}{\d t}\in \mathrm{L}^{2}(0,T;\H)$ such that
		\begin{align*}
			\v^n\xrightharpoonup{w^*}&\ \widehat{\v}\text{ in }	\mathrm{L}^{\infty}(0,T;\V),\\
			\v^n\xrightharpoonup{w}&\ \widehat{\v}\text{ in } \mathrm{L}^{2}(0,T;\D(\A)),\\
			\frac{\d \v^n}{\d t}\xrightharpoonup{w}&\frac{\d \widehat{\v}}{\d t} \text{ in }\mathrm{L}^{2}(0,T;\H).
		\end{align*}
		In view of the uniqueness of weak limit, we have $\widehat{\v}=\v$. Moreover, $ \v\in \mathrm{L}^{2}(0,T;\D(\A))$ and $\frac{\d \v}{\d t}\in \mathrm{L}^{2}(0,T;\H)$ implies that $\v\in  \mathrm{C}([0,T]; \V)$, which completes the proof.
	\end{proof}

	\begin{lemma}[\cite{KM}]\label{RDS_Conti1}
		Assume that, for $r\in[1,3]$ and for some $T >0$ fixed, $\x^n \to \x$ in $\H$, 
		\begin{align*}
			\z_n \to \z\ \text{ in }\ \mathrm{L}^{\infty} (0, T; \V)\cap \mathrm{L}^2(0, T;\D(\A)),\ \ \f_n \to \f \ \text{ in }\  \V'.
		\end{align*}
		Let us denote by $\v(t, \z)\x$, the solution of the problem \eqref{cscbf} and by $\v(t, \z_n)\x^n$, the solution of the problem \eqref{cscbf} with $\z, \f, \x$ being replaced by $\z_n, \f_n, \x^n$, respectively. Then $$\v(\cdot, \z_n)\x^n \to \v(\cdot, \z)\x \ \text{ in } \mathrm{L}^2 (0, T;\V).$$
		In particular, $\v(T, \z_n)\x^n \to \v(T, \z)\x$ in $\H$.
	\end{lemma}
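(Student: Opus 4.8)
The plan is to bound the difference of the two solutions by a Gronwall argument, with the convergence of the data entering as a vanishing forcing. Write $\v_n:=\v(\cdot,\z_n)\x^n$, $\v:=\v(\cdot,\z)\x$, $\mathfrak{F}_n:=\v_n-\v$, $\g_n:=\z_n-\z$, and set $\kappa_n:=\|\g_n\|_{\mathrm{L}^{\infty}(0,T;\V)}\to0$. Since $\x^n\to\x$ in $\H$, $\f_n\to\f$ in $\V'$ and $\z_n\to\z$ in $\mathrm{L}^{\infty}(0,T;\V)\cap\mathrm{L}^2(0,T;\D(\A))$, the three data sequences are bounded, so the energy estimates \eqref{S4}--\eqref{S6} from the proof of Theorem \ref{solution_v} show that $\{\v_n\}_{n\in\N}$ is bounded in $\mathrm{L}^{\infty}(0,T;\H)\cap\mathrm{L}^2(0,T;\V)\cap\mathrm{L}^{r+1}(0,T;\wi\L^{r+1})$ uniformly in $n$, and the same holds for $\v$; in particular $\{\v_n+\z_n\}$ and $\v+\z$ are bounded in $\mathrm{L}^4(0,T;\wi\L^4)$ uniformly. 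Subtracting the two copies of \eqref{cscbf}, the function $\mathfrak{F}_n$ solves, in the weak sense,
\begin{align*}
\frac{\d\mathfrak{F}_n}{\d t}&=-\mu\A\mathfrak{F}_n-\alpha\mathfrak{F}_n-\big[\B(\v_n+\z_n)-\B(\v+\z)\big]-\beta\big[\mathcal{C}(\v_n+\z_n)-\mathcal{C}(\v+\z)\big]\\&\quad+(\eta-\alpha)\g_n+(\f_n-\f),
\end{align*}
with $\mathfrak{F}_n(0)=(\x^n-\x)-\g_n(0)$, so $\|\mathfrak{F}_n(0)\|_{\H}\leq\|\x^n-\x\|_{\H}+\kappa_n\to0$. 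Testing with $\mathfrak{F}_n$ and using the energy equality --- valid because $\mathfrak{F}_n\in\mathrm{L}^2(0,T;\V)$ with $\frac{\d\mathfrak{F}_n}{\d t}\in\mathrm{L}^2(0,T;\V')$ when $r\in[1,3)$, and via the mollification argument of Step II in the proof of Theorem \ref{solution_v} when $r=3$ --- one obtains
\begin{align*}
\frac{1}{2}\frac{\d}{\d t}\|\mathfrak{F}_n\|_{\H}^2+\mu\|\mathfrak{F}_n\|_{\V}^2+\alpha\|\mathfrak{F}_n\|_{\H}^2&=-\big\langle\B(\v_n+\z_n)-\B(\v+\z),\mathfrak{F}_n\big\rangle-\beta\big\langle\mathcal{C}(\v_n+\z_n)-\mathcal{C}(\v+\z),\mathfrak{F}_n\big\rangle\\&\quad+(\eta-\alpha)(\g_n,\mathfrak{F}_n)+\langle\f_n-\f,\mathfrak{F}_n\rangle.
\end{align*}

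The crux is the convective term. Using $\B(x)-\B(y)=\B(x-y,x)+\B(y,x-y)$ with $x=\v_n+\z_n$ and $y=\v+\z$ (so $x-y=\mathfrak{F}_n+\g_n$) together with the cancellations \eqref{b0}, one gets
\begin{align*}
\big\langle\B(\v_n+\z_n)-\B(\v+\z),\mathfrak{F}_n\big\rangle&=-b(\mathfrak{F}_n,\mathfrak{F}_n,\v_n+\z_n)+b(\g_n,\v_n+\z_n,\mathfrak{F}_n)+b(\v+\z,\g_n,\mathfrak{F}_n).
\end{align*}
The first term is the only genuinely dangerous one: by \eqref{lady} and Young's inequality it is controlled by a small multiple of $\|\mathfrak{F}_n\|_{\V}^2$ plus $C\|\v_n+\z_n\|_{\wi\L^4}^4\|\mathfrak{F}_n\|_{\H}^2$, which supplies the Gronwall coefficient $\rho_n(t):=1+\|\v_n(t)+\z_n(t)\|_{\wi\L^4}^4$, bounded in $\mathrm{L}^1(0,T)$ uniformly in $n$ by the a priori bounds. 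Each of the other two terms carries a factor $\|\g_n\|_{\V}\leq\kappa_n$; by the Sobolev embedding $\V\hookrightarrow\wi\L^4$ and Young's inequality they are bounded by a small multiple of $\|\mathfrak{F}_n\|_{\V}^2$ plus $C\kappa_n^2$ times quantities bounded in $\mathrm{L}^1(0,T)$, so their time integrals are $O(\kappa_n^2)$.

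For the damping term, write $\mathfrak{F}_n=(\v_n+\z_n)-(\v+\z)-\g_n$; the monotonicity \eqref{MO_c} makes $\langle\mathcal{C}(\v_n+\z_n)-\mathcal{C}(\v+\z),(\v_n+\z_n)-(\v+\z)\rangle\geq0$, so this part may be discarded, leaving only $\beta\langle\mathcal{C}(\v_n+\z_n)-\mathcal{C}(\v+\z),\g_n\rangle$, which by H\"older's inequality is bounded by $\beta\big(\|\v_n+\z_n\|_{\wi\L^{r+1}}^{r}+\|\v+\z\|_{\wi\L^{r+1}}^{r}\big)\|\g_n\|_{\wi\L^{r+1}}$ with $\|\g_n\|_{\wi\L^{r+1}}\leq C\kappa_n$ (Sobolev embedding), so its time integral is $O(\kappa_n)$ by the uniform $\mathrm{L}^{r+1}(0,T;\wi\L^{r+1})$ bound. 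The remaining linear terms satisfy $|(\eta-\alpha)(\g_n,\mathfrak{F}_n)|\leq\frac{\alpha}{2}\|\mathfrak{F}_n\|_{\H}^2+C\kappa_n^2$, and $|\langle\f_n-\f,\mathfrak{F}_n\rangle|$ is bounded by a small multiple of $\|\mathfrak{F}_n\|_{\V}^2$ plus $C\|\f_n-\f\|_{\V'}^2$. Collecting everything and absorbing the $\|\mathfrak{F}_n\|_{\V}^2$- and $\alpha\|\mathfrak{F}_n\|_{\H}^2$-contributions into the left-hand side, we arrive at
\begin{align*}
\frac{\d}{\d t}\|\mathfrak{F}_n(t)\|_{\H}^2+\mu\|\mathfrak{F}_n(t)\|_{\V}^2&\leq C\rho_n(t)\|\mathfrak{F}_n(t)\|_{\H}^2+h_n(t),
\end{align*}
where $h_n\geq0$ collects the $\kappa_n$- and $\f_n$-dependent contributions, $\sup_n\int_0^T\rho_n(t)\d t<\infty$, and $\int_0^T h_n(t)\d t\leq C\big(\kappa_n+\kappa_n^2+\|\f_n-\f\|_{\V'}^2\big)\to0$.

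Gronwall's inequality then gives $\sup_{t\in[0,T]}\|\mathfrak{F}_n(t)\|_{\H}^2\leq e^{C\int_0^T\rho_n}\big(\|\mathfrak{F}_n(0)\|_{\H}^2+\int_0^T h_n\big)\to0$, which already proves $\v(T,\z_n)\x^n\to\v(T,\z)\x$ in $\H$; integrating the differential inequality over $[0,T]$ and using this $\H$-convergence,
\begin{align*}
\mu\int_0^T\|\mathfrak{F}_n(t)\|_{\V}^2\d t&\leq\|\mathfrak{F}_n(0)\|_{\H}^2+C\Big(\sup_{t\in[0,T]}\|\mathfrak{F}_n(t)\|_{\H}^2\Big)\int_0^T\rho_n(t)\d t+\int_0^T h_n(t)\d t\to0,
\end{align*}
that is, $\v(\cdot,\z_n)\x^n\to\v(\cdot,\z)\x$ in $\mathrm{L}^2(0,T;\V)$. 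The main obstacle is the bookkeeping of the convective term: one must isolate precisely the piece $b(\mathfrak{F}_n,\mathfrak{F}_n,\v_n+\z_n)$, which has to feed the Gronwall coefficient rather than be absorbed by the viscosity, while the $\g_n$-dependent remainders must be shown to vanish; a secondary technical point, already settled in the proof of Theorem \ref{solution_v}, is the validity of the energy equality at the critical exponent $r=3$.
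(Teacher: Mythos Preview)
Your argument is correct and is exactly the natural energy/Gronwall approach one expects here; the paper itself does not give a proof but simply refers to Theorem~4.8 of \cite{KM}, and your proof is fully consistent with the techniques already used in the paper (compare Step~III of Theorem~\ref{solution_v}, where the same decomposition of the trilinear term and the monotonicity \eqref{MO_c} drive the uniqueness argument). One cosmetic point: when you bound $\|\mathfrak{F}_n(0)\|_{\H}\leq\|\x^n-\x\|_{\H}+\kappa_n$, you are implicitly using the Poincar\'e inequality \eqref{2.1} to pass from $\|\g_n(0)\|_{\V}$ to $\|\g_n(0)\|_{\H}$, and that $\z_n,\z$ are continuous in time so that the $\mathrm{L}^\infty(0,T;\V)$-bound applies at $t=0$; both are harmless in the present setting.
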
	
	\begin{proof}
		See the proof of Theorem 4.8, \cite{KM}.
	\end{proof}
	\begin{lemma}[\cite{KM1}]\label{RDS_Conti}
		Assume that, for $r\in[1,3]$ and for some $T >0$ fixed, $\x^n \to \x$ in $\V$, 
		\begin{align*}
			\z_n \to \z\ \text{ in }\ \mathrm{L}^{\infty} (0, T; \V)\cap \mathrm{L}^2(0, T;\D(\A)),\ \ \f_n \to \f \ \text{ in }\ \mathrm{L}^2 (0, T; \H).
		\end{align*}
		Let us denote by $\v(t, \z)\x$, the solution of the problem \eqref{cscbf} and by $\v(t, \z_n)\x^n$, the solution of the problem \eqref{cscbf} with $\z, \f, \x$ being replaced by $\z_n, \f_n, \x^n$, respectively. Then $$\v(\cdot, \z_n)\x^n \to \v(\cdot, \z)\x \ \text{ in } \ \mathrm{C}([0,T];\V)\cap\mathrm{L}^2 (0, T;\D(\A)).$$
		In particular, $\v(T, \z_n)\x^n \to \v(T, \z)\x$ in $\V$.
	\end{lemma}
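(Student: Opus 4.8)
The plan is to obtain the convergence directly from an energy estimate in $\V$ for the \emph{difference} of the two solutions, closing a Gronwall argument whose remainder terms are forced to zero by the convergence of the data; this is the same mechanism as in Step IV of the proof of Theorem \ref{solution_v}. Write $\v:=\v(\cdot,\z)\x$, $\v_n:=\v(\cdot,\z_n)\x^n$, and set $\w_n:=\v_n-\v$, $\boldsymbol{\zeta}_n:=\z_n-\z$, $\g_n:=\f_n-\f$. The first step is to note that, the data being convergent, the sequences $\x^n$, $\z_n$, $\f_n$ are bounded in $\V$, in $\mathrm{L}^{\infty}(0,T;\V)\cap\mathrm{L}^2(0,T;\D(\A))$, and in $\mathrm{L}^2(0,T;\H)$ respectively; hence the a priori bounds \eqref{S22}--\eqref{S23} apply \emph{uniformly in $n$}, so that $\{\v_n\}$ and $\v$ are bounded in $\mathrm{L}^{\infty}(0,T;\V)\cap\mathrm{L}^2(0,T;\D(\A))$ and $\{\frac{\d\v_n}{\d t}\}$ is bounded in $\mathrm{L}^2(0,T;\H)$. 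In particular $\w_n\in\mathrm{C}([0,T];\V)\cap\mathrm{L}^2(0,T;\D(\A))$ with $\frac{\d\w_n}{\d t}\in\mathrm{L}^2(0,T;\H)$, which legitimises the computation of $\frac{\d}{\d t}\|\w_n\|_{\V}^2$ below.

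Next I would subtract the two copies of \eqref{cscbf}, test the difference against $\A\w_n$ in $\H$, and use the bilinearity of $\B$ together with $\v_n+\z_n=(\v+\z)+(\w_n+\boldsymbol{\zeta}_n)$ to get, for a.e.\ $t\in[0,T]$,
\begin{align*}
\frac{1}{2}\frac{\d}{\d t}\|\w_n\|_{\V}^2+\mu\|\A\w_n\|_{\H}^2+\alpha\|\w_n\|_{\V}^2
&=-b(\v+\z,\w_n+\boldsymbol{\zeta}_n,\A\w_n)-b(\w_n+\boldsymbol{\zeta}_n,\v+\z,\A\w_n)\\
&\quad-b(\w_n+\boldsymbol{\zeta}_n,\w_n+\boldsymbol{\zeta}_n,\A\w_n)-\beta\big(\mathcal{C}(\v_n+\z_n)-\mathcal{C}(\v+\z),\A\w_n\big)\\
&\quad+\big((\eta-\alpha)\boldsymbol{\zeta}_n+\g_n,\A\w_n\big).
\end{align*}
The trilinear terms are estimated with \eqref{b1} (or \eqref{b2}), Hölder's, Ladyzhenskaya's \eqref{lady} and Young's inequalities; the damping difference is handled by Taylor's formula together with the bound for $\mathcal{C}'$ from \eqref{29}, exactly as in the proof of Lemma \ref{Converge_c}, using the $2$D Sobolev embedding $\V\hookrightarrow\wi\L^{2r}$; the last term is bounded by a multiple of $\|\A\w_n\|_{\H}^2+\|\boldsymbol{\zeta}_n\|_{\V}^2+\|\g_n\|_{\H}^2$. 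Absorbing all the $\|\A\w_n\|_{\H}^2$-contributions on the left-hand side and using the uniform bounds from the first step, this yields a differential inequality
\begin{align*}
\frac{\d}{\d t}\|\w_n(t)\|_{\V}^2+\frac{\mu}{2}\|\A\w_n(t)\|_{\H}^2\le C\,\Psi_n(t)\,\|\w_n(t)\|_{\V}^2+C\,\mathcal{E}_n(t),
\end{align*}
where $\Psi_n$ is built only from the uniformly bounded quantities $\|\v\|_{\V}^2$, $\|\z\|_{\V}^2$, $\|\A\v\|_{\H}$, $\|\A\z\|_{\H}$, $\|\v_n\|_{\V}^2$, $\|\A\v_n\|_{\H}$ and thus satisfies $\sup_n\|\Psi_n\|_{\mathrm{L}^1(0,T)}<\infty$, while $\mathcal{E}_n$ collects the terms carrying a factor $\boldsymbol{\zeta}_n$ or $\g_n$, so that $\|\mathcal{E}_n\|_{\mathrm{L}^1(0,T)}\to0$ as $n\to\infty$ by the hypotheses $\z_n\to\z$ in $\mathrm{L}^{\infty}(0,T;\V)\cap\mathrm{L}^2(0,T;\D(\A))$ and $\f_n\to\f$ in $\mathrm{L}^2(0,T;\H)$.

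Finally, since $\v_n(0)=\x^n-\z_n(0)$ and $\v(0)=\x-\z(0)$, one has $\w_n(0)=(\x^n-\x)-(\z_n(0)-\z(0))\to\boldsymbol{0}$ in $\V$ (the uniform convergence $\z_n\to\z$ in $\mathrm{L}^{\infty}(0,T;\V)$ gives $\z_n(0)\to\z(0)$ in $\V$, as the Ornstein--Uhlenbeck paths lie in $\mathrm{C}([0,T];\V)$); Gronwall's inequality then gives
\begin{align*}
\sup_{t\in[0,T]}\|\w_n(t)\|_{\V}^2+\frac{\mu}{2}\int_0^T\|\A\w_n(t)\|_{\H}^2\,\d t\le\Big(\|\w_n(0)\|_{\V}^2+C\,\|\mathcal{E}_n\|_{\mathrm{L}^1(0,T)}\Big)\,e^{C\|\Psi_n\|_{\mathrm{L}^1(0,T)}}\longrightarrow 0,
\end{align*}
as $n\to\infty$, which is precisely the asserted convergence in $\mathrm{C}([0,T];\V)\cap\mathrm{L}^2(0,T;\D(\A))$, and hence in $\V$ at $t=T$. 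I expect the main obstacle to be the uniform-in-$n$ control of $\|\Psi_n\|_{\mathrm{L}^1(0,T)}$: this forces one first to secure the uniform $\mathrm{L}^{\infty}(0,T;\V)\cap\mathrm{L}^2(0,T;\D(\A))$ bounds for $\v_n$, which is where the $\V$-regularity theory of Theorem \ref{solution_v} and the boundedness of the convergent data are genuinely used; at the critical exponent $r=3$ one must additionally invoke the borderline Gagliardo--Nirenberg inequality \eqref{S21} (no subordinate lower-order term) to control $\|\v_n\|_{\wi\L^{2r}}^{2r}$. As an alternative first step one could deduce $\v_n\to\v$ in $\mathrm{L}^2(0,T;\V)$ from Lemma \ref{RDS_Conti1} and then bootstrap, but the difference estimate above is self-contained.
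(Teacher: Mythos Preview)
Your argument is correct and is precisely the standard route for this type of continuous-dependence result; the paper itself does not give an independent proof but simply refers to Theorem~3.8 in \cite{KM1}, where the same energy estimate in $\V$ for the difference $\w_n=\v_n-\v$, tested against $\A\w_n$ and closed by Gronwall's inequality, is carried out. Your handling of the trilinear terms via \eqref{b1}--\eqref{b2}, of the damping difference via Taylor's formula and the $2$D embedding $\V\hookrightarrow\widetilde{\L}^{2r}$ (as in \eqref{C_c3}), and of the uniform $\mathrm{L}^{\infty}(0,T;\V)\cap\mathrm{L}^2(0,T;\D(\A))$ bounds for $\v_n$ via Step~IV of Theorem~\ref{solution_v} is exactly what is needed, so there is nothing to add.
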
	
	\begin{proof}
		See the proof of Theorem 3.8, \cite{KM1}.
	\end{proof}
	\begin{definition}
		We define a map $\varphi^{\eta} : \mathbb{R}^+ \times \Omega \times \V \to \V$ by
		\begin{align}
			(t, \omega, \x) \mapsto \v^{\eta}(t)  + \z_{\eta}(\omega)(t) \in \V,
		\end{align}
		where $\v^{\eta}(t) = \v(t, \z_{\eta}(\omega)(t))(\x - \z_{\eta}(\omega)(0))$ is a solution to the problem \eqref{cscbf} with the initial condition $\x - \z_{\eta}(\omega)(0).$
	\end{definition}
	\begin{lemma}[\cite{KM}.]\label{alpha_ind}
		If $\eta_1, \eta_2 \geq 0$, then $\varphi^{\eta_1} = \varphi^{\eta_2}.$ 
	\end{lemma}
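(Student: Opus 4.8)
The plan is to reduce the claim to the \emph{linearity} of the Ornstein--Uhlenbeck equation \eqref{OUPe} together with the pathwise uniqueness of solutions to \eqref{cscbf} furnished by Theorem \ref{solution_v}. Fix $\omega\in\Omega$, $\x\in\V$ and $\f\in\H$; for $i=1,2$ write $\z_i:=\z_{\eta_i}(\omega)$, and let $\v^{\eta_i}(\cdot)$ be the unique solution of \eqref{cscbf} written with the Ornstein--Uhlenbeck process $\z=\z_i$, the parameter $\eta=\eta_i$ and initial datum $\x-\z_i(0)$, so that by definition $\varphi^{\eta_i}(t,\omega,\x)=\v^{\eta_i}(t)+\z_i(t)$. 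The first step is to observe that the difference $\boldsymbol{\zeta}:=\z_1-\z_2$ satisfies a \emph{deterministic} linear equation: subtracting the two instances of \eqref{OUPe}, the $\d\mathrm{W}$ terms cancel and
\[
\frac{\d\boldsymbol{\zeta}(t)}{\d t}=-\mu\A\boldsymbol{\zeta}(t)-\eta_1\z_1(t)+\eta_2\z_2(t),\qquad\boldsymbol{\zeta}(0)=\z_1(0)-\z_2(0),
\]
and, since $\z_1,\z_2\in\mathrm{L}^{\infty}_{\mathrm{loc}}([0,\infty);\D(\A))$ by Remark \ref{stationary}, the process $\boldsymbol{\zeta}$ lies in $\mathrm{C}([0,\infty);\V)\cap\mathrm{L}^2_{\mathrm{loc}}([0,\infty);\D(\A))$ with $\frac{\d\boldsymbol{\zeta}}{\d t}\in\mathrm{L}^2_{\mathrm{loc}}([0,\infty);\H)$.

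Next I would set $\w:=\v^{\eta_1}+\boldsymbol{\zeta}$ and show that $\w$ is exactly the solution of \eqref{cscbf} written with $\z=\z_2$, $\eta=\eta_2$ and initial datum $\x-\z_2(0)$. The regularity required in Definition \ref{def_v}/Theorem \ref{solution_v} is inherited from $\v^{\eta_1}$ and $\boldsymbol{\zeta}$, and the initial condition holds because $\w(0)=(\x-\z_1(0))+(\z_1(0)-\z_2(0))=\x-\z_2(0)$. The key identity is
\[
\w+\z_2=\v^{\eta_1}+\boldsymbol{\zeta}+\z_2=\v^{\eta_1}+\z_1,
\]
which makes the nonlinear terms agree: $\B(\w+\z_2)=\B(\v^{\eta_1}+\z_1)$ and $\mathcal{C}(\w+\z_2)=\mathcal{C}(\v^{\eta_1}+\z_1)$. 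Adding the equation for $\v^{\eta_1}$ (parameter $\eta_1$) to the equation for $\boldsymbol{\zeta}$, the terms $-\mu\A\v^{\eta_1}$ and $-\mu\A\boldsymbol{\zeta}$ combine to $-\mu\A\w$, while the zeroth-order linear terms collapse via
\[
-\alpha\v^{\eta_1}+(\eta_1-\alpha)\z_1-\eta_1\z_1+\eta_2\z_2=-\alpha(\v^{\eta_1}+\z_1)+\eta_2\z_2=-\alpha(\w+\z_2)+\eta_2\z_2=-\alpha\w+(\eta_2-\alpha)\z_2,
\]
so that $\w$ solves
\[
\frac{\d\w}{\d t}=-\mu\A\w-\B(\w+\z_2)-\alpha\w-\beta\mathcal{C}(\w+\z_2)+(\eta_2-\alpha)\z_2+\f,
\]
which is precisely \eqref{cscbf} with $\eta=\eta_2$ and noise $\z_2$.

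Finally, by the uniqueness assertion of Theorem \ref{solution_v}, $\w\equiv\v^{\eta_2}$ on $[0,\infty)$, and therefore
\[
\varphi^{\eta_1}(t,\omega,\x)=\v^{\eta_1}(t)+\z_1(t)=\w(t)-\boldsymbol{\zeta}(t)+\z_1(t)=\w(t)+\z_2(t)=\v^{\eta_2}(t)+\z_2(t)=\varphi^{\eta_2}(t,\omega,\x)
\]
for every $t\geq0$, $\omega\in\Omega$ and $\x\in\V$ (the case $\x\in\H$, $\f\in\V'$ is identical with the weak-solution notion), which is the assertion. The only point that needs care is the bookkeeping in the second paragraph — keeping track of the Darcy term $\alpha\u$, the coefficients $\eta_1,\eta_2$ in front of $\z_1,\z_2$, and the drift $\mu\A\boldsymbol{\zeta}$ hidden in the equation for $\boldsymbol{\zeta}$ — but this is a short algebraic computation requiring no estimates; everything rests on the linearity of \eqref{OUPe} and pathwise uniqueness for \eqref{cscbf}.
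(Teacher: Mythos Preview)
Your argument is correct. The paper does not supply its own proof of this lemma but simply cites \cite{KM}; the standard proof there proceeds along exactly the lines you give---exploiting that the difference $\z_{\eta_1}-\z_{\eta_2}$ satisfies a deterministic linear equation so that $\v^{\eta_1}+\z_{\eta_1}-\z_{\eta_2}$ solves the same problem \eqref{cscbf} (with $\eta=\eta_2$, $\z=\z_{\eta_2}$) as $\v^{\eta_2}$, and then invoking pathwise uniqueness. Your bookkeeping of the zeroth-order terms is accurate, and the regularity you claim for $\boldsymbol{\zeta}$ is available from Remark~\ref{stationary}.
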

	Using Lemma \ref{alpha_ind}, we denote $\varphi^{\eta}$  by $\varphi$.
	\begin{lemma}[\cite{KM1}]
		$(\varphi, \theta)$ is a random dynamical system.
	\end{lemma}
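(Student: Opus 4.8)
The plan is to verify the three defining properties of a measurable RDS from Section~\ref{sec3} --- joint measurability, the $\theta$-cocycle identity, and continuity in the initial datum --- with the underlying well-posedness supplied by Theorem~\ref{solution_v}. First I would record well-definedness: by Remark~\ref{stationary}, for every $\omega\in\Omega$ the path $\z_\eta(\omega)$ lies in $\mathrm{L}^{\infty}_{\mathrm{loc}}([0,\infty);\V)\cap\mathrm{L}^2_{\mathrm{loc}}([0,\infty);\D(\A))$ and $\z_\eta(\omega)(0)\in\V$, so that for $\x\in\V$ and $\f\in\H$ Theorem~\ref{solution_v} yields a unique strong solution $\v^\eta\in\mathrm{C}([0,\infty);\V)$ of \eqref{cscbf} with datum $\x-\z_\eta(\omega)(0)$; hence $\varphi(t,\omega)\x=\v^\eta(t)+\z_\eta(\omega)(t)\in\V$ for all $t\ge0$ and $\varphi(0,\omega)\x=\x$. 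Continuity in $\x$ is then immediate from Lemma~\ref{RDS_Conti}: taking $\z_n\equiv\z_\eta(\omega)$, $\f_n\equiv\f$ and $\x^n\to\x$ in $\V$ gives $\v(\cdot,\z_\eta(\omega))\x^n\to\v(\cdot,\z_\eta(\omega))\x$ in $\mathrm{C}([0,T];\V)$, whence $\varphi(t,\omega)\x^n\to\varphi(t,\omega)\x$ in $\V$.

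The heart of the argument is the cocycle identity, whose key ingredient is the shift-covariance of the stationary Ornstein--Uhlenbeck process: from the representation \eqref{oup} and the definition $\theta_s\omega(\cdot)=\omega(\cdot+s)-\omega(s)$, a change of variables in the It\^o integral gives, for all $s\in\R$ and $\omega\in\Omega$,
\begin{align}\label{PP-shift}
\z_\eta(\theta_s\omega)(t)=\z_\eta(\omega)(t+s),\qquad t\in\R.
\end{align}
Since \eqref{cscbf} is autonomous apart from the driving path, if $\v$ solves \eqref{cscbf} on $[0,\infty)$ with datum $\v_0$ and path $\z$, then $\tau\mapsto\v(s+\tau)$ solves the same problem with datum $\v(s)$ and path $\z(s+\cdot)$; by the uniqueness statement in Theorem~\ref{solution_v} (Step~III),
\begin{align}\label{PP-sg}
\v(s+t,\z)\v_0=\v\big(t,\z(s+\cdot)\big)\big(\v(s,\z)\v_0\big),\qquad s,t\ge0.
\end{align}
Specializing \eqref{PP-sg} to $\z=\z_\eta(\omega)$ and $\v_0=\x-\z_\eta(\omega)(0)$, using \eqref{PP-shift} to identify $\z_\eta(\omega)(s+\cdot)$ with $\z_\eta(\theta_s\omega)(\cdot)$ and to write $\varphi(s,\omega)\x-\z_\eta(\theta_s\omega)(0)=\v(s,\z_\eta(\omega))(\x-\z_\eta(\omega)(0))$, and then adding $\z_\eta(\theta_s\omega)(t)=\z_\eta(\omega)(t+s)$ to both sides, I obtain $\varphi(t+s,\omega)\x=\varphi(t,\theta_s\omega)\big(\varphi(s,\omega)\x\big)$.

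For measurability, for fixed $\omega$ the map $(t,\x)\mapsto\v^\eta(t)$ is continuous --- in $t$ because $\v^\eta\in\mathrm{C}([0,\infty);\V)$ and in $\x$ by Lemma~\ref{RDS_Conti} --- while for fixed $(t,\x)$ the map $\omega\mapsto\v^\eta(t)$ is measurable, since $\omega\mapsto\z_\eta(\omega)$ is measurable with values in $\mathrm{L}^{\infty}_{\mathrm{loc}}([0,\infty);\V)\cap\mathrm{L}^2_{\mathrm{loc}}([0,\infty);\D(\A))$ and, again by Lemma~\ref{RDS_Conti}, the solution operator $\z\mapsto\v(t,\z)\x$ is continuous from that space into $\V$. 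Because $\V$ is separable, a standard Carath\'eodory-type argument promotes these to joint $(\mathcal B(\R^+)\otimes\mathscr F\otimes\mathcal B(\V),\mathcal B(\V))$-measurability of $(t,\omega,\x)\mapsto\v^\eta(t)$; since $(t,\omega)\mapsto\z_\eta(\omega)(t)$ is jointly measurable too, $\varphi$ is jointly measurable, and as $\varphi$ is independent of $\eta$ by Lemma~\ref{alpha_ind} the whole structure is intrinsic.

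The one step I expect to require genuine care is the shift-covariance \eqref{PP-shift}, which is exactly what couples the probabilistic construction of $\z_\eta$ to the flow $\hat\theta$ on $\Omega$; once it is in place, the cocycle identity falls out of the uniqueness in Theorem~\ref{solution_v}, and joint measurability follows routinely from the continuous-dependence Lemma~\ref{RDS_Conti}.
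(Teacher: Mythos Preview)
Your proof is correct and follows the standard template for showing that a transformed SPDE generates an RDS: verify well-posedness via Theorem~\ref{solution_v}, derive the cocycle identity from the shift-covariance $\z_\eta(\theta_s\omega)(t)=\z_\eta(\omega)(t+s)$ together with uniqueness of solutions to \eqref{cscbf}, and obtain joint measurability from the continuous-dependence Lemma~\ref{RDS_Conti} via a Carath\'eodory argument. The paper itself does not supply a proof at all --- it simply cites \cite{KM1} --- so there is nothing to compare at the level of argument; your write-up is precisely the kind of verification that \cite{KM1} (and, before it, \cite{BL,KM}) carries out, and the key non-routine step you identify, namely \eqref{PP-shift}, is indeed the point where the stationarity of the Ornstein--Uhlenbeck process meets the flow $\theta$.
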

	Let us now define, for $\x \in \V,\ \omega \in \Omega,$ and $t\geq s,$
	\begin{align}\label{combine_sol}
		\u(t, s;\omega, \x) := \varphi(t-s; \theta_s \omega)\x = \v\big(t, s; \omega, \x - \z(s)\big) + \z(t),
	\end{align}
	then for each $s\in \mathbb{R}$ and each $\x \in \V,$ the process $\u(t), \ t\geq s,$ is a solution to the problem \eqref{S-CBF}.

	\section{Random attractors for 2D SCBF equations in $\V$}\label{sec4}\setcounter{equation}{0}
	The existence of random attractors in $\V$ for the 2D SCBF equations \eqref{S-CBF} is established in this section. We consider the RDS $\varphi$ over the metric DS $(\Omega, \hat{\mathscr{F}}, \hat{\mathbb{P}}, \hat{\theta})$.
	\begin{lemma}\label{weak_topo1}
		Let	$\v(t, \v_0)$ be the unique solution to the initial value problem \eqref{cscbf} with initial condition  $\v_0 \in \H $, and with a deterministic function $\z \in \mathrm{L}^{\infty}_{loc}(\R^+; \V)\cap \mathrm{L}^2_{loc} (\R^+; \D(\A))$. For $r\in[1,3]$ and for $T>0,$ if $\boldsymbol{y}_n\to \boldsymbol{y}$ in $\V$ weakly, then 
		\begin{align}
			\v(t, \boldsymbol{y}_n) \xrightharpoonup{w} \v(t, \boldsymbol{y})\  \text{ in } \ &\V,\label{Weak1}\\
			\v(\cdot, \boldsymbol{y}_n) \xrightharpoonup{w}  \v(\cdot, \boldsymbol{y})\  \text{ in } \ &\mathrm{L}^2 (0, T; \D(\A) ).\label{Weak2}
		\end{align}
	\end{lemma}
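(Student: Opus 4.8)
The plan is to combine the uniform a priori estimates for the regular solution from Theorem \ref{solution_v} with the Aubin--Lions lemma on bounded subdomains and a weak-topology Arzel\`a--Ascoli argument, and then to identify the limit through the weak formulation and the uniqueness statement of Theorem \ref{solution_v}. First, since $\boldsymbol{y}_n\xrightharpoonup{w}\boldsymbol{y}$ in $\V$, the sequence $\{\boldsymbol{y}_n\}$ is bounded in $\V$, say $\|\boldsymbol{y}_n\|_{\V}\leq\rho$. As $\f\in\H$ and $\z\in\mathrm{L}^{\infty}_{loc}(\R^+;\V)\cap\mathrm{L}^2_{loc}(\R^+;\D(\A))$ are fixed, the two successive Gronwall arguments carried out in Step IV of the proof of Theorem \ref{solution_v} (first for $\mathrm{L}^{\infty}(0,T;\H)\cap\mathrm{L}^2(0,T;\V)$, then for $\mathrm{L}^{\infty}(0,T;\V)\cap\mathrm{L}^2(0,T;\D(\A))$ together with the bound on the time derivative in $\mathrm{L}^2(0,T;\H)$) depend on the initial datum only through $\rho$. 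Hence, writing $\v_n(\cdot):=\v(\cdot,\boldsymbol{y}_n)$, there is $M=M(\rho,T,\f,\z)$ with
\begin{align*}
\|\v_n\|_{\mathrm{L}^{\infty}(0,T;\V)}+\|\v_n\|_{\mathrm{L}^{2}(0,T;\D(\A))}+\left\|\tfrac{\d\v_n}{\d t}\right\|_{\mathrm{L}^{2}(0,T;\H)}\leq M,\qquad n\in\N,
\end{align*}
and by the Banach--Alaoglu theorem we may extract a subsequence (not relabelled) converging to some $\v_{*}\in\mathrm{L}^{\infty}(0,T;\V)\cap\mathrm{L}^{2}(0,T;\D(\A))$, with $\frac{\d\v_{*}}{\d t}\in\mathrm{L}^{2}(0,T;\H)$, in the weak-$*$ topology of $\mathrm{L}^{\infty}(0,T;\V)$, the weak topology of $\mathrm{L}^{2}(0,T;\D(\A))$, and with $\frac{\d\v_n}{\d t}\xrightharpoonup{w}\frac{\d\v_{*}}{\d t}$ in $\mathrm{L}^{2}(0,T;\H)$.

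Next, restricting to $\mathcal{O}_R=\mathcal{O}\cap\{|x|<R\}$, the embedding $\H^1_0(\mathcal{O}_R)\subset\L^2(\mathcal{O}_R)$ is compact, so the Aubin--Lions lemma gives $\v_n\to\v_{*}$ strongly in $\mathrm{L}^2(0,T;\L^2(\mathcal{O}_R))$ for every $R$. The bound on $\frac{\d\v_n}{\d t}$ also yields the equicontinuity estimate $\|\v_n(t)-\v_n(s)\|_{\H}\leq M|t-s|^{1/2}$, which together with $\sup_{t\in[0,T]}\|\v_n(t)\|_{\V}\leq M$ and the reflexivity of $\H$ gives, along a further subsequence, $\v_n\to\v_{*}$ in $\mathrm{C}([0,T];\H_w)$, where $\H_w$ denotes $\H$ with its weak topology (weak Arzel\`a--Ascoli). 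In particular $\v_n(t)\xrightharpoonup{w}\v_{*}(t)$ in $\H$ for every $t\in[0,T]$; since $\|\v_n(t)\|_{\V}\leq M$ and $\V$ is reflexive, this improves to $\v_n(t)\xrightharpoonup{w}\v_{*}(t)$ in $\V$ for each $t$, and moreover $\v_{*}(0)=\lim_n\boldsymbol{y}_n=\boldsymbol{y}$.

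It remains to show that $\v_{*}$ solves \eqref{cscbf} with datum $\boldsymbol{y}$. Fix $\psi\in\mathrm{C}^1([0,T])$ and $\boldsymbol{\phi}\in\bigcup_{m\geq1}\H_m$, so that $\mathrm{supp}\,\boldsymbol{\phi}\subset\mathcal{O}_R$ for some $R$. Testing \eqref{W-CSCBF} written for $\v_n$ against $\psi(t)\boldsymbol{\phi}$ and integrating over $[0,T]$, the linear terms pass to the limit by the weak convergences above, the convective term converges after splitting off the $\z$-parts exactly as in \eqref{S14} (using Lemma \ref{convergence_b*} and the strong convergence in $\mathrm{L}^2(0,T;\L^2(\mathcal{O}_R))$), and the Forchheimer term converges by Lemma \ref{convergence_c2_1} with $\w=\z$. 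Hence $\v_{*}$ satisfies \eqref{W-CSCBF} for all such $\boldsymbol{\phi}$, and by density for all $\boldsymbol{\phi}\in\V$, with $\v_{*}(0)=\boldsymbol{y}$; the uniqueness part of Theorem \ref{solution_v} then forces $\v_{*}=\v(\cdot,\boldsymbol{y})$. Since every subsequence of $\{\v_n\}$ has a further subsequence converging, in the above senses, to this same limit, the whole sequence converges, which yields \eqref{Weak2} and, via the pointwise-in-time statement, \eqref{Weak1}.

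The main obstacle, relative to the bounded-domain case, is the non-compactness of the embedding $\V\subset\H$ on the unbounded domain $\mathcal{O}$: only local strong convergence on the subdomains $\mathcal{O}_R$ is available, which is precisely why the test functions in the identification step must be taken compactly supported and why Lemmas \ref{convergence_b*} and \ref{convergence_c2_1} (tailored to such $\psi$) are invoked for the nonlinear terms. A second delicate point is upgrading the $\mathrm{L}^2$-in-time weak convergence to weak convergence in $\V$ at \emph{every} fixed time $t$, rather than almost every $t$, which is exactly what the equicontinuity-into-$\H$ estimate and the weak Arzel\`a--Ascoli argument are designed to provide.
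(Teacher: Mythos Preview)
Your proof is correct and follows essentially the same strategy as the paper's: uniform bounds from Theorem \ref{solution_v}, weak compactness, local strong compactness on bounded subdomains, identification of the limit through the weak formulation and uniqueness, and pointwise weak-$\V$ convergence via an equicontinuity/Arzel\`a--Ascoli argument. The paper differs only in minor technical choices---it extracts the local strong convergence at the $\H^1$ rather than the $\L^2$ level (via the compact embedding $\H^2(\mathcal{O}_R)\hookrightarrow\H^1_0(\mathcal{O}_R)$ and a translation-compactness criterion) and applies Arzel\`a--Ascoli directly to the scalar functions $t\mapsto(\!(\v_n(t),\boldsymbol{\phi})\!)$ for $\boldsymbol{\phi}\in\mathcal{V}$ rather than passing through $\H_w$ first---but the overall structure is the same.
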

	\begin{proof}
		Assume that $\{\boldsymbol{y}_n\}_{n\in \N}$ is an $\V$-valued sequence such that $\boldsymbol{y}_n$ converges to $\boldsymbol{y}\in \V$ weakly. Let $\v_n(t)= \v(t, \boldsymbol{y}_n)$ and $\v(t)=\v(t, \boldsymbol{y})$. Let us first prove \eqref{Weak2}. Since  $\{\boldsymbol{y}_n\}_n$ is a bounded sequence in $\V$, we infer that 
		\begin{align}\label{Bounded}
			\text{ the sequence } \ \{\v_n\}_{n\in \N} \ \text{ is bounded in }\  \mathrm{L}^{\infty}(0, T; \V)\cap\mathrm{L}^{2}(0, T; \D(\A)).
		\end{align}
		Hence, by the Banach-Alaoglu theorem, there exists a subsequence $\{\v_{n'}\}_{n'\in \N}$ of $\{\v_n\}_{n\in \N}$ and  $\widetilde{\v}\in \mathrm{L}^{\infty}(0, T; \H)\cap\mathrm{L}^{2}(0, T; \V)$, such that, as $n' \to \infty,$
		\begin{align}
			\v_{n'} \xrightharpoonup{w^*} \widetilde{\v}  &\ \text{ in } \ \mathrm{L}^{\infty}(0, T; \V),\label{lim1}\\ \v_{n'} \xrightharpoonup{w} \widetilde{\v} &\ \text{ in }\ \mathrm{L}^{2}(0, T; \D(\A)),\label{lim2}
		\end{align}
		as $n' \to \infty$. For $\boldsymbol{\psi}\in\mathrm{L}^{2}(0,T;\H)$, we consider  
		\begin{align*}
			&	\int_0^T\bigg(\frac{\d\v_{n}(t)}{\d t},\boldsymbol{\psi}(t)\bigg)\d t\\&\leq \mu\int_0^T\|\A\v_n(t)\|_{\H}\|\boldsymbol{\psi}(t)\|_{\H}\d t+\int_0^T\|\v_n (t)+ \z(t)\|^2_{\wi \L^4}\|\boldsymbol{\psi}(t)\|_{\H}\d t + \alpha\int_{0}^{T}\|\v_n(t)\|_{\H}\|\boldsymbol{\psi}(t)\|_{\H}\d t\nonumber\\&+\beta\int_0^T\|\v_n (t)+ \z(t)\|^r_{\wi \L^{2r}}\|\boldsymbol{\psi}(t)\|_{\H}\d t+\left|\eta-\alpha\right|\int_0^T\|\z(t)\|_{\H}\|\boldsymbol{\psi}(t)\|_{\H}\d t+\int_0^T\|\f\|_{\H}\|\boldsymbol{\psi}(t)\|_{\H}\d t\nonumber\\&\leq C\bigg\{\left(\int_0^T\|\A\v_n(t)\|_{\H}^2\d t\right)^{1/2}+\left(\int_0^T\|\v_n (t)+ \z(t)\|_{\V}^4\d t\right)^{1/2}+\left(\int_0^T\|\z(t)\|_{\V}^2\d t\right)^{1/2}\nonumber\\&+\left(\int_0^T\|\v_n(t)\|_{\V}^2\d t\right)^{1/2}+T^{\frac{1}{2}}\|\f\|_{\H}+\left(\int_0^T\|\v_n(t)+\z(t)\|_{\V}^{2r}\d t\right)^{\frac{1}{2}}\bigg\}\left(\int_0^T\|\boldsymbol{\psi}(t)\|_{\H}^{2}\d t\right)^{\frac{1}{2}},
		\end{align*}
		and thus, we obtain $\big\|\frac{\d\v_{n}}{\d t}\big\|_{\mathrm{L}^{2}(0, T; \H)}\leq C,$ for some $C>0$ independent of $n$. Hence by the Cauchy-Schwartz inequality, for all $0\leq t \leq t+a \leq T$, $\boldsymbol{\phi}\in\D(\A)$ and $n\in \N,$ we have 
		\begin{align}\label{V'}
			|(\!(\v_n(t+a)-\v_n(t), \boldsymbol{\phi})\!)|\leq \int_{t}^{t+a}\bigg|\bigg(\frac{\d\v_{n}(s)}{\d t} , \A\boldsymbol{\phi} \bigg)\bigg|\d s\leq C \|\boldsymbol{\phi}\|_{\D(\A)} \sqrt{a}.
		\end{align}
		For $\boldsymbol{\phi} =\v_n(t+a)-\v_n(t)$ and for a.e. $t\in (0, T)$,  from \eqref{V'} we get
		$$\|\v_n(t+a)-\v_n(t)\|^2_{\V}\leq C\sqrt{a}\|\v_n(t+a)-\v_n(t)\|_{\D(\A)}.$$
		Integrating from $0$ to $T-a$ and, using the Cauchy-Schwarz inequality and \eqref{Bounded}, we further deduce that
		\begin{align}
			\int_{0}^{T-a}\|\v_n(t+a)-\v_n(t)\|^2_{\V}\d t&\leq C\sqrt{a}\int_{0}^{T-a}\|\v_n(t+a)-\v_n(t)\|_{\D(\A)}\d t\nonumber\\&\leq C\sqrt{a}(T-a)^{1/2}\left(\int_{0}^{T-a}\|\v_n(t+a)-\v_n(t)\|^2_{\D(\A)}\d t\right)^{1/2}\nonumber\\&\leq C\sqrt{a}.
		\end{align}Furthermore, we have 
		\begin{align}
			\lim_{a\to 0}\sup_{n} \int_{0}^{T-a}\|\v_n(t+a)-\v_n(t)\|^2_{\H^1(\mathcal{O}_{R})}\d t=0,
		\end{align}
		for all $R>0$, where $\mathcal{O}_{R}= \mathcal{O}\cap\{x\in \R^2:|x|\leq R\}.$ 
		Due to compact embedding $\H^2(\mathcal{O}_{R})\hookrightarrow\H^1_0(\mathcal{O}_{R})$, we have (see Theorem  13.3, \cite{Temam1}) $$\v_{n'} \to \widetilde{\v}\  \text{ strongly in } \ \mathrm{L}^2(0,T; \H_0^1(\mathcal{O}_{R})).$$ 
		The convergences given above can be used to pass limit in the equation \eqref{cscbf} for $\v_{n}$ and deduce that $\widetilde{\v}$ is a solution of \eqref{cscbf} with initial data $\y$ at time $0$. Since the system \eqref{cscbf} has a unique solution, we infer that $\widetilde{\v}=\v.$ By using a standard contradiction argument, we conclude that the whole sequence $$\v_n\to \v\ \text{ in }\  \mathrm{L}^{2}(0, T; \D(\A))\text{ weakly},$$ and hence the convergence given in \eqref{Weak2} is established.
		
		Let us now prove the convergence \eqref{Weak1}. Take any function $\boldsymbol{\phi} \in \mathcal{V}.$ Then, by \eqref{lim1}, for a.e. $t\in[0, T],$ we have  $$(\!(\v_n(t), \boldsymbol{\phi})\!) \to (\!(\v(t), \boldsymbol{\phi})\!).$$ Since $\{\v_n\}_{n\in\N}$ is a bounded sequence in $\mathrm{L}^{\infty}(0,T; \V), \{(\!(\v_n(\cdot), \boldsymbol{\phi})\!)\}_{n\in \N}$ is uniformly bounded on $[0, T].$ Also, the estimate \eqref{V'} shows that the sequence $\{(\!(\v_n(\cdot), \boldsymbol{\phi})\!)\}_{n\in \N}$ is uniformly equicontinuous on $[0, T].$ Hence, there exists a subsequence $\{(\!(\v_{n'}(\cdot), \boldsymbol{\phi})\!)\}_{n'\in \N}$ of $\{(\!(\v_n(\cdot), \boldsymbol{\phi})\!)\}_{n\in \N}$ (by the Arzela-Ascoli theorem), such that $$(\!(\v_{n'}(\cdot), \boldsymbol{\phi})\!)\to (\!(\v(\cdot), \boldsymbol{\phi})\!) \text{ uniformly on } [0, T].$$ Again, using the standard contradiction argument, we assert that 
		$$
		(\!(\v_{n}(\cdot), \boldsymbol{\phi})\!) \to (\!(\v(\cdot), \boldsymbol{\phi})\!) \text{ uniformly on  } [0, T].
		$$
		Using the density of $\mathcal{V}$ in $\V$ and $\sup\limits_{t  \in [0, T]} \|\v_n(t)\|_{\V}< \infty$,  for any $\boldsymbol{\phi}\in \V,$
		\begin{align*}
			(\!(\v_{n}(\cdot), \boldsymbol{\phi})\!) \to (\!(\v(\cdot), \boldsymbol{\phi})\!) \text{ uniformly on  } [0, T],
		\end{align*}
		which completes the proof.
	\end{proof}
	
	\begin{lemma}[Lemma 4.1 \cite{KM1}]\label{Bddns4}
		For each $\omega\in \Omega,$ 
		\begin{align*}
			\limsup_{t\to - \infty} \|\z(\omega)(t)\|^2_{\H}\  e^{\alpha t +\frac{4}{ \mu} \int_{t}^{0}\|\z(\zeta)\|^{2}_{\V}\d\zeta} = 0.
		\end{align*}
	\end{lemma}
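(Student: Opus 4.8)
The plan is to strip off the exponential factor using Corollary \ref{Bddns1} and then reduce the statement to the temperedness of the Ornstein--Uhlenbeck process $\z$. Fix $\omega\in\Omega$ and write $\z(t)=\z_\eta(\omega)(t)$. For $t<0$ one has $\int_t^0=\int_{-|t|}^0$, so Corollary \ref{Bddns1} provides $t_0=t_0(\omega)\ge0$ with
\[
\frac{4}{\mu}\int_t^0\|\z(\zeta)\|_{\V}^2\,\d\zeta\le\frac{4}{\mu}\int_t^0\|\z(\zeta)\|_{\mathrm{X}}^2\,\d\zeta\le-\frac{\alpha t}{2},\qquad t\le -t_0 .
\]
Hence $\alpha t+\frac{4}{\mu}\int_t^0\|\z(\zeta)\|_{\V}^2\,\d\zeta\le\frac{\alpha t}{2}$ for $t\le-t_0$, and since $\mathrm{X}=\V\cap\H^2(\mathcal{O})$ embeds continuously into $\H$ (using \eqref{2.1}), there is $C>0$ with
\[
0\le\|\z(t)\|_{\H}^2\,e^{\alpha t+\frac{4}{\mu}\int_t^0\|\z(\zeta)\|_{\V}^2\d\zeta}\le C\,\|\z(t)\|_{\mathrm{X}}^2\,e^{\alpha t/2},\qquad t\le-t_0 .
\]
Thus it suffices to show that $\|\z(t)\|_{\mathrm{X}}^2\,e^{\alpha t/2}\to0$ as $t\to-\infty$.

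I would obtain this from the strong law \eqref{SLLN}, which holds at the chosen $\omega$ by construction of $\Omega$, together with the path regularity of $\z$. By Lemma \ref{SOUP} and \eqref{E-OUP}, $\z_\eta$ is an $\mathrm{X}$-valued stationary and ergodic process with $\mathbb{E}\|\z_\eta(0)\|_{\mathrm{X}}^2<\infty$, and by Remark \ref{stationary} (and the discussion around \eqref{SLLN}) its paths lie in $\C^{\xi}_{1/2}(\R,\mathrm{X})$, so they are continuous into $\mathrm{X}$ with a modulus of continuity growing at most like $(1+|t|)^{1/2}$. I claim $e^{\gamma t}\|\z(t)\|_{\mathrm{X}}^2\to0$ as $t\to-\infty$ for every $\gamma>0$. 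If this failed for some $\gamma$, there would be $c>0$ and $t_n\to-\infty$ with $\|\z(t_n)\|_{\mathrm{X}}^2\ge c\,e^{-\gamma t_n}$; the above modulus then forces $\|\z(s)\|_{\mathrm{X}}^2\ge\tfrac{c}{4}e^{-\gamma t_n}$ for every $s$ in a fixed-length interval around $t_n$ once $n$ is large, since the exponentially large lower bound dominates the $(1+|t_n|)^{1/2}$ growth of the modulus. Integrating over that interval gives
\[
\frac{1}{|t_n|+1}\int_{-(|t_n|+1)}^{0}\|\z(\zeta)\|_{\mathrm{X}}^2\,\d\zeta\ \ge\ \frac{\mathrm{const}\cdot e^{-\gamma t_n}}{|t_n|+1}\ \longrightarrow\ \infty ,
\]
contradicting the convergence of the left-hand side to $\mathbb{E}\|\z_\eta(0)\|_{\mathrm{X}}^2<\infty$ in \eqref{SLLN}. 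Taking $\gamma=\alpha/2$ and combining with the first paragraph yields $\limsup_{t\to-\infty}\|\z(t)\|_{\H}^2\,e^{\alpha t+\frac{4}{\mu}\int_t^0\|\z(\zeta)\|_{\V}^2\d\zeta}=0$.

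The routine part is the bookkeeping in the first paragraph (Corollary \ref{Bddns1} plus the embeddings); the only point requiring care is the temperedness step, and there the essential ingredient is that $\z$ has genuinely continuous --- not merely $\mathrm{L}^{\infty}_{\mathrm{loc}}$ --- paths in the $\mathrm{X}$-norm, so that an exponentially large value at an isolated time cannot be invisible to the time average appearing in \eqref{SLLN}. Equivalently one may cite the standard temperedness criterion for stationary ergodic processes admitting an integrable $\sup_{[0,1]}$-majorant, which $\z_\eta$ possesses by Gaussianity together with \eqref{E-OUP} and Remark \ref{stationary}.
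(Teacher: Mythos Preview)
The paper does not prove this lemma here; it simply cites it as Lemma~4.1 of \cite{KM1}. Your argument is correct, and the first paragraph (reduction via Corollary~\ref{Bddns1} and the embedding $\mathrm{X}\hookrightarrow\H$) is exactly the right start.

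Your temperedness step, however, is more elaborate than necessary, and the detour through \eqref{SLLN} can be removed using the very hypothesis you invoke. Once you grant that $\z_\eta$ has paths in $\C^{\xi}_{1/2}(\R,\mathrm{X})$ (this is the content of Proposition~6.10 in \cite{BL}, referenced in Lemma~\ref{SOUP}, and is what the phrase ``on $\C^{\xi}_{1/2}(\R,\mathrm{X})$'' after \eqref{SLLN} is pointing to), the H\"older modulus applied between $0$ and $t$ gives directly
\[
\|\z(t)\|_{\mathrm{X}}\le\|\z(0)\|_{\mathrm{X}}+M\,|t|^{\xi}(1+|t|)^{1/2}\le C_\omega\,(1+|t|)^{\xi+1/2},
\]
so $\|\z(t)\|_{\mathrm{X}}^2\,e^{\alpha t/2}\le C_\omega^2(1+|t|)^{2\xi+1}e^{\alpha t/2}\to0$ as $t\to-\infty$. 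This is the route taken in \cite{KM1} (and in \cite{BL}): polynomial pathwise growth of the stationary Ornstein--Uhlenbeck process, combined with Corollary~\ref{Bddns1}, yields the claim in two lines. Your contradiction argument via the ergodic average is valid, but it only re-derives a polynomial growth bound that is already implied by the path-space membership you assumed. The alternative you mention at the end---temperedness of stationary ergodic processes with integrable $\sup_{[0,1]}$-majorant---would also work and is closer in spirit to how such facts are usually packaged.
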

	\begin{lemma}[Lemma 4.2 \cite{KM1}]\label{Bddns5}
		For each $\omega\in \Omega,$
		\begin{align*}
			\int_{- \infty}^{0} \bigg\{ 1 + \|\z(t)\|^{2}_{\V} + \|\z(s)\|^{r+1}_{\V} + \|\z(t)\|^4_{\V}  \bigg\}e^{\alpha t +\frac{4}{\mu} \int_{t}^{0}\|\z(\zeta)\|^{2}_{\V}\d\zeta} \d t < \infty.
		\end{align*}
	\end{lemma}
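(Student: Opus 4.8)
The plan is to split the integral at the random time $-t_0(\omega)$ furnished by Corollary~\ref{Bddns1} and to estimate the bounded part and the far tail separately. By Remark~\ref{stationary} the map $t\mapsto 1+\|\z(t)\|^{2}_{\V}+\|\z(t)\|^{r+1}_{\V}+\|\z(t)\|^{4}_{\V}$ belongs to $\mathrm{L}^1$ of every bounded interval (since $\z\in\mathrm{L}^q_{\mathrm{loc}}(\R;\mathrm{X})$ for all $q$ and $\|\cdot\|_{\V}\le\|\cdot\|_{\mathrm{X}}$), and by Corollary~\ref{Bddns1}, for $t\le -t_0$ we have $|t|\ge t_0$, hence
\begin{align*}
\frac{4}{\mu}\int_{t}^{0}\|\z(\zeta)\|^{2}_{\V}\,\d\zeta=\frac{4}{\mu}\int_{-|t|}^{0}\|\z(\zeta)\|^{2}_{\V}\,\d\zeta\le\frac{\alpha|t|}{2}=-\frac{\alpha t}{2},
\end{align*}
so that the exponent satisfies $\alpha t+\frac{4}{\mu}\int_{t}^{0}\|\z(\zeta)\|^{2}_{\V}\d\zeta\le\frac{\alpha t}{2}\le 0$ on $(-\infty,-t_0]$. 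Consequently the whole weight is bounded on $(-\infty,0]$, the contribution of any bounded subinterval is finite, and the lemma reduces to showing
\begin{align*}
\int_{-\infty}^{-t_0}\Big(1+\|\z(t)\|^{2}_{\V}+\|\z(t)\|^{r+1}_{\V}+\|\z(t)\|^{4}_{\V}\Big)\,e^{\alpha t/2}\,\d t<\infty.
\end{align*}

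To establish this I would decompose $(-\infty,-t_0]$ into the compact interval $[-n_0,-t_0]$ with $n_0:=\lceil t_0\rceil$ (whose contribution is finite) and the unit intervals $[-n-1,-n)$ for $n\ge n_0$, on each of which $|t|\ge n\ge t_0$ and $e^{\alpha t/2}\le e^{-\alpha n/2}$. Using the stationarity relation $\z(\omega)(t-n)=\z(\theta_{-n}\omega)(t)$ of the Ornstein--Uhlenbeck process (Lemma~\ref{SOUP}), the integral over $(-\infty,-n_0]$ is then dominated by $\sum_{n\ge n_0}e^{-\alpha n/2}\,g(\theta_{-n}\omega)$, where
\begin{align*}
g(\omega):=\int_{-1}^{0}\Big(1+\|\z(\omega)(t)\|^{2}_{\V}+\|\z(\omega)(t)\|^{r+1}_{\V}+\|\z(\omega)(t)\|^{4}_{\V}\Big)\,\d t.
\end{align*}
Since $\z$ is a Gaussian process with values in the Hilbert space $\mathrm{X}=\V\cap\H^{2}(\mathcal{O})$ with finite second moment \eqref{E-OUP}, all its $\mathrm{X}$-moments are finite, so by Fubini and $\|\cdot\|_{\V}\le\|\cdot\|_{\mathrm{X}}$ one gets $g\in\mathrm{L}^1(\Omega)$. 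The pointwise ergodic theorem --- the same device that yields \eqref{SLLN} --- then gives $\frac1N\sum_{n=1}^{N}g(\theta_{-n}\omega)\to\mathbb{E}[g]$ for $\omega\in\Omega$, whence $g(\theta_{-n}\omega)=o(n)$ and in particular $g(\theta_{-n}\omega)\le n$ for all large $n$. As $\sum_{n}n\,e^{-\alpha n/2}<\infty$, the series converges, and adding the finite contribution of $[-n_0,0]$ proves the claim.

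The main obstacle is precisely the tail estimate: one must rule out exponential (in fact super-polynomial) growth of the unit-interval integrals $t\mapsto\int_{t-1}^{t}\|\z(s)\|^{p}_{\V}\,\d s$ as $t\to-\infty$ for $p\in\{2,r+1,4\}$, and this is where the stationarity and ergodicity of the Ornstein--Uhlenbeck process together with the finiteness of its Gaussian moments are essential; everything else is bookkeeping built on Corollary~\ref{Bddns1}. One should also note that the invariant full-measure set $\Omega$ has to be taken large enough to carry the relevant ergodic limits for these three functionals, which costs only intersecting $\Omega$ with countably many further full-measure sets.
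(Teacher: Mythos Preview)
The paper does not prove this lemma but cites it from \cite{KM1}, so there is no in-paper argument to compare against directly. Your approach is sound: the reduction via Corollary~\ref{Bddns1} to an integral against $e^{\alpha t/2}$, the unit-interval decomposition, the identification of the mass over $[-n-1,-n)$ with $g(\theta_{-n}\omega)$ by stationarity of the Ornstein--Uhlenbeck process, and the sublinear growth $g(\theta_{-n}\omega)=o(n)$ via Birkhoff's theorem together with finiteness of all Gaussian $\mathrm{X}$-moments of $\z(0)$ are all correct. The one technical caveat you flag --- that $\Omega$ must be intersected with the ($\theta$-invariant, full-measure) sets on which the additional ergodic limits hold --- is real and is presumably absorbed into the definition of $\Omega$ in \cite{KM1}; note also that for your purposes Birkhoff gives convergence to a finite conditional expectation, which is all you use, so ergodicity of the time-one map is not actually needed.

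A slightly more streamlined variant, and the one most commonly seen for such estimates, avoids the discrete decomposition altogether: apply the continuous-time ergodic theorem separately to $t\mapsto\|\z(t)\|^p_{\mathrm{X}}$ for each $p\in\{2,r{+}1,4\}$ (finite expectation by Gaussianity and \eqref{E-OUP}) to obtain $H(t):=\int_t^0\!\big(1+\|\z(s)\|^2_{\V}+\|\z(s)\|^{r+1}_{\V}+\|\z(s)\|^4_{\V}\big)\d s\le C|t|$ for $t\le -T_0(\omega)$, and then integrate by parts,
\[
\int_{-T}^{0}\!\big(1+\|\z(t)\|^2_{\V}+\cdots\big)e^{\alpha t/2}\,\d t
= H(-T)\,e^{-\alpha T/2}+\frac{\alpha}{2}\int_{-T}^{0} H(t)\,e^{\alpha t/2}\,\d t,
\]
both terms on the right being controlled by the linear bound on $H$. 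This route is equivalent in spirit to yours (it too requires enlarging $\Omega$ to carry the extra SLLN's) but bypasses the discrete Birkhoff step.
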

	\begin{definition}\label{RA2}
		A function $\kappa: \Omega\to (0, \infty)$ belongs to class $\mathfrak{K}$ if and only if 
		\begin{align}
			\limsup_{t\to \infty} [\kappa(\theta_{-t}\omega)]^2 e^{-\alpha t +\frac{4}{\mu} \int_{-t}^{0}\|\z(\omega)(s)\|^{2}_{\V}\d s} = 0,
		\end{align}
		where $\alpha$ is Darcy's constant.
		
		We denote by $\hat{\mathfrak{DK}},$ the class of all closed and bounded random sets $\D$ on $\V$ such that the radius function $\Omega\ni \omega \mapsto \kappa(\D(\omega)):= \sup\{\|x\|_{\V}:x\in \D(\omega)\}$ belongs to the class $\mathfrak{K}.$
	\end{definition}
	By Corollary \ref{Bddns1}, we infer that the constant functions belong to $\mathfrak{K}$. 
	The class $\mathfrak{K}$ is closed with respect to sum, multiplication by a constant and if $\kappa \in \mathfrak{K}, 0\leq \bar{\kappa} \leq \kappa,$ then $\bar{\kappa}\in \mathfrak{K}.$
	\begin{lemma}[Proposition 4.4 \cite{KM1}]\label{radius}
		Define functions $\kappa_{i}:\Omega\to (0, \infty), \ i= 1, 2, 3, 4, 5, 6,$ by the following formulae, for $\omega\in\Omega,$
		\begin{align*}
			[\kappa_1(\omega)]^2 &:= \|\z(\omega)(0)\|_{\V},\ \ 
			[\kappa_2(\omega)]^2 := \sup_{s\leq 0} \|\z(\omega)(s)\|^2_{\H}\  e^{\alpha s +\frac{4}{ \mu} \int_{s}^{0}\|\z(\omega)(\zeta)\|^{2}_{\V}\d\zeta}, \\
			[\kappa_3(\omega)]^2 &:= \int_{- \infty}^{0} \|\z(\omega)(t)\|^{2}_{\V}\ e^{\alpha t +\frac{4}{ \mu} \int_{t}^{0}\|\z(\omega)(\zeta)\|^{2}_{\V}\d\zeta} \d t, \\
			[\kappa_4(\omega)]^2 &:= \int_{- \infty}^{0} \|\z(\omega)(t)\|^{r+1}_{\V}\ e^{\alpha t +\frac{4}{ \mu} \int_{t}^{0}\|\z(\omega)(\zeta)\|^{2}_{\V}\d\zeta} \d t,\\
			[\kappa_5(\omega)]^2 &:= \int_{- \infty}^{0} \|\z(\omega)(t)\|^4_{\V}\ e^{\alpha t +\frac{4}{ \mu} \int_{t}^{0}\|\z(\omega)(\zeta)\|^{2}_{\V}\d\zeta} \d t,\ \ 
			[\kappa_6(\omega)]^2 := \int_{- \infty}^{0} e^{\alpha t +\frac{4}{ \mu} \int_{t}^{0}\|\z(\omega)(\zeta)\|^{2}_{\V}\d\zeta} \d t.
		\end{align*}
		Then all these functions belongs to class $\mathfrak{K}.$
	\end{lemma}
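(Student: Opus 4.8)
The plan is to reduce all six claims to the shift-cocycle identity for the Ornstein--Uhlenbeck process, $\z_{\eta}(\theta_{-t}\omega)(s)=\z_{\eta}(\omega)(s-t)$ for all $s,t\in\R$ (immediate from the representation \eqref{oup} and $\theta_t\omega(\cdot)=\omega(\cdot+t)-\omega(t)$), combined with Corollary~\ref{Bddns1} and Lemmas~\ref{Bddns4}--\ref{Bddns5}. I would first record that identity. The key observation is that plugging $\theta_{-t}\omega$ into any of the $\kappa_i$ and multiplying by the weight $W_t:=e^{-\alpha t+\frac{4}{\mu}\int_{-t}^{0}\|\z(\zeta)\|_{\V}^{2}\d\zeta}$ from the definition of $\mathfrak{K}$, the substitution $\sigma=s-t$ (together with $\zeta\mapsto\zeta-t$ in the inner integrals) shifts the supremum/integration window to $(-\infty,-t]$, and the surplus factors $e^{\alpha t}$ and $e^{\frac{4}{\mu}\int_{-t}^{0}\|\z\|_{\V}^{2}}$ produced by the change of variables cancel exactly against the corresponding factors in $W_t$.

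Carrying this out for $\kappa_2$ gives
\[
[\kappa_2(\theta_{-t}\omega)]^{2}\,W_t
=\sup_{\sigma\le -t}\|\z(\omega)(\sigma)\|_{\H}^{2}\,
e^{\alpha\sigma+\frac{4}{\mu}\int_{\sigma}^{0}\|\z(\omega)(\zeta)\|_{\V}^{2}\d\zeta},
\]
and since $\lim_{t\to\infty}\sup_{\sigma\le -t}(\cdots)=\limsup_{\sigma\to-\infty}(\cdots)=0$ by Lemma~\ref{Bddns4}, this yields $\kappa_2\in\mathfrak{K}$. For $i\in\{3,4,5,6\}$ the same manipulation turns $[\kappa_i(\theta_{-t}\omega)]^{2}W_t$ into $\int_{-\infty}^{-t}g_i(\sigma)\,e^{\alpha\sigma+\frac{4}{\mu}\int_{\sigma}^{0}\|\z\|_{\V}^{2}\d\zeta}\d\sigma$ with $g_i(\sigma)$ equal to $\|\z(\sigma)\|_{\V}^{2},\ \|\z(\sigma)\|_{\V}^{r+1},\ \|\z(\sigma)\|_{\V}^{4},\ 1$ respectively; each such integrand is dominated by the (integrable) one appearing in Lemma~\ref{Bddns5}, so these tails vanish as $t\to\infty$, giving $\kappa_i\in\mathfrak{K}$.

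Only $\kappa_1$ needs a different ingredient, since $[\kappa_1(\theta_{-t}\omega)]^{2}=\|\z(\omega)(-t)\|_{\V}$ is a pointwise value rather than an integral. Here I would invoke Corollary~\ref{Bddns1}: for $t\ge t_0(\omega)$ one has $W_t\le e^{-\alpha t/2}$, which decays exponentially, so it suffices that $\|\z(\omega)(-t)\|_{\V}$ grow sub-exponentially. This holds because $\omega\in\Omega(\xi,\mathrm{E})\subset\C^{\xi}_{1/2}(\R,\mathrm{E})$ with $\xi<1/2$ forces the stationary Ornstein--Uhlenbeck path $t\mapsto\z_{\eta}(\omega)(t)$ to be of at most polynomial (in fact $o(|t|)$) growth in $\mathrm{X}\hookrightarrow\V$; hence $\|\z(\omega)(-t)\|_{\V}\,e^{-\alpha t/2}\to0$. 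Since the weights $e^{\alpha\cdot+\frac{4}{\mu}\int_{\cdot}^{0}\|\z\|_{\V}^{2}}$ are already known to produce finite quantities via Lemmas~\ref{Bddns4}--\ref{Bddns5}, no circularity is introduced.

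The only genuinely fiddly part is the change-of-variables bookkeeping in the second paragraph — making the three exponential factors telescope correctly — together with the sub-exponential growth of $\z$ invoked for $\kappa_1$; everything else is a direct appeal to Corollary~\ref{Bddns1} and Lemmas~\ref{Bddns4}--\ref{Bddns5}.
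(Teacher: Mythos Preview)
Your argument is correct and is precisely the natural one: the shift identity $\z_{\eta}(\theta_{-t}\omega)(s)=\z_{\eta}(\omega)(s-t)$ together with the change of variables makes $[\kappa_i(\theta_{-t}\omega)]^2 W_t$ into a tail of the quantity controlled by Lemma~\ref{Bddns4} (for $\kappa_2$) or Lemma~\ref{Bddns5} (for $\kappa_3,\ldots,\kappa_6$), and the telescoping of the exponential factors works exactly as you describe. The paper itself gives no proof here, merely citing Proposition~4.4 of \cite{KM1}; your derivation is the standard verification and matches what that reference contains. One small remark on $\kappa_1$: rather than invoking the $\C^{\xi}_{1/2}$ structure to get sub-exponential growth of $\|\z(\omega)(-t)\|_{\V}$, you can simply note that the proof of Lemma~\ref{Bddns4} goes through verbatim with $\|\cdot\|_{\H}$ replaced by $\|\cdot\|_{\mathrm X}$ (since $\z$ is $\mathrm X$-valued and stationary), which yields $\kappa_1\in\mathfrak K$ directly and avoids the extra growth discussion.
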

	\begin{theorem}\label{V_ab}
		Assume that, for $r\in[1,3],$ $\x\in\V,\ \f\in\H$ and Assumption \ref{assump} holds. Then there exists a closed and bounded $\hat{\mathfrak{DK}}$-random absorbing sets in $\V$ corresponding to the RDS $\varphi.$
	\end{theorem}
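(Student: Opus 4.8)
The plan is to work with the decomposition $\u=\v+\z$, where $\z=\z_\eta(\omega)$ is the stationary Ornstein--Uhlenbeck process of Lemma \ref{SOUP} (fix $\eta\ge\eta_0$ so that \eqref{Bddns} and hence Corollary \ref{Bddns1} hold, and recall $\varphi=\varphi^\eta$ by Lemma \ref{alpha_ind}) and $\v$ solves \eqref{cscbf}. I would build the $\V$-absorbing set in two steps: first an $\H$-bound for $\v$ obtained from a pullback Gronwall estimate, then a $\V$-bound obtained by testing with $\A\v$ and a uniform (local) Gronwall argument on the terminal unit time-interval; finally I assemble the ball and check its radius lies in $\mathfrak K$.

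\textbf{Step 1 ($\H$-bound).} Take the $\H$-inner product of \eqref{cscbf} with $\v$ and, using $b(\v+\z,\v+\z,\v)=-b(\v,\z,\v)-b(\z,\z,\v)$ together with \eqref{b0}, Ladyzhenskaya's inequality \eqref{lady}, the Poincar\'e inequality \eqref{2.1}, H\"older's and Young's inequalities, and $\langle\mathcal C(\v+\z),\v\rangle=\|\v+\z\|_{\widetilde{\L}^{r+1}}^{r+1}-\langle\mathcal C(\v+\z),\z\rangle$, argue exactly as in \eqref{S1}--\eqref{S3} to obtain
\[
\frac{\d}{\d t}\|\v(t)\|_{\H}^2+\mu\|\v(t)\|_{\V}^2+\alpha\|\v(t)\|_{\H}^2+\beta\|\v(t)+\z(t)\|_{\widetilde{\L}^{r+1}}^{r+1}\le\frac{4}{\mu}\|\z(t)\|_{\V}^2\|\v(t)\|_{\H}^2+C\Big(1+\|\z(t)\|_{\V}^2+\|\z(t)\|_{\V}^{r+1}+\|\z(t)\|_{\V}^4+\|\f\|_{\H}^2\Big).
\]
Replacing $\omega$ by $\theta_{-t}\omega$, setting $\v_0=\x_0-\z(\theta_{-t}\omega)(0)$ with $\x_0\in D(\theta_{-t}\omega)$, $D\in\hat{\mathfrak{DK}}$, and applying Gronwall's inequality on $[0,t]$ (using stationarity of $\z$ to rewrite the exponential weights), I get a bound whose ``initial'' part tends to $0$ as $t\to\infty$: the $\|\x_0\|_{\V}^2$-contribution vanishes since $\kappa(D(\cdot))\in\mathfrak K$ and $\tfrac4\mu\int_{-t}^0\|\z\|_{\V}^2\le\tfrac{\alpha t}{2}$ for $t\ge t_0(\omega)$ by Corollary \ref{Bddns1}, and the $\|\z(\theta_{-t}\omega)(0)\|_{\H}^2=\|\z(\omega)(-t)\|_{\H}^2$-contribution vanishes by Lemma \ref{Bddns4}; the remaining integral converges as $t\to\infty$ by Lemma \ref{Bddns5} and $\kappa_6$. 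Hence there are $T_D^1(\omega)$ and a radius $\rho_{\H}(\omega)^2$, expressible through $\kappa_2^2,\kappa_3^2,\kappa_4^2,\kappa_5^2,\kappa_6^2$ and $\|\f\|_{\H}^2$, with $\|\v(t,\theta_{-t}\omega)\v_0\|_{\H}^2\le\rho_{\H}(\omega)^2$ for all $t\ge T_D^1(\omega)$; by Lemma \ref{radius}, $\rho_{\H}\in\mathfrak K$. Integrating the displayed inequality over the terminal window $[t-1,t]$ and using this bound (for $t\ge T_D^1(\omega)+1$) yields, by stationarity, $t$-independent bounds on $\sup_{s\in[t-1,t]}\|\v(s)\|_{\H}^2$, on $\int_{t-1}^t\|\v(s)\|_{\V}^2\d s$ and on $\int_{t-1}^t\|\v(s)+\z(s)\|_{\widetilde{\L}^{r+1}}^{r+1}\d s$.

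\textbf{Step 2 ($\V$-bound).} Take the $\H$-inner product of \eqref{cscbf} with $\A\v$ and estimate the nonlinear terms exactly as in \eqref{S18}--\eqref{S22}, using \eqref{b1}, Young's inequality, the Gagliardo--Nirenberg inequality and the monotonicity behind \eqref{MO_c} (with the additional $\|\v\|_{\H}^2\|\v\|_{\V}^2$ term appearing when $r=3$), to get $\frac{\d}{\d t}\|\v\|_{\V}^2+\|\A\v\|_{\H}^2+\|\v\|_{\V}^2\le C\big(Q(t)\|\v\|_{\V}^2+\widetilde Q(t)\big)$ with $Q,\widetilde Q$ as in the proof of Theorem \ref{solution_v}. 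On $[t-1,t]$ the integrals of $Q$ and $\widetilde Q$ are bounded uniformly in $t$ by the Step 1 bounds together with $\z\in\mathrm L^q_{\mathrm{loc}}(\R;\mathrm X)$ for every $q$ (Remark \ref{stationary}) and stationarity. A uniform Gronwall argument on $[t-1,t]$ then gives $\|\v(t,\theta_{-t}\omega)\v_0\|_{\V}^2\le\rho_{\V}(\omega)^2$ for $t\ge T_D(\omega):=T_D^1(\omega)+1$, where $\rho_{\V}(\omega)^2$ is again built from the $\kappa_i$ and from finite $\mathrm L^q$-in-time norms of $\z$ over a fixed unit window. Since $\varphi(t,\theta_{-t}\omega)\x_0=\v(t,\theta_{-t}\omega)\v_0+\z(\omega)(0)$, setting $R(\omega):=\rho_{\V}(\omega)+\|\z(\omega)(0)\|_{\V}$ and $B(\omega):=\{x\in\V:\|x\|_{\V}\le R(\omega)\}$ gives a closed, bounded, measurable random set with $\varphi(t,\theta_{-t}\omega)D(\theta_{-t}\omega)\subset B(\omega)$ for $t\ge T_D(\omega)$. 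Finally $R\in\mathfrak K$: the class $\mathfrak K$ is stable under sums, scalar multiples and domination and contains the $\kappa_i$ (Lemma \ref{radius}), while $\|\z(\omega)(0)\|_{\V}$ and the unit-window $\mathrm L^q$-norms of $\z$ evaluated at $\theta_{-t}\omega$ grow sub-exponentially in $t$ by the ergodic theorem behind \eqref{SLLN}, which combined with $\tfrac4\mu\int_{-t}^0\|\z\|_{\V}^2\le\tfrac{\alpha t}{2}$ (Corollary \ref{Bddns1}) forces $\limsup_{t\to\infty}R(\theta_{-t}\omega)^2 e^{-\alpha t+\frac4\mu\int_{-t}^0\|\z(\omega)(s)\|_{\V}^2\d s}=0$. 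Thus $B\in\hat{\mathfrak{DK}}$ is the desired $\hat{\mathfrak{DK}}$-absorbing random set.

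\textbf{Main obstacle.} Step 1 is routine; the delicate part is Step 2, namely: checking that $\int_{t-1}^t Q$ and $\int_{t-1}^t\widetilde Q$ are bounded uniformly in $t$ (which genuinely uses the $\H$-absorption from Step 1 and the stationarity of $\z$), running the uniform Gronwall argument at the critical exponent $r=3$ where the estimate $\|\v\|_{\widetilde{\L}^6}^6\le C\|\v\|_{\H}^2\|\v\|_{\V}^4$ is exactly borderline, and finally confirming that the resulting radius $\rho_{\V}$ still belongs to the class $\mathfrak K$.
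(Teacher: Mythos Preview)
Your proposal is correct and follows essentially the same two-step strategy as the paper: an $\H$-energy estimate on $\v$ combined with a pullback Gronwall argument (using Corollary \ref{Bddns1}, Lemmas \ref{Bddns4}--\ref{Bddns5} and \ref{radius}) to obtain an absorbing $\H$-ball, followed by testing with $\A\v$ as in \eqref{S18}--\eqref{S22} and a uniform Gronwall argument on the terminal unit interval to upgrade to a $\V$-ball. The only cosmetic difference is that you work in the forward picture on $[0,t]$ with $\theta_{-t}\omega$ while the paper works directly in the pullback picture on $[s,0]$; your more explicit discussion of why the final radius lies in $\mathfrak K$ is in fact slightly more detailed than the paper's, which simply asserts $B_{\V}(0,\kappa_{16}(\omega))\in\hat{\mathfrak{DK}}$.
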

	\begin{proof}
		Let ${\mathrm{D}}(\omega)\in \hat{\mathfrak{DK}}$. Let $\kappa_{\mathrm{D}}(\omega)$ be the radius of ${\mathrm{D}}(\omega)$, that is, $\kappa_{\mathrm{D}}(\omega):= \sup\{\|x\|_{\V} : x \in {\mathrm{D}}(\omega)\},\ \omega\in \Omega.$	Let $\omega\in \Omega$ be fixed. For given $s\leq 0$ and $\x\in \V$, let $\v(\cdot)$ be the unique solution of \eqref{cscbf} on time interval $[s, \infty)$ with the initial condition $\v(s)= \x-\z(s).$
		Multiplying the first equation of \eqref{cscbf} by $\v(\cdot)$ and integrating the resulting equation over $\mathcal{O}$, we obtain	
		\begin{align}\label{H_ab1}
			\frac{1}{2}\frac{\d}{\d t}\|\v(t)\|^2_{\H} = & -\mu\|\v(t)\|^2_{\V}-\alpha\|\v(t)\|^2_{\H} -b(\v(t)+\z(t),\v(t)+\z(t),\v(t))\nonumber\\&-\beta(\mathcal{C}\big(\v(t)+\z(t)\big),\v(t)) +(\eta-\alpha)\big(\z(t), \v(t)\big) +\big(\f,\v(t)\big)\nonumber\\
			=&-\mu\|\v(t)\|^2_{\V} -\alpha\|\v(t)\|^2_{\H}-\beta\|\v(t)+\z(t)\|^{r+1}_{\widetilde{\L}^{r+1}}- b(\v(t),\z(t),\v(t))\nonumber\\&-b(\z(t),\z(t),\v(t))+\beta(\mathcal{C}\big(\v(t)+\z(t)\big),\z(t)) +(\eta-\alpha)\big(\z(t), \v(t)\big) \nonumber\\&+\big(\f,\v(t)\big) .
		\end{align}
		Using  H\"older's inequality, Young's inequality, Sobolev's embedding and \eqref{lady}, we have
		\begin{align*}
			| b(\v,\z, \v)|&\leq \|\v\|^2_{\widetilde{\L}^{4}}\|\z\|_{\V}\leq\sqrt{2}\|\v\|_{\H}\|\v\|_{\V}\|\z\|_{\V}\leq \frac{\mu}{4} \|\v\|^2_{\V}+ \frac{2}{\mu}\|\z\|^2_{\V}\|\v\|^2_{\H} ,\\
			|b(\z,\z, \v)|&\leq \|\z\|^2_{\widetilde{\L}^{4}}\|\v\|_{\V}\leq\sqrt{2}\|\z\|_{\H}\|\z\|_{\V}\|\v\|_{\V}\leq \frac{\mu}{4} \|\v\|^2_{\V}+ \frac{2}{\mu}\|\z\|^2_{\H}\|\z\|^2_{\V},\\
			|\beta\big\langle\mathcal{C}(\v+\z),\z\big\rangle|& \leq \beta \|\v+\z\|^{r}_{\widetilde{\L}^{r+1}} \|\z\|_{\widetilde{\L}^{r+1}} \leq \frac{\beta}{2} \|\v+\z\|^{r+1}_{\widetilde{\L}^{r+1}} + \frac{\beta(2r)^r}{(r+1)^{r+1}}\|\z\|^{r+1}_{\widetilde{\L}^{r+1}}\\
			& \leq \frac{\beta}{2} \|\v+\z\|^{r+1}_{\widetilde{\L}^{r+1}} + C\|\z\|^{r+1}_{\V},\\
			|(\eta-\alpha)\big( \z, \v\big)+\big(\f, \v\big) |  &\leq \big[\left|\eta-\alpha\right|\|\z\|_{\H} +\|\f\|_{\H}\big] \|\v\|_{\H}\nonumber\\&\leq \frac{\alpha}{2} \|\v\|^2_{\H} + \frac{(\eta-\alpha)^2}{\alpha} \|\z\|^2_{\H}+\frac{1}{\alpha} \|\f\|^2_{\H}.
		\end{align*}
		Thus from \eqref{H_ab1}, we deduce that
		\begin{align}\label{H_ab2}
			&\frac{\d}{\d t} \|\v(t)\|^2_{\H}  + \alpha \|\v(t)\|^2_{\H}+\mu\|\v(t)\|^2_{\V}+\beta\|\v(t)+\z(t)\|^{r+1}_{\wi\L^{r+1}} \nonumber\\& \leq  \frac{4}{\mu} \|\v(t)\|^2_{\H}\ \|\z(t)\|^{2}_{\V} +C\|\z(t)\|^4_{\V} + C\|\z(t)\|^{r+1}_{\V}+ C \|\z(t)\|^2_{\V} + C \|\f\|^2_{\H}.
		\end{align}
		We infer from  the classical Gronwall inequality that 
		\begin{align}\label{H_ab3}
			\|\v(0)\|^2_{\H} &\leq \|\v(s)\|^2_{\H} e^{\alpha s + \frac{4}{\mu}\int_{s}^{0}\|\z(\zeta)\|^2_{\V}\d\zeta} +C\int_{s}^{0}e^{\alpha t  + \frac{4}{\mu}\int_{t}^{0}\|\z(\zeta)\|^2_{\V}\d\zeta}\biggl\{\|\z(t)\|^4_{\V}  + \|\z(t)\|^{r+1}_{\V}+   \|\z(t)\|^2_{\V} \nonumber\\& \qquad +  \|\f\|^2_{\H}\biggr\}\d t\nonumber\\
			&\leq2 \|\x\|^2_{\H} e^{\alpha s + \frac{4}{\mu}\int_{s}^{0}\|\z(\zeta)\|^2_{\V}\d\zeta} +2\|\z(s)\|^2_{\H} e^{\alpha s + \frac{4}{\mu}\int_{s}^{0}\|\z(\zeta)\|^2_{\V}\d\zeta} +C\int_{s}^{0}e^{\alpha t  + \frac{4}{\mu}\int_{t}^{0}\|\z(\zeta)\|^2_{\V}\d\zeta}\nonumber \\& \quad\times\bigg\{\|\z(t)\|^4_{\V}  + \|\z(t)\|^{r+1}_{\V}+   \|\z(t)\|^2_{\V}  +  \|\f\|^2_{\H}\biggr\}\d t.
		\end{align}
		For $\omega\in \Omega,$	let us set 
		\begin{align}
			[\kappa_{11}(\omega)]^2  &=2 +  2\sup_{s\leq 0}\bigg\{ \|\z(s)\|^2_{\H}\  e^{\alpha s +\frac{4}{ \mu} \int_{s}^{0}\|\z(\zeta)\|^{2}_{\V}\d\zeta}\bigg\} + C\int_{- \infty}^{0} e^{\alpha t +\frac{4}{ \mu} \int_{t}^{0}\|\z(\zeta)\|^{2}_{\V}\d\zeta}\bigg\{\|\z(t)\|^4_{\V}  \nonumber\\& \quad + \|\z(t)\|^{r+1}_{\V}+ \|\z(t)\|^2_{\V}  +  \|\f\|^2_{\H}\bigg\} \d t,\\	
			\kappa_{12}(\omega)&=  \|\z(\omega)(0)\|_{\V}.
		\end{align}
		By Lemmas \ref{Bddns5} and \ref{radius}, it is immediate that both $\kappa_{11}$ and $\kappa_{12}$ belong to the class $\mathfrak{K}$.
		Let $\omega\in\Omega$ be fixed. Since $\kappa_{\mathrm{D}}(\omega)\in \mathfrak{K}$, there exists $t_{\mathrm{D}}(\omega)\geq 0$ such that 
		\begin{align*}
			[\kappa_{\mathrm{D}}(\theta_{-t}\omega)]^2 e^{-\alpha t +\frac{4}{ \mu} \int_{-t}^{0}\|\z(\omega)(s)\|^{2}_{\V}\d s} \leq 1 \  \text{ for }\  t\geq t_{\mathrm{D}}(\omega). 
		\end{align*}
		Thus, if $\x\in {\mathrm{D}}(\theta_{-t}\omega)$ and $s\leq- t_{\mathrm{D}}(\omega),$ then, by \eqref{H_ab3}, we get
		\begin{align}\label{ab_H}
			\|\v(0,s;\omega, \x-\z(s))\|_{\H}\leq \kappa_{11}(\omega).	
		\end{align}
		Using a similar argument, we observe that for all $t\in[-1,0]$ and for any $\omega\in \Omega$, there exists a $\kappa_{13}(\omega)\geq0$ such that 
		\begin{align}\label{H_ab7}
			\|\v(t,s;\omega, \x-\z(\omega)(s))\|_{\H} \leq \kappa_{13}(\omega),
		\end{align}
		for all $s\leq -(t_{\mathrm{D}}(\omega)+1).$
		Furthermore, integrating \eqref{H_ab2} over $(-1,0)$, we find that for any $\omega\in \Omega$, there exists a $\kappa_{14}(\omega)\geq 0$ such that
		\begin{align}\label{H_ab4}
			\int_{-1}^{0} \|\v(t)\|^2_{\V} \d t+ \int_{-1}^{0} \|\v(t)\|^{r+1}_{\wi\L^{r+1}} \d t \leq \kappa_{14}(\omega),
		\end{align}
		for all $s\leq -(t_{\mathrm{D}}(\omega)+1).$ Taking the inner product with $\A\v(\cdot)$ to the first equation of \eqref{cscbf}, similar calculations as we have performed for \eqref{S22}, provide us
		\begin{align}\label{V_ab8}
			&\frac{\d}{\d t}\|\v(t)\|^2_{\V} +\|\A\v(t)\|^2_{\H}\leq\begin{cases}
				C [Q_1(t)\|\v(t)\|^2_{\V} + \widetilde{Q}_1(t)],  \ \text{ for } r\in[1,3),\\
				C[Q_2(t)\|\v(t)\|^2_{\V} +\widetilde{Q}_2(t)],  \ \text{ for } r=3,\\
			\end{cases} 
		\end{align}
		where
		\begin{align*}
			Q_1(t)&=\|\v(t)\|_{\H}^2\|\v(t)\|^2_{\V} + \|\z(t)\|_{\H}\|\A\z(t)\|_{\H},\ \\
			Q_2(t)&=	Q_1(t)+\|\v(t)\|^{2}_{\H}\|\v(t)\|^2_{\V},\\
			\widetilde{Q}_1(t)&=  \|\v(t)\|_{\H}^2\|\z(t)\|^4_{\V}+ \|\A\z(t)\|_{\H}\|\z(t)\|^3_{\V} + \|\z(t)\|^{2r}_{\V} +\|\z(t)\|^2_{\V}+\|\f\|^2_{\H}+\|\v\|^{2}_{\H},\\
			\widetilde{Q}_2(t)&=  \|\v(t)\|_{\H}^2\|\z(t)\|^4_{\V}+ \|\A\z(t)\|_{\H}\|\z(t)\|^3_{\V} + \|\z(t)\|^{6}_{\V} +\|\z(t)\|^2_{\V}+\|\f\|^2_{\H}.
		\end{align*}
		Applying Gronwall's inequality on the time interval $[-1,0]$ in \eqref{V_ab8}, we get for $i\in\{1,2\}$
		\begin{align}\label{V_ab11}
			\|\v(0)\|^2_{\V}\leq \|\v(t)\|^2_{\V} e^{\int_{t}^{0}Q_i(s)\d s}+\int_{t}^{0}\widetilde{Q}_i(s)e^{\int_{t}^{0}Q_i(\tau)\d \tau}\d s, \quad t\in[-1,0].
		\end{align}
		Integrating \eqref{V_ab11} from $-1$ to $0$, we obtain for $i\in\{1,2\}$
		\begin{align}\label{V_ab12}
			\|\v(0)\|^2_{\V}&\leq \int_{-1}^{0}\|\v(t)\|^2_{\V} e^{\int_{t}^{0}Q_i(s)\d s}\d t+\int_{-1}^{0}\int_{t}^{0}\widetilde{Q}_i(s)e^{\int_{t}^{0}Q_i(\tau)\d \tau}\d s\d t\nonumber\\
			&\leq e^{\int_{-1}^{0}Q_i(s)\d s}\int_{-1}^{0}\|\v(t)\|^2_{\V} \d t +e^{\int_{-1}^{0}Q_i(s)\d s}\int_{-1}^{0}\widetilde{Q}_i(t)\d t.
		\end{align}
		In view of \eqref{O-U_conti}, \eqref{H_ab7} and \eqref{H_ab4}, we infer from \eqref{V_ab12} that for any $r\in[1,3]$ and for any $\omega\in\Omega,$ there exists $\kappa_{15}(\omega)\geq0$ such that 
		\begin{align}\label{ab_V}
			\|\v(0,\omega; s, \x-\z(s))\|_{\V}\leq \kappa_{15}(\omega),
		\end{align}
		for any $s\leq-(t_{\mathrm{D}}(\omega)+1)$. 	Moreover, we have 
		\begin{align}\label{V_ab13}
			\|\u(0,s;\omega, \x)\|_{\V} \leq \|\v(0,s;\omega, \x-\z(s))\|_{\V} + \|\z(\omega)(0)\|_{\V}\leq \kappa_{15}(\omega)+\kappa_{12}(\omega):=\kappa_{16}(\omega).
		\end{align}
		This implies that the ball $B_{\V}(0, \kappa_{16}(\omega))\in \hat{\mathfrak{DK}}$, that is, the ball in $\V$ centered at the origin and of radius $\kappa_{16}$, absorbs ${\mathrm{D}}(\omega)$.
	\end{proof}	
	\begin{theorem}\label{V_asymptotically}
		Suppose that $r\in[1,3]$, the domain $\mathcal{O}\subset\R^2$ satisfies Assumption \ref{assumpO},  Assumption \ref{assump} is satisfied and $\f\in \H$, then the RDS $\varphi$ generated by \eqref{S-CBF} on $\V$ is $\V$-asymptotically compact.  
	\end{theorem}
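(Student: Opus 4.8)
The plan is to use Ball's energy method at the level of the $\V$-norm, together with the weak sequential continuity of the solution map (Lemma~\ref{weak_topo1}), the $\V$-absorbing set of Theorem~\ref{V_ab}, and the convergence Lemmas~\ref{Converge_b} and~\ref{Converge_c} for the nonlinear terms tested against $\A\v$. Fix $\omega\in\Omega$, a set $\mathrm{D}\in\hat{\mathfrak{DK}}$, a sequence $t_n\to\infty$ and points $\x_n\in\mathrm{D}(\theta_{-t_n}\omega)$. With $\z:=\z_\eta(\omega)$ write
\begin{align*}
\varphi(t_n,\theta_{-t_n}\omega)\x_n=\v_n(0)+\z(0),\qquad\v_n(\cdot):=\v\big(\cdot,-t_n;\omega,\x_n-\z(-t_n)\big),
\end{align*}
where $\v_n$ solves \eqref{cscbf} on $[-t_n,0]$; it suffices to prove that $\{\v_n(0)\}$ is relatively compact in $\V$. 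By Theorem~\ref{V_ab} this sequence is bounded in $\V$, so along a subsequence $\v_n(0)\rightharpoonup\y$ weakly in $\V$ and $\|\y\|_{\V}\le\liminf_n\|\v_n(0)\|_{\V}$. Since $\V$ is a Hilbert space, it is enough to establish the reverse inequality $\limsup_{n\to\infty}\|\v_n(0)\|^2_{\V}\le\|\y\|^2_{\V}$, for then $\|\v_n(0)\|_{\V}\to\|\y\|_{\V}$ and hence $\v_n(0)\to\y$ strongly in $\V$.

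First I would construct a limiting backward solution. For each $m\in\N$ and all $n$ large enough (so that $t_n$ exceeds $m$ plus the relevant absorption time), the estimates in the proof of Theorem~\ref{V_ab} give $\|\v_n(-m)\|_{\V}\le\kappa(\theta_{-m}\omega)$ for a radius function $\kappa\in\mathfrak{K}$; a diagonal extraction then yields a further subsequence (not relabelled) along which $\v_n(-m)\rightharpoonup\y_m$ in $\V$ for every $m$, with $\y_0=\y$. Applying Lemma~\ref{weak_topo1} on each interval $[-m,0]$ (the shifted path $\z$ still lies in $\mathrm{L}^{\infty}_{loc}(\V)\cap\mathrm{L}^2_{loc}(\D(\A))$) and invoking uniqueness of solutions of \eqref{cscbf}, these weak limits are mutually consistent and define a solution $\w$ of \eqref{cscbf} on $(-\infty,0]$ with $\w(-m)=\y_m$ for all $m$ and $\w(0)=\y$, such that, for each $m$,
\begin{align*}
\v_n(t)\rightharpoonup\w(t)\ \text{in}\ \V\ (t\le0),\qquad\v_n\rightharpoonup\w\ \text{in}\ \mathrm{L}^2(-m,0;\D(\A)),\qquad\v_n\xrightharpoonup{w^*}\w\ \text{in}\ \mathrm{L}^{\infty}(-m,0;\V).
\end{align*}
Since $\y_m\in\V$ and $\f\in\H$, Theorem~\ref{solution_v} shows $\w\in\mathrm{C}([-m,0];\V)\cap\mathrm{L}^2(-m,0;\D(\A))$ with $\w'\in\mathrm{L}^2(-m,0;\H)$, so $\w$ obeys the $\V$-energy equality on $[-m,0]$.

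Next I would upgrade weak to strong convergence in $\mathrm{L}^2(-m,0;\V)$. The a priori estimates of Step~IV in the proof of Theorem~\ref{solution_v}, combined with the uniform $\V$- and $\D(\A)$-bounds above, give $\sup_n\big(\|\v_n\|_{\mathrm{L}^2(-m,0;\D(\A))}+\|\v_n'\|_{\mathrm{L}^2(-m,0;\H)}\big)<\infty$; on each ball $\mathcal{O}_R=\mathcal{O}\cap\{|x|<R\}$, using the compact embedding $\H^2(\mathcal{O}_R)\hookrightarrow\hookrightarrow\H^1(\mathcal{O}_R)$ and the Aubin--Lions lemma, $\v_n\to\w$ strongly in $\mathrm{L}^2(-m,0;\L^2(\mathcal{O}_R))$. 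Combining this with a uniform tail estimate
\begin{align*}
\lim_{R\to\infty}\ \sup_{n}\ \int_{-m}^{0}\int_{\mathcal{O}\setminus\mathcal{O}_R}|\v_n(\tau,x)|^2\,\d x\,\d\tau=0,
\end{align*}
obtained by testing \eqref{cscbf} against $\rho_R^2\v_n$ with a smooth radial cut-off $\rho_R$ vanishing on $\{|x|\le R\}$ and equal to $1$ on $\{|x|\ge2R\}$ (using $\langle\mathcal{C}(\u),\u\rangle\ge0$, the $\mu$- and $\alpha$-dissipation, the decay at infinity of $\f\in\H$ and of $\z\in\mathrm{L}^2(-m,0;\H)$, and the $O(R^{-1})$ bound for the terms involving $\nabla\rho_R$ and the pressure), one gets $\v_n\to\w$ strongly in $\mathrm{L}^2(-m,0;\H)$. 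Finally, since $\|\psi\|^2_{\V}\le\|\psi\|_{\H}\|\A\psi\|_{\H}$, the Cauchy--Schwarz inequality and the uniform $\mathrm{L}^2(-m,0;\D(\A))$-bound promote this to $\v_n\to\w$ strongly in $\mathrm{L}^2(-m,0;\V)$.

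The last step is the energy identity. Multiplying \eqref{cscbf} by $\A\v$ and using the $\V$-energy equality, every solution satisfies
\begin{align*}
\|\v(0)\|^2_{\V}=e^{-2\alpha m}\|\v(-m)\|^2_{\V}&+\int_{-m}^{0}e^{2\alpha\tau}\Big[-2\mu\|\A\v(\tau)\|^2_{\H}-2b\big(\v(\tau)+\z(\tau),\v(\tau)+\z(\tau),\A\v(\tau)\big)\\
&\quad-2\beta\big(\mathcal{C}(\v(\tau)+\z(\tau)),\A\v(\tau)\big)+2\big((\eta-\alpha)\z(\tau)+\f,\A\v(\tau)\big)\Big]\d\tau.
\end{align*}
Apply this to $\v_n$ and let $n\to\infty$: the boundary term is at most $e^{-2\alpha m}[\kappa(\theta_{-m}\omega)]^2$, which tends to $0$ as $m\to\infty$ since $\kappa\in\mathfrak{K}$ and $e^{-2\alpha m}\le e^{-\alpha m+\frac{4}{\mu}\int_{-m}^{0}\|\z(\zeta)\|^2_{\V}\d\zeta}$; the term $-2\mu\int e^{2\alpha\tau}\|\A\v_n\|^2_{\H}$ is weakly upper semicontinuous, so its $\limsup$ does not exceed the corresponding $\w$-integral; the two nonlinear integrals converge to their $\w$-counterparts by Lemmas~\ref{Converge_b} and~\ref{Converge_c} (whose hypotheses \eqref{210}--\eqref{2.11} hold by the previous two steps, the bounded smooth weight $e^{2\alpha\tau}$ and the shift by $\z\in\mathrm{L}^{\infty}(-m,0;\V)\cap\mathrm{L}^2(-m,0;\D(\A))$ causing no difficulty, and $b(\z,\z,\A\v_n)=(\B(\z),\A\v_n)$ handled by weak convergence via Remark~\ref{R2.2}); and the forcing integral converges because $\A\v_n\rightharpoonup\A\w$ in $\mathrm{L}^2(-m,0;\H)$ while $e^{2\alpha\cdot}((\eta-\alpha)\z+\f)\in\mathrm{L}^2(-m,0;\H)$. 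Writing the same identity for $\w$ and using $\w(0)=\y$,
\begin{align*}
\limsup_{n\to\infty}\|\v_n(0)\|^2_{\V}\le e^{-2\alpha m}[\kappa(\theta_{-m}\omega)]^2+\|\w(0)\|^2_{\V}-e^{-2\alpha m}\|\w(-m)\|^2_{\V}\le e^{-2\alpha m}[\kappa(\theta_{-m}\omega)]^2+\|\y\|^2_{\V};
\end{align*}
letting $m\to\infty$ gives $\limsup_n\|\v_n(0)\|^2_{\V}\le\|\y\|^2_{\V}$, as required, and since every subsequence of $\{\varphi(t_n,\theta_{-t_n}\omega)\x_n\}$ admits a further subsequence to which this argument applies, $\varphi$ is $\V$-asymptotically compact. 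I expect the main obstacle to be the uniform tail estimate of the third step: because the Helmholtz projection and the Laplacian do not commute with multiplication by $\rho_R$, the convective and pressure contributions on the annulus $\{R\le|x|\le2R\}$ must be controlled with care, which is exactly where the dissipative and damping structure of the equation and the decay of $\f$ and $\z$ at infinity are used (in the spirit of \cite{Rosa,BL}).
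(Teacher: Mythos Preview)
Your proof follows the same overall scheme as the paper --- Ball's energy method at the $\V$-level, construction of a backward limiting trajectory via Lemma~\ref{weak_topo1}, the weighted $\V$-energy equality, weak lower semicontinuity for the $\|\A\v\|^2_{\H}$ term, and Lemmas~\ref{Converge_b}--\ref{Converge_c} for the nonlinear integrals --- and is correct. The paper uses the weight $e^{\alpha s}$ (keeping an extra negative $-\tfrac{\alpha}{2}\|\v\|^2_{\V}$ on the right) rather than your $e^{2\alpha s}$, and bounds the initial term by an $\mathrm{L}^1(-\infty,0)$ function $h$ rather than by $e^{-2\alpha m}[\kappa(\theta_{-m}\omega)]^2$; these are cosmetic.

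The one substantive difference is how strong convergence $\v_n\to\w$ in $\mathrm{L}^2(-m,0;\V)$ (hypothesis~\eqref{2.11} of Lemmas~\ref{Converge_b}--\ref{Converge_c}) is obtained. The paper does not use your route of local Aubin--Lions plus a cut-off tail estimate plus the interpolation $\|\psi\|^2_{\V}\le\|\psi\|_{\H}\|\A\psi\|_{\H}$. Instead it exploits the $\H$-asymptotic compactness of $\varphi$ already established in \cite{KM}: since $\varphi(-k+t_{n^{(k)}},\theta_{-t_{n^{(k)}}}\omega,\x_{n^{(k)}})$ is of the form $\varphi(s_n,\theta_{-s_n}(\theta_{-k}\omega))\x_n$ with $s_n\to\infty$, a further subsequence converges strongly in $\H$ to $\y_{-k}$, and then Lemma~\ref{RDS_Conti1} (continuous dependence on $\H$-initial data) immediately gives $\v^{n^{(k)}}\to\v_k$ in $\mathrm{L}^2(-k,0;\V)$. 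This is shorter and avoids precisely the pressure/commutator issue in the tail estimate that you flag as the main obstacle; on the other hand, your route is self-contained (it does not presuppose the $\H$-attractor from \cite{KM}) and the tail bound you sketch is indeed workable --- it is essentially the same computation carried out later in the paper in Lemma~\ref{largeradius}.
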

	\begin{proof}
		Assume that $\mathrm{D}(\omega) \in \hat{\mathfrak{DK}}$ and $\textbf{B}(\omega)\in \hat{\mathfrak{DK}}$ is such that $\textbf{B}(\omega)$ absorbs $\mathrm{D}(\omega)$. Let us fix $\omega\in \Omega$ and take a sequence of positive numbers $\{t_n\}^{\infty}_{n=1}$ such that $t_1\leq t_2 \leq t_3 \leq \cdots$ and $t_n \to \infty$. We take a $\V$-valued sequence $\{\boldsymbol{x}_n\}^{\infty}_{n=1}$ such that $\boldsymbol{x}_n \in \mathrm{D}(\theta_{-t}\omega),$ for all $n\in \mathbb{N}.$
		\vskip 0.2 cm 
		\noindent 
		\textbf{Step I.} \textit{Reduction.} Since $\textbf{B}(\omega)$ absorbs $\mathrm{D}(\omega)$, we obtain  \begin{align}\label{619}\varphi(t_n, \theta_{-t_n}\omega, \mathrm{D}(\theta_{-t_n}\omega))\subset \textbf{B}(\omega),\end{align} 
		for sufficient large $n\in \mathbb{N}.$ Since $\textbf{B}(\omega) \subset \V$, is a bounded set, which implies that $\textbf{B}(\omega)$ is weakly pre-compact in $\V$, without loss of generality, we may assume that \eqref{619} holds for all $n\in \mathbb{N}$ and, for some $\y_0\in\V,$ 
		\begin{align}\label{weak_lim1}
			\varphi(t_n, \theta_{-t_n}\omega, \boldsymbol{x}_n)\to \y_0 \ \text{ in } \ \V\ \text{ weakly}.
		\end{align}
		Since $\z(0)\in \V,$ in order to complete the proof, we only need to show that for some subsequence $\{n'\}\subset \mathbb{N}$ (for more details, see Theorem 5.9 of \cite{KM})
		\begin{align}\label{weak_lim4}
			\|\y_0-\z(0)\|_{\V} \geq \limsup_{n'\to \infty} \|\varphi(t_{n'}, \theta_{-t_{n'}}\omega, \boldsymbol{x}_{n'})-\z(0)\|_{\V},
		\end{align}
		as the norm is weakly lower semicontinuous. 
		\vskip 0.2 cm
		\noindent
		\textbf{Step II.} \textit{Construction of a negative trajectory.}  That is, the construction of a sequence $\{\y_n\}^0_{n=-\infty}$ such that $\y_n\in \boldsymbol{B}(\theta_n\omega), n\in \mathbb{Z}^{-},$ and 
		$$\y_k = \varphi(k-n, \theta_n\omega, \y_n), \ \ \ n<k\leq 0.$$
		Since $\boldsymbol{B}(\omega)$ is bounded and closed in $\V$, it is also weakly compact in $\V$. Therefore, analogous to the proof of Proposition 5.9 in \cite{KM}, for each $k = 1, 2, \cdots,$ we can construct a subsequence $\{n^{(k)}\}\subset \{n^{(k-1)}\}$ and $\y_{-k}\in \textbf{B}(\theta_{-k}\omega),$ such that $\varphi(1, \theta_{-k}\omega, \y_{-k})= \y_{-k+1}$ and 
		\begin{align}\label{weak_lim6}
			\varphi(-k + t_{n^{(k)}}, \theta_{-t_{n^{(k)}}}\omega, \boldsymbol{x}_{n^{(k)}})\xrightharpoonup{w} \y_{-k} \  \text{ in } \ \V  \  \text{ as }\   n^{(k)} \to \infty.
		\end{align}
		Now, using the cocycle property of RDS $\varphi$, with $t=k,\ s=-k +t_{n^{(k)}}$ and $\omega$ being replaced by $\theta_{-t_{n^{(k)}}}\omega,$ we get 
		\begin{align}\label{weak_lim7}
			\varphi(t_{n^{(k)}}, \theta_{-t_{n^{(k)}}}\omega) = \varphi(k, \theta_{-k}\omega)\varphi(t_{n^{(k)}}-k, \theta_{-t_{n^{(k)}}}\omega), \ \ \ k\in \mathbb{N}.
		\end{align}
		Hence, using the convergence \eqref{Weak1} of Lemma \ref{weak_topo1} with \eqref{weak_lim6}, we obtain 
		\begin{align}\label{weak_lim8}
			\y_{-j} = & \ \textrm{w}_{\V}\text{-} \lim_{n^{(k)} \to \infty} \varphi (-j+t_{n^{(k)}}, \theta_{-t_{n^{(k)}}}\omega, \boldsymbol{x}_{n^{(k)}})\nonumber\\
			=&\ \textrm{w}_{\V}\text{-} \lim_{n^{(k)} \to \infty}\varphi\big(-j+k, \theta_{-k}\omega, \varphi (t_{n^{(k)}}-k, \theta_{-t_{n^{(k)}}}\omega, \boldsymbol{x}_{n^{(k)}})\big)\nonumber\\
			=&\  \varphi\bigg(-j+k, \theta_{-k}\omega,\big(\textrm{w}_{\V}\text{-} \lim_{n^{(k)} \to \infty}\varphi (t_{n^{(k)}}-k, \theta_{-t_{n^{(k)}}}\omega, \boldsymbol{x}_{n^{(k)}})\big)\bigg)\nonumber\\
			= & \ \varphi (-j+k, \theta_{-k}\omega, \y_{-k}),
		\end{align}
		where $\textrm{w}_{\V}\text{-}\lim$ represents the limit in the weak topology on $\V$ given in \eqref{Weak1} of Lemma \ref{weak_topo1}.
		The same proof provides a more general property: $$\varphi(j, \theta_{-k}\omega, \y_{-k})= \y_{-k+j}, \ \text{ if }\ 0\leq j\leq k.$$	
		More precisely, in \eqref{weak_lim8}, $\y_{-j} = \u(-j, -k; \omega,\y_{-k}),$ where $\u$ is defined by \eqref{combine_sol}.
		
		\vskip 0.2 cm
		\noindent
		\textbf{Step III.} \textit{Proof of \eqref{weak_lim4}.}  From \eqref{S18}, we obtain 
		\begin{align*}
			\frac{\d}{\d t}\|\v(t)\|^2_{\V} +\alpha\|\v(t)\|_{\V}^2 &= 2\bigg\{\big((\eta-\alpha) \z(t)+\f, \A\v(t)\big)-\big(\B(\v(t)+\z(t)), \A\v(t)\big) \\&\quad\quad\quad- \beta\big(\mathcal{C}(\v(t)+\z(t)), \A\v(t)\big)- \mu \|\A\v(t)\|_{\H}^2-\frac{\alpha}{2}\|\v(t)\|_{\V}^2 \bigg\}.
		\end{align*}
		Then, using the variation of constant formula, we have for $t_0\leq \tau\leq t$,
		\begin{equation}\label{Energy_esti2}
			\begin{aligned}
				&\|\v(t)\|^2_{\V} = \|\v(\tau)\|_{\V}^2 e^{-\alpha(t-\tau)} + 2\int_{\tau}^{t}e^{-\alpha(t-s)}\bigg\{((\eta-\alpha) \z(s)+\f, \A\v(s))\\&\quad-(\B(\v(s)+\z(s)), \A\v(s)) -\beta(\mathcal{C}(\v(s)+\z(s)), \A\v(s))- \mu \|\A\v(s)\|_{\H}^2-\frac{\alpha}{2}\|\v(s)\|_{\V}^2 \bigg\}\d s.
			\end{aligned}
		\end{equation}
		From now onward (until explicitly stated), we fix $k\in\mathbb{N},$ and consider the problem \eqref{S-CBF} on the interval $[-k, 0].$ From \eqref{combine_sol} and \eqref{weak_lim7}, with $t=0$ and $s=-k,$ we have
		\begin{align}\label{weak_lim9}
			\|\varphi(t_{n^{(k)}}, \theta_{-t_{n^{(k)}}}\omega, \boldsymbol{x}_{n^{(k)}}) - \z(0)\|^2_{\V} = & \|\varphi\big(k, \theta_{-k}\omega,\varphi(t_{n^{(k)}}-k, \theta_{-t_{n^{(k)}}}\omega, \boldsymbol{x}_{n^{(k)}})\big) - \z(0)\|^2_{\V}\nonumber\\
			=&\|\v\big(0, -k; \omega, \varphi(t_{n^{(k)}}-k, \theta_{-t_{n^{(k)}}}\omega, \boldsymbol{x}_{n^{(k)}}) - \z(-k)\big)\|^2_{\V}.
		\end{align}
		Let $\v$ be the solution to \eqref{cscbf} on $[-k, \infty)$ with $\z= \z_{\eta}(\cdot, \omega)$ and the initial condition at time $-k:$ $$\v(-k) = \varphi(t_{n^{(k)}}-k, \theta_{-t_{n^{(k)}}}\omega, \boldsymbol{x}_{n^{(k)}}) - \z(-k).$$ Also, we can write $$\v(s) = \v\big(s, -k; \omega, \varphi(t_{n^{(k)}}-k, \theta_{-t_{n^{(k)}}}\omega, \boldsymbol{x}_{n^{(k)}}) - \z(-k)\big), \  s\geq -k.$$
		Using \eqref{Energy_esti2} with $t=0$ and $\tau = -k$, we obtain 
		\begin{align}\label{Energy_esti6}
			&\|\varphi(t_{n^{(k)}}, \theta_{-t_{n^{(k)}}}\omega, \boldsymbol{x}_{n^{(k)}}) - \z(0)\|^2_{\V}\nonumber\\ 
			&= e^{- \alpha k} \|\varphi(t_{n^{(k)}}-k, \theta_{-t_{n^{(k)}}}\omega, \boldsymbol{x}_{n^{(k)}}) - \z(-k)\|^2_{\V}  + 2 \int_{-k}^{0} e^{\alpha s}\bigg\{\big((\eta-\alpha) \z(s)+\f, \A\v(s)\big)\nonumber\\&\quad-\big(\B(\v(s)+\z(s)), \A\v(s)\big) - \beta\big(\mathcal{C}(\v(s)+\z(s)), \A\v(s)\big)- \mu \|\A\v(s)\|_{\H}^2-\frac{\alpha}{2}\|\v(s)\|_{\V}^2 \bigg\} \d s\nonumber\\ 
			&= e^{- \alpha k} \|\varphi(t_{n^{(k)}}-k, \theta_{-t_{n^{(k)}}}\omega, \boldsymbol{x}_{n^{(k)}}) - \z(-k)\|^2_{\V}  + 2 \int_{-k}^{0} e^{\alpha s}\bigg\{\big((\eta-\alpha) \z(s)+\f, \A\v(s)\big)\nonumber\\&\quad-b\big(\v(s), \v(s), \A\v(s)\big)-b\big(\v(s), \z(s), \A\v(s)\big)-b\big(\z(s), \v(s), \A\v(s)\big) \nonumber\\&\quad-b\big(\z(s), \z(s), \A\v(s)\big)- \beta\big(\mathcal{C}(\v(s)+\z(s)), \A\v(s)\big)- \mu \|\A\v(s)\|_{\H}^2-\frac{\alpha}{2}\|\v(s)\|_{\V}^2 \bigg\} \d s.
		\end{align}	
		Analogous to the methods used in the proof of Theorem 5.9, \cite{KM}, using \eqref{V_ab8}, we can deduce that there exists a non negative function $h\in \mathrm{L}^1 (-\infty, 0)$ such that
		\begin{align}\label{positive_function}
			\limsup_{n^{(k)} \to \infty}\ e^{- \alpha k} \|\varphi(t_{n^{(k)}}-k, \theta_{-t_{n^{(k)}}}\omega, \boldsymbol{x}_{n^{(k)}}) - \z(-k)\|^2_{\V}\leq \int_{-\infty}^{-k} h(s)\ \d s, \ k\in \N.
		\end{align}
	Let us denote 
		\begin{align*}
			\v^{n^{(k)}}(s) &= \v\big(s, -k; \omega, \varphi(t_{n^{(k)}}-k, \theta_{-t_{n^{(k)}}}\omega)\boldsymbol{x}_{n^{(k)}} - \z(-k)\big), \ s\in (-k, 0),\\
			\v_k(s) &= \v\big(s, -k; \omega, \y_{-k} - \z(-k)\big), \ s\in (-k, 0).
		\end{align*}
		By Lemma \ref{weak_topo1} and the convergence \eqref{weak_lim6}, we conclude that 
		\begin{align}\label{weak_lim10}
			\v^{n^{(k)}}(\cdot) \text{ converges to } \v_k(\cdot) \ \text{ in }  \ \mathrm{L}^2(-k,0;\D(\A)) \text{ weakly}.
		\end{align}
		Note that $\f\in \H$ implies  $s\mapsto e^{\alpha s } \f \in \mathrm{L}^2(-k,0;\H).$ Thus, using \eqref{weak_lim10}, we obtain 
		\begin{align}\label{weak_lim11}
			\lim_{n^{(k)} \to \infty} \int_{-k}^{0} e^{\alpha s} \big( \f, \A\v^{n^{(k)}}(s)\big) \ \d s = \int_{-k}^{0} e^{\alpha s} \big( \f,\A \v_k(s)\big) \ \d s.
		\end{align}
		Now, $\z\in \mathrm{L}^{\infty}(0,T;\V)$ (see \eqref{O-U_conti}) implies that $s\mapsto  e^{\alpha s } \z(s) \in \mathrm{L}^2(-k,0;\H).$ Therefore, using \eqref{weak_lim10}, we obtain
		\begin{align}\label{weak_lim12}
			\lim_{n^{(k)} \to \infty} \int_{-k}^{0} (\eta-\alpha) e^{\alpha s} \big( \z(s), \A\v^{n^{(k)}}(s)\big)  \d s = \int_{-k}^{0} (\eta-\alpha) e^{\alpha s} \big( \z(s), \A\v_k(s)\big) \d s.
		\end{align}
		Using  $\z\in \mathrm{L}^{\infty}(0,T;\V)\cap\mathrm{L}^2(0,T;\D(\A))$ (see \eqref{O-U_conti}), and \eqref{2.8}, we obtain  that $s\mapsto e^{\alpha s } \B(\z(s)) \in \mathrm{L}^2(-k,0;\H).$ Hence, using \eqref{weak_lim10}, we obtain
		\begin{align}\label{weak_lim13}
			\lim_{n^{(k)} \to \infty}\int_{-k}^{0} e^{\alpha s} b\big(\z(s), \z(s), \A\v^{n^{(k)}}(s) \big) \d s = \int_{-k}^{0} e^{\alpha s} b\big(\z(s), \z(s),\A \v_k(s) \big)\d s.
		\end{align}
		Since we have the convergence \eqref{weak_lim6}, by the Banach-Alaoglu theorem, we can find  a subsequence of $\{\varphi(-k + t_{n^{(k)}}, \theta_{-t_{n^{(k)}}}\omega, \boldsymbol{x}_{n^{(k)}})\}$ (denoted as the same) such that
		\begin{align}\label{weak_lim17}
			\varphi(-k + t_{n^{(k)}}, \theta_{-t_{n^{(k)}}}\omega, \boldsymbol{x}_{n^{(k)}})\ \text{ converges to }\ \y_{-k} \  \text{ in }\ \H\ \text{ strongly as }  n^{(k)} \to \infty.
		\end{align}
		By Theorem \ref{RDS_Conti1}, we have
		\begin{align}\label{weak_lim18}
			\v^{n^{(k)}}(\cdot) \text{ converges to } \v_k(\cdot) \ \text{ in }\  \mathrm{L}^2(-k,0;\V) \text{ strongly}.
		\end{align}
		We have the convergences \eqref{weak_lim10} and \eqref{weak_lim18} with $\z\in \mathrm{L}^{\infty}(0,T;\V)\cap\mathrm{L}^2(0,T;\D(\A))$ (see \eqref{O-U_conti}). Therefore, by Theorems \ref{Converge_b} and \ref{Converge_c}, we conclude that
		\begin{align}
			\lim_{n^{(k)} \to \infty}\int_{-k}^{0} e^{\alpha s} b\big(\v^{n^{(k)}}(s), \v^{n^{(k)}}(s), \A\v^{n^{(k)}}(s) \big) \d s &= \int_{-k}^{0} e^{\alpha s} b\big(\v_k(s), \v_k(s), \A\v_k(s) \big) \d s,\label{weak_lim19}\\
			\lim_{n^{(k)} \to \infty}\int_{-k}^{0} e^{\alpha s} b\big(\v^{n^{(k)}}(s), \z(s), \A\v^{n^{(k)}}(s) \big) \d s &= \int_{-k}^{0} e^{\alpha s} b\big(\v_k(s), \z(s), \A\v_k(s) \big) \d s,\\
			\lim_{n^{(k)} \to \infty}\int_{-k}^{0} e^{\alpha s} b\big(\z(s), \v^{n^{(k)}}(s), \A\v^{n^{(k)}}(s) \big) \d s& = \int_{-k}^{0} e^{\alpha s} b\big(\z(s), \v_k(s), \A\v_k(s) \big) \d s,\\
			\lim_{n^{(k)} \to \infty}\int_{-k}^{0} e^{\alpha s}\big(\mathcal{C}(\v^{n^{(k)}}(s) +\z(s)),\A\v^{n^{(k)}}(s)\big) \d s &= \int_{-k}^{0} e^{\alpha s} \big(\mathcal{C}(\v_k(s) +\z(s)),\A\v_k(s)\big)\d s.\label{weak_lim20}
		\end{align}
		From \eqref{weak_lim10}, we get
		\begin{align*}
			\int_{-k}^{0} e^{\alpha s} \left[\mu\|\A\v_k(s)\|^2_{\H}+\frac{\alpha}{2}\|\v_k(s)\|^2_{\V}\right]\d s \leq \liminf_{n^{(k)}\to \infty} \int_{-k}^{0} e^{\alpha s}\left[ \mu\|\A\v^{n^{(k)}}(s)\|^2_{\H}+\frac{\alpha}{2}\|\v^{n^{(k)}}(s)\|^2_{\V}\right] \d s.
		\end{align*}
		We can also write the above inequality as 
		\begin{align}\label{weak_lim21}
			&\limsup_{n^{(k)}\to \infty} \bigg\{- \int_{-k}^{0} e^{\alpha s}\left[ \mu\|\A\v^{n^{(k)}}(s)\|^2_{\H}+\frac{\alpha}{2}\|\v^{n^{(k)}}(s)\|^2_{\V}\right] \d s\bigg\}  \nonumber\\&\leq - \int_{-k}^{0} e^{\alpha s} \left[\mu\|\A\v_k(s)\|^2_{\H}+\frac{\alpha}{2}\|\v_k(s)\|^2_{\V}\right] \d s .
		\end{align}
		From \eqref{Energy_esti6}, \eqref{positive_function}, \eqref{weak_lim11}-\eqref{weak_lim13}, \eqref{weak_lim19}-\eqref{weak_lim20}, and inequality \eqref{weak_lim21}, we conclude that
		\begin{align}\label{Energy_esti8}
			&	\limsup_{n^{(k)}\to \infty} \|\varphi(t_{n^{(k)}}, \theta_{-t_{n^{(k)}}}\omega, \boldsymbol{x}_{n^{(k)}}) - \z(0)\|^2_{\V}\nonumber\\ 
			&\leq \int_{- \infty}^{-k} h(s)\d s  + 2 \int_{-k}^{0} e^{\alpha s}\bigg\{\big((\eta-\alpha) \z(s)+\f, \A\v_k(s)\big)-b\big(\v_k(s), \v(s), \A\v_k(s)\big)\nonumber\\&\quad-b\big(\v_k(s), \z(s), \A\v_k(s)\big)-b\big(\z(s), \v_k(s), \A\v_k(s)\big) -b\big(\z(s), \z(s), \A\v_k(s)\big)\nonumber\\&\quad-\beta \big(\mathcal{C}(\v_k(s)+\z(s)), \A\v_k(s)\big)-\mu\|\A\v_k(s)\|^2_{\H}-\frac{\alpha}{2}\|\v_k(s)\|^2_{\V} \bigg\} \d s.
		\end{align} 
		Using \eqref{weak_lim8} and \eqref{Energy_esti2}, we obtain 
		\begin{align}\label{Energy_esti9}
			&	\|\y_0-\z(0)\|^2_{\V}\nonumber \\&=  \|\varphi(k, \theta_{-k}\omega,\y_{-k}) - \z(0)\|^2_{\V}= \|\v\big(0, -k; \omega, \y_{-k} - \z(-k)\big)\|^2_{\V}\nonumber\\
			&	= \|\y_{-k}-\z(-k)\|^2_{\V}\ e^{-\alpha k} + 2 \int_{-k}^{0} e^{\alpha s}\bigg\{\big((\eta-\alpha) \z(s)+\f, \A\v_k(s)\big)-b\big(\v_k(s), \v(s), \A\v_k(s)\big)\nonumber\\&\quad-b\big(\v_k(s), \z(s), \A\v_k(s)\big)-b\big(\z(s), \v_k(s), \A\v_k(s)\big) -b\big(\z(s), \z(s), \A\v_k(s)\big)\nonumber\\&\quad- \beta\big(\mathcal{C}(\v_k(s)+\z(s)), \A\v_k(s)\big)-\mu\|\A\v_k(s)\|^2_{\H}-\frac{\alpha}{2}\|\v_k(s)\|^2_{\V} \bigg\} \d s.
		\end{align}
		After combining \eqref{Energy_esti8} with \eqref{Energy_esti9}, we find 
		\begin{align}\label{Energy_esti10}
			&	\limsup_{n^{(k)}\to \infty} \|\varphi(t_{n^{(k)}}, \theta_{-t_{n^{(k)}}}\omega, \boldsymbol{x}_{n^{(k)}}) - \z(0)\|^2_{\V}\nonumber\\ &\leq \int_{- \infty}^{-k} h(s)\ \d s + \|\y_0-\z(0)\|^2_{\V} - \|\y_{-k}-\z(-k)\|^2_{\V}\ e^{-\alpha k}\nonumber\\
			&\leq \int_{- \infty}^{-k} h(s)\ \d s + \|\y_0-\z(0)\|^2_{\V}.
		\end{align}
		If we define the diagonal process $\{m_j\}^{\infty}_{j=1}$ by $m_j=j^{(j)}, j\in \mathbb{N},$ the sequence $\{m_j\}^{\infty}_{j=k}$ is a subsequence of the sequence $(n^{(k)})$ and hence by \eqref{Energy_esti10},
		\begin{align}\label{weak_lim22}
			\limsup_{j} \|\varphi(t_{m_j}, \theta_{-t_{m_j}}\omega, \boldsymbol{x}_{m_j}) - \z(0)\|^2_{\V}\leq \int_{-\infty}^{-k} h(s)\ \d s + \|\y_0 -\z(0)\|^2_{\V}.
		\end{align}
		Taking the limit $k\to \infty$ in \eqref{weak_lim22}, we arrive at
		\begin{align*}
			\limsup_{j} \|\varphi(t_{m_j}, \theta_{-t_{m_j}}\omega, \boldsymbol{x}_{m_j}) - \z(0)\|^2_{\V}\leq \|\y_0 -\z(0)\|^2_{\V},
		\end{align*}
		which proves the claim \eqref{weak_lim4} and hence the proof of Theorem \ref{V_asymptotically} is completed.
	\end{proof}
	
	From Theorems \ref{V_ab}, \ref{V_asymptotically} and Theorem 2.8, \cite{BCLLLR}, we immediately conclude the main result of this section.

	\begin{theorem}\label{Main_theorem_1}
		Suppose that, for $r\in[1,3],$ the domain $\mathcal{O}\subset\R^2$ satisfies Assumption \ref{assumpO} and Assumption \ref{assump} is satisfied. Consider the metric dynamical system, $\Im = (\Omega, \hat{\mathcal{F}}, \hat{\mathbb{P}}, \hat{\theta})$ from Proposition \ref{m-DS}, and the RDS $\varphi$ on $\V$ over $\Im$ generated by the 2D SCBF equations \eqref{S-CBF} subjected to additive noise satisfying Assumption \ref{assump}. Then, the family $\mathcal{G}$ of sets defined by $\mathcal{G}(\omega)= \Omega_{\mathrm{\bf B}}(\omega),$ for all $\omega\in \Omega$, is the minimal $\hat{\mathcal{F}}^{u}$-measurable $\hat{\mathfrak{DK}}$-random attractor for $\varphi$, where $\mathrm{\bf B}$ is $\hat{\mathfrak{DK}}$-absorbing set.
	\end{theorem}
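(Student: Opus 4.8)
The plan is to deduce Theorem~\ref{Main_theorem_1} directly from the abstract random attractor criterion of \cite{BCLLLR} (Theorem~2.8 there), so that the work consists only in checking that its hypotheses hold for the cocycle $\varphi$ on $\V$. That criterion requires: (a) a metric dynamical system; (b) a Polish phase space; (c) a continuous, $\mathfrak{D}$-asymptotically compact RDS over the MDS; and (d) the existence of a closed and bounded $\mathfrak{D}$-absorbing random set. Here $\mathfrak{D}$ will be the class $\hat{\mathfrak{DK}}$ of Definition~\ref{RA2}.

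First I would assemble the structural ingredients, all of which are already available. The quadruple $\Im$ is a metric dynamical system by Proposition~\ref{m-DS}, and $\V$, being a separable Hilbert space, is a Polish space. The pair $(\varphi,\theta)$ is a random dynamical system on $\V$; in particular, for every $(t,\omega)$ the map $\varphi(t,\omega,\cdot)\colon\V\to\V$ is continuous, which follows from the continuous-dependence result Lemma~\ref{RDS_Conti} (take $\z_n\equiv\z$, $\f_n\equiv\f$ there). The class $\hat{\mathfrak{DK}}$ is a nonempty class of closed and bounded random sets on $\V$: it is nonempty because, by Corollary~\ref{Bddns1}, every constant radius function lies in $\mathfrak{K}$, so all deterministic closed balls of $\V$ belong to $\hat{\mathfrak{DK}}$. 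Next I would invoke the two main results of this section: Theorem~\ref{V_ab} provides a closed and bounded $\hat{\mathfrak{DK}}$-absorbing random set $\mathrm{\bf B}$ in $\V$ (one may take $\mathrm{\bf B}(\omega)=\overline{B_{\V}(0,\kappa_{16}(\omega))}$ with $\kappa_{16}\in\mathfrak{K}$), and Theorem~\ref{V_asymptotically} shows that $\varphi$ is $\hat{\mathfrak{DK}}$-asymptotically compact on $\V$ under the stated hypotheses on $\mathcal{O}$, on the RKHS, and with $\f\in\H$.

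With (a)--(d) in hand, the abstract theorem yields an $\hat{\mathscr{F}}^u$-measurable $\hat{\mathfrak{DK}}$-random attractor given by $\mathcal{G}(\omega)=\Omega_{\mathrm{\bf B}}(\omega)=\bigcap_{T\ge0}\overline{\bigcup_{t\ge T}\varphi\big(t,\theta_{-t}\omega,\mathrm{\bf B}(\theta_{-t}\omega)\big)}$, which is exactly the claimed family of sets. For minimality I would argue in the usual way: if $\mathcal{G}'$ is any closed random set that $\hat{\mathfrak{DK}}$-attracts, then applying the attraction property to $\mathrm{\bf B}\in\hat{\mathfrak{DK}}$ and passing to $\omega$-limits forces $\Omega_{\mathrm{\bf B}}(\omega)\subset\mathcal{G}'(\omega)$ for $\hat{\mathbb{P}}$-a.e. $\omega$ (the standard $\omega$-limit set argument, see also \cite{CDF,Crauel1}); hence $\mathcal{G}$ is the minimal such attractor. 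The only genuinely nontrivial input is the $\V$-asymptotic compactness of $\varphi$ on the unbounded domain $\mathcal{O}$, where compact Sobolev embeddings fail and one must instead exploit the energy-equality and weak-convergence machinery of Theorem~\ref{V_asymptotically}; once that is granted, the present statement is a matter of quoting Theorems~\ref{V_ab} and \ref{V_asymptotically} together with the abstract criterion.
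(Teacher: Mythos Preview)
Your proposal is correct and follows essentially the same approach as the paper: the paper simply states that the result follows immediately from Theorems~\ref{V_ab} and \ref{V_asymptotically} together with Theorem~2.8 of \cite{BCLLLR}, and you have spelled out precisely how those hypotheses are verified. Your added remarks on continuity (via Lemma~\ref{RDS_Conti}), nonemptiness of $\hat{\mathfrak{DK}}$, and minimality are all standard and consistent with the cited abstract framework.
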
	

	\section{Invariant Measures}\label{sec5}\setcounter{equation}{0}
	This section is devoted to show the existence of invariant measures for our model in $\V$. The existence of  invariant measures for  2D SCBF equations defined on bounded domains in $\H$ is established in \cite{KM1}. It is established in \cite{CF} that the existence of compact invariant random set is a sufficient condition for the existence of invariant measures, that is, if a random dynamical system $\varphi$ has compact invariant random set, then there exist invariant measures for $\varphi$ (Corollary 4.4, \cite{CF}). Since, the random attractor itself is a compact invariant random set, the existence of invariant measures for the 2D SCBF equations \eqref{S-CBF} is a direct consequence of Corollary 4.4, \cite{CF} and Theorem \ref{Main_theorem_1}.
	
	\subsection{Existence of invariant measures}
	Let us define the transition operator $\{\mathrm{P}_t\}_{t\geq 0}$ by \begin{align}\label{71}\mathrm{P}_t f(\x)=\int_{\Omega}f(\varphi(\omega,t,\x))\d\mathbb{P}(\omega)=\E\left[f(\varphi(t,\x))\right],\end{align}  for all $f\in\mathcal{B}_b(\V)$, where $\mathcal{B}_b(\V)$ is the space of all bounded and Borel measurable functions on $\V$ and $\varphi$ is the random dynamical system corresponding to the 2D SCBF equations \eqref{S-CBF}, which is defined by \eqref{combine_sol}. Since $\varphi$ is continuous (Theorem \ref{RDS_Conti}), Proposition 3.8, \cite{BL} provides the following result: 
	\begin{lemma}
		The family $\{\mathrm{P}_t\}_{t\geq 0}$ is Feller, that is, $\mathrm{P}_tf\in\C_{b}(\V)$ if $f\in\C_b(\V)$, where $\C_b(\V)$ is the space of all bounded and continuous functions on $\V$. Furthermore, for any $f\in\C_b(\V)$, $\mathrm{P}_tf(\x)\to f(\x)$ as $t\downarrow 0$. 
	\end{lemma}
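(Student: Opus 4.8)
The plan is to obtain both assertions pathwise, by combining the continuous dependence of the cocycle on the initial datum (Lemma \ref{RDS_Conti}) with the bounded convergence theorem, exactly along the lines of Proposition 3.8 in \cite{BL}.

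\textbf{Feller property.} First I would note that boundedness is automatic: for $f\in\C_b(\V)$ and $t\geq0$ one has $|\mathrm{P}_tf(\x)|\leq\sup_{\omega\in\Omega}|f(\varphi(t,\omega)\x)|\leq\|f\|_{\infty}$, so $\mathrm{P}_tf$ is bounded. To prove continuity of $\mathrm{P}_tf$, I would fix $t\geq0$ and a sequence $\x_n\to\x$ in $\V$. Recalling from \eqref{combine_sol} (equivalently the definition of $\varphi$) that $\varphi(t,\omega)\x=\v(t,\z_\eta(\omega))(\x-\z_\eta(\omega)(0))+\z_\eta(\omega)(t)$, and that $\z_\eta(\omega)(0)\in\V$, the shifted data satisfy $\x_n-\z_\eta(\omega)(0)\to\x-\z_\eta(\omega)(0)$ in $\V$; applying Lemma \ref{RDS_Conti} with the constant data $\z_n\equiv\z_\eta(\omega)$ and $\f_n\equiv\f$ then gives $\varphi(t,\omega)\x_n\to\varphi(t,\omega)\x$ in $\V$ for every $\omega\in\Omega$. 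Since $f$ is continuous on $\V$, it follows that $f(\varphi(t,\omega)\x_n)\to f(\varphi(t,\omega)\x)$ pointwise in $\omega$, with the $\mathbb{P}$-integrable domination $|f(\varphi(t,\omega)\x_n)|\leq\|f\|_{\infty}$. The dominated convergence theorem then yields $\mathrm{P}_tf(\x_n)\to\mathrm{P}_tf(\x)$, hence $\mathrm{P}_tf\in\C_b(\V)$.

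\textbf{Continuity as $t\downarrow0$.} Again I would argue pathwise. Fix $\x\in\V$ and $\omega\in\Omega$. By Theorem \ref{solution_v}, $\v(\cdot,\z_\eta(\omega))(\x-\z_\eta(\omega)(0))\in\C([0,\infty);\V)$, so $\v(t,\z_\eta(\omega))(\x-\z_\eta(\omega)(0))\to\x-\z_\eta(\omega)(0)$ in $\V$ as $t\downarrow0$; moreover, by Lemma \ref{SOUP} and Remark \ref{stationary}, the Ornstein-Uhlenbeck process has trajectories continuous with values in $\mathrm{X}=\V\cap\H^2(\mathcal{O})\hookrightarrow\V$, so $\z_\eta(\omega)(t)\to\z_\eta(\omega)(0)$ in $\V$. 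Adding these two limits, $\varphi(t,\omega)\x\to\x$ in $\V$ as $t\downarrow0$ for each $\omega$, whence $f(\varphi(t,\omega)\x)\to f(\x)$ pointwise in $\omega$ with the uniform bound $\|f\|_{\infty}$. Dominated convergence applied to \eqref{71} then gives $\mathrm{P}_tf(\x)=\E[f(\varphi(t,\x))]\to f(\x)$ as $t\downarrow0$.

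\textbf{Main obstacle.} There is essentially no analytic difficulty beyond what is already encoded in Lemma \ref{RDS_Conti} and Theorem \ref{solution_v}; the only point that requires care is the measurability of $\omega\mapsto f(\varphi(t,\omega)\x)$, which follows from the joint $(\mathcal{B}(\R^+)\otimes\mathcal{F}\otimes\mathcal{B},\mathcal{B})$-measurability of the RDS $\varphi$ together with the Borel measurability of $f$, so that the expectations in \eqref{71} are well defined and the bounded convergence theorem is legitimately applicable. The rest is a routine combination of pathwise continuity with dominated convergence.
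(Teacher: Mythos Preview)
Your proposal is correct and follows exactly the approach the paper invokes: the paper does not give an independent proof but simply cites Proposition~3.8 of \cite{BL}, whose argument is precisely the pathwise continuous-dependence (Lemma~\ref{RDS_Conti}) plus dominated convergence that you have written out. The only minor quibble is that the continuity of $t\mapsto\z_\eta(\omega)(t)$ in $\mathrm{X}$ is not literally stated in Remark~\ref{stationary} (which gives $\mathrm{L}^q$-regularity), but it is a standard property of the Ornstein--Uhlenbeck process under Assumption~\ref{assump} and is implicit in the construction via the path space $\Omega(\xi,\mathrm{E})$.
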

	Analogously as in the proof of Theorem 5.6, \cite{CF}, one can prove that $\varphi$ is a Markov random dynamical system, that is, $\mathrm{P}_{t_1+t_2}=\mathrm{P}_{t_1}\mathrm{P}_{t_2}$, for all $t_1,t_2\geq 0$. 
	Since, we know by Corollary 4.4, \cite{CF} that if a Markov RDS on a Polish space has an invariant compact random set, then there exists a Feller invariant probability measure $\nu$ for $\varphi$. 
	\begin{definition}
		A Borel probability measure $\nu$ on $\V$  is called an \emph{invariant measure}	for a Markov semigroup $\{\mathrm{P}_t\}_{t\geq 0}$ of Feller operators on $\C_b(\V)$ if and only if $$\mathrm{P}_{t}^*\nu=\nu, \ t\geq 0,$$ where $(\mathrm{P}_{t}^*\nu)(\Gamma)=\int_{\V}\mathrm{P}_{t}(\y,\Gamma)\nu(\d\y),$ for $\Gamma\in\mathcal{B}(\V)$ and  $\mathrm{P}_t(\y,\cdot)$ is the transition probability, $\mathrm{P}_{t}(\y,\Gamma)=\mathrm{P}_{t}(\chi_{\Gamma})(\y),\ \y\in\V$.
	\end{definition}

	By the definition of random attractors, it is clear  that there exists an invariant compact random set in $\V$. A Feller invariant probability measure for a Markov RDS $\varphi$ on $\V$ is, by definition, an invariant probability measure for the semigroup $\{\mathrm{P}_t\}_{t\geq 0}$ defined by \eqref{71}. Hence, we have the following result on the existence of invariant measures for the 2D SCBF equations \eqref{S-CBF} defined on Poincar\'e domains in $\V$.
	\begin{theorem}\label{thm6.3}
		There exists an invariant measure for the 2D SCBF equations \eqref{S-CBF} in $\V$.
	\end{theorem}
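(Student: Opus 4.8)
The plan is to deduce Theorem \ref{thm6.3} directly from the existence of the random attractor established in Theorem \ref{Main_theorem_1} together with the abstract principle of \cite{CF} (Corollary 4.4 there), which asserts that a Feller Markov random dynamical system on a Polish space possessing a compact invariant random set admits an invariant probability measure. The RDS $\varphi$ constructed in Section \ref{sec3} acts on the Polish space $\V$, and by Theorem \ref{Main_theorem_1} it has a random $\hat{\mathfrak{DK}}$-attractor $\mathcal{G}(\omega) = \Omega_{\mathrm{\bf B}}(\omega)$, which is by Definition \ref{RA} a compact $\varphi$-invariant random set. So the three ingredients I need to assemble are: (i) $\varphi$ is a Markov RDS; (ii) the associated transition semigroup $\{\mathrm{P}_t\}_{t\geq 0}$ is Feller; and (iii) the invariant compact random set exists.

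First I would recall from the preceding subsection that the transition operators are defined by $\mathrm{P}_tf(\x) = \E[f(\varphi(t,\x))]$ for $f\in\mathcal{B}_b(\V)$, that $\{\mathrm{P}_t\}_{t\geq 0}$ is Feller (this follows from the continuity of $\varphi$ given in Lemma \ref{RDS_Conti}, via Proposition 3.8 of \cite{BL}, and is already recorded as a Lemma above), and that $\varphi$ is a Markov RDS, i.e.\ $\mathrm{P}_{t_1+t_2} = \mathrm{P}_{t_1}\mathrm{P}_{t_2}$ for all $t_1,t_2\geq 0$, proved exactly as in Theorem 5.6 of \cite{CF}. With these in hand, I would invoke Corollary 4.4 of \cite{CF}: since the random attractor $\mathcal{G}$ is a compact invariant random set for the Markov RDS $\varphi$ on the Polish space $\V$, there exists a Feller invariant probability measure $\nu$ for $\varphi$, which by definition is an invariant probability measure for the semigroup $\{\mathrm{P}_t\}_{t\geq 0}$ in the sense of the Definition given above, i.e.\ $\mathrm{P}_t^*\nu = \nu$ for all $t\geq 0$. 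This $\nu$ is then the desired invariant measure for the 2D SCBF equations \eqref{S-CBF} in $\V$, completing the proof.

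There is in fact no serious obstacle here: the entire argument is a citation of the abstract machinery of \cite{CF} applied to the objects already built in Sections \ref{sec3}–\ref{sec4}. The only points requiring care — and these are genuinely routine given the earlier results — are verifying that the hypotheses of Corollary 4.4 of \cite{CF} are met, namely that $\V$ is Polish (clear, as it is a separable Hilbert space), that $\varphi$ is a Markov RDS with the cocycle and measurability properties (established in Section \ref{sec3}), and that $\mathcal{G}(\omega)$ is compact and strictly $\varphi$-invariant (this is precisely parts (i) and (ii) of Definition \ref{RA}, guaranteed by Theorem \ref{Main_theorem_1}). Thus the proof is short: one sentence recalling that the random attractor is a compact invariant random set, and one sentence applying Corollary 4.4 of \cite{CF} to conclude the existence of the invariant measure.
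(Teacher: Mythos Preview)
Your proposal is correct and follows essentially the same approach as the paper: the paper also deduces Theorem \ref{thm6.3} directly from Corollary 4.4 of \cite{CF}, using that the random attractor from Theorem \ref{Main_theorem_1} is a compact invariant random set for the Markov Feller RDS $\varphi$ on the Polish space $\V$. The paper records the Feller property (via Lemma \ref{RDS_Conti} and Proposition 3.8 of \cite{BL}) and the Markov property (via Theorem 5.6 of \cite{CF}) just as you do, and then concludes existence without further argument.
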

	\subsection{Uniqueness of invariant measures}
	In this work, $\W(t)$ is a Wiener process with RKHS $\mathrm{K}$ satisfying Assumption \ref{assump}. In particular, $\mathrm{K} \subset\H$ and the natural embedding  $i : \mathrm{K}\hookrightarrow \H$ is a Hilbert-Schmidt operator. For a fixed orthonormal basis $\{e_k\}_{k\in\N}$ of $\mathrm{K}$ and a sequence $\{\beta_k\}_{k\in\N}$ of independent Brownian motions defined on some filtered probability space $(\Omega, \mathscr{F}, (\mathscr{F}_t)_{t\in \R}, \mathbb{P})$ such that $\W(t)$ can be written in the following form
	\begin{align*}
		\W(t)=\sum_{k=1}^{\infty}\beta_k(t) e_k,  \ \ \ t\geq0.
	\end{align*}
	Moreover, there exists a covariance operator $\J \in \mathcal{L}(\H)$ associated with $\W(t)$ defined by 
	\begin{align*}
		\left\langle \J h_1,h_2\right\rangle=\mathbb{E}\left[\left\langle h_1,\W(1)\right\rangle_{\H}\left\langle \W(1),h_2\right\rangle_{\H}\right], \ \ \ h_1,h_2\in \H. 
	\end{align*}
	It is well known from \cite{DZ1} that $\J$ is a non-negative self-adjoint and trace class operator in $\H$. Furthermore, $\J = ii^*$ and $K = R(\J^{1/2} ),$ where $R(\J^{1/2} )$ is the range of the operator $\J^{1/2}$ (see \cite{BN}). Note that 
	\begin{align*}
		\sum_{k=1}^{\infty}\|ie_k\|^2_{\H}= \text{Tr}\left[\J\right]<\infty.
	\end{align*}
	\subsubsection{Exponential estimates}
	Here, we obtain exponential estimates which is used to get the uniqueness of invariant measures.
	\begin{theorem}\label{UIM1}
		Let $\u_1(\cdot)$ and $\u_2(\cdot)$ be two solutions of the system \eqref{S-CBF} with the initial data $\u_1^0,\u_2^0\in\H$, respectively. Then, we have
		\begin{align}\label{62}
			\mathbb{E}\left[\|\u_1(t)-\u_2(t)\|^2_{\H}\right] \leq\|\u_1^0-\u_2^0\|^2_{\H}\ \emph{\text{exp}}\left\{\frac{2}{\mu^2}\|\u_1^0\|^2_{\H}+\left(\frac{2}{\mu^2\alpha}\|\f\|^2_{\H}+\frac{2}{\mu^2} \emph{\text{Tr}}\left[\J\right]-\mu\lambda_1-2\alpha\right)t\right\},
		\end{align}
		provided $\frac{2}{\mu^2}\leq\frac{\alpha}{2\|i^*\|^2_{\mathcal{L}(\H,\mathrm{K})}}$.
	\end{theorem}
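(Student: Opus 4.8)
The plan is to control the difference $\w:=\u_1-\u_2$ pathwise, integrate by Gronwall's lemma, and then pass to expectations by means of an exponential supermartingale that absorbs the energy of $\u_1$ appearing in the Gronwall exponent. Since the noise in \eqref{S-CBF} is additive, for $\mathbb{P}$-almost every $\omega$ the process $\w$ solves the deterministic-type equation
\[
\frac{\d\w(t)}{\d t}+\mu\A\w(t)+\big[\B(\u_1(t))-\B(\u_2(t))\big]+\alpha\w(t)+\beta\big[\mathcal{C}(\u_1(t))-\mathcal{C}(\u_2(t))\big]=\boldsymbol{0},\qquad \w(0)=\u_1^0-\u_2^0,
\]
and $\w$ lies in the regularity class for which the $\H$-energy equality holds (exactly as in the uniqueness argument of Theorem \ref{solution_v}). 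Testing with $\w$, using \eqref{b0} to rewrite $\langle\B(\u_1)-\B(\u_2),\w\rangle=b(\w,\u_1,\w)$ (since $b(\u_2,\w,\w)=b(\w,\w,\w)=0$) and \eqref{MO_c} to discard $\beta\langle\mathcal{C}(\u_1)-\mathcal{C}(\u_2),\w\rangle\geq0$, and then estimating $|b(\w,\u_1,\w)|\leq\|\w\|_{\wi\L^4}^2\|\u_1\|_{\V}\leq2\|\w\|_{\H}\|\w\|_{\V}\|\u_1\|_{\V}\leq\frac{\mu}{2}\|\w\|_{\V}^2+\frac{2}{\mu}\|\w\|_{\H}^2\|\u_1\|_{\V}^2$ via \eqref{lady} and Young's inequality, I obtain, after applying the Poincar\'e inequality \eqref{2.1} to the remaining viscous term,
\[
\frac{\d}{\d t}\|\w(t)\|_{\H}^2\leq-(\mu\lambda_1+2\alpha)\|\w(t)\|_{\H}^2+\frac{4}{\mu}\|\u_1(t)\|_{\V}^2\,\|\w(t)\|_{\H}^2 .
\]
Gronwall's lemma then gives the pathwise bound $\|\w(t)\|_{\H}^2\leq\|\u_1^0-\u_2^0\|_{\H}^2\exp\{-(\mu\lambda_1+2\alpha)t+\frac{4}{\mu}\int_0^t\|\u_1(s)\|_{\V}^2\,\d s\}$.

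To control $\int_0^t\|\u_1(s)\|_{\V}^2\,\d s$ I would apply It\^o's formula to $\|\u_1(t)\|_{\H}^2$ for the solution of \eqref{S-CBF} (the It\^o correction being $\sum_k\|ie_k\|_{\H}^2=\Tr[\J]$); using $b(\u_1,\u_1,\u_1)=0$, $\langle\mathcal{C}(\u_1),\u_1\rangle=\|\u_1\|_{\wi\L^{r+1}}^{r+1}\geq0$, the splitting $2\langle\f,\u_1\rangle\leq\frac{1}{\alpha}\|\f\|_{\H}^2+\alpha\|\u_1\|_{\H}^2$, and discarding the nonnegative terms $\|\u_1(t)\|_{\H}^2$ and $2\beta\int_0^t\|\u_1\|_{\wi\L^{r+1}}^{r+1}\,\d s$ gives
\[
2\mu\int_0^t\|\u_1(s)\|_{\V}^2\,\d s+\alpha\int_0^t\|\u_1(s)\|_{\H}^2\,\d s\leq\|\u_1^0\|_{\H}^2+\frac{t}{\alpha}\|\f\|_{\H}^2+t\,\Tr[\J]+2\int_0^t\big(\u_1(s),\d\W(s)\big).
\]
Substituting this into the Gronwall bound (so that $\frac{4}{\mu}\int_0^t\|\u_1\|_{\V}^2=\frac{2}{\mu^2}\,\big(2\mu\int_0^t\|\u_1\|_{\V}^2\big)$) and taking expectations, the estimate \eqref{62} will follow once we show
\[
\E\Big[\exp\Big\{\frac{4}{\mu^2}\int_0^t\big(\u_1(s),\d\W(s)\big)-\frac{2\alpha}{\mu^2}\int_0^t\|\u_1(s)\|_{\H}^2\,\d s\Big\}\Big]\leq1 .
\]

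For the last bound, set $\lambda:=\frac{4}{\mu^2}$ and consider the stochastic exponential $Z_t:=\exp\{\lambda\int_0^t(\u_1,\d\W)-\frac{\lambda^2}{2}\int_0^t\|i^*\u_1(s)\|_{\mathrm{K}}^2\,\d s\}$; being a nonnegative local martingale with $Z_0=1$, it is a supermartingale and hence $\E[Z_t]\leq1$. The quadratic variation density of $\int_0^t(\u_1,\d\W)$ is $\|i^*\u_1\|_{\mathrm{K}}^2\leq\|i^*\|_{\mathcal{L}(\H,\mathrm{K})}^2\|\u_1\|_{\H}^2$, so $\frac{\lambda^2}{2}\|i^*\u_1\|_{\mathrm{K}}^2\leq\frac{8\|i^*\|_{\mathcal{L}(\H,\mathrm{K})}^2}{\mu^4}\|\u_1\|_{\H}^2$; the hypothesis $\frac{2}{\mu^2}\leq\frac{\alpha}{2\|i^*\|_{\mathcal{L}(\H,\mathrm{K})}^2}$ is precisely the inequality $\frac{8\|i^*\|_{\mathcal{L}(\H,\mathrm{K})}^2}{\mu^4}\leq\frac{2\alpha}{\mu^2}$, so the exponent in the displayed expectation is pointwise dominated by $\log Z_t$. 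Therefore that expectation is at most $\E[Z_t]\leq1$, and collecting the deterministic factors gives exactly \eqref{62}.

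The genuine work here is the constant bookkeeping that makes the exponential-martingale step close: one must split the viscous dissipation in the $\w$-equation and treat the $\u_1$-energy identity so that, after substitution, the coefficient of the It\^o integral and the coefficient of the negative term $-\int_0^t\|\u_1\|_{\H}^2\,\d s$ stand in the precise ratio required by $\E[Z_t]\leq1$; that ratio is what forces the smallness condition $\frac{2}{\mu^2}\leq\frac{\alpha}{2\|i^*\|_{\mathcal{L}(\H,\mathrm{K})}^2}$. A secondary technical point is to justify the two energy identities for $\u_1^0,\u_2^0\in\H$ and $r\in[1,3]$: the one for $\|\w\|_{\H}^2$ is the identity already used in the uniqueness argument of Theorem \ref{solution_v}, while the one for $\|\u_1\|_{\H}^2$ is obtained by applying It\^o's formula to Galerkin approximations and passing to the limit, as in \cite{MTM}.
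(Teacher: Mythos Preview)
Your proof is correct and follows essentially the same strategy as the paper: a pathwise Gronwall bound on $\|\u_1-\u_2\|_{\H}^2$ (using \eqref{b0}, \eqref{lady}, \eqref{MO_c} and Poincar\'e) followed by an exponential moment estimate for $\int_0^t\|\u_1(s)\|_{\V}^2\,\d s$ that forces the smallness condition on $\frac{2}{\mu^2}$. The only cosmetic difference is that the paper obtains the exponential moment by applying It\^o's formula directly to $\exp\big(\varepsilon(\|\u_1\|_{\H}^2+\mu\int_0^t\|\u_1\|_{\V}^2\,\d s)\big)$ with $\varepsilon=\frac{2}{\mu^2}$ and then Gronwall, whereas you apply the linear It\^o formula to $\|\u_1\|_{\H}^2$ and invoke the supermartingale property of the stochastic exponential; both reduce to the same inequality $\frac{4\|i^*\|_{\mathcal{L}(\H,\mathrm{K})}^2}{\mu^2}\leq\alpha$ and yield exactly \eqref{62}.
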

	\begin{proof}
		Let $\mathfrak{X}(\cdot)=\u_1(\cdot)-\u_2(\cdot)$, then $\mathfrak{X}(\cdot)$ satisfies  the following equality:
		\begin{align*}
			\|\mathfrak{X}(t)\|^2_{\H}&=\|\mathfrak{X}(0)\|^2_{\H}-2\mu\int_{0}^{t}\|\mathfrak{X}(\zeta)\|^2_{\V}\d\zeta-2\alpha\int_{0}^{t}\|\mathfrak{X}(\zeta)\|^2_{\H}\d\zeta\nonumber\\&\quad-2\int_{0}^{t}\left\langle\B(\u_1(\zeta))-\B(\u_2(\zeta)),\mathfrak{X}(\zeta)\right\rangle\d\zeta-2\beta\int_{0}^{t}\left\langle\mathcal{C}(\u_1(\zeta))-\mathcal{C}(\u_2(\zeta)),\mathfrak{X}(\zeta)\right\rangle\d\zeta\nonumber\\&\leq\|\mathfrak{X}(0)\|^2_{\H}-2\mu\int_{0}^{t}\|\mathfrak{X}(\zeta)\|^2_{\V}\d\zeta-2\alpha\int_{0}^{t}\|\mathfrak{X}(\zeta)\|^2_{\H}\d\zeta+2\int_{0}^{t} b(\mathfrak{X}(\zeta),\u_1(\zeta),\mathfrak{X}(\zeta))\d\zeta\nonumber\\&\leq\|\mathfrak{X}(0)\|^2_{\H}-\mu\int_{0}^{t}\|\mathfrak{X}(\zeta)\|^2_{\V}\d\zeta-2\alpha\int_{0}^{t}\|\mathfrak{X}(\zeta)\|^2_{\H}\d\zeta+\frac{2}{\mu}\int_{0}^{t} \|\u_1(\zeta)\|^2_{\V}\|\mathfrak{X}(\zeta)\|^2_{\H}\d\zeta\nonumber\\&\leq\|\mathfrak{X}(0)\|^2_{\H}-\int_{0}^{t}\left[(\mu\lambda_1+2\alpha)-\frac{2}{\mu}\|\u_1(\zeta)\|^2_{\V}\right]\|\mathfrak{X}(\zeta)\|^2_{\H}\d\zeta,
		\end{align*}
	for a.e. $t\in[0,T]$,	where we have used \eqref{2.1}, \eqref{b0}-\eqref{lady}, \eqref{MO_c}, H\"older's and Young's inequalities. The above estimate implies (using Gronwall's inequality) that
		\begin{align}\label{63}
			\|\mathfrak{X}(t)\|^2_{\H}\leq\|\mathfrak{X}(0)\|^2_{\H}\ \text{exp}\left(-(\mu\lambda_1+2\alpha)t+\frac{2}{\mu}\int_{0}^{t}\|\u_1(\zeta)\|^2_{\V}\d\zeta\right).
		\end{align}
	
		Let $\mathcal{Z}(t)=\|\u_1(t)\|^2_{\H}+\mu\int_{0}^{t}\|\u_1(\zeta)\|^2_{\V}\d\zeta$, then
		\begin{align*}
			\d\mathcal{Z}&=-\mu\|\u_1(t)\|^2_{\V}\d t -2\alpha\|\u_1(t)\|^2_{\H}\d t -2\beta\|\u_1(t)\|^{r+1}_{\wi\L^{r+1}}\d t+2\left(\u_1(t),\f\right)\d t \nonumber\\&\quad+2 \left(\u_1(t),\d\W(t)\right) + \text{Tr}\left[\J\right]\d t. 
		\end{align*}
		Applying It\^o's formula to $\mathfrak{Z}=\text{exp}(\varepsilon\mathcal{Z})$, we have
		\begin{align*}
			\d\mathfrak{Z}&=\varepsilon\mathcal{Z}\bigg(-\mu\|\u_1(t)\|^2_{\V}\d t -2\alpha\|\u_1(t)\|^2_{\H}\d t -2\beta\|\u_1(t)\|^{r+1}_{\wi\L^{r+1}}\d t+2\left(\u_1(t),\f\right)\d t \nonumber\\&\qquad\qquad +2 \left(\u_1(t),\d\W(t)\right)+ \text{Tr}\left[\J\right]\d t+2\varepsilon\|i^*\u_1(t)\|^2_{\H}\d t\bigg)\nonumber\\&\leq\varepsilon\mathcal{Z}\bigg(-\mu\|\u_1(t)\|^2_{\V}\d t -\left[\alpha-2\varepsilon\|i^*\|^2_{\mathcal{L}(\H,\mathrm{K})}\right]\|\u_1(t)\|^2_{\H}\d t -2\beta\|\u_1(t)\|^{r+1}_{\wi\L^{r+1}}\d t \nonumber\\&\qquad\qquad+\frac{1}{\alpha}\|\f\|^2_{\H} \d t+2 \left(\u_1(t),\d\W(t)\right) + \text{Tr}\left[\J\right]\d t\bigg).
		\end{align*}
		Choose $\varepsilon>0$ such that $2\varepsilon\|i^*\|^2_{\mathcal{L}(\H,\mathrm{K})}\leq\alpha,$ then we get
		\begin{align*}
			\mathfrak{Z}(t)\leq \text{exp}\left(\varepsilon\|\u_1(0)\|^2_{\H}\right) +2\varepsilon\int_{0}^{t}\mathfrak{Z}(\zeta)\left(\u_1(\zeta),\d\W(\zeta)\right) + \varepsilon\left(\frac{1}{\alpha}\|\f\|^2_{\H}+\text{Tr}\left[\J\right]\right)\int_{0}^{t}\mathfrak{Z}(\zeta)\d\zeta.
		\end{align*}
		Taking the expectation and using Gronwall's inequality, we obtain
		\begin{align}\label{64}
			\mathbb{E}\bigg[\text{exp}\bigg(\varepsilon\bigg(\|\u_1(t)\|^2_{\H}+\mu\int_{0}^{t}\|\u_1(\zeta)\|^2_{\V}\d\zeta\bigg)\bigg)\bigg]\leq\text{exp}\left(\varepsilon\left(\|\u_1(0)\|^2_{\H}+\frac{t}{\alpha}\|\f\|^2_{\H}+t\ \text{Tr}\left[\J\right]\right)\right).
		\end{align}
		Therefore, if $\frac{2}{\mu^2}\leq\frac{\alpha}{2\|i^*\|^2_{\mathcal{L}(\H,\mathrm{K})}},$ we obtain \eqref{62} from \eqref{63}-\eqref{64}.
	\end{proof}
	\begin{theorem}\label{UIM2}
		Let the condition given in Theorem \ref{UIM1} be satisfied and $\u_1^0\in\H$ be given. Then, for the condition $$\frac{2}{\mu^2\alpha}\|\f\|^2_{\H}+\frac{2}{\mu^2} \emph{\text{Tr}}\left[\J\right]\leq\mu\lambda_1+2\alpha,$$ there is a unique invariant measure to system \eqref{S-CBF}. Moreover, the invariant measure is ergodic and strongly mixing.
	\end{theorem}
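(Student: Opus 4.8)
The plan is to upgrade the exponential contraction estimate \eqref{62} of Theorem \ref{UIM1} into an asymptotic stability statement for the Markov semigroup $\{\mathrm{P}_t\}_{t\geq0}$ defined in \eqref{71}, and then read off uniqueness, ergodicity and strong mixing by standard arguments. Write $\kappa:=\mu\lambda_1+2\alpha-\frac{2}{\mu^2\alpha}\|\f\|^2_{\H}-\frac{2}{\mu^2}\text{Tr}[\J]$, which is nonnegative precisely under the hypothesis of Theorem \ref{UIM2}; I would take $\kappa>0$ (the equality case being obtained from the same argument applied to a slightly reduced value of $\alpha$ in \eqref{62}). Combining \eqref{62} with Jensen's inequality, for the two solutions $\u(\cdot;\x),\u(\cdot;\y)$ of \eqref{S-CBF} with deterministic data $\x,\y\in\H$ one gets \[\mathbb{E}\big[\|\u(t;\x)-\u(t;\y)\|_{\H}\big]\leq\|\x-\y\|_{\H}\,\text{exp}\Big\{\tfrac{1}{\mu^2}\|\x\|^2_{\H}-\tfrac{1}{2}\kappa t\Big\}\xrightarrow[t\to\infty]{}0.\] The crucial observation is that the divergent-looking prefactor $e^{\frac{1}{\mu^2}\|\x\|^2_{\H}}$ depends only on the (fixed) first initial datum, so it stays a harmless constant throughout; this is exactly why the exponential moment bound \eqref{64}, already needed to prove \eqref{62}, suffices and one never needs exponential moments of the invariant measure.

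Next I would invoke the Feller property of $\{\mathrm{P}_t\}_{t\geq0}$ and the existence of an invariant measure from Theorem \ref{thm6.3}. Fix a bounded Lipschitz $g:\H\to\R$ and set $f:=g|_{\V}\in\C_b(\V)$. For an invariant measure $\nu$ (a Borel probability measure on $\V$, hence tight), invariance $\mathrm{P}_t^{*}\nu=\nu$ and the contraction above give, for every $\x\in\V$, \[\Big|\mathrm{P}_tf(\x)-\int_{\V}f\,\d\nu\Big|\leq\int_{\V}\big|\mathrm{P}_tf(\x)-\mathrm{P}_tf(\y)\big|\,\nu(\d\y)\leq\|g\|_{\mathrm{Lip}}\,e^{\frac{1}{\mu^2}\|\x\|^2_{\H}}e^{-\frac12\kappa t}\int_{\V}\|\x-\y\|_{\H}\,\nu(\d\y),\] and the last quantity tends to $0$ as $t\to\infty$ after a routine truncation of the $\y$-integral over a compact set carrying most of $\nu$ (bounding the integrand by $2\|g\|_{\infty}$ on the remainder). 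Hence $\mathrm{P}_tf(\x)\to\int_{\V}f\,\d\nu$ for all $\x\in\V$. If $\nu'$ is another invariant measure, then $\int_{\V}f\,\d\nu'=\int_{\V}\mathrm{P}_tf\,\d\nu'\to\int_{\V}f\,\d\nu$ by dominated convergence (recall $\|\mathrm{P}_tf\|_{\infty}\leq\|g\|_{\infty}$), so the pushforwards of $\nu$ and $\nu'$ under the continuous injection $\V\hookrightarrow\H$ agree on all bounded Lipschitz functions and therefore coincide as Borel measures on $\H$; since that injection is a Borel isomorphism onto a Borel subset of $\H$ (Lusin--Souslin), $\nu=\nu'$, which is the asserted uniqueness.

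Finally, ergodicity is automatic, since a unique invariant probability measure is an extreme point of the convex set of invariant measures, and extreme invariant measures are ergodic. For strong mixing I would upgrade the pointwise limit $\mathrm{P}_tf\to\int_{\V}f\,\d\nu$ to $\int_{\V}(\mathrm{P}_tf)\,h\,\d\nu\to\big(\int_{\V}f\,\d\nu\big)\big(\int_{\V}h\,\d\nu\big)$, first for bounded Lipschitz $f$ and bounded $h$ via dominated convergence (using $\|\mathrm{P}_tf\|_{\infty}\le\|f\|_{\infty}$), and then for arbitrary $f,h\in L^{2}(\nu)$ by density together with the $L^2(\nu)$-contractivity of $\mathrm{P}_t$. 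The only genuinely delicate point in the whole scheme is the $\x$-dependent exponential prefactor in \eqref{62}; anchoring the first trajectory at the fixed point $\x$ rather than integrating that trajectory against $\nu$ confines it to a fixed constant, after which the proof reduces to the routine measure-theoretic steps above.
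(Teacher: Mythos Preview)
Your proposal is correct and follows the standard route: leverage the exponential contraction estimate \eqref{62} to obtain $\mathrm{P}_tf(\x)\to\int f\,\d\nu$ for bounded Lipschitz $f$, deduce uniqueness, and then read off ergodicity and strong mixing by the usual functional-analytic arguments. The paper itself does not give a proof but simply refers to Theorem~5.5 of \cite{MTM}; the argument there is precisely the one you sketch, so your approach coincides with the intended one.

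One small caveat: your handling of the borderline case $\kappa=0$ (equality in the hypothesis) via ``a slightly reduced value of $\alpha$'' is not quite right as written, since $\alpha$ is the fixed Darcy coefficient and appears on both sides of the inequality; reducing it in \eqref{62} would weaken, not strengthen, the decay. In practice the result is typically stated (and proved) under the strict inequality, and the referenced proof in \cite{MTM} likely does the same. This is a minor boundary issue rather than a gap in the main argument.
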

	\begin{proof}
		See the proof of Theorem 5.5, \cite{MTM}.
	\end{proof}
\begin{remark}
	Theorem \ref{UIM2}  establishes the unique invariant measure in $\H$. As $\V\subset\H$ is a closed subspace, the uniqueness of invariant measure in $\V$ also follows from Theorem \ref{UIM2}. 	The existence and uniqueness of invariant measure in $\H$ for 2D SCBF equations (with $r\in[1,3]$)   perturbed by a white noise (non degenerate) is obtained in \cite{AKMTM}. The  uniqueness of invariant measure  in $\H$ for 2D as well as 3D SCBF equations driven by additive as well as multiplicative degenerate noise via the asymptotic coupling method is established in \cite{MTM3}.
\end{remark}
	\begin{remark}
		Note that, in the proof of Theorem \ref{UIM1}, even though we are using \eqref{2.1}, one can prove  without using \eqref{2.1} as $\alpha>0$.
	\end{remark}
	
	\begin{remark}
		For 2D SCBF equations \eqref{SCBF}, the results of sections \ref{sec4}-\ref{sec5} can be proved in general unbounded domains also. In that case, one has to take the norm defined on $\V$ space as $\|\u\|^2_{\V} := \|\u\|^2_{\H} + \|\nabla\u\|^2_{\H}$. Since the Stokes operator $\A$ is not invertible in general unbounded domains, one has to make a change in Assumption \ref{assump} also. Instead of \eqref{A1}, we need to assume the following: the map
		\begin{align}\label{A2}
			(\I+\A)^{-\delta} : \mathrm{K} \to \V \cap \H^{2} (\mathcal{O}) \   \text{ is }\ \gamma \text{-radonifying,}
		\end{align}
		for some $\delta\in(0,1/2)$. Under the above change in Assumption \ref{assump} (which help us to prove Lemma \ref{SOUP} in general unbounded domains) and with some minor changes in calculations, all the results of this work hold true in general unbounded domains also. 
	\end{remark}

		\section{Approximation of Attractors from Bounded to Unbounded Domain}\label{sec6}\setcounter{equation}{0}
	In this section, we prove the upper semicontinuity of the random attractor, when the domain changes from bounded to unbounded. This upper semicontinuity of the random attractor is entirely different from the upper semicontinuity results established in the work \cite{KM1}. For deterministic NSE and CBF equations, upper semicontinuity of global attractors with respect to domain is obtained in \cite{ZD} and \cite{Mohan2}, respectively. In order to prove the results of this section, we are extending the theory used for the deterministic model in \cite{Mohan2,ZD} to the stochastic CBF model. 
	
	Let $\mathcal{O}=:\mathcal{O}_{\infty}\subset\R^2$ be an unbounded Poincar\'e domain (for example, one can take $\mathcal{O}=\R\times(-L,L)$). Also let $$\mathcal{O}_m=\{x\in\mathcal{O}:|x|\leq m\}, \ m\in\bar{\N}:= \N\cup\{\infty\}.$$ It is clear that $\mathcal{O}_m\subset\mathcal{O}_{m+1}\cdots\subset\mathcal{O}$, for $m\in \N$. Due to some technical difficulties (related to the RKHS of Wiener process), we are not able to prove the upper semicontinuity of random attractors with respect to domain for the system \eqref{SCBF}. But, we are able to prove if noise in \eqref{SCBF} replaced by a finite dimensional noise (see \eqref{SCBF_in} below). Consider the following 2D SCBF equations perturbed by additive noise on $\mathcal{O}$:
		\begin{equation}\label{SCBF_in}
			\left\{
			\begin{aligned}
				\d\u(t)&=[\mu \Delta\u(t)-(\u(t)\cdot\nabla)\u(t)-\alpha\u(t)-\beta|\u(t)|^{r-1}\u(t)\\ &\qquad-\nabla p(t)+\boldsymbol{f}]\d t +\boldsymbol{\mathrm{g}} \d\mathcal{W}(t),  \text{ in } \ \mathcal{O}\times(0,\infty), \\ \nabla\cdot\u&=0, \ \text{ in } \ \mathcal{O}\times(0,\infty), \\
				\u&=\boldsymbol{0},\  \text{ on } \ \partial\mathcal{O}\times(0,\infty), \\
				\u(0)&=\x, \ \text{ in } \ \mathcal{O},
			\end{aligned}
			\right.
		\end{equation} 	
	with $r\geq1$ and $\mathcal{W}=\mathcal{W}(t,\omega)$ is an one-dimensional two-sided Wiener process defined on a probability space $(\tilde{\Omega},\tilde{\mathscr{F}},\tilde{\mathbb{P}})$. Here  $\tilde{\Omega}$ is given  by
	\begin{align*}
		\tilde{\Omega}=\{\omega\in\mathrm{C}(\R;\R):\omega(0)=0\}, 
	\end{align*}
	$\tilde{\mathscr{F}}$ is the Borel sigma-algebra induced by the compact-open topology of $\tilde{\Omega}$, and $\tilde{\mathbb{P}}$ is the two-sided Gaussian measure on $(\tilde{\Omega},\tilde{\mathscr{F}})$.	Consider the following 2D SCBF equations perturbed by additive noise on $\mathcal{O}_m$ ($m\in\bar{\N}=:\N\cup\{\infty\}$):
	\begin{equation}\label{SCBF_m}
		\left\{
		\begin{aligned}
			\d\u_m(t)&=[\mu \Delta\u_m(t)-(\u_m(t)\cdot\nabla)\u_m(t)-\alpha\u_m(t)-\beta|\u_m(t)|^{r-1}\u_m(t)\\ &\qquad-\nabla p_m(t)+\boldsymbol{f}_m]\d t +\boldsymbol{\mathrm{g}}_m \d\mathcal{W}(t),  \text{ in } \ \mathcal{O}_m\times(0,\infty), \\ \nabla\cdot\u_m&=0, \ \text{ in } \ \mathcal{O}_m\times(0,\infty), \\
			\u_m&=\boldsymbol{0},\  \text{ on } \ \partial\mathcal{O}_m\times(0,\infty), \\
			\u_m(0)&=\x_m, \ \text{ in } \ \mathcal{O}_m,
		\end{aligned}
		\right.
	\end{equation}
	 where
	 	\begin{align*}
	 	\boldsymbol{f}_m(x) &= \begin{cases}
	 		\boldsymbol{f}(x),\ &x\in\mathcal{O}_m,\\
	 		\boldsymbol{0}, \ &x\in\mathcal{O} \backslash\mathcal{O}_m,
	 	\end{cases}\ \ \
 		\boldsymbol{\mathrm{g}}_m(x) = \begin{cases}
 			\boldsymbol{\mathrm{g}}(x),\ &x\in\mathcal{O}_m,\\
 			\boldsymbol{0}, \ &x\in\mathcal{O} \backslash\mathcal{O}_m,
 		\end{cases}\\
 		\x_m(x) &= \begin{cases}
 		\x(x),\ &x\in\mathcal{O}_m,\\
 		\boldsymbol{0}, \ &x\in\mathcal{O} \backslash\mathcal{O}_m.
 	\end{cases}
	 \end{align*}
 Consider, for some $\ell>0$ 
 \begin{align*}
 	\y(\theta_{t}\omega) =  \int_{-\infty}^{t} e^{-\ell(t-s)}\d \mathcal{W}(s), \ \ \omega\in \tilde{\Omega},
 \end{align*} which is the stationary solution of the one dimensional Ornstein-Uhlenbeck equation
 \begin{align*}
 	\d\y(\theta_t\omega) + \ell\y(\theta_t\omega)\d t =\d\mathcal{W}(t).
 \end{align*}
 Let us recall from \cite{FAN} that there exists a $\theta$-invariant subset of $\tilde{\Omega}$ (will be denoted by $\tilde{\Omega}$ itself) of full measure such that $\y(\theta_t\omega)$ is continuous in $t$ for every $\omega\in \tilde{\Omega},$ and
 \begin{align}
 	\lim_{t\to \pm \infty} \frac{|\y(\theta_t\omega)|}{|t|}=0   \text{\ \ and  \ \ }
 	\lim_{t\to \pm \infty} \frac{1}{t} \int_{0}^{t} \y(\theta_{s}\omega)\d s =0.\label{Y2}
 \end{align}
Moreover,
\begin{align}\label{Y3}
	\lim_{t\to \infty} e^{-\delta t}|\y(\theta_{-t}\omega)| &=0, \ \text{ for all } \ \delta>0.
\end{align}

	Assume that $D=\{D(\omega):\omega\in\tilde{\Omega}\}$ is a family of non-empty subsets of $E$ satisfying, for every $c>0$ and $\omega\in\tilde{\Omega}$, 
\begin{align}\label{D_1}
	\lim_{t\to\infty}e^{-ct}\|D(\theta_{-t}\omega)\|_{E}=0,
\end{align}
where $\|D\|_{E}=\sup\limits_{\x\in D}\|\x\|_{E}.$ Let $\mathfrak{D}$ be the collection of all tempered families of bounded non-empty subsets of $E$, that is,
\begin{align}\label{D_11}
	\mathfrak{D}=\big\{D=\{D(\omega):\omega\in\tilde{\Omega}\}:D \text{ satisfying } \eqref{D_1}\big\}.
\end{align} 

 Define
 \begin{align}\label{T_add}
 	\v(t,\omega)=\u(t,\omega)-\textbf{g}(x)\y(\theta_{t}\omega) \ \text{ and }\ \v_m(t,\omega)=\u_m(t,\omega)-\textbf{g}_m(x)\y(\theta_{t}\omega).
 \end{align}
 Then, from \eqref{SCBF_in} and \eqref{SCBF_m}, we obtain pathwise deterministic systems (on $\mathcal{O}$)
 	\begin{equation}\label{C_SCBF_in}
 	\left\{
 	\begin{aligned}
 		\frac{\d\v}{\d t}&=\mu \Delta\v-\big((\v+\mathrm{\textbf{g}}\y)\cdot\nabla\big)(\v+\mathrm{g}\y)-\alpha\v-\beta|\v+\mathrm{\textbf{g}}\y|^{r-1}(\v+\mathrm{\textbf{g}}\y)\\ &\qquad-\nabla p+\boldsymbol{f} +(\ell-\alpha)\boldsymbol{\mathrm{g}}\y +\mu\y\Delta\mathrm{\textbf{g}},  \text{ in } \ \mathcal{O}\times(0,\infty), \\ \nabla\cdot\v&=0, \qquad\qquad\qquad\qquad\qquad\qquad\qquad\quad \ \text{ in } \ \mathcal{O}\times(0,\infty), \\
 		\v&=\boldsymbol{0},\ \qquad\qquad\qquad\qquad\qquad\qquad\qquad\quad \text{ on } \ \partial\mathcal{O}\times(0,\infty), \\
 		\v(0)&=\x-\mathrm{\textbf{g}}\y(\omega),\qquad\qquad\qquad\qquad\qquad\quad \ \text{ in } \ \mathcal{O},
 	\end{aligned}
 	\right.
 \end{equation} 	
and (on $\mathcal{O}_m$, $m\in\bar{\N}$)
	\begin{equation}\label{C_SCBF_m}
	\left\{
	\begin{aligned}
		\frac{\d\v_m}{\d t}&=\mu \Delta\v_m-\big((\v_m+\mathrm{\textbf{g}}_m\y)\cdot\nabla\big)(\v_m+\mathrm{g}_m\y)-\alpha\v_m-\beta|\v_m+\mathrm{\textbf{g}}_m\y|^{r-1}(\v_m+\mathrm{\textbf{g}}_m\y)\\ &\qquad-\nabla p_m+\boldsymbol{f}_m +(\ell-\alpha)\boldsymbol{\mathrm{g}}_m\y +\mu\y\Delta\mathrm{\textbf{g}}_m,  \text{ in } \ \mathcal{O}_m\times(0,\infty), \\ \nabla\cdot\v_m&=0, \qquad\qquad\qquad\qquad\qquad\qquad\qquad\qquad\qquad \ \text{ in } \ \mathcal{O}_m\times(0,\infty), \\
		\v_m&=\boldsymbol{0},\ \qquad\qquad\qquad\qquad\qquad\qquad\qquad\qquad\qquad \text{ on } \ \partial\mathcal{O}_m\times(0,\infty), \\
		\v_m(0)&=\x_m-\mathrm{\textbf{g}}_m\y(\omega),\qquad\qquad\qquad\qquad\qquad\qquad\quad \ \text{ in } \ \mathcal{O}_m,
	\end{aligned}
	\right.
\end{equation} 	
respectively. Let us define the space (for $m\in\bar{\N}$) $$\mathcal{V}_m:=\{\u_m\in\C_0^{\infty}(\mathcal{O}_m,\R^2):\nabla\cdot\u_m=0\},$$ where $\C_0^{\infty}(\mathcal{O}_m;\R^2)$ denote the space of all infinitely differentiable functions  ($\R^2$-valued) with compact support in $\mathcal{O}_m$. Let $\H(\mathcal{O}_m)$, $\V(\mathcal{O}_m)$ and $\wi\L^p(\mathcal{O}_m)$, for $p\in(2,\infty)$ denote the completion of $\mathcal{V}_m$ in 	$\mathrm{L}^2(\mathcal{O}_m;\R^2)$, $\mathrm{H}_0^1(\mathcal{O}_m;\R^2)$ and $\mathrm{L}^p(\mathcal{O}_m;\R^2)$ norms respectively. The spaces  $\H(\mathcal{O}_m)$, $\V(\mathcal{O}_m)$ and $\wi\L^p(\mathcal{O}_m)$  are endowed with the norms $\|\u\|_{\H(\mathcal{O}_m)}^2:=\int_{\mathcal{O}_m}|\u(x)|^2\d x$, $\|\u\|_{\V(\mathcal{O}_m)}^2:=\int_{\mathcal{O}_m}|\nabla\u(x)|^2\d x$ and  $\|\u\|_{\wi \L^p(\mathcal{O}_m)}^2:=\int_{\mathcal{O}_m}|\u(x)|^p\d x,$ respectively. The induced duality between the spaces $\V(\mathcal{O}_m)$ and $\V'(\mathcal{O}_m)$, and $\widetilde{\L}^p(\mathcal{O}_m)$ and its dual $\widetilde{\L}^{\frac{p}{p-1}}(\mathcal{O}_m)$ is denoted by $\langle\cdot,\cdot\rangle.$
	
	 For all $t\geq0,$ and for every $\v(0)\in\H(\mathcal{O}_m)$ and $\omega\in\Omega$, \eqref{C_SCBF_m} has a unique solution in $\mathrm{C}\big([0,T];\H(\mathcal{O}_m)\big)\cap\mathrm{L}^2\big(0,T;\V(\mathcal{O}_m)\big)\cap\mathrm{L}^{r+1}\big(0,T;\widetilde{\L}^{r+1}(\mathcal{O}_m)\big)$, for all $m\in\bar{\N}$. Define a cocycle $\Phi_m:\R^+\times\Omega\times\H(\mathcal{O}_m)\to\H(\mathcal{O}_m)$ for the system \eqref{SCBF_m} such that for given $\omega\in\Omega$, $\u(s)\in\H$ and for all $t\geq s$,
	\begin{align}\label{Phi_m}
		 \u_m(t,s;\omega,\u_m(s))=\Phi_m(t-s;\theta_{s}\omega)\u_m(s)=\v_m(t,s;\omega,\v_m(s))+\textbf{g}_m\y(\theta_{t}\omega).
	\end{align}
	For the existence of a unique random attractor of the system \eqref{SCBF_m} in $\H(\mathcal{O}_m)$, we refer the readers to \cite{KM7}. See section 5 in \cite{KM7} for unbounded domains, and for bounded domains see section 3 in \cite{KM7} and the work \cite{KM1}, where authors have used compact Sobolev embedding to prove their results in bounded domains. In this work, we are proving upper semicontinuity of random attractor for 2D SCBF equations when $r>1$ (for $r=1$, see Remark \ref{r=1} below).
	
	\begin{definition}[Expansion and restriction of a function]
	For a function $\u:\mathcal{O}_m\to \R^2,$ its null-expansion $\widetilde{\u}:\mathcal{O}\to \R^2$ is defined by 
	\begin{align}
		\widetilde{\u}(x) = \begin{cases}
			\u(x),\ &x\in\mathcal{O}_m,\\
			\boldsymbol{0}, \ &x\in\mathcal{O} \backslash\mathcal{O}_m.
		\end{cases}
	\end{align}
	For a function $\v:\mathcal{O}\to \R^2,$ the restriction $\v|_{\mathcal{O}_m}:\mathcal{O}_{m}\to \R^2$ is given by
	$$\v|_{\mathcal{O}_m}(x)=\v(x), \quad x\in \mathcal{O}_m.$$
\end{definition}
	On taking the orthogonal projections $\mathcal{P}$ and $\mathcal{P}_{\mathcal{O}_m}: \L^2(\mathcal{O}_m) \to\H(\mathcal{O}_m)$ (see subsection \ref{proj}) onto the equations \eqref{C_SCBF_in} and \eqref{C_SCBF_m}, respectively,  we obtain following systems on $\mathcal{O}$ and $\mathcal{O}_m$ ($m\in\N$):
	\begin{equation}\label{cscbf_in}
		\left\{
		\begin{aligned}
			\frac{\d\v}{\d t} &= -\mu \A\v (t)- \B(\v+\textbf{g}\y)-\alpha\v - \beta \mathcal{C}(\v +\textbf{g}\y) \\&\quad+ \boldsymbol{f} + (\ell-\alpha)\textbf{g}\y +\mu\y\Delta\textbf{g}, \\
			\v(0)&= \x - \textbf{g}\y(\omega),
		\end{aligned}
		\right.
	\end{equation}	
and
\begin{equation}\label{cscbf_m}
	\left\{
	\begin{aligned}
		\frac{\d\v_m}{\d t} &= -\mu \A_m\v_m (t)- \B_m(\v_m+\textbf{g}_m\y)-\alpha\v_m - \beta \mathcal{C}_m(\v_m +\textbf{g}_m\y) \\&\quad+ \boldsymbol{f}_m + (\ell-\alpha)\textbf{g}_m\y +\mu\y\Delta\textbf{g}_m, \\
		\v_m(0)&= \x_{m} - \textbf{g}_m\y(\omega),
	\end{aligned}
	\right.
\end{equation}	
respectively, where $\A_m, \B_m$ and $\mathcal{C}_m$ are defined on $\mathcal{O}_m$ in a similar way as of $\A, \B$ and $\mathcal{C}$ defined on $\mathcal{O}$ in section \ref{sec2}.

Let $\mathcal{A}_m(\omega)$ represent the random attractor for the 2D SCBF equations \eqref{SCBF_m} in $\H(\mathcal{O}_m)$ for $m\in\bar{\N}$. In this section, we prove the upper semicontinuity from $\mathcal{A}_m(\omega)$ to $\mathcal{A}_{\infty}(\omega)$ as $m\to \infty$, that is, as the bounded domain $\mathcal{O}_m$ is expanded to the unbounded domain $\mathcal{O}.$ The main difficulty is the proper definition of the Hausdorff semidistance between $\mathcal{A}_m(\omega)$ ($m\in\N$) and $\mathcal{A}_{\infty}(\omega)$, since they lie in different spaces $\H(\mathcal{O}_m)$ ($m\in\N$) and $\H(\mathcal{O})$. In order to overcome the above difficulty, we consider the null-expansion $\widetilde{\u}_m$ of solution $\u_m$ defined by 
	\begin{align}
		\widetilde{\u}_m = \begin{cases}
			\u_m,\ &x\in\mathcal{O}_m,\\
			\boldsymbol{0}, \ &x\in\mathcal{O} \backslash\mathcal{O}_m.
		\end{cases}
	\end{align}
	It can be easily seen that $\widetilde{\u}_m\in \H(\mathcal{O})$ if $\u_m\in \H(\mathcal{O}_m)$. Notice that $\widetilde{\u}_m$ ($m\in\N$) is  not equal to $\u_{\infty}.$ Hence, the null-expansion of random attractor $\mathcal{A}_m(\omega)$ is denoted by $\widetilde{\mathcal{A}}_m(\omega)$ and is define as follows: 
	$$\widetilde{\mathcal{A}}_m(\omega)=\{\u\in\H(\mathcal{O}) : \text{there exists }\  \v\in\mathcal{A}_m(\omega), \text{ such that }\   \u=\widetilde{\v}\}, \text{ for all } \omega\in \Omega.$$ It follows that  all $\widetilde{\mathcal{A}}_m(\omega)$ lies in the same space $\H(\mathcal{O}),$ and thus, the Hausdorff semidistance can be understood in the following sense:
	\begin{align}
		d_m(\omega):=\text{dist}_{\H(\mathcal{O})}\big(\widetilde{\mathcal{A}}_m(\omega), \mathcal{A}_{\infty}(\omega)\big)= \sup_{\u\in\widetilde{\mathcal{A}}_m(\omega)}\inf_{\v\in\mathcal{A}_{\infty}(\omega)}\|\u-\v\|_{\H(\mathcal{O})}.
	\end{align}
	Hence, our aim in this section  is to prove $d_m(\omega)\to 0$ as $m\to \infty.$ In the sequel, we  use the tilde symbol $\ \widetilde{\cdot}\ $ to denote the null-expansion of a function, a set, an operator, etc.

	\subsection{Random Dynamical systems and their expansions for SCBF equations}
	We will assume $m\in\N$ for this section unless stated. Let $\Phi_{m}(t, \omega):\H(\mathcal{O}_m)\to \H(\mathcal{O}_m)$ be the RDS generated by 2D SCBF equations \eqref{SCBF_m} for domain $\mathcal{O}_{m}$ and a natural expansion of $\Phi_{m}$ is given by $\widetilde{\Phi}_{m}:\H(\mathcal{O})\to \H(\mathcal{O})$,
	\begin{align}
		(\widetilde{\Phi}_{m}\u)(x) = \begin{cases}
			(\Phi_{m}(\u|_{\mathcal{O}_m}))(x),\ &\text{ for all }x\in\mathcal{O}_m,\\
			0, \ &\text{ for all }x\in\mathcal{O} \backslash\mathcal{O}_m.
		\end{cases}
	\end{align}
The following Lemma is useful to obtain the absorbing set.
	\begin{lemma}\label{Absorb}
	For $r>1$, let $\f\in\H(\mathcal{O})$ and $\textbf{g}\in\D(\A)$. Then for all $\omega\in\tilde{\Omega}$ and for every $D=\{D(\omega):\omega\in\tilde{\Omega}\}\in\mathfrak{D}$, there exists $\mathcal{T}_{\infty}=\mathcal{T}_{\infty}(\omega,D)>0$ such that for all $t\geq\mathcal{T}_{\infty}$, the solution $\v(\cdot)$ of \eqref{cscbf_in} satisfies
	\begin{align}\label{H0^*}
	&		\|\v(0,-t;\omega, \v_{0})\|^2_{\H(\mathcal{O})} +\mu\int_{-t}^{0} e^{\alpha \zeta} \|\v(\zeta,-t;\omega,\v_{0})\|^2_{\V(\mathcal{O})}\ \d \zeta \nonumber\\&+\beta\int_{-t}^{0} e^{\alpha\zeta} \|\v(\zeta,-t;\omega,\v_{0})+\y(\theta_{\zeta}\omega)\textbf{g}\|^{r+1}_{\wi\L^{r+1}(\mathcal{O})} \d \zeta \nonumber\\&  \leq M \int_{-\infty}^{0} e^{\alpha\zeta} \bigg[\|\f\|_{\H(\mathcal{O})}^2+\left(\|\textbf{g}\|^2_{\H(\mathcal{O})}+\|\textbf{g}\|^2_{\D(\A)}\right)|\y(\theta_{\zeta}\omega)|^{2}+\left|\y(\theta_{\zeta}\omega)\right|^{\frac{2(r+1)}{r-1}}\|\textbf{g}\|^{\frac{2(r+1)}{r-1}}_{\wi\L^{\frac{2(r+1)}{r-1}}(\mathcal{O})}\nonumber\\&\qquad\qquad\qquad+  |\y(\theta_{\zeta}\omega)|^{r+1}\|\textbf{g}\|^{r+1}_{\wi\L^{r+1}(\mathcal{O})}\bigg]\d \zeta=:\mathscr{L}(\omega),
\end{align}
and
\begin{align}\label{H0}
	\sup_{s\in[-t,0]}\|\v(s,-t;\omega, \v_{0})\|^2_{\H(\mathcal{O})}\leq\mathscr{L}(\omega).
\end{align}
where $\v_{0}\in D(\theta_{-t}\omega)$ and $M>0$ is a constant independent of $\omega$ and $D$.
\end{lemma}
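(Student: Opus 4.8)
The argument would be entirely pathwise: fix $\omega\in\tilde\Omega$, work with the random PDE \eqref{cscbf_in}, and combine a uniform energy estimate with the temperedness of $D$ (condition \eqref{D_1}) and the sub-exponential growth of the Ornstein--Uhlenbeck process (property \eqref{Y3}). Write $\y(\zeta):=\y(\theta_\zeta\omega)$ and $\z(\zeta):=\y(\zeta)\textbf{g}$; since $\textbf{g}\in\D(\A)\hookrightarrow\H^2(\mathcal O)$ and $\mathcal O$ is a Poincar\'e domain, $\textbf{g}$ lies in every $\wi\L^{q}(\mathcal O)$, $q\in[2,\infty)$, so all norms of $\textbf{g}$ occurring in $\mathscr{L}(\omega)$ are finite. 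First I would take the $\H(\mathcal O)$--inner product of \eqref{cscbf_in} with $\v$ and use the energy equality for its solution, obtaining an identity for $\frac{\d}{\d t}\|\v\|^2_{\H(\mathcal O)}$ with right-hand side $-2\mu\|\v\|^2_{\V(\mathcal O)}-2\alpha\|\v\|^2_{\H(\mathcal O)}-2b(\v+\z,\v+\z,\v)-2\beta\langle\mathcal C(\v+\z),\v\rangle+2(\f,\v)+2(\ell-\alpha)\y(\textbf{g},\v)+2\mu\y(\mathcal P\Delta\textbf{g},\v)$.

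The heart of the estimate is the trilinear cross term. By the cancellation properties \eqref{b0},
\[ b(\v+\z,\v+\z,\v)=\y\,b(\v+\z,\textbf{g},\v)=-\y\,b(\v+\z,\v,\textbf{g}), \]
and I would bound $|\y\,b(\v+\z,\v,\textbf{g})|\le |\y|\,\|\v+\z\|_{\wi\L^{r+1}(\mathcal O)}\|\v\|_{\V(\mathcal O)}\|\textbf{g}\|_{\wi\L^{2(r+1)/(r-1)}(\mathcal O)}$ by H\"older's inequality with exponents $\bigl(r+1,2,\tfrac{2(r+1)}{r-1}\bigr)$, then Young's inequality (with a small parameter) with the same exponents, producing a small multiple of $\|\v+\z\|^{r+1}_{\wi\L^{r+1}(\mathcal O)}$, a small multiple of $\|\v\|^2_{\V(\mathcal O)}$, and a term $C\,|\y|^{\frac{2(r+1)}{r-1}}\|\textbf{g}\|^{\frac{2(r+1)}{r-1}}_{\wi\L^{2(r+1)/(r-1)}(\mathcal O)}$ depending only on $\y$ and $\textbf{g}$; this is where $r>1$ enters and why the exponent $\tfrac{2(r+1)}{r-1}$ appears in \eqref{H0^*}. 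For the Forchheimer term I would use $\langle\mathcal C(\v+\z),\v\rangle=\|\v+\z\|^{r+1}_{\wi\L^{r+1}(\mathcal O)}-\y\langle\mathcal C(\v+\z),\textbf{g}\rangle$ and Young's inequality to absorb $\tfrac12\|\v+\z\|^{r+1}_{\wi\L^{r+1}(\mathcal O)}$, leaving $C\,|\y|^{r+1}\|\textbf{g}\|^{r+1}_{\wi\L^{r+1}(\mathcal O)}$; and I would bound $(\f,\v)$, $(\ell-\alpha)\y(\textbf{g},\v)$ and $\mu\y(\mathcal P\Delta\textbf{g},\v)=-\mu\y(\A\textbf{g},\v)$ by Young's inequality against $\alpha\|\v\|^2_{\H(\mathcal O)}$, using $\textbf{g}\in\D(\A)$, which yields $C\bigl(\|\f\|^2_{\H(\mathcal O)}+(\|\textbf{g}\|^2_{\H(\mathcal O)}+\|\textbf{g}\|^2_{\D(\A)})|\y|^2\bigr)$. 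Collecting terms and absorbing the small multiples into the dissipation on the left, I reach
\[ \frac{\d}{\d t}\|\v(t)\|^2_{\H(\mathcal O)}+\alpha\|\v(t)\|^2_{\H(\mathcal O)}+\mu\|\v(t)\|^2_{\V(\mathcal O)}+\beta\|\v(t)+\z(t)\|^{r+1}_{\wi\L^{r+1}(\mathcal O)}\le M\,\mathscr{R}(\theta_t\omega), \]
where $\mathscr{R}(\theta_t\omega)$ is the bracketed integrand in \eqref{H0^*}; the decisive point is that $\mathscr{R}$ carries no factor of $\|\v\|$.

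Next I would multiply by the integrating factor $e^{\alpha t}$ and integrate over $[-t,0]$, retaining the two extra dissipation terms on the left, to obtain
\[ \|\v(0,-t;\omega,\v_{0})\|^2_{\H(\mathcal O)}+\mu\!\int_{-t}^0\! e^{\alpha\zeta}\|\v(\zeta)\|^2_{\V(\mathcal O)}\,\d\zeta+\beta\!\int_{-t}^0\! e^{\alpha\zeta}\|\v(\zeta)+\z(\zeta)\|^{r+1}_{\wi\L^{r+1}(\mathcal O)}\,\d\zeta\le e^{-\alpha t}\|\v(-t)\|^2_{\H(\mathcal O)}+M\!\int_{-\infty}^0\! e^{\alpha\zeta}\mathscr{R}(\theta_\zeta\omega)\,\d\zeta. \]
Since $\v(-t)=\v_{0}\in D(\theta_{-t}\omega)$ (if $D$ is understood as a family for $\u$, replace $\v_0$ by $\x-\y(\theta_{-t}\omega)\textbf{g}$ and additionally use \eqref{Y3}), the first term on the right is at most $e^{-\alpha t}\|D(\theta_{-t}\omega)\|^2_{\H(\mathcal O)}$, which tends to $0$ as $t\to\infty$ by \eqref{D_1}; hence there is $\mathcal{T}_{\infty}=\mathcal{T}_{\infty}(\omega,D)$ such that for $t\ge\mathcal{T}_{\infty}$ it is dominated by $M\int_{-\infty}^0 e^{\alpha\zeta}\mathscr{R}(\theta_\zeta\omega)\,\d\zeta$, which gives \eqref{H0^*} after relabelling $M$. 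Finiteness $\mathscr{L}(\omega)<\infty$ follows because $\mathscr{R}(\theta_\zeta\omega)$ is a fixed polynomial in $|\y(\theta_\zeta\omega)|$ and, by \eqref{Y3}, the weight $e^{\alpha\zeta}$ beats any polynomial growth of $|\y(\theta_\zeta\omega)|$ as $\zeta\to-\infty$. For \eqref{H0} I would integrate the same differential inequality over $[-t,s]$ for arbitrary $s\in[-t,0]$, bound $\int_{-t}^s e^{\alpha(\zeta-s)}\mathscr{R}(\theta_\zeta\omega)\,\d\zeta$ using $e^{\alpha(\zeta-s)}\le 1$ together with the stationarity of $\y$ and \eqref{Y3}, and again discard the vanishing initial contribution, obtaining the bound uniformly in $s$.

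I expect the main obstacle to be exactly the trilinear cross term $\y\,b(\v,\textbf{g},\v)$: the naive bound via Ladyzhenskaya's inequality \eqref{lady} is $\sqrt2\,|\y|\,\|\textbf{g}\|_{\V(\mathcal O)}\|\v\|_{\H(\mathcal O)}\|\v\|_{\V(\mathcal O)}$, and any Young splitting of this leaves a coefficient proportional to $|\y(\theta_t\omega)|^2$ in front of $\|\v\|^2_{\H(\mathcal O)}$; feeding a time-dependent coefficient into Gronwall's lemma destroys the sharp $e^{\alpha\zeta}$--weighted estimate needed here (it would require an ergodic-averaging control of $\int|\y|^2$ and enlarging the relaxation parameter $\ell$). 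Avoiding this by integrating by parts to $-\y\,b(\v+\z,\v,\textbf{g})$ and distributing the three factors between the viscous and the Forchheimer dissipation is what forces the restriction $r>1$ and the appearance of the conjugate exponent $\tfrac{2(r+1)}{r-1}$ in $\mathscr{L}(\omega)$; the case $r=1$ has to be handled separately (cf. Remark~\ref{r=1}). A secondary, more bookkeeping, difficulty is keeping $M$ independent of $\omega$ and $D$, which is why the initial-data contribution is absorbed into the $t$--independent integral $\int_{-\infty}^0 e^{\alpha\zeta}\mathscr{R}(\theta_\zeta\omega)\,\d\zeta$ rather than into a separate $\omega$--dependent constant.
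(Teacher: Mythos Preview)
Your proposal is correct and follows essentially the same route as the paper's proof. In particular, the paper handles the trilinear cross term exactly as you do: it rewrites $b(\v+\z,\v+\z,\v)=\y\,b(\v+\y\textbf{g},\textbf{g},\v)$ using \eqref{b0}, applies H\"older's inequality with the triple of exponents $\bigl(r+1,2,\tfrac{2(r+1)}{r-1}\bigr)$ to place $\|\v+\z\|_{\wi\L^{r+1}}$ and $\|\v\|_{\V}$ against the Forchheimer and viscous dissipations, and this is precisely where the restriction $r>1$ and the exponent $\tfrac{2(r+1)}{r-1}$ in $\mathscr{L}(\omega)$ originate; your discussion of why the Ladyzhenskaya route would spoil the clean $e^{\alpha\zeta}$--weight is spot on and mirrors the paper's implicit motivation. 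The remaining estimates (Forchheimer cross term, linear forcing terms, variation of constants, temperedness via \eqref{D_1}, finiteness of $\mathscr{L}(\omega)$ via \eqref{Y3}) all match the paper's (H2)--(H6).
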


\begin{proof}
	Taking the inner product with $\v(\cdot)$ to the first equation in \eqref{cscbf_in}, we have
	\begin{align}\label{H1}
		\frac{1}{2}\frac{\d}{\d t} \|\v(t)\|^2_{\H(\mathcal{O})}=&-\mu \|\v(t)\|^2_{\V(\mathcal{O})}-\alpha \|\v(t)\|^2_{\H(\mathcal{O})} - b\big(\v(t)+\y(\theta_t\omega)\textbf{g}, \v(t)+\y(\theta_t\omega)\textbf{g}, \v(t)\big)\nonumber\\&-\beta\left\langle \mathcal{C}(\v(t)+\y(\theta_t\omega)\textbf{g}),\v(t)\right\rangle+\big(\f,\v(t)\big)+(\ell-\alpha)\big(\y(\theta_t\omega)\textbf{g}, \v(t)\big) \nonumber\\&-\big(\mu\y(\theta_t\omega)\A \textbf{g},\v(t)\big),\nonumber\\
		=&-\mu \|\v(t)\|^2_{\V(\mathcal{O})}-\alpha \|\v(t)\|^2_{\H(\mathcal{O})} - b\big(\v(t)+\y(\theta_t\omega)\textbf{g},\y(\theta_t\omega)\textbf{g}, \v(t)\big)\nonumber\\&-\beta\|\v(t)+\y(\theta_t\omega)\textbf{g}\|^{r+1}_{\wi \L^{r+1}(\mathcal{O})}+\beta\left\langle \mathcal{C}(\v(t)+\y(\theta_t\omega)\textbf{g}),\y(\theta_t\omega)\textbf{g}\right\rangle\nonumber\\&+\big(\f,\v(t)\big)+(\ell-\alpha)\big(\y(\theta_t\omega)\textbf{g}, \v(t)\big) -\big(\mu\y(\theta_t\omega)\A\textbf{g},\v(t)\big),
	\end{align}
	for a.e. $t\in[0,T]$, where we have used \eqref{b0}. Applying H\"older's and Young's inequalities, we obtain for $r>1$
	\begin{align}\label{H2}
		&\big|(\ell-\alpha)\big(\y(\theta_t\omega)\textbf{g}, \v\big)\big|+\big|\big(\f,\v\big)\big|+\big|\big(\mu\y(\theta_t\omega)\A\textbf{g},\v\big)\big|
		\nonumber\\&\leq C\|\f\|_{\H(\mathcal{O})}^2+ C\left[\|\textbf{g}\|^2_{\H(\mathcal{O})}+\|\textbf{g}\|^2_{\D(\A)}\right] |\y(\theta_t\omega)|^2 + \frac{\alpha}{2} \|\v\|_{\H(\mathcal{O})}^2,
	\end{align} 
	\begin{align}\label{H3}
	&	\big|b\big(\v+\y(\theta_t\omega)\textbf{g}, \y(\theta_t\omega)\textbf{g}, \v\big)\big|\nonumber\\&\leq \left|\y(\theta_t\omega)\right|\|\v+\y(\theta_t\omega)\textbf{g}\|_{\wi\L^{r+1}(\mathcal{O})}\|\v\|_{\V(\mathcal{O})}\|\textbf{g}\|_{\wi\L^{\frac{2(r+1)}{r-1}}(\mathcal{O})}\nonumber\\&\leq \frac{\beta}{4}\|\v+\y(\theta_t\omega)\textbf{g}\|^{r+1}_{\wi\L^{r+1}(\mathcal{O})}+\frac{\mu}{2}\|\v\|^2_{\V(\mathcal{O})}+C\left|\y(\theta_t\omega)\right|^{\frac{2(r+1)}{r-1}}\|\textbf{g}\|^{\frac{2(r+1)}{r-1}}_{\wi\L^{\frac{2(r+1)}{r-1}}(\mathcal{O})},
	\end{align}
and
	\begin{align}\label{H4}
		\beta\left\langle \mathcal{C}(\v+\y(\theta_t\omega)\textbf{g}),\y(\theta_t\omega)\textbf{g}\right\rangle&\leq \beta|\y(\theta_t\omega)|\|\v+\y(\theta_t\omega)\textbf{g}\|^{r}_{\wi \L^{r+1}(\mathcal{O})}\|\textbf{g}\|_{\wi \L^{r+1}(\mathcal{O})}\nonumber\\&\leq\frac{\beta}{4}\|\v+\y(\theta_t\omega)\textbf{g}\|^{r+1}_{\wi \L^{r+1}(\mathcal{O})}+ C |\z(\theta_t\omega)|^{r+1}\|\textbf{g}\|^{r+1}_{\wi\L^{r+1}(\mathcal{O})} .
	\end{align}
	Combining \eqref{H2}-\eqref{H4} and using it in \eqref{H1}, we find
	\begin{align*}
		&\frac{\d}{\d t} \|\v(t)\|^2_{\H(\mathcal{O})}+\mu \|\v(t)\|^2_{\V(\mathcal{O})}+\alpha\|\v(t)\|^2_{\H(\mathcal{O})}+\beta\|\v(t)+\y(\theta_t\omega)\textbf{g}\|^{r+1}_{\wi \L^{r+1}(\mathcal{O})}\nonumber\\&\leq C\|\f\|_{\H(\mathcal{O})}^2+ C\left[\|\textbf{g}\|^2_{\H(\mathcal{O})}+\|\textbf{g}\|^2_{\D(\A)}\right] |\y(\theta_t\omega)|^2 +C\left|\y(\theta_t\omega)\right|^{\frac{2(r+1)}{r-1}}\|\textbf{g}\|^{\frac{2(r+1)}{r-1}}_{\wi\L^{\frac{2(r+1)}{r-1}}(\mathcal{O})}\nonumber\\&\quad+ C |\z(\theta_t\omega)|^{r+1}\|\textbf{g}\|^{r+1}_{\wi\L^{r+1}(\mathcal{O})} .
	\end{align*}
	for a.e. $t\in[0,T]$. By means of variation of constants formula, we obtain 
	\begin{align*}
		&	e^{\alpha t_1}\|\v(t_1)\|^2_{\H(\mathcal{O})} +\mu\int_{t_2}^{t_1} e^{\alpha \zeta} \|\v(\zeta)\|^2_{\V(\mathcal{O})} \d \zeta +\beta\int_{t_2}^{t_1} e^{\alpha \zeta} \|\v(\zeta)+\y(\theta_{\zeta}\omega)\textbf{g}\|^{r+1}_{\wi\L^{r+1}(\mathcal{O})} \d \zeta\nonumber\\& \leq e^{-\alpha t_2}\|\v(t_2)\|^2_{\H(\mathcal{O})}+C\int_{t_2}^{t_1} e^{\alpha\zeta} \bigg[\|\f\|_{\H(\mathcal{O})}^2+\left(\|\textbf{g}\|^2_{\H(\mathcal{O})}+\|\textbf{g}\|^2_{\D(\A)}\right)|\y(\theta_{\zeta}\omega)|^{2}\nonumber\\&\qquad+\left|\y(\theta_{\zeta}\omega)\right|^{\frac{2(r+1)}{r-1}}\|\textbf{g}\|^{\frac{2(r+1)}{r-1}}_{\wi\L^{\frac{2(r+1)}{r-1}}(\mathcal{O})}+  |\y(\theta_{\zeta}\omega)|^{r+1}\|\textbf{g}\|^{r+1}_{\wi\L^{r+1}(\mathcal{O})}\bigg]\d \zeta.
	\end{align*}
	Therefore, for any $t>0$, $s\geq-t$ (for $s\in[-t,0]$) and $\v_{0}\in D(\theta_{-t}\omega)$, we get
	\begin{align}\label{H5}
		&	\|\v(s,-t;\omega, \v_{0})\|^2_{\H(\mathcal{O})} +\mu\int_{-t}^{s} e^{\alpha \zeta} \|\v(\zeta,-t;\omega,\v_{0})\|^2_{\V(\mathcal{O})}\ \d \zeta \nonumber\\&+\beta\int_{-t}^{s} e^{\alpha\zeta} \|\v(\zeta,-t;\omega,\v_{0})+\y(\theta_{\zeta}\omega)\textbf{g}\|^{r+1}_{\wi\L^{r+1}(\mathcal{O})} \d \zeta \nonumber\\& \leq e^{-\alpha t}\|\v_{0}\|^2_{\H(\mathcal{O})}+C \int_{-t}^{s} e^{\alpha\zeta} \bigg[\|\f\|_{\H(\mathcal{O})}^2+\left(\|\textbf{g}\|^2_{\H(\mathcal{O})}+\|\textbf{g}\|^2_{\D(\A)}\right)|\y(\theta_{\zeta}\omega)|^{2}\nonumber\\&\qquad+\left|\y(\theta_{\zeta}\omega)\right|^{\frac{2(r+1)}{r-1}}\|\textbf{g}\|^{\frac{2(r+1)}{r-1}}_{\wi\L^{\frac{2(r+1)}{r-1}}(\mathcal{O})}+  |\y(\theta_{\zeta}\omega)|^{r+1}\|\textbf{g}\|^{r+1}_{\wi\L^{r+1}(\mathcal{O})}\bigg]\d \zeta\nonumber\\& \leq e^{-\alpha t}\|\v_{0}\|^2_{\H(\mathcal{O})}+C \int_{-\infty}^{0} e^{\alpha\zeta} \bigg[\|\f\|_{\H(\mathcal{O})}^2+\left(\|\textbf{g}\|^2_{\H(\mathcal{O})}+\|\textbf{g}\|^2_{\D(\A)}\right)|\y(\theta_{\zeta}\omega)|^{2}\nonumber\\&\qquad+\left|\y(\theta_{\zeta}\omega)\right|^{\frac{2(r+1)}{r-1}}\|\textbf{g}\|^{\frac{2(r+1)}{r-1}}_{\wi\L^{\frac{2(r+1)}{r-1}}(\mathcal{O})}+  |\y(\theta_{\zeta}\omega)|^{r+1}\|\textbf{g}\|^{r+1}_{\wi\L^{r+1}(\mathcal{O})}\bigg]\d \zeta.
	\end{align}
Since, $\v_{0}\in D(\theta_{-t}\omega)$ and $D\in\mathfrak{D}$, we have
\begin{align}\label{H6}
	e^{-\alpha t}\|\v_{0}\|^2_{\H(\mathcal{O})}\leq e^{-\alpha t}\|D(\theta_{-t}\omega)\|^2_{\H(\mathcal{O})}\to 0 \text{ as } t\to \infty. 
\end{align}
From \eqref{Y3}, we infer that the second term on the right hand side of inequality \eqref{H5} is finite. Moreover, \eqref{H6} implies that there exists $\mathcal{T}_{\infty}=\mathcal{T}_{\infty}(\omega,D)$ such that 
\begin{align*}
	e^{-\alpha t}\|\v_{0}\|^2_{\H(\mathcal{O})}&\leq C \int_{-\infty}^{0} e^{\alpha\zeta} \bigg[\|\f\|_{\H(\mathcal{O})}^2+\left(\|\textbf{g}\|^2_{\H(\mathcal{O})}+\|\textbf{g}\|^2_{\D(\A)}\right)|\y(\theta_{\zeta}\omega)|^{2}\nonumber\\&\qquad+\left|\y(\theta_{\zeta}\omega)\right|^{\frac{2(r+1)}{r-1}}\|\textbf{g}\|^{\frac{2(r+1)}{r-1}}_{\wi\L^{\frac{2(r+1)}{r-1}}(\mathcal{O})}+  |\y(\theta_{\zeta}\omega)|^{r+1}\|\textbf{g}\|^{r+1}_{\wi\L^{r+1}(\mathcal{O})}\bigg]\d \zeta, \ \ \text{ for all } t\geq\mathcal{T}_{\infty},
\end{align*}
which gives \eqref{H0^*} along with \eqref{H5} and \eqref{H6} for $s=0,$ and \eqref{H0} is immediate.
\end{proof}
The following Lemma can be proved in a similar way as Lemma \ref{Absorb}.
	\begin{lemma}\label{Absorb1}
	For $r>1$, let $\f_m\in\H(\mathcal{O}_m)$ and $\textbf{g}_m\in\D(\A_m)$. Then for all $\omega\in\tilde{\Omega}$ and for every $D=\{D(\omega):\omega\in\tilde{\Omega}\}\in\mathfrak{D}$, there exists $\mathcal{T}_m=\mathcal{T}_m(\omega,D)>0$ such that for all $t\geq\mathcal{T}_m$, the solution $\v_m(\cdot)$ of \eqref{cscbf_m} satisfies
	\begin{align}\label{H0^*1}
		&		\|\v_m(0,-t;\omega, \v_{m,0})\|^2_{\H(\mathcal{O}_m)} +\mu\int_{-t}^{0} e^{\alpha \zeta} \|\v_m(\zeta,-t;\omega,\v_{m,0})\|^2_{\V(\mathcal{O}_m)}\ \d \zeta \nonumber\\&+\beta\int_{-t}^{0} e^{\alpha\zeta} \|\v_m(\zeta,-t;\omega,\v_{m,0})+\y(\theta_{\zeta}\omega)\textbf{g}_m\|^{r+1}_{\wi\L^{r+1}(\mathcal{O}_m)} \d \zeta \leq\mathscr{L}(\omega),
	\end{align}
and
	\begin{align}\label{H01}
		\sup_{s\in[-t,0]}\|\v_m(s,-t;\omega, \v_{m,0})\|^2_{\H(\mathcal{O}_m)}\leq\mathscr{L}(\omega),
	\end{align}
	where $\mathscr{L}(\omega)$ is same as in \eqref{H0^*} and $\v_{m,0}\in D(\theta_{-t}\omega)$.
\end{lemma}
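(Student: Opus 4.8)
The plan is to repeat, essentially line for line, the proof of Lemma \ref{Absorb}, but working with the projected system \eqref{cscbf_m} on the bounded domain $\mathcal{O}_m$ in place of \eqref{cscbf_in} on $\mathcal{O}$, and then to check that every constant produced along the way is independent of $m$, so that the absorbing estimate can be written with the \emph{same} function $\mathscr{L}(\omega)$ as in \eqref{H0^*}. First I would take the $\H(\mathcal{O}_m)$-inner product of the first equation of \eqref{cscbf_m} with $\v_m(\cdot)$. Since $\v_m$ vanishes on $\partial\mathcal{O}_m$, the cancellation $b(\u_1,\u_2,\u_2)=0$ (cf.\ \eqref{b0}) remains available on $\mathcal{O}_m$, so the convective term collapses to $-b\big(\v_m+\y(\theta_t\omega)\textbf{g}_m,\y(\theta_t\omega)\textbf{g}_m,\v_m\big)$ and the nonlinear damping term to $-\beta\|\v_m+\y(\theta_t\omega)\textbf{g}_m\|^{r+1}_{\wi\L^{r+1}(\mathcal{O}_m)}+\beta\big\langle\mathcal{C}_m(\v_m+\y(\theta_t\omega)\textbf{g}_m),\y(\theta_t\omega)\textbf{g}_m\big\rangle$, giving the exact analogue of \eqref{H1} with $\mathcal{O}$, $\A$, $\mathcal{C}$, $\f$, $\textbf{g}$ replaced by $\mathcal{O}_m$, $\A_m$, $\mathcal{C}_m$, $\f_m$, $\textbf{g}_m$.

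Next I would bound the three forcing-type terms, the bilinear term and the cubic-type term by H\"older's, Young's and Ladyzhenskaya's \eqref{lady} inequalities on $\mathcal{O}_m$, copying \eqref{H2}--\eqref{H4} verbatim; this yields
\begin{align*}
\frac{\d}{\d t}\|\v_m(t)\|^2_{\H(\mathcal{O}_m)}&+\mu\|\v_m(t)\|^2_{\V(\mathcal{O}_m)}+\alpha\|\v_m(t)\|^2_{\H(\mathcal{O}_m)}+\beta\|\v_m(t)+\y(\theta_t\omega)\textbf{g}_m\|^{r+1}_{\wi\L^{r+1}(\mathcal{O}_m)}\\
&\leq C\|\f_m\|^2_{\H(\mathcal{O}_m)}+C\big(\|\textbf{g}_m\|^2_{\H(\mathcal{O}_m)}+\|\textbf{g}_m\|^2_{\D(\A_m)}\big)|\y(\theta_t\omega)|^2\\
&\quad+C|\y(\theta_t\omega)|^{\frac{2(r+1)}{r-1}}\|\textbf{g}_m\|^{\frac{2(r+1)}{r-1}}_{\wi\L^{\frac{2(r+1)}{r-1}}(\mathcal{O}_m)}+C|\y(\theta_t\omega)|^{r+1}\|\textbf{g}_m\|^{r+1}_{\wi\L^{r+1}(\mathcal{O}_m)},
\end{align*}
with the same constant $C=C(\mu,\alpha,\beta,r)$ as in Lemma \ref{Absorb} (this is exactly where $r>1$ is used, to keep the exponent $\tfrac{2(r+1)}{r-1}$ finite). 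From here I would apply the variation-of-constants formula on $[-t,s]$, $s\in[-t,0]$, just as in the passage leading to \eqref{H5}; use the temperedness of $D$ (through \eqref{D_1}, as in \eqref{H6}) to dominate the initial term $e^{-\alpha t}\|\v_{m,0}\|^2_{\H(\mathcal{O}_m)}$ by the right-hand integral once $t\geq\mathcal{T}_m(\omega,D)$; and invoke the Ornstein--Uhlenbeck growth bounds \eqref{Y2}--\eqref{Y3} to guarantee convergence of the $\zeta$-integral over $(-\infty,0)$. Evaluating at $s=0$ gives \eqref{H0^*1}, and taking the supremum over $s\in[-t,0]$ in the analogue of \eqref{H5} gives \eqref{H01}.

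The only step that is not mere transcription — and hence the one I would spell out carefully, the ``obstacle'' being bookkeeping rather than any new idea — is the uniformity in $m$ of the bound, i.e.\ that the right-hand side can be replaced by the $m$-independent $\mathscr{L}(\omega)$ of \eqref{H0^*}. This rests on the fact that $\f_m$ and $\textbf{g}_m$ are the null-expansions of the restrictions of $\f$ and $\textbf{g}$ to $\mathcal{O}_m$: restriction does not increase $\L^2$- or $\wi\L^p$-norms, so $\|\f_m\|_{\H(\mathcal{O}_m)}\leq\|\f\|_{\H(\mathcal{O})}$, $\|\textbf{g}_m\|_{\H(\mathcal{O}_m)}\leq\|\textbf{g}\|_{\H(\mathcal{O})}$ and $\|\textbf{g}_m\|_{\wi\L^p(\mathcal{O}_m)}\leq\|\textbf{g}\|_{\wi\L^p(\mathcal{O})}$ for every $p$; moreover, writing $\A_m\textbf{g}_m=-\mathcal{P}_{\mathcal{O}_m}\Delta\textbf{g}_m$ and using that the Leray projection $\mathcal{P}_{\mathcal{O}_m}$ has norm $\le 1$ on $\L^2(\mathcal{O}_m)$ together with the non-increase of the $\L^2$-norm under restriction, one gets $\|\textbf{g}_m\|_{\D(\A_m)}=\|\A_m\textbf{g}_m\|_{\H(\mathcal{O}_m)}\leq\|\Delta\textbf{g}\|_{\L^2(\mathcal{O})}\leq C\|\textbf{g}\|_{\D(\A)}$, the last step being the norm-equivalence on $\mathcal{O}$ itself (whose boundary is uniformly of class $\mathrm{C}^3$), uniformly in $m$. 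Replacing each $\mathcal{O}_m$-quantity on the right-hand side above by the corresponding $\mathcal{O}$-quantity therefore bounds it by the integrand defining $\mathscr{L}(\omega)$, which closes the argument; alternatively, one may keep the $m$-dependent constants and note that $\sup_m\mathscr{L}_m(\omega)$ is finite and of tempered class, which is all the subsequent upper semicontinuity proof requires.
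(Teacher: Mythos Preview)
Your proposal is correct and follows exactly the approach the paper takes: the paper's own ``proof'' of Lemma~\ref{Absorb1} is the single sentence ``can be proved in a similar way as Lemma~\ref{Absorb}.'' You have actually gone further than the paper by making explicit the uniformity-in-$m$ bookkeeping (that the $\mathcal{O}_m$-norms of $\f_m$, $\textbf{g}_m$ are dominated by the corresponding $\mathcal{O}$-norms), which is precisely what justifies using the \emph{same} $\mathscr{L}(\omega)$ and which the paper leaves implicit.
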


It implies from Lemmas \ref{Absorb} and \ref{Absorb1} that, for every $D=\{D(\omega):\omega\in\tilde{\Omega}\}$ and $\mathbb{P}$-a.e. $\omega\in\tilde{\Omega}$, there exists a time $\mathcal{T}(\omega,D)=\max\limits_{m\in\bar{\N}}\mathcal{T}_m(\omega,D)>0$ such that for all $t\geq\mathcal{T}(\omega,D)$,
\begin{align}
	\|\Phi_{m}(t,\theta_{-t}\omega,D(\theta_{-t}\omega))\|^2_{\H(\mathcal{O}_m)}\leq[\mathscr{L}^*(\omega)]^2,
\end{align}
where $[\mathscr{L}^*(\omega)]^2=\mathscr{L}(\omega)+\|\textbf{g}\|^2_{\H(\mathcal{O})}\left|\y(\omega)\right|^2$ and $\mathscr{L}(\omega)$ is the same as in \eqref{H0^*}, that is, there exists a random $\mathfrak{D}$-absorbing set $\mathcal{K}_m(\omega)$ of $\Phi_{m}$ ($m\in\bar{\N}$) given by
\begin{align}\label{absorbing}
	\mathcal{K}_m(\omega):=\{\u\in\H(\mathcal{O}_m):\|\u\|^2_{\H(\mathcal{O}_m)}\leq[\mathscr{L}^*(\omega)]^2\}.
\end{align}

	\begin{lemma}\label{largeradius}
	For $r>1$, assume that $\f\in\H(\mathcal{O})$, $\textbf{g}\in\D(\A)$. Then, for any $\v_{0}\in D(\theta_{-t}\omega),$ where $D=\{D(\omega):\omega\in \tilde{\Omega}\}\in\mathfrak{D}$, and for any $\varepsilon>0$ and $\omega\in \tilde{\Omega}$, there exists $\mathscr{T}_{\infty}=\mathscr{T}_{\infty}(\omega,D,\varepsilon)\geq 0$ and $K_1(\omega,\varepsilon)>0$ such that for all $t\geq \mathscr{T}_{\infty}$ and $k\geq K_1$, the solution of \eqref{cscbf_in}  satisfies
	\begin{align}\label{ep}
		&\int_{\mathcal{O}\backslash\mathcal{O}_k}|\v(0,-t;\omega,\v_{0}) |^2\d x+\mu\int_{-t}^{0}e^{\alpha\zeta} \int_{\mathcal{O}\backslash\mathcal{O}_k}|\nabla\v(\zeta,-t;\omega,\v_{0})|^2 \d x \d\zeta\nonumber\\& +\int_{-t}^{0}e^{\alpha\zeta} \int_{\mathcal{O}\backslash\mathcal{O}_k}|\v(\zeta,-t;\omega,\v_{0})+\textbf{g}(x)\y(\theta_{\zeta}\omega)|^{r+1}\d x \d\zeta\leq \varepsilon,
	\end{align}
and
\begin{align*}
	\sup_{s\in[-t,0]}\int_{\mathcal{O}\backslash\mathcal{O}_k}|\v(s,-t;\omega,\v_{0}) |^2\d x\leq \varepsilon.
\end{align*}
\end{lemma}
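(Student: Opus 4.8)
The plan is a uniform ``tail'' (truncation) estimate. Fix $\rho\in\mathrm{C}^{\infty}([0,\infty);[0,1])$ with $\rho\equiv0$ on $[0,1]$, $\rho\equiv1$ on $[2,\infty)$ and $|\rho'|\le C_0$, and set $\rho_k(x)=\rho(|x|^{2}/k^{2})$, so that $\rho_k\equiv0$ on $\mathcal{O}_k$, $\rho_k\equiv1$ on $\mathcal{O}\backslash\mathcal{O}_{\sqrt2 k}$, $\mathrm{supp}\,\nabla\rho_k\subset\mathcal{O}_{\sqrt2 k}\backslash\mathcal{O}_k$, $|\nabla\rho_k|\le C/k$ and $|\Delta\rho_k|\le C/k^{2}$. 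I would work with \eqref{C_SCBF_in} in its \emph{unprojected} form (keeping the pressure $p$ explicit, since $\rho_k\v$ is a legitimate $\mathrm{H}^{1}_{0}$ test function but is not divergence free), write $\u:=\v+\textbf{g}\y(\theta_t\omega)$, and take the $\mathrm{L}^{2}(\mathcal{O})$ inner product of \eqref{C_SCBF_in} with $\rho_k\v$. The goal is a differential inequality
\[
\frac{\d}{\d t}\!\int_{\mathcal{O}}\!\rho_k|\v|^{2}\d x+\mu\!\int_{\mathcal{O}}\!\rho_k|\nabla\v|^{2}\d x+\alpha\!\int_{\mathcal{O}}\!\rho_k|\v|^{2}\d x+\tfrac{\beta}{2}\!\int_{\mathcal{O}}\!\rho_k|\u|^{r+1}\d x\le\Psi_k(t,\omega),
\]
where $\Psi_k$ is a finite sum of terms, each the product of a factor that is small for large $k$ --- namely $k^{-1}$, $k^{-2}$, or one of the tail norms $\|\f\|_{\L^{2}(\mathcal{O}\backslash\mathcal{O}_k)}$, $\|\textbf{g}\|_{\V(\mathcal{O}\backslash\mathcal{O}_k)}$, $\|\textbf{g}\|_{\wi\L^{r+1}(\mathcal{O}\backslash\mathcal{O}_k)}$, $\|\Delta\textbf{g}\|_{\L^{2}(\mathcal{O}\backslash\mathcal{O}_k)}$ (each tending to $0$ as $k\to\infty$ because $\f\in\H(\mathcal{O})$ and $\textbf{g}\in\D(\A)$) --- times a quantity whose $e^{\alpha\zeta}$-weighted integral over $(-\infty,0)$ is finite and controlled through $\mathscr{L}(\omega)$ by Lemma \ref{Absorb}.

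The term-by-term accounting is routine: integration by parts gives $\mu(\Delta\v,\rho_k\v)=-\mu\int\rho_k|\nabla\v|^{2}+O(k^{-2})\|\v\|_{\H(\mathcal{O})}^{2}$; the Darcy term contributes $-\alpha\int\rho_k|\v|^{2}$; the Forchheimer term, after $\v=\u-\textbf{g}\y$, gives $-\beta\int\rho_k|\u|^{r+1}$ plus a piece absorbed by $\tfrac{\beta}{2}\int\rho_k|\u|^{r+1}+C|\y|^{r+1}\|\textbf{g}\|_{\wi\L^{r+1}(\mathcal{O}\backslash\mathcal{O}_k)}^{r+1}$; the convective term, written as $-((\u\cdot\nabla)\u,\rho_k\u)+\y((\u\cdot\nabla)\u,\rho_k\textbf{g})$, produces (using $\nabla\cdot\u=0$, Ladyzhenskaya's inequality \eqref{lady}, $\textbf{g}\in\D(\A)\hookrightarrow\L^{\infty}(\mathcal{O})$, and one integration by parts) only $k$-small multiples of $\|\u\|_{\H(\mathcal{O})}^{2}(1+\|\u\|_{\V(\mathcal{O})}^{2})$, $|\y|^{j}\|\u\|_{\H(\mathcal{O})}^{2}$ and $\|\u\|_{\V(\mathcal{O})}^{2}$; and the forcing and Ornstein--Uhlenbeck terms $(\f,\rho_k\v)$, $(\ell-\alpha)\y(\textbf{g},\rho_k\v)$, $\mu\y(\Delta\textbf{g},\rho_k\v)$ are bounded simply by Cauchy--Schwarz and $\|\v\|_{\H(\mathcal{O})}\le\mathscr{L}(\omega)^{1/2}$ (from Lemma \ref{Absorb}), deliberately \emph{without} Young's inequality, so that the $\alpha\int\rho_k|\v|^{2}$ on the left is not consumed and the weight stays $e^{\alpha\zeta}$. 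The single delicate point is the pressure term $(-\nabla p,\rho_k\v)=\int_{\mathcal{O}}p\,(\nabla\rho_k\cdot\v)\d x$ (using $\nabla\cdot\v=0$): estimating it by $\tfrac{C}{k}\|p\|_{\L^{2}(\mathcal{O}_{\sqrt2 k}\backslash\mathcal{O}_k)}\|\v\|_{\H(\mathcal{O})}$ requires a \emph{local pressure bound}, i.e.\ control of $\|p\|_{\L^{2}}$ on the annulus in terms of $\|\u\|_{\V(\mathcal{O})}$, $\|\u\|_{\wi\L^{2r}(\mathcal{O})}$ and $\|\f\|_{\H(\mathcal{O})}$, obtained from the elliptic equation satisfied by $p$ together with Stokes/Helmholtz regularity valid on a uniformly $\mathrm{C}^{3}$ Poincar\'e domain (the analogue of the argument used for deterministic 2D CBF in \cite{Mohan2}); equivalently, one may test with $\rho_k\v$ minus a Bogovskii-type corrector supported on the annulus, of norm $O(k^{-1})$, so that $p$ never enters. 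I expect this to be the main obstacle.

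With the differential inequality in hand, I would apply the variation-of-constants formula on $[-t,0]$ to $\v(\cdot,-t;\omega,\v_0)$ with $\v_0\in D(\theta_{-t}\omega)$, obtaining, for $s\in[-t,0]$, the announced left-hand side bounded by $e^{-\alpha(s+t)}\int\rho_k|\v(-t)|^{2}\d x+\int_{-t}^{s}e^{\alpha(\zeta-s)}\Psi_k(\zeta,\omega)\d\zeta$. Since $\v(-t)=\v_0-\textbf{g}\y(\theta_{-t}\omega)$, the initial term is $\le2e^{-\alpha t}\|\v_0\|_{\H(\mathcal{O})}^{2}+2e^{-\alpha t}|\y(\theta_{-t}\omega)|^{2}\|\textbf{g}\|_{\H(\mathcal{O})}^{2}$, which tends to $0$ as $t\to\infty$ by temperedness of $D$ (see \eqref{D_1}) and by \eqref{Y3}, while $\int_{-\infty}^{0}e^{\alpha\zeta}\Psi_k(\zeta,\omega)\d\zeta\le\sigma_k(\omega)$ with $\sigma_k(\omega)\to0$ as $k\to\infty$ by the structure described above. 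Hence, given $\varepsilon>0$ and $\omega\in\tilde\Omega$, I first pick $K_1(\omega,\varepsilon)$ so that $\sigma_k(\omega)\le\varepsilon/2$ for $k\ge K_1$, then $\mathscr{T}_\infty(\omega,D,\varepsilon)$ so that the initial term is $\le\varepsilon/2$ for $t\ge\mathscr{T}_\infty$; taking $s=0$ and using $\rho_k\equiv1$ on $\mathcal{O}\backslash\mathcal{O}_{\sqrt2 k}$ (then relabelling $\sqrt2 k$ as $k$, which only enlarges $K_1$) yields \eqref{ep}, and retaining $\sup_{s\in[-t,0]}$ before letting $t$ and $k$ vary yields the stated supremum bound. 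The case $r=1$ is easier, the Forchheimer term being linear, cf.\ Remark \ref{r=1}.
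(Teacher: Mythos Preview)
Your proposal is correct and follows essentially the same route as the paper: the same smooth cutoff $\rho(|x|^{2}/k^{2})$ is multiplied against the \emph{unprojected} equation \eqref{C_SCBF_in}, each term is estimated to produce a differential inequality of the form you describe, and the conclusion comes from the variation-of-constants formula together with Lemma \ref{Absorb}. For the pressure term---which you rightly flag as the only delicate point---the paper takes your first alternative: it writes $p$ explicitly via \eqref{p-value} as $(-\Delta)^{-1}$ of the divergence of the nonlinear terms and then invokes Calder\'on--Zygmund (Riesz transform) boundedness to get $\tfrac{C}{k}\|p\|\,\|\v\|$ controlled by $\tfrac{C}{k}\big(\|\u\|_{\wi\L^{4}}^{2}\|\v\|_{\H}+\|\u\|_{\wi\L^{r+1}}^{r}\|\v\|_{\wi\L^{r+1}}\big)$, exactly matching your ``local pressure bound'' option; the Bogovskii corrector is not used.
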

\begin{proof}
	Let $\uprho$ be a smooth function such that $0\leq\uprho(s)\leq 1$ for $s\in\R^+$ and 
	\begin{align}\label{Psi}
		\uprho(s)=\begin{cases*}
			0, \text{ for }0\leq s\leq 1,\\
			1, \text{ for } s\geq2.
		\end{cases*}
	\end{align}
	Then, there exists a positive constant $C$ such that $|\uprho'(s)|\leq C$ for all $s\in\R^+$. Taking divergence to the first equation of \eqref{C_SCBF_in}, formally we obtain
	\begin{align*}
-\Delta p&=\nabla\cdot\left[\big((\v+\mathrm{\textbf{g}}\y)\cdot\nabla\big)(\v+\mathrm{g}\y)\right]+\beta\nabla\cdot\left[|\v+\mathrm{\textbf{g}}\y|^{r-1}(\v+\mathrm{\textbf{g}}\y)\right]\\
&=\nabla\cdot\left[\nabla\cdot\big((\v+\textbf{g}\y)\otimes(\v+\textbf{g}\y)\big)\right]+\beta\nabla\cdot\left[|\v+\mathrm{\textbf{g}}\y|^{r-1}(\v+\mathrm{\textbf{g}}\y)\right]\\
&=\sum_{i,j=1}^{2}\frac{\partial^2}{\partial x_i\partial x_j}\big((v_i+\text{g}_i\y)(v_j+\text{g}_j\y)\big)+\beta\nabla\cdot\left[|\v+\mathrm{\textbf{g}}\y|^{r-1}(\v+\mathrm{\textbf{g}}\y)\right],
	\end{align*}
which implies that
	\begin{align}\label{p-value}
	 p=(-\Delta)^{-1}\left[\sum_{i,j=1}^{2}\frac{\partial^2}{\partial x_i\partial x_j}\big((v_i+\text{g}_i\y)(v_j+\text{g}_j\y)\big)+\beta\nabla\cdot\left[|\v+\mathrm{\textbf{g}}\y|^{r-1}(\v+\mathrm{\textbf{g}}\y)\right]\right].
\end{align}
Taking the inner product to the first equation of \eqref{C_SCBF_in} with $\uprho\left(\frac{|x|^2}{k^2}\right)\v$, we have
	\begin{align}\label{ep1}
		&\frac{1}{2} \frac{\d}{\d t} \int_{\mathcal{O}}\uprho\left(\frac{|x|^2}{k^2}\right)|\v|^2\d x\nonumber \\&= \mu \int_{\mathcal{O}}(\Delta\v) \uprho\left(\frac{|x|^2}{k^2}\right) \v \d x-\alpha \int_{\mathcal{O}}\uprho\left(\frac{|x|^2}{k^2}\right)|\v|^2\d x-b\left(\v+\textbf{g}\y,\v+\textbf{g}\y,\uprho\left(\frac{|x|^2}{k^2}\right)\v\right)\nonumber\\&\quad-\beta \int_{\mathcal{O}}\left|\v+\textbf{g}\y\right|^{r-1}(\v+\textbf{g}\y)\uprho\left(\frac{|x|^2}{k^2}\right)\v\d x-\int_{\mathcal{O}}(\nabla p)\uprho\left(\frac{|x|^2}{k^2}\right)\v\d x+ \int_{\mathcal{O}}\f\uprho\left(\frac{|x|^2}{k^2}\right)\v\d x \nonumber\\&\quad+(\ell-\alpha)\y\int_{\mathcal{O}}\textbf{g}\uprho\left(\frac{|x|^2}{k^2}\right)\v\d x+\mu\y\int_{\mathcal{O}}(\Delta\textbf{g})\uprho\left(\frac{|x|^2}{k^2}\right)\v\d x\nonumber \\&= \mu \int_{\mathcal{O}}(\Delta\v) \uprho\left(\frac{|x|^2}{k^2}\right) \v \d x-\alpha \int_{\mathcal{O}}\uprho\left(\frac{|x|^2}{k^2}\right)|\v|^2\d x-b\left(\v+\textbf{g}\y,\v+\textbf{g}\y,\uprho\left(\frac{|x|^2}{k^2}\right)(\v+\textbf{g}\y)\right)\nonumber\\&\quad+b\left(\v+\textbf{g}\y,\v+\textbf{g}\y,\uprho\left(\frac{|x|^2}{k^2}\right)\textbf{g}\y\right)-\beta \int_{\mathcal{O}}\uprho\left(\frac{|x|^2}{k^2}\right)\left|\v+\textbf{g}\y\right|^{r+1}\d x\nonumber\\&\quad+\beta \int_{\mathcal{O}}\left|\v+\textbf{g}\y\right|^{r-1}(\v+\textbf{g}\y)\uprho\left(\frac{|x|^2}{k^2}\right)\textbf{g}\y\d x-\int_{\mathcal{O}}(\nabla p)\uprho\left(\frac{|x|^2}{k^2}\right)\v\d x+ \int_{\mathcal{O}}\f\uprho\left(\frac{|x|^2}{k^2}\right)\v\d x \nonumber\\&\quad+(\ell-\alpha)\y\int_{\mathcal{O}}\textbf{g}\uprho\left(\frac{|x|^2}{k^2}\right)\v\d x+\mu\y\int_{\mathcal{O}}(\Delta\textbf{g})\uprho\left(\frac{|x|^2}{k^2}\right)\v\d x.
	\end{align}
	We estimate each term on right hand side of \eqref{ep1}. Integration by parts and divergence free condition of $\v(\cdot)$ help us to obtain
	\begin{align}\label{ep2}
		&\mu \int_{\mathcal{O}}(\Delta\v) \uprho\left(\frac{|x|^2}{k^2}\right) \v \d x\nonumber\\&= -\mu \int_{\mathcal{O}}|\nabla\v|^2 \uprho\left(\frac{|x|^2}{k^2}\right)  \d x -\mu \int_{\mathcal{O}} \uprho'\left(\frac{|x|^2}{k^2}\right)\frac{2}{k^2}(x\cdot\nabla) \v\cdot\v \d x\nonumber\\&= -\mu \int_{\mathcal{O}}|\nabla\v|^2 \uprho\left(\frac{|x|^2}{k^2}\right)  \d x -\mu \int\limits_{k\leq|x|\leq \sqrt{2}k}\uprho'\left(\frac{|x|^2}{k^2}\right)\frac{2}{k^2}(x\cdot\nabla) \v\cdot\v \d x\nonumber\\&\leq -\mu \int_{\mathcal{O}}|\nabla\v|^2 \uprho\left(\frac{|x|^2}{k^2}\right)  \d x +\frac{2\sqrt{2}\mu}{k} \int\limits_{k\leq|x|\leq \sqrt{2}k}\left|\v\right| \left|\uprho'\left(\frac{|x|^2}{k^2}\right)\right|\left|\nabla \v\right| \d x\nonumber\\&\leq -\mu \int_{\mathcal{O}}|\nabla\v|^2 \uprho\left(\frac{|x|^2}{k^2}\right)  \d x +\frac{C}{k} \int_{\mathcal{O}}\left|\v\right| \left|\nabla \v\right| \d x\nonumber\\&\leq -\mu \int_{\mathcal{O}}|\nabla\v|^2 \uprho\left(\frac{|x|^2}{k^2}\right)  \d x +\frac{C}{k} \left(\|\v\|^2_{\H(\mathcal{O})}+\|\v\|^2_{\V(\mathcal{O})}\right)\nonumber\\&\leq -\mu \int_{\mathcal{O}}|\nabla\v|^2 \uprho\left(\frac{|x|^2}{k^2}\right)  \d x +\frac{C}{k}\|\v\|^2_{\V(\mathcal{O})},
	\end{align}
\begin{align}
	\y^2\ b\left(\v+\textbf{g}\y,\textbf{g},\uprho\left(\frac{|x|^2}{k^2}\right)\textbf{g}\right)&=\y^2\int_{\mathcal{O}} \uprho'\left(\frac{|x|^2}{k^2}\right)\frac{x}{k^2}\cdot(\v+\textbf{g}\y) |\textbf{g}|^2 \d x\nonumber\\&=  \y^2\int\limits_{k\leq|x|\leq \sqrt{2}k} \uprho'\left(\frac{|x|^2}{k^2}\right)\frac{x}{k^2}\cdot(\v+\textbf{g}\y) |\textbf{g}|^2 \d x\nonumber\\&\leq\y^2 \frac{\sqrt{2}}{k} \int\limits_{k\leq|x|\leq \sqrt{2}k} \left|\uprho'\left(\frac{|x|^2}{k^2}\right)\right| |\v+\textbf{g}\y| |\textbf{g}|^2 \d x\nonumber\\&\leq\y^2 \frac{C}{k}\|\v+\textbf{g}\y\|_{\H(\mathcal{O})}\|\textbf{g}\|^2_{\wi\L^4(\mathcal{O})}\nonumber\\&\leq\frac{C}{k}\left[\|\v+\textbf{g}\y\|^2_{\H(\mathcal{O})}+\y^4\|\textbf{g}\|^{4}_{\wi \L^{4}(\mathcal{O})}\right]\nonumber\\&\leq\frac{C}{k}\left[\|\v\|^2_{\V(\mathcal{O})}+\y^2\|\textbf{g}\|^2_{\H(\mathcal{O})}+\y^4\|\textbf{g}\|^{4}_{\wi \L^{4}(\mathcal{O})}\right],
\end{align}
and
	\begin{align}\label{ep3}
		&-b\left(\v+\textbf{g}\y,\v+\textbf{g}\y,\uprho\left(\frac{|x|^2}{k^2}\right)(\v+\textbf{g}\y)\right)\nonumber\\&=\int_{\mathcal{O}} \uprho'\left(\frac{|x|^2}{k^2}\right)\frac{x}{k^2}\cdot(\v+\textbf{g}\y) |\v+\textbf{g}\y|^2 \d x\nonumber\\&=  \int\limits_{k\leq|x|\leq \sqrt{2}k} \uprho'\left(\frac{|x|^2}{k^2}\right)\frac{x}{k^2}\cdot(\v+\textbf{g}\y) |\v+\textbf{g}\y|^2 \d x\nonumber\\&\leq \frac{\sqrt{2}}{k} \int\limits_{k\leq|x|\leq \sqrt{2}k} \left|\uprho'\left(\frac{|x|^2}{k^2}\right)\right| |\v+\textbf{g}\y|^3 \d x\nonumber\\&\leq \frac{C}{k}\|\v+\textbf{g}\y\|^3_{\wi \L^3(\mathcal{O})}\leq\frac{C}{k}\left[\|\v+\textbf{g}\y\|^2_{\H(\mathcal{O})}+\|\v+\textbf{g}\y\|^{4}_{\wi \L^{4}(\mathcal{O})}\right]\nonumber\\&
		\leq\frac{C}{k}\left[\|\v\|^2_{\V(\mathcal{O})}+\y^2\|\textbf{g}\|^2_{\H(\mathcal{O})}+\|\v\|^2_{\H(\mathcal{O})}\|\v\|^2_{\V(\mathcal{O})}+\y^4\|\textbf{g}\|^{4}_{\wi \L^{4}(\mathcal{O})}\right],
	\end{align}
where we have used interpolation and Young's inequalities in the last inequality. Using integration by parts, divergence free condition and \eqref{p-value}, we obtain 
\begin{align}
	&-\int_{\mathcal{O}}(\nabla p)\uprho\left(\frac{|x|^2}{k^2}\right)\v\d x\nonumber\\&=\int_{\mathcal{O}}p\uprho'\left(\frac{|x|^2}{k^2}\right)\frac{2}{k^2}(x\cdot\v) \leq\frac{C}{k} \int\limits_{k\leq|x|\leq \sqrt{2}k}\left|p\right|\left|\v\right|\d x\nonumber\\&\leq \frac{C}{k}\bigg[\|\v+\textbf{g}\y\|^2_{\wi\L^{4}(\mathcal{O})}\|\v\|_{\wi\L^2(\mathcal{O})}+\|\v+\textbf{g}\y\|^{r}_{\wi\L^{r+1}(\mathcal{O})}\|\v\|_{\wi\L^{r+1}(\mathcal{O})}\bigg]\nonumber\\&\leq \frac{C}{k}\bigg[\|\v+\textbf{g}\y\|^4_{\wi\L^{4}(\mathcal{O})}+\|\v\|^2_{\V(\mathcal{O})}+\|\v+\textbf{g}\y\|^{r+1}_{\wi\L^{r+1}(\mathcal{O})}+\|\v\|^{r+1}_{\wi\L^{r+1}(\mathcal{O})}\bigg]\nonumber\\&\leq \frac{C}{k}\bigg[\|\v\|^2_{\H(\mathcal{O})}\|\v\|^2_{\V(\mathcal{O})}+\y^4\|\textbf{g}\|^4_{\wi\L^{4}(\mathcal{O})}+\|\v\|^2_{\V(\mathcal{O})}+\|\v+\textbf{g}\y\|^{r+1}_{\wi\L^{r+1}(\mathcal{O})}+\y^{r+1}\|\g\|^{r+1}_{\wi\L^{r+1}(\mathcal{O})}\bigg].
\end{align}
 Finally, we estimate the remaining terms of \eqref{ep1} by using H\"older's and Young's inequalities as follows, for $r>1$,
\begin{align}
	&\y b\left(\v+\textbf{g}\y,\v,\uprho\left(\frac{|x|^2}{k^2}\right)\textbf{g}\right)\nonumber\\&\leq \y \left[\int_{\mathcal{O}}\uprho\left(\frac{|x|^2}{k^2}\right)\left|\v+\textbf{g}\y\right|^{r+1}\d x\right]^{\frac{1}{r+1}}\left[\int_{\mathcal{O}} \uprho\left(\frac{|x|^2}{k^2}\right)|\nabla\v|^2  \d x\right]^{\frac{1}{2}}\left[\int_{\mathcal{O}}\uprho\left(\frac{|x|^2}{k^2}\right)\left|\textbf{g}\right|^{\frac{2(r+1)}{r-1}}\d x\right]^{\frac{r-1}{2(r+1)}}\nonumber\\&\leq\frac{\beta}{4}\int_{\mathcal{O}}\uprho\left(\frac{|x|^2}{k^2}\right)\left|\v+\textbf{g}\y\right|^{r+1}\d x+\frac{\mu}{2}\int_{\mathcal{O}} \uprho\left(\frac{|x|^2}{k^2}\right)|\nabla\v|^2  \d x\nonumber\\&\quad+C\y^{\frac{2(r+1)}{r-1}} \int_{\mathcal{O}}\uprho\left(\frac{|x|^2}{k^2}\right)\left|\textbf{g}\right|^{\frac{2(r+1)}{r-1}}\d x,\\
	&\beta\y\int_{\mathcal{O}}\left|\v+\textbf{g}\y\right|^{r-1}(\v+\textbf{g}\y)\uprho\left(\frac{|x|^2}{k^2}\right)\textbf{g}\d x  \nonumber\\&\leq\frac{\beta}{4} \int_{\mathcal{O}}\uprho\left(\frac{|x|^2}{k^2}\right)\left|\v+\textbf{g}\y\right|^{r+1}\d x+C\y^{r+1} \int_{\mathcal{O}}\uprho\left(\frac{|x|^2}{k^2}\right)|\textbf{g}(x)|^{r+1}\d x,\\
	&	\int_{\mathcal{O}}\f(x)\uprho\left(\frac{|x|^2}{k^2}\right)\v \d x\nonumber\\&\leq \frac{\alpha}{4} \int_{\mathcal{O}}\uprho\left(\frac{|x|^2}{k^2}\right)|\v|^2\d x +\frac{1}{\alpha} \int_{\mathcal{O}}\uprho\left(\frac{|x|^2}{k^2}\right)|\f(x)|^2\d x,\\
	&(\ell-\alpha)\y\int_{\mathcal{O}}\textbf{g}\uprho\left(\frac{|x|^2}{k^2}\right)\v\d x+\mu\y\int_{\mathcal{O}}(\Delta\textbf{g})\uprho\left(\frac{|x|^2}{k^2}\right)\v\d x\nonumber\\&\leq\frac{\alpha}{4} \int_{\mathcal{O}}\uprho\left(\frac{|x|^2}{k^2}\right)|\v|^2\d x +C\y^2 \int_{\mathcal{O}}\uprho\left(\frac{|x|^2}{k^2}\right)|\textbf{g}(x)|^2\d x+C\y^2 \int_{\mathcal{O}}\uprho\left(\frac{|x|^2}{k^2}\right)|\Delta\textbf{g}(x)|^2\d x.	\label{ep4}
\end{align}
	Combining \eqref{ep1}-\eqref{ep4}, we get
	\begin{align}\label{ep5}
		&	\frac{\d}{\d t} \int_{\mathcal{O}}\uprho\left(\frac{|x|^2}{k^2}\right)|\v|^2\d x+\alpha \int_{\mathcal{O}}\uprho\left(\frac{|x|^2}{k^2}\right)|\v|^2\d x\nonumber\\&+\mu \int_{\mathcal{O}}|\nabla\v|^2 \uprho\left(\frac{|x|^2}{k^2}\right)  \d x +\beta \int_{\mathcal{O}}\uprho\left(\frac{|x|^2}{k^2}\right)\left|\v+\textbf{g}\y\right|^{r+1}\d x \nonumber\\&\leq \frac{C}{k}\bigg[\|\v\|^2_{\V(\mathcal{O})}+\|\v\|^2_{\H(\mathcal{O})}\|\v\|^2_{\V(\mathcal{O})}+\y^2\|\textbf{g}\|^2_{\H(\mathcal{O})}+\y^4\|\textbf{g}\|^{4}_{\wi \L^{4}(\mathcal{O})}+\|\v+\textbf{g}\y\|^{r+1}_{\wi\L^{r+1}(\mathcal{O})}\nonumber\\&\qquad+\left|\y\right|^{r+1}\|\textbf{g}\|^{r+1}_{\wi\L^{r+1}(\mathcal{O})}\bigg]+C\y^{\frac{2(r+1)}{r-1}} \int_{\mathcal{O}}\uprho\left(\frac{|x|^2}{k^2}\right)\left|\textbf{g}\right|^{\frac{2(r+1)}{r-1}}\d x+C\y^{r+1} \int_{\mathcal{O}}\uprho\left(\frac{|x|^2}{k^2}\right)|\textbf{g}(x)|^{r+1}\d x\nonumber\\&\quad+C \int_{\mathcal{O}}\uprho\left(\frac{|x|^2}{k^2}\right)|\f(x)|^2\d x+C\y^2 \int_{\mathcal{O}}\uprho\left(\frac{|x|^2}{k^2}\right)|\textbf{g}(x)|^2\d x+C\y^2 \int_{\mathcal{O}}\uprho\left(\frac{|x|^2}{k^2}\right)|\Delta\textbf{g}(x)|^2\d x\nonumber\\&\leq \frac{C}{k}\bigg[\|\v\|^2_{\V(\mathcal{O})}+\|\v\|^2_{\H(\mathcal{O})}\|\v\|^2_{\V(\mathcal{O})}+\y^2\|\textbf{g}\|^2_{\H(\mathcal{O})}+\y^4\|\textbf{g}\|^{4}_{\wi \L^{4}(\mathcal{O})}+\|\v+\textbf{g}\y\|^{r+1}_{\wi\L^{r+1}(\mathcal{O})}\nonumber\\&\qquad+\left|\y\right|^{r+1}\|\textbf{g}\|^{r+1}_{\wi\L^{r+1}(\mathcal{O})}\bigg]+C\y^{\frac{2(r+1)}{r-1}} \int_{|x|\geq k}\left|\textbf{g}\right|^{\frac{2(r+1)}{r-1}}\d x+C\y^{r+1} \int_{|x|\geq k}|\textbf{g}(x)|^{r+1}\d x\nonumber\\&\quad+C \int_{|x|\geq k}|\f(x)|^2\d x+C\y^2 \int_{|x|\geq k}|\textbf{g}(x)|^2\d x+C\y^2 \int_{|x|\geq k}|\Delta\textbf{g}(x)|^2\d x.
	\end{align}
Applying variation of constant formula, for $s\in[-t,0]$, we find 
\begin{align*}
	&\int_{|x|\geq k}|\v(s,-t;\omega,\v_{0})|^2\d x+\mu\int_{-t}^{s}e^{\alpha\zeta} \int_{|x|\geq k}|\nabla\v(\zeta,-t;\omega,\v_{0})|^2 \d x \d\zeta\nonumber\\ & +\int_{-t}^{s}e^{\alpha\zeta} \int_{|x|\geq k}|\v(\zeta,-t;\omega,\v_{0})+\textbf{g}(x)\y(\theta_{\zeta}\omega)|^{r+1}\d x \d\zeta \nonumber\\&\leq e^{-\alpha t}\|\v_{0}\|^2_{\H(\mathcal{O})}+\frac{C}{k}\bigg[\sup_{\zeta\in[-t,0]}\|\v(\zeta,-t;\omega,\v_{0})\|^2_{\H(\mathcal{O})}\int_{-t}^{0}e^{\alpha\zeta}\|\v(\zeta,-t;\omega,\v_{0})\|^2_{\V(\mathcal{O})}\d\zeta\nonumber\\&\quad+\int_{-t}^{0}e^{\alpha\zeta}\|\v(\zeta,-t;\omega,\v_{0})\|^2_{\V(\mathcal{O})}\d\zeta+\int_{-t}^{0}e^{\alpha\zeta}\|\v(\zeta,-t;\omega,\v_{0})+\y(\theta_{\zeta}\omega)\g\|^{r+1}_{\wi\L^{r+1}(\mathcal{O})}\d\zeta\bigg]\nonumber\\&\quad+\frac{C}{k}\bigg[\|\textbf{g}\|^2_{\H(\mathcal{O})}\int_{-t}^{0}e^{\alpha\zeta}|\y(\theta_{\zeta}\omega)|^{2} \d\zeta+\|\textbf{g}\|^{4}_{\wi\L^{4}(\mathcal{O})}\int_{-t}^{0}e^{\alpha\zeta}|\y(\theta_{\zeta}\omega)|^{4} \d\zeta\nonumber\\&\quad+\|\textbf{g}\|^{r+1}_{\wi\L^{r+1}(\mathcal{O})}\int_{-t}^{0}e^{\alpha\zeta}|\y(\theta_{\zeta}\omega)|^{r+1} \d\zeta\bigg]+C\bigg[ \int_{|x|\geq k}\left|\textbf{g}\right|^{\frac{2(r+1)}{r-1}}\d x\int_{-t}^{0}e^{\alpha\zeta}|\y(\theta_{\zeta}\omega)|^{\frac{2(r+1)}{r-1}}\d\zeta\nonumber\\&\quad+ \int_{|x|\geq k}|\textbf{g}(x)|^{r+1}\d x\int_{-t}^{0}e^{\alpha \zeta}|\y(\theta_{\zeta}\omega)|^{r+1}\d\zeta+ \int_{|x|\geq k}|\f(x)|^2\d x\int_{-t}^{0}e^{\alpha\zeta}\d\zeta\nonumber\\&\quad+ \int_{|x|\geq k}|\textbf{g}(x)|^2\d x\int_{-t}^{0}e^{\alpha \zeta}|\y(\theta_{\zeta}\omega)|^{2}\d\zeta+ \int_{|x|\geq k}|\Delta\textbf{g}(x)|^2\d x\int_{-t}^{0}e^{\alpha \zeta}|\y(\theta_{\zeta}\omega)|^{2}\d\zeta\bigg].
\end{align*}
Since $\v_{0}\in D(\theta_{-t}\omega)$, and $D\in\mathfrak{D}$, from \eqref{H6}, we infer that, for given $\varepsilon>0$, there exists $\mathscr{T}_1=\mathscr{T}_1(\omega,D,\varepsilon)>0$ such that for all $t\geq\mathscr{T}_1$,
\begin{align*}
	e^{-\alpha t}\|\v_{0}\|^2_{\H(\mathcal{O})}\leq\frac{\varepsilon}{3}.
\end{align*}
Due to Lemma \ref{Absorb}, we conclude that, for given $\varepsilon>0$, there exists $k_1=k_1(\omega,\varepsilon)>0$ and $\mathscr{T}_2=\mathscr{T}_2(\omega,D,\varepsilon)>0$ such that for all $k\geq k_1$ and $t\geq\mathscr{T}_2$,
\begin{align*}
	&\frac{C}{k}\bigg[\sup_{\zeta\in[-t,0]}\|\v(0,-t;\omega,\v_{0})\|^2_{\H(\mathcal{O})}\int_{-t}^{0}e^{\alpha\zeta}\|\v(\zeta,-t;\omega,\v_{0})\|^2_{\V(\mathcal{O})}\d\zeta\nonumber\\&\quad+\int_{-t}^{0}e^{\alpha\zeta}\|\v(\zeta,-t;\omega,\v_{0})\|^2_{\V(\mathcal{O})}\d\zeta+\int_{-t}^{0}e^{\alpha\zeta}\|\v(\zeta,-t;\omega,\v_{0})+\y(\theta_{\zeta}\omega)\g\|^{r+1}_{\wi\L^{r+1}(\mathcal{O})}\d\zeta\bigg]\leq \frac{\varepsilon}{3}.
\end{align*}
Since, $\textbf{g}\in\D(\A)$, $\f\in\H(\mathcal{O})$ and by \eqref{Y3}, we get 
\begin{align*}
	\int_{-\infty}^{0}e^{\alpha \zeta}\bigg[1+|\y(\theta_{\zeta}\omega)|^{2}+|\y(\theta_{\zeta}\omega)|^{4}+|\y(\theta_{\zeta}\omega)|^{\frac{2(r+1)}{r-1}}+|\y(\theta_{\zeta}\omega)|^{r+1}\bigg]\d\zeta<\infty,
\end{align*}
and that for given $\varepsilon>0$, there exists $k_2=k_2(\omega,\varepsilon)>0$ such that for all $k\geq k_2$,
\begin{align*}
	 &\frac{C}{k}\bigg[\|\textbf{g}\|^2_{\H(\mathcal{O})}\int_{-t}^{0}e^{\alpha\zeta}|\y(\theta_{\zeta}\omega)|^{2} \d\zeta+\|\textbf{g}\|^{4}_{\wi\L^{4}(\mathcal{O})}\int_{-t}^{0}e^{\alpha\zeta}|\y(\theta_{\zeta}\omega)|^{4} \d\zeta\nonumber\\&\quad+\|\textbf{g}\|^{r+1}_{\wi\L^{r+1}(\mathcal{O})}\int_{-t}^{0}e^{\alpha\zeta}|\y(\theta_{\zeta}\omega)|^{r+1} \d\zeta\bigg]+C\bigg[ \int_{|x|\geq k}\left|\textbf{g}\right|^{\frac{2(r+1)}{r-1}}\d x\int_{-t}^{0}e^{\alpha\zeta}|\y(\theta_{\zeta}\omega)|^{\frac{2(r+1)}{r-1}}\d\zeta\nonumber\\&\quad+ \int_{|x|\geq k}|\textbf{g}(x)|^{r+1}\d x\int_{-t}^{0}e^{\alpha \zeta}|\y(\theta_{\zeta}\omega)|^{r+1}\d\zeta+ \int_{|x|\geq k}|\f(x)|^2\d x\int_{-t}^{0}e^{\alpha\zeta}\d\zeta\nonumber\\&\quad+ \int_{|x|\geq k}|\textbf{g}(x)|^2\d x\int_{-t}^{0}e^{\alpha \zeta}|\y(\theta_{\zeta}\omega)|^{2}\d\zeta+ \int_{|x|\geq k}|\Delta\textbf{g}(x)|^2\d x\int_{-t}^{0}e^{\alpha \zeta}|\y(\theta_{\zeta}\omega)|^{2}\d\zeta\bigg]\leq \frac{\varepsilon}{3}.
\end{align*}
Let us take $\mathscr{T}_{\infty}=\mathscr{T}_{\infty}(\omega,D,\varepsilon)=\max\{\mathscr{T}_1,\mathscr{T}_2\}$ and $K_1(\omega,\varepsilon)=\max\{k_1,k_2\}$, so that the proof can be completed.
\end{proof}
The following Lemma can be proved in a similar way as Lemma \ref{largeradius}.
	\begin{lemma}\label{largeradius1}
		For $r>1$, assume that $\f_m\in\H(\mathcal{O}_m)$, $\textbf{g}_m\in\D(\A_m)$. Then, for any $\v_{m,0}\in D(\theta_{-t}\omega),$ where $D=\{D(\omega):\omega\in \tilde{\Omega}\}\in\mathfrak{D}$, and for any $\varepsilon>0$ and $\omega\in \tilde{\Omega}$, there exists $\mathscr{T}_m=\mathscr{T}_m(\omega,D,\varepsilon)\geq 0$ and $K_2(\omega,\varepsilon)>0$ such that for all $t\geq \mathscr{T}_m$ and $m\geq k\geq K_2$, the solution of \eqref{cscbf_m}  satisfies
	\begin{align}\label{ep*}
		&\int_{\mathcal{O}_m\backslash\mathcal{O}_k}|\v_m(0,-t;\omega,\v_{m,0}) |^2\d x+\mu\int_{-t}^{0}e^{\alpha\zeta} \int_{\mathcal{O}_m\backslash\mathcal{O}_k}|\nabla\v_m(\zeta,-t;\omega,\v_{m,0})|^2 \d x \d\zeta\nonumber\\& +\int_{-t}^{0}e^{\alpha\zeta} \int_{\mathcal{O}_m\backslash\mathcal{O}_k}|\v_m(\zeta,-t;\omega,\v_{m,0})+\textbf{g}(x)\y(\theta_{\zeta}\omega)|^{r+1}\d x \d\zeta\leq \varepsilon,
	\end{align}
and
\begin{align}
	\sup_{s\in[-t,0]}\left[\int_{\mathcal{O}_m\backslash\mathcal{O}_k}|\v_m(s,-t;\omega,\v_{m,0}) |^2\d x\right]\leq\varepsilon.
\end{align}
\end{lemma}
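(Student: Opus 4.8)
The plan is to repeat, almost verbatim, the argument used for Lemma \ref{largeradius}, carrying everything out on the bounded domain $\mathcal{O}_m$ instead of $\mathcal{O}$ and invoking the uniform absorbing estimates of Lemma \ref{Absorb1} in place of those of Lemma \ref{Absorb}. First I would fix the same cut-off function $\uprho$ satisfying \eqref{Psi}, take the inner product in $\H(\mathcal{O}_m)$ of the first equation of \eqref{cscbf_m} with $\uprho(|x|^2/k^2)\v_m$, and observe that the computation is legitimate precisely because $m\ge k$: the gradient of $\uprho(|x|^2/k^2)$ is supported in the annulus $\{k\le|x|\le\sqrt2 k\}$, while $\v_m$ vanishes on $\partial\mathcal{O}_m$, so $\uprho(|x|^2/k^2)\v_m$ is an admissible test function and every integration by parts either produces a boundary term carrying a factor $\v_m|_{\partial\mathcal{O}_m}=\boldsymbol{0}$, hence vanishing, or a term localized to the annulus.

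Second, I would reproduce the term-by-term estimates \eqref{ep2}--\eqref{ep4} with $\v$, $\textbf{g}$, $\f$, $\mathcal{O}$ replaced by $\v_m$, $\textbf{g}_m$, $\f_m$, $\mathcal{O}_m$: the viscous term, the two trilinear contributions obtained by splitting $\v_m+\textbf{g}_m\y(\theta_t\omega)$, the Forchheimer terms, the pressure term, the forcing term, and the lower-order terms with $\textbf{g}_m$ and $\Delta\textbf{g}_m$. For the pressure I would use the analogue of \eqref{p-value} on $\mathcal{O}_m$ together with the $\L^p$ elliptic estimate, so that $\|p_m\|$ restricted to the annulus is controlled by the same combination of $\|\v_m+\textbf{g}_m\y(\theta_t\omega)\|_{\wi\L^4(\mathcal{O}_m)}$ and $\|\v_m+\textbf{g}_m\y(\theta_t\omega)\|_{\wi\L^{r+1}(\mathcal{O}_m)}$ as in Lemma \ref{largeradius}. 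Because $\f_m$, $\textbf{g}_m$ and $\Delta\textbf{g}_m$ are restrictions of $\f$, $\textbf{g}$, $\Delta\textbf{g}$, all their $\L^p$-norms over $\mathcal{O}_m$ (and over $\mathcal{O}_m\backslash\mathcal{O}_k$) are dominated by the corresponding norms over $\mathcal{O}$ (resp. $\mathcal{O}\backslash\mathcal{O}_k$), and the Ladyzhenskaya/Sobolev and Calder\'on--Zygmund constants on the exhausting family $\mathcal{O}_m$ of the fixed uniformly $\mathrm{C}^3$ domain $\mathcal{O}$ may be taken independent of $m$. Hence one arrives at the exact structural analogue of the differential inequality \eqref{ep5}, with a prefactor $C/k$ in front of the solution-dependent quantities plus tails $\int_{|x|\ge k}(\cdots)$ of the fixed data, all constants uniform in $m$.

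Third, I would apply the variation of constants formula on $[-t,0]$ and estimate the right-hand side exactly as in the proof of Lemma \ref{largeradius}: the initial term satisfies $e^{-\alpha t}\|\v_{m,0}\|^2_{\H(\mathcal{O}_m)}\le e^{-\alpha t}\|D(\theta_{-t}\omega)\|^2_{\H(\mathcal{O})}\to0$ by temperedness of $D$ (cf. \eqref{H6}); the $C/k$-terms are bounded \emph{uniformly in} $m$ by the quantity $\mathscr{L}(\omega)$ of Lemma \ref{Absorb1} --- this is the only place the uniform-in-$m$ absorbing estimate is really needed; and the data tails $\int_{|x|\ge k}(\cdots)\,\d x$ tend to $0$ as $k\to\infty$ by dominated convergence, using $\f\in\H(\mathcal{O})$, $\textbf{g}\in\D(\A)$ and the integrability (via \eqref{Y3}) of the Ornstein--Uhlenbeck weights $e^{\alpha\zeta}\big(1+|\y(\theta_\zeta\omega)|^2+|\y(\theta_\zeta\omega)|^4+|\y(\theta_\zeta\omega)|^{\frac{2(r+1)}{r-1}}+|\y(\theta_\zeta\omega)|^{r+1}\big)$ on $(-\infty,0)$. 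Taking $\mathscr{T}_m=\mathscr{T}_m(\omega,D,\varepsilon)$ and $K_2(\omega,\varepsilon)$ as the appropriate maxima then yields \eqref{ep*}; evaluating the same variation-of-constants estimate at each $s\in[-t,0]$ (and using $e^{\alpha(\zeta-s)}\le1$ for $\zeta\le s$) gives the stated $\sup_{s\in[-t,0]}$ bound as well.

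The main point requiring attention --- rather than a genuine obstacle --- is exactly this uniformity in $m$ of all constants: the pressure estimate on $\mathcal{O}_m$, the embedding constants in \eqref{ep2}--\eqref{ep4}, and the absorbing bound $\mathscr{L}(\omega)$ from Lemma \ref{Absorb1}, together with the fact that the relevant norms of $\f_m,\textbf{g}_m,\Delta\textbf{g}_m$ over $\mathcal{O}_m$ are majorized by those of $\f,\textbf{g},\Delta\textbf{g}$ over $\mathcal{O}$. Granting these observations, the argument is line-for-line identical to that of Lemma \ref{largeradius}, so I would only indicate the differences and refer to that proof for the routine computations.
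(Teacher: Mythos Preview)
Your proposal is correct and matches the paper's own treatment: the paper gives no separate proof of Lemma \ref{largeradius1}, stating only that it ``can be proved in a similar way as Lemma \ref{largeradius}''. You have correctly identified the two substantive modifications---replacing Lemma \ref{Absorb} by the uniform-in-$m$ absorbing estimate of Lemma \ref{Absorb1}, and bounding the data norms $\|\f_m\|,\|\textbf{g}_m\|,\|\Delta\textbf{g}_m\|$ over $\mathcal{O}_m\backslash\mathcal{O}_k$ by those of $\f,\textbf{g},\Delta\textbf{g}$ over $\mathcal{O}\backslash\mathcal{O}_k$---and your remarks on the admissibility of the test function $\uprho(|x|^2/k^2)\v_m$ (via $\v_m|_{\partial\mathcal{O}_m}=\boldsymbol{0}$) and on the need for $m$-independent constants are exactly the points that make the transfer work.
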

	\begin{lemma}\label{v_Conver}
	Assume that $\f\in \H(\mathcal{O})$. Let $\v_{m}$ and $\v$ be the solutions of \eqref{cscbf_m} and \eqref{cscbf_in} with initial data $\v_{m,0}$ and $\v_{0}$, respectively. If $\v_{m,0}\xrightharpoonup{w}\v_{0}$ in $\H$ as $n\to\infty$, then for every $\omega\in\tilde{\Omega}$,
	\begin{itemize}
		\item [(i)] $\widetilde{\v}_{m}(\xi,s;\omega,\v_{m,0})\xrightharpoonup{w}\v(\xi,s;\omega,\v_{0})$ in $\H$ for all $\xi\geq s$.
		\item [(ii)] $\widetilde{\v}_{m}(\cdot,s;\omega,\v_{m,0})\xrightharpoonup{w}\v(\cdot,s;\omega,\v_{0})$ in $\mathrm{L}^2((s,s+T);\V)$ for every $T>0$.
		\item [(iii)] $\widetilde{\v}_{m}(\cdot,s;\omega,\v_{m,0})\xrightharpoonup{w}\v(\cdot,s;\omega,\v_{0})$ in $\mathrm{L}^{r+1}((s,s+T);\widetilde{\L}^{r+1})$ for every $T>0$.
		\item [(iv)] $\widetilde{\v}_{m}(\cdot,s;\omega,\v_{m,0})\to\v(\cdot,s;\omega,\v_{0})$ in $\mathrm{L}^2((s,s+T);\L^2(\mathcal{O}_l))$ for every $T>0$ and $0<l\leq m$,
		where $\mathcal{O}_l=\{x\in\R^d:|x|<l\}$.
	\end{itemize}
\end{lemma}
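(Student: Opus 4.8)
The plan is to adapt the argument of Lemma \ref{weak_topo1} to the case in which the domain, and not merely the initial datum, is being varied, combining it with the bounded-to-unbounded truncation technique used for the deterministic equations in \cite{Mohan2,ZD} and, in the stochastic setting, with the compactness scheme of \cite{BL}. Fix $s\in\R$, $T>0$ and $\omega\in\tilde\Omega$. First I would observe that $\v_{m,0}\xrightharpoonup{w}\v_0$ in $\H$ forces $\sup_m\|\v_{m,0}\|_{\H}<\infty$, and that the null-expansion preserves the $\H$, $\V$ and $\widetilde{\L}^{r+1}$ norms. Running on $[s,s+T]$ the energy estimates of Lemma \ref{Absorb1} — equivalently, those in the proof of Theorem \ref{solution_v} with $\z$ replaced by $\textbf{g}_m\y$, whose constants depend only on $\|\v_{m,0}\|_{\H}$, $T$, $\f$, $\textbf{g}$ and $\omega$ — then yields a bound for $\{\widetilde{\v}_m\}$ in $\mathrm{L}^{\infty}(s,s+T;\H)\cap\mathrm{L}^2(s,s+T;\V)\cap\mathrm{L}^{r+1}(s,s+T;\widetilde{\L}^{r+1})$ that is uniform in $m$. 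Estimating $\frac{\d\v_m}{\d t}$ exactly as in the proof of Theorem \ref{solution_v} gives, uniformly in $m$, a bound for $\frac{\d\widetilde{\v}_m}{\d t}$ in $\mathrm{L}^2(s,s+T;\V')$ for $r\in[1,3)$, and in $\mathrm{L}^2(s,s+T;\V')+\mathrm{L}^{4/3}(s,s+T;\widetilde{\L}^{4/3})$ for $r=3$.

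Next I would extract, via Banach--Alaoglu, a subsequence along which $\widetilde{\v}_m\xrightharpoonup{w^*}\widetilde{\v}$ in $\mathrm{L}^{\infty}(s,s+T;\H)$, $\widetilde{\v}_m\xrightharpoonup{w}\widetilde{\v}$ in $\mathrm{L}^2(s,s+T;\V)\cap\mathrm{L}^{r+1}(s,s+T;\widetilde{\L}^{r+1})$, and $\frac{\d\widetilde{\v}_m}{\d t}\xrightharpoonup{w}\frac{\d\widetilde{\v}}{\d t}$ in the corresponding space. For each fixed $l\in\N$, since $\widetilde{\v}_m|_{\mathcal{O}_l}=\v_m|_{\mathcal{O}_l}$ for $m\geq l$, the restrictions are bounded in $\mathrm{L}^2(s,s+T;\H^1_0(\mathcal{O}_l))$ with time derivatives bounded in $\mathrm{L}^{(r+1)/r}(s,s+T;\H^{-1}(\mathcal{O}_l))$, uniformly in $m$; since the embedding $\H^1_0(\mathcal{O}_l)\subset\L^2(\mathcal{O}_l)$ is compact and $\L^2(\mathcal{O}_l)\subset\H^{-1}(\mathcal{O}_l)$ is continuous, the Aubin--Lions lemma gives strong convergence $\widetilde{\v}_m\to\widetilde{\v}$ in $\mathrm{L}^2(s,s+T;\L^2(\mathcal{O}_l))$, and a diagonal argument over $l$ makes this hold for every $l$. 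Once $\widetilde{\v}$ is identified with $\v$, this is precisely assertion (iv).

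To identify the limit I would take $\boldsymbol{\phi}\in\mathcal{V}$, so that $\mathrm{supp}\,\boldsymbol{\phi}\subset\mathcal{O}_l$ for some $l$, and $\psi\in\mathrm{C}^1([s,s+T])$; for $m>l$ the restriction of $\boldsymbol{\phi}$ to $\mathcal{O}_m$ lies in $\mathcal{V}_m$, and testing the weak form of \eqref{cscbf_m} against $\psi(\cdot)\boldsymbol{\phi}$ produces an identity in which, since $\widetilde{\v}_m=\v_m$, $\textbf{g}_m=\textbf{g}$, $\f_m=\f$ on $\mathcal{O}_l$, every term coincides with the corresponding term of the weak form of \eqref{cscbf_in} written for $\widetilde{\v}_m$, and the data terms $\f_m$, $(\ell-\alpha)\textbf{g}_m\y$, $\mu\y\Delta\textbf{g}_m$ are in fact independent of $m$ on the test region. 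I would then pass to the limit $m\to\infty$: the time-derivative and linear terms by weak convergence; the convective term by Lemma \ref{convergence_b*} (using weak $\mathrm{L}^2(\V)$-convergence and strong $\mathrm{L}^2(\L^2(\mathcal{O}_l))$-convergence, the $\textbf{g}\y$-part handled exactly as in \eqref{S14}); and the Forchheimer term by Lemma \ref{convergence_c2_1}, applied with the fixed function $\textbf{g}\y|_{\mathcal{O}_l}$ in the role of $\w$ in that lemma. This shows that $\widetilde{\v}$ satisfies \eqref{W-CSCBF} for all $\boldsymbol{\phi}\in\mathcal{V}$, hence by density for all $\boldsymbol{\phi}\in\V$; the uniform time-derivative bound makes $\widetilde{\v}$ weakly $\H$-continuous, and together with $\v_{m,0}\xrightharpoonup{w}\v_0$ this forces $\widetilde{\v}(s)=\v_0$, so by the uniqueness part of Theorem \ref{solution_v} we get $\widetilde{\v}=\v$. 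Since the limit is independent of the subsequence, the convergences hold for the whole sequence, giving (ii) and (iii). For (i) I would argue as in Lemma \ref{weak_topo1}: for $\boldsymbol{\phi}\in\mathcal{V}$ the maps $t\mapsto\langle\widetilde{\v}_m(t),\boldsymbol{\phi}\rangle$ are uniformly bounded and equicontinuous on $[s,s+T]$, so by Arzela--Ascoli $\langle\widetilde{\v}_m(\cdot),\boldsymbol{\phi}\rangle\to\langle\v(\cdot),\boldsymbol{\phi}\rangle$ uniformly; using $\sup_m\sup_{t}\|\widetilde{\v}_m(t)\|_{\H}<\infty$ and the density of $\mathcal{V}$ in $\H$, this extends to all $\boldsymbol{\phi}\in\H$, giving $\widetilde{\v}_m(\xi)\xrightharpoonup{w}\v(\xi)$ in $\H$ for every $\xi\in[s,s+T]$. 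As $T>0$ is arbitrary, (i)--(iv) follow.

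The hard part will be the uniform-in-$m$ control of $\frac{\d\widetilde{\v}_m}{\d t}$ restricted to the fixed cores $\mathcal{O}_l$, together with the correct localization of the nonlinear operators $\B_m$ and $\mathcal{C}_m$ to $\mathcal{O}_l$: this is exactly what makes the Aubin--Lions compactness and Lemmas \ref{convergence_b*}, \ref{convergence_c2_1} applicable. Once the strong $\mathrm{L}^2_{\mathrm{loc}}$-convergence is secured, the passage to the limit in the remaining terms is routine.
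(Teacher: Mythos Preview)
Your proposal is correct and follows precisely the approach the paper has in mind: the paper omits the proof entirely, writing only ``Proof is similar to Lemma 4.2, \cite{KM}, and hence we omit it here,'' and your argument is exactly the expected adaptation of the scheme used in Lemma~\ref{weak_topo1} and in Step~I of Theorem~\ref{solution_v} to the varying-domain situation. One small slip: the restrictions $\widetilde{\v}_m|_{\mathcal{O}_l}$ for $m>l$ need not vanish on $\partial\mathcal{O}_l$, so they lie in $\H^1(\mathcal{O}_l)$ rather than $\H^1_0(\mathcal{O}_l)$; this is harmless since Aubin--Lions only requires the compact embedding $\H^1(\mathcal{O}_l)\hookrightarrow\L^2(\mathcal{O}_l)$, which holds on the bounded set $\mathcal{O}_l$.
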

\begin{proof}
	Proof is similar to Lemma 4.2, \cite{KM}, and hence we omit it here.
\end{proof}
\begin{lemma}\label{Converence_m}
	If $\u_{m,0}\in\mathcal{A}_m(\omega) ( \text{ for } m\in\N)$, then there exists an element $\u_0\in\mathcal{A}_{\infty}(\omega)$ such that upto a subsequence 
\begin{align}\label{CON3}
	\widetilde{\u}_{m,0}\xrightharpoonup{w}\u_{0} \text{ in }  \H(\mathcal{O}) \text{ as } m\to\infty.
\end{align}
\end{lemma}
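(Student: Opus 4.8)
The plan is to exploit three ingredients already established: the common $\mathfrak{D}$-absorbing set $\mathcal{K}_m(\omega)$ from Lemmas \ref{Absorb}--\ref{Absorb1}, whose radius $\mathscr{L}^*(\omega)$ is \emph{independent of} $m$; the $\Phi_m$-invariance of the random attractors $\mathcal{A}_m$ of \cite{KM7}, which are built from $\mathcal{K}_m$ so that $\mathcal{A}_m(\omega)=\Omega_{\mathcal{K}_m}(\omega)\subseteq\mathcal{K}_m(\omega)$; and the weak stability of solutions under domain expansion, namely Lemma \ref{v_Conver}. First I would note that, for $\u_{m,0}\in\mathcal{A}_m(\omega)\subseteq\mathcal{K}_m(\omega)$, the null-expansions satisfy $\|\widetilde{\u}_{m,0}\|_{\H(\mathcal{O})}=\|\u_{m,0}\|_{\H(\mathcal{O}_m)}\leq\mathscr{L}^*(\omega)$ for every $m$, so $\{\widetilde{\u}_{m,0}\}_{m\in\N}$ is bounded in the Hilbert space $\H(\mathcal{O})$; passing to a subsequence, $\widetilde{\u}_{m,0}\xrightharpoonup{w}\u_0$ in $\H(\mathcal{O})$ for some $\u_0$ with $\|\u_0\|_{\H(\mathcal{O})}\leq\mathscr{L}^*(\omega)$ by weak lower semicontinuity. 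The remaining task is to show this $\u_0$ lies in $\mathcal{A}_\infty(\omega)$.

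The key step is to produce, for each $k\in\N$, an element $\u_{-k}\in\mathcal{K}_\infty(\theta_{-k}\omega)$ with $\Phi_\infty(k,\theta_{-k}\omega)\u_{-k}=\u_0$. Using the invariance $\mathcal{A}_m(\omega)=\Phi_m(k,\theta_{-k}\omega)\mathcal{A}_m(\theta_{-k}\omega)$, I would write $\u_{m,0}=\Phi_m(k,\theta_{-k}\omega)\u_{m,-k}$ with $\u_{m,-k}\in\mathcal{A}_m(\theta_{-k}\omega)\subseteq\mathcal{K}_m(\theta_{-k}\omega)$, so that $\{\widetilde{\u}_{m,-k}\}_m$ is bounded by $\mathscr{L}^*(\theta_{-k}\omega)$ in $\H(\mathcal{O})$; along a further ($k$-dependent) subsequence, $\widetilde{\u}_{m,-k}\xrightharpoonup{w}\u_{-k}$ with $\|\u_{-k}\|_{\H(\mathcal{O})}\leq\mathscr{L}^*(\theta_{-k}\omega)$, i.e.\ $\u_{-k}\in\mathcal{K}_\infty(\theta_{-k}\omega)$. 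Since $\textbf{g}\in\D(\A)\subseteq\H(\mathcal{O})$, dominated convergence gives $\widetilde{\textbf{g}}_m\to\textbf{g}$ strongly in $\H(\mathcal{O})$, whence the shifted data $\widetilde{\v}_{m,-k}:=\widetilde{\u}_{m,-k}-\y(\theta_{-k}\omega)\widetilde{\textbf{g}}_m\xrightharpoonup{w}\u_{-k}-\y(\theta_{-k}\omega)\textbf{g}=:\v_{-k}$ in $\H(\mathcal{O})$. Applying Lemma \ref{v_Conver}(i) with initial time $s=-k$ and $\xi=0$, and recalling \eqref{Phi_m}, I would pass to the weak limit in
\begin{equation*}
\widetilde{\u}_{m,0}=\widetilde{\v}_m(0,-k;\omega,\v_{m,-k})+\y(\omega)\widetilde{\textbf{g}}_m\xrightharpoonup{w}\v(0,-k;\omega,\v_{-k})+\y(\omega)\textbf{g}=\u(0,-k;\omega,\u_{-k}),
\end{equation*}
and, by uniqueness of the weak limit, conclude $\u_0=\u(0,-k;\omega,\u_{-k})=\Phi_\infty(k,\theta_{-k}\omega)\u_{-k}$.

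Finally, I would invoke the attraction property of $\mathcal{A}_\infty$: since $\mathcal{K}_\infty\in\mathfrak{D}$, for each $k\in\N$ one has $\mathrm{dist}_{\H(\mathcal{O})}(\u_0,\mathcal{A}_\infty(\omega))\leq\mathrm{dist}_{\H(\mathcal{O})}\big(\Phi_\infty(k,\theta_{-k}\omega,\mathcal{K}_\infty(\theta_{-k}\omega)),\mathcal{A}_\infty(\omega)\big)$, and the right-hand side vanishes as $k\to\infty$; as $\mathcal{A}_\infty(\omega)$ is closed, $\u_0\in\mathcal{A}_\infty(\omega)$, which is \eqref{CON3}. (Equivalently, $\u_0\in\bigcup_{t\geq k}\Phi_\infty(t,\theta_{-t}\omega,\mathcal{K}_\infty(\theta_{-t}\omega))$ for every $k$, hence $\u_0\in\Omega_{\mathcal{K}_\infty}(\omega)=\mathcal{A}_\infty(\omega)$.) The analytic core --- the weak convergence of $\widetilde{\v}_m$ to $\v$ through the nonlinearities $\B$ and $\mathcal{C}$ --- is entirely encapsulated in Lemma \ref{v_Conver}, so the main obstacle here is organizational: translating correctly between the cocycle notation of \eqref{Phi_m} and the hypotheses of Lemma \ref{v_Conver} (in particular verifying that all initial-data sequences, including the $\widetilde{\textbf{g}}_m$ contribution, converge weakly in $\H(\mathcal{O})$), and threading the nested subsequence extractions so that the single limit $\u_0$ fixed at the outset is the one shown to lie in every $\Phi_\infty(k,\theta_{-k}\omega,\mathcal{K}_\infty(\theta_{-k}\omega))$.
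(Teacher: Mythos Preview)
Your proof is correct and follows essentially the same route as the paper: pull $\u_{m,0}$ back along the $\Phi_m$-invariant attractors to obtain bounded preimages in $\mathcal{K}_m(\theta_{-k}\omega)$, pass to weak limits, and use Lemma~\ref{v_Conver} to identify the weak limit $\u_0$ as $\Phi_\infty(k,\theta_{-k}\omega)\u_{-k}$ with $\u_{-k}\in\mathcal{K}_\infty(\theta_{-k}\omega)$ for every $k$. The only cosmetic difference is in the last step: the paper invokes the $\mathfrak{D}$-asymptotic compactness of $\Phi_\infty$ (from \cite{KM7}) to force the constant sequence $\Phi_\infty(j,\theta_{-j}\omega)\u_{j,0}$ into $\mathcal{A}_\infty(\omega)$, whereas you use the equivalent $\mathfrak{D}$-attraction property directly --- your presentation is in fact the tidier of the two, and your explicit handling of the shift $\widetilde{\textbf{g}}_m\to\textbf{g}$ needed to feed Lemma~\ref{v_Conver} fills a detail the paper leaves implicit.
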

\begin{proof}
	Since $\u_{m,0}\in\mathcal{A}_m(\omega)$ and attractor is a subset of absorbing set, we have that there exists a sequence $\u_{j,m,0}\in\mathcal{A}_{\infty}(\theta_{-j}\omega)$ such that $\u_{m,0}=\Phi_{m}(j,\theta_{-j}\omega)\u_{j,m,0}$ and 
	\begin{align*}
		\|\widetilde{\u}_{j,m,0}\|_{\H(\mathcal{O})}=\|\u_{j,m,0}\|_{\H(\mathcal{O}_m)}\leq \mathscr{L}^*(\theta_{-j}\omega),
	\end{align*}
which implies that there exists an element $\u_{j,0}\in\H(\mathcal{O})$ such that upto a subsequence
\begin{align}\label{WC1}
	\widetilde{\u}_{j,m,0}\xrightharpoonup{w}\u_{j,0} \ \text{ in }  \ \H(\mathcal{O}) \ \text{ as } \ m\to\infty.
\end{align}
From Lemma \ref{v_Conver}, we get 
\begin{align}\label{WC2}
	\widetilde{\Phi}_m(j,\theta_{-j}\omega)\widetilde{\u}_{j,m,0}\xrightharpoonup{w}\Phi(j,\theta_{-j}\omega)\u_{j,0} \ \text{ in } \ \H(\mathcal{O}).
\end{align}
It follows from the property of weak lower-semicontinuity of norm and \eqref{WC1} that
\begin{align*}
	\|\u_{j,0}\|_{\H(\mathcal{O})}\leq\liminf_{m\to\infty}\|\u_{j,m,0}\|_{\H(\mathcal{O}_m)}\leq \mathscr{L}^*(\theta_{-j}\omega),
\end{align*} 
which implies that $\u_{j,0}\in \mathcal{K}_{\infty}(\theta_{-j}\omega)$. Since $\Phi$ is asymptotically compact (Lemma 5.7, \cite{KM7}), for a sequence $t_n\to\infty$ as $n\to \infty$, the sequence $\{\Phi(t_n,\theta_{-t_n}\omega)\u_{t_n,0}\}_{n\in\N}$ has a convergent subsequence. Therefore, there exists an element $\u_0\in\mathcal{A}_{\infty}(\omega)$ such that $$\lim_{n \to \infty}\Phi(t_n,\theta_{-t_n}\omega)\u_{t_n,0}=\u_0.$$ From the invariance property of  the random attractors, we have $$\lim_{n \to \infty}\Phi_m(t_n,\theta_{-t_n}\omega)\u_{t_n,m,0}=\u_{m,0}$$ Hence, as $j\to\infty$, \eqref{WC2} gives  
\begin{align*}
	\widetilde{\u}_{m,0}\xrightharpoonup{w}\u_{0} \text{ in }  \H(\mathcal{O}),
\end{align*}
which completes the proof.
\end{proof}

\begin{lemma}\label{StrongC}
	For $r>1$, if $\u_{m,0}\in\mathcal{A}_m(\omega) (m\in\N)$, then there exists an element $\u_0\in\mathcal{A}_{\infty}(\omega)$ such that upto a subsequence 
	\begin{align}\label{CON4}
		\widetilde{\u}_{m,0}\to\u_{0} \text{ strongly in } \H(\mathcal{O}).
	\end{align}
\end{lemma}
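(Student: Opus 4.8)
The plan is to upgrade the weak convergence of Lemma \ref{Converence_m} to strong convergence by Ball's energy method: I will show that the $\H(\mathcal{O})$-norms converge along the (already extracted) subsequence, and in a Hilbert space weak convergence together with convergence of norms yields strong convergence. First I would reduce to the auxiliary variable. Writing $\widetilde{\u}_{m,0}=\widetilde{\v}_{m,0}+\widetilde{\textbf{g}}_m\y(\omega)$ and $\u_0=\v_0+\textbf{g}\y(\omega)$, and using that $\widetilde{\textbf{g}}_m\to\textbf{g}$ strongly in $\H(\mathcal{O})$ and in $\D(\A)$, it suffices to prove $\widetilde{\v}_{m,0}\to\v_0$ strongly in $\H(\mathcal{O})$; since Lemma \ref{Converence_m} already gives $\widetilde{\v}_{m,0}\xrightharpoonup{w}\v_0$, by weak lower semicontinuity of the norm only the bound $\limsup_{m\to\infty}\|\widetilde{\v}_{m,0}\|_{\H(\mathcal{O})}^2\le\|\v_0\|_{\H(\mathcal{O})}^2$ remains.

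To produce this bound I would exploit the invariance of the attractors together with the energy identity \eqref{H1}. Fix $t\in\mathbb{N}$. By invariance of $\mathcal{A}_m$ there is $\u_{m,-t}\in\mathcal{A}_m(\theta_{-t}\omega)$ with $\u_{m,0}=\Phi_m(t,\theta_{-t}\omega)\u_{m,-t}$; since $\mathcal{A}_m(\theta_{-t}\omega)\subset\mathcal{K}_m(\theta_{-t}\omega)$ with a bound uniform in $m$, a diagonal extraction (over $t\in\mathbb{N}$) yields a further subsequence along which, for every $t\in\mathbb{N}$, $\widetilde{\u}_{m,-t}\xrightharpoonup{w}\u_{-t}\in\mathcal{A}_\infty(\theta_{-t}\omega)$ with $\Phi(t,\theta_{-t}\omega)\u_{-t}=\u_0$ (the last identity via Lemma \ref{v_Conver}). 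Integrating the analogue of \eqref{H1} for $\v_m$ on $\mathcal{O}_m$ over $[-t,0]$ by variation of constants (as in \eqref{H5}) gives
\[\|\v_{m,0}\|_{\H(\mathcal{O}_m)}^2=e^{-\alpha t}\|\v_m(-t)\|_{\H(\mathcal{O}_m)}^2+\int_{-t}^{0}e^{\alpha s}\mathcal{R}_m(s)\d s,\]
together with the analogous identity on $\mathcal{O}$ for $\v_0$, where $\mathcal{R}_m(s)$ collects the dissipative contributions $-2\mu\|\v_m(s)\|_{\V(\mathcal{O}_m)}^2-\alpha\|\v_m(s)\|_{\H(\mathcal{O}_m)}^2-2\beta\|\v_m(s)+\y(\theta_s\omega)\textbf{g}_m\|_{\wi\L^{r+1}(\mathcal{O}_m)}^{r+1}$, the trilinear term $-2b(\v_m+\y\textbf{g}_m,\y\textbf{g}_m,\v_m)$, the Forchheimer term $2\beta\langle\mathcal{C}(\v_m+\y\textbf{g}_m),\y\textbf{g}_m\rangle$, and the forcing and linear terms $2(\f_m,\v_m)+2(\ell-\alpha)(\y\textbf{g}_m,\v_m)-2\mu\y(\A_m\textbf{g}_m,\v_m)$ (with $\y=\y(\theta_s\omega)$).

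Then I would pass to the limit term by term. For the boundary term, $\|\v_m(-t)\|_{\H(\mathcal{O}_m)}^2\le 2[\mathscr{L}^*(\theta_{-t}\omega)]^2+2\|\textbf{g}\|_{\H(\mathcal{O})}^2|\y(\theta_{-t}\omega)|^2$ by Lemmas \ref{Absorb} and \ref{Absorb1}, so $e^{-\alpha t}\|\v_m(-t)\|_{\H(\mathcal{O}_m)}^2\le\varrho(t,\omega)$ uniformly in $m$, where a change of variables in the integral defining $\mathscr{L}(\omega)$ in \eqref{H0^*} together with \eqref{Y3} shows $\varrho(t,\omega)\to0$ as $t\to\infty$. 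For the three dissipative terms I use weak lower semicontinuity: by Lemma \ref{v_Conver}, $\widetilde{\v}_m\xrightharpoonup{w}\v$ in $\mathrm{L}^2((-t,0);\V)$ and in $\mathrm{L}^{r+1}((-t,0);\wi\L^{r+1})$, whence $\limsup_m$ of $\int_{-t}^{0}e^{\alpha s}$ times the negatives of these contributions is bounded above by the corresponding integrals for $\v$. For the forcing and linear terms I combine the strong convergence of $\f_m$, $\textbf{g}_m$, $\A_m\textbf{g}_m$ with the weak convergence of $\widetilde{\v}_m$ in $\mathrm{L}^2((-t,0);\H)$. For the trilinear and Forchheimer terms I split each spatial integral over $\mathcal{O}_k$ and $\mathcal{O}\setminus\mathcal{O}_k$: on $\mathcal{O}_k$ the strong convergence $\widetilde{\v}_m\to\v$ in $\mathrm{L}^2((-t,0);\L^2(\mathcal{O}_k))$ from Lemma \ref{v_Conver} together with the weak convergence in $\V$ and the strong convergence $\textbf{g}_m\to\textbf{g}$ allow passage to the limit exactly as in Lemmas \ref{convergence_b*} and \ref{convergence_c2_1}, while the uniform tail estimates of Lemmas \ref{largeradius} and \ref{largeradius1}, combined with the integrability of $\textbf{g}$ at infinity in $\wi\L^2$, $\wi\L^{r+1}$ and $\wi\L^{2(r+1)/(r-1)}$, make the $\mathcal{O}\setminus\mathcal{O}_k$ part uniformly small for $k$ large. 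This yields $\limsup_m\int_{-t}^{0}e^{\alpha s}\mathcal{R}_m(s)\d s\le\int_{-t}^{0}e^{\alpha s}\mathcal{R}(s)\d s$, and recombining with the energy identity for $\v_0$,
\[\limsup_{m\to\infty}\|\v_{m,0}\|_{\H(\mathcal{O})}^2\le\varrho(t,\omega)+\|\v_0\|_{\H(\mathcal{O})}^2-e^{-\alpha t}\|\v(-t)\|_{\H(\mathcal{O})}^2\le\varrho(t,\omega)+\|\v_0\|_{\H(\mathcal{O})}^2.\]
Letting $t\to\infty$ through $\mathbb{N}$ gives $\limsup_m\|\widetilde{\v}_{m,0}\|_{\H(\mathcal{O})}^2\le\|\v_0\|_{\H(\mathcal{O})}^2$; with weak lower semicontinuity this forces norm convergence, which with $\widetilde{\v}_{m,0}\xrightharpoonup{w}\v_0$ gives $\widetilde{\v}_{m,0}\to\v_0$, hence $\widetilde{\u}_{m,0}\to\u_0$, strongly in $\H(\mathcal{O})$.

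I expect the main obstacle to be the passage to the limit in the nonlinear contributions to $\mathcal{R}_m$ on the unbounded domain: one must simultaneously use the weak convergence in $\mathrm{L}^2((-t,0);\V)\cap\mathrm{L}^{r+1}((-t,0);\wi\L^{r+1})$, the merely local strong convergence on each $\mathcal{O}_k$, and the uniform tail smallness, while carefully tracking the domain-dependent Leray projections $\mathcal{P}_{\mathcal{O}_m}$ under the null-expansion — precisely the cut-off difficulty mentioned in the introduction, and also the reason why $\textbf{g}_m$ must be chosen so that $\widetilde{\textbf{g}}_m\to\textbf{g}$ in $\D(\A)$. A lesser but necessary point is the verification $\varrho(t,\omega)\to0$, i.e. the temperedness of $\mathscr{L}^*$, which follows from the structure of $\mathscr{L}(\omega)$ in \eqref{H0^*} together with \eqref{Y3}.
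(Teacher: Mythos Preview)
Your proposal is correct and follows essentially the same strategy as the paper's own proof: Ball's energy method via the variation-of-constants identity for $\|\v_m(0)\|_{\H}^2$, weak lower semicontinuity for the dissipative terms, the $\mathcal{O}_k$/tail splitting (Lemmas \ref{largeradius} and \ref{largeradius1}) for the trilinear and Forchheimer terms, and temperedness of the absorbing radius to kill the boundary contribution as the backward time goes to $-\infty$. Your diagonal extraction over $t\in\mathbb{N}$ is in fact a cleaner bookkeeping of the subsequences than the paper's treatment, which fixes $j$, extracts, and then lets $j\to\infty$ somewhat informally.
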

\begin{proof}
	Since $\u_{m,0}\in\mathcal{A}_m(\omega)$, by \eqref{CON3}, there exists an element $\u_{0}\in\mathcal{A}_{\infty}(\omega)$  such that 
\begin{align}\label{SC0}
			\widetilde{\u}_{m,0}\xrightharpoonup{w}\u_{0} \text{ in } \H(\mathcal{O}).
\end{align}
Since $\u_{m,0}\in\mathcal{A}_m(\omega)$ and $\u_{0}\in\mathcal{A}_{\infty}(\omega)$, by invariance property of attractors, for all $j>0$ (for our purpose, we take $j$ sufficiently large so that Lemmas \ref{Absorb}-\ref{largeradius1} can be used properly in further calculations), there exists $\u_{j,m,0}\in\mathcal{A}_m(\theta_{-j}\omega)$ and $\u_{j,0}\in\mathcal{A}_{\infty}(\theta_{-j}\omega)$ such that
\begin{align*}
	\u_{m,0}=\Phi_{m}(j,\theta_{-j}\omega)\u_{j,m,0}=\u_m(0,-j;\omega,\u_{j,m,0})
\end{align*}
and 
\begin{align*}
	\u_{0}=\Phi(j,\theta_{-j}\omega)\u_{j,0}=\u_m(0,-j;\omega,\u_{j,0}).
\end{align*}
From \eqref{Phi_m}, we have
\begin{align*}
	\v_m(0,-j;\omega,\v_{j,m,0})=\u_{m}(0,-j;\omega,\u_{j,m,0})-\textbf{g}_m(x)\y(\omega),
\end{align*}
and
\begin{align*}
	\v(0,-j;\omega,\v_{j,0})=\u(0,-j;\omega,\u_{j,0})-\textbf{g}(x)\y(\omega),
\end{align*}
where
\begin{align*}
	\v_{j,m,0}=\u_{j,m,0}-\textbf{g}_m(x)\y(\theta_{-j}\omega)\ \ \text{ and }\ \  	\v_{j,0}=\u_{j,0}-\textbf{g}(x)\y(\theta_{-j}\omega).	
\end{align*}
Moreover, from \eqref{SC0}, we have 
\begin{align}\label{SC00}
	\widetilde{\v}_m(0,-j;\omega,\v_{j,m,0})\xrightharpoonup{w} \v(0,-j;\omega,\v_{j,0}) \ \text{ in }\  \H(\mathcal{O}).
\end{align}
From the fact that attractor is a subset of absorbing set, we have 
\begin{align*}
	\|\u_{j,m,0}\|_{\H(\mathcal{O}_m)}\leq \mathscr{L}^*(\theta_{-j}\omega),
\end{align*} 
which implies that there exists an element $\u_{j,0}\in\H(\mathcal{O})$ (due to  the uniqueness of weak limit we are taking element $\u_{j,0}$) such that 
\begin{align*}
	\widetilde{\u}_{j,m,0}\xrightharpoonup{w}\u_{j,0} \text{ in } \H(\mathcal{O}).	
\end{align*}
Consequently, we get
\begin{align}\label{SC1}
	\widetilde{\v}_{j,m,0}\xrightharpoonup{w}\v_{j,0} \text{ in } \H(\mathcal{O}).	
\end{align}
By \eqref{SC1} and Lemma \ref{v_Conver}, we have 
	\begin{equation}\label{weak_con}
	\left\{
	\begin{aligned}			\widetilde{\v}_m(\cdot,-j;\omega,\v_{j,m,0})&\xrightharpoonup{w}\v(\cdot,-j;\omega,\v_{j,0}) \text{ in } \mathrm{L}^2(-j,0;\H(\mathcal{O})),\\
	\widetilde{\v}_m(\cdot,-j;\omega,\v_{j,m,0})&\xrightharpoonup{w}\v(\cdot,-j;\omega,\v_{j,0}) \text{ in } \mathrm{L}^2(-j,0;\V(\mathcal{O})),\\
	\widetilde{\v}_m(\cdot,-j;\omega,\v_{j,m,0})&\xrightharpoonup{w}\v(\cdot,-j;\omega,\v_{j,0}) \text{ in } \mathrm{L}^{r+1}(-j,0;\wi\L^{r+1}(\mathcal{O})),\\
	\widetilde{\v}_m(\cdot,-j;\omega,\v_{j,m,0})&\to\v(\cdot,-j;\omega,\v_{j,0}) \text{ in } \mathrm{L}^2(-j,0;\L^2(\mathcal{O}_l)),
	\end{aligned}
	\right.
\end{equation} 	
	for every $l>0$, where $\mathcal{O}_l=\{x\in\R^d:|x|<l\}$. From \eqref{cscbf_m} together with \eqref{b0}, we obtain
\begin{align}\label{SC2}
	&\frac{\d}{\d t}\|\v_{m}\|^2_{\H(\mathcal{O}_m)}+2\alpha\|\v_{m}\|^2_{\H(\mathcal{O}_m)}+2\mu\|\v_{m}\|^2_{\V(\mathcal{O}_m)}+2\beta\|\v_{m}+\textbf{g}_m\y\|^{r+1}_{\wi\L^{r+1}(\mathcal{O}_m)} \nonumber\\&=2b_m(\v_{m}+\textbf{g}_m\y,\textbf{g}_m\y,\v_{m})+2\beta\left\langle\mathcal{C}_m(\v_{m}+\textbf{g}_m\y),\textbf{g}_m\y\right\rangle+2\left(\f_m,\v_{m}\right)\nonumber\\&\quad + 2\y\left(\left(\ell-\alpha\right)\textbf{g}_m-\mu\A\textbf{g}_m,\v_{m}\right). 
\end{align}
An application of variation of constant formula yields
\begin{align}\label{SC3}
	&\|\v_{m}(0,-j;\omega,\v_{j,m,0})\|^2_{\H(\mathcal{O}_m)}\nonumber\\&=I_1(m,j)+I_2(m,j)+I_3(m,j)+I_4(m,j)+I_5(m,j)+I_6(m,j)+I_7(m,j)+I_8(m,j),
\end{align} 
where 
\begin{align*}
	I_1(m,j)&=e^{-\alpha j}\|\v_{j,m,0}\|^2_{\H(\mathcal{O}_m)},\\
	I_2(m,j)&=-\alpha\int_{-j}^{0}e^{\alpha\xi}\|\v_{m}(\xi,-j;\omega,\v_{j,m,0})\|^2_{\H(\mathcal{O}_m)}\d\xi,\\
	I_3(m,j)&=-2\mu\int_{-j}^{0}e^{\alpha\xi}\|\v_{m}(\xi,-j;\omega,\v_{j,m,0})\|^2_{\V(\mathcal{O}_m)}\d\xi,\\
	I_4(m,j)&=-2\beta\int_{-j}^{0}e^{\alpha\xi}\|\v_{m}(\xi,-j;\omega,\v_{j,m,0})+\textbf{g}_m\y(\theta_{\xi}\omega)\|^{r+1}_{\wi\L^{r+1}(\mathcal{O}_m)}\d\xi,\\
	I_5(m,j)&=	2 \int_{-j}^{0} e^{\alpha\xi}b_m(\v_{m}(\xi,-j;\omega,\v_{j,m,0})+\textbf{g}_m\y(\theta_{\xi}\omega),\textbf{g}_m\y(\theta_{\xi}\omega),\v_{m}(\xi,-j;\omega,\v_{j,m,0}))\d\xi,\\
	I_6(m,j)&=2\beta \int_{-j}^{0} e^{\alpha\xi}\left\langle\mathcal{C}_m(\v_{m}(\xi,-j;\omega,\v_{j,m,0})+\textbf{g}_m\y(\theta_{\xi}\omega)),\textbf{g}_m\y(\theta_{\xi}\omega)\right\rangle\d\xi,\\
	I_7(m,j)&=2 \int_{-j}^{0} e^{\alpha\xi}\left(\f_m,\v_{m}(\xi,-j;\omega,\v_{j,m,0})\right)\d\xi,
\end{align*}
and
\begin{align*}
	I_8(m,j)&=2 \int_{-j}^{0} e^{\alpha\xi} \y(\theta_{\xi}\omega)\big((\ell-\alpha)\textbf{g}_m-\mu\A_m\textbf{g}_m,\v_{m}(\xi,-j;\omega,\v_{j,m,0})\big)\d\xi.
\end{align*}
Similar to \eqref{SC3}, by \eqref{cscbf_in}, it can also be obtained that 
\begin{align}\label{SC4}
	&\|\v(0,-j;\omega,\v_{j,0})\|^2_{\H(\mathcal{O})} = e^{-\alpha j}\|\v_{j,0}\|^2_{\H(\mathcal{O})} -\alpha\int_{-j}^{0}e^{\alpha \xi}\|\v(\xi,-j;\omega,\v_{j,0})\|^2_{\H(\mathcal{O})}\d\xi\nonumber\\&\quad-2\mu\int_{-j}^{0}e^{\alpha \xi}\|\v(\xi,-j;\omega,\v_{j,0})\|^2_{\V(\mathcal{O})}\d\xi-2\beta\int_{-j}^{0}e^{\alpha \xi}\|\v(\xi,-j;\omega,\v_{j,0})+\textbf{g}\y(\theta_{\xi}\omega)\|^{r+1}_{\wi \L^{r+1}(\mathcal{O})}\d\xi\nonumber\\&\quad+2 \int_{-j}^{0} e^{\alpha\xi}b_{\infty}(\v(\xi,-j;\omega,\v_{j,0})+\textbf{g}\y(\theta_{\xi}\omega),\textbf{g}\y(\theta_{\xi}\omega),\v(\xi,-j;\omega,\v_{j,0}))\d\xi\nonumber\\&\quad+2\beta \int_{-j}^{0} e^{\alpha\xi}\left\langle\mathcal{C}_{\infty}(\v(\xi,-j;\omega,\v_{j,0})+\textbf{g}\y(\theta_{\xi}\omega)),\textbf{g}\y(\theta_{\xi}\omega)\right\rangle\d\xi\nonumber\\&\quad+2 \int_{-j}^{0} e^{\alpha\xi}\left(\f,\v(\xi,-j;\omega,\v_{j,0})\right)\d\xi\nonumber\\&\quad+2 \int_{-j}^{0} e^{\alpha\xi}\y(\theta_{\xi}\omega)\big((\ell-\alpha)\textbf{g}-\mu\A\textbf{g},\v(\xi,-j;\omega,\v_{j,0})\big)\d\xi.
\end{align}
Using the convergences obtained in \eqref{weak_con}, we have
	\begin{equation}\label{SC5}
	\left\{
	\begin{aligned}	
	\limsup_{m\to\infty}I_2(m,j)&\leq -\alpha\int_{-j}^{0}e^{\alpha \xi}\|\v(\xi,-j;\omega,\v_{j,0})\|^2_{\H(\mathcal{O})}\d\xi,\\
	\limsup_{m\to\infty}I_3(m,j)&\leq -2\mu\int_{-j}^{0}e^{\alpha \xi}\|\v(\xi,-j;\omega,\v_{j,0})\|^2_{\V(\mathcal{O})}\d\xi,\\
	\limsup_{m\to\infty}I_4(m,j) &\leq -2\beta\int_{-j}^{0}e^{\alpha \xi}\|\v(\xi,-j;\omega,\v_{j,0})+\textbf{g}\y(\theta_{\xi}\omega)\|^{r+1}_{\wi \L^{r+1}(\mathcal{O})}\d\xi,\\
	\lim_{m \to \infty}I_7(m,j)&=2 \int_{-j}^{0} e^{\alpha\xi}\left(\f,\v(\xi,-j;\omega,\v_{j,0})\right)\d\xi,\\
\lim_{m \to \infty}I_8(m,j)&=2 \int_{-j}^{0} e^{\alpha\xi} \y(\theta_{\xi}\omega)\big((\ell-\alpha)\textbf{g}-\mu\A\textbf{g},\v(\xi,-j;\omega,\v_{j,0})\big)\d\xi.
	\end{aligned}
\right.
\end{equation} 	
For $I_1(m,j)$, from the definition of absorbing set, we have
\begin{align}\label{SC6}
	I_1(m,j)\leq e^{-\alpha j}\mathscr{L}(\theta_{-j}\omega),
\end{align}
where $\mathscr{L}(\omega)$ is the same as in \eqref{H0^*}. Finally, we find the convergence for $I_{5}(m,j)$ and $I_{6}(m,j)$ using the Lemmas \ref{Absorb}-\ref{largeradius1}. For convenience, we write 
\begin{align*}
	\v_m(\xi):=\v_m(\xi,-j;\omega,\v_{j,m,0}) \ \text{ and }\ \v(\xi):=\v(\xi,-j;\omega,\v_{j,0}).
\end{align*}
Now, choose some $k$ sufficiently large so that Lemmas \ref{largeradius} and \ref{largeradius1} can be used in further calculations and consider $m\geq k$. For $I_5(m,j)$, we consider
\begin{align}\label{SC7}
	&\left|I_{5}(m,j)-2 \int_{-j}^{0} e^{\alpha\xi}b_{\infty}(\v(\xi,-j;\omega,\v_{j,0})+\textbf{g}\y(\theta_{\xi}\omega),\textbf{g}\y(\theta_{\xi}\omega),\v(\xi,-j;\omega,\v_{j,0}))\d\xi\right|\nonumber\\&\leq2\int_{-j}^{0}e^{\alpha\xi}\left|\y(\theta_{\xi}\omega)\right|\int_{\mathcal{O}_k}\big|\big((\v_m(x,\xi)+\textbf{g}(x)\y(\theta_{\xi}\omega))\cdot\nabla\big)\textbf{g}(x)\cdot\v_m(x,\xi)\nonumber\\&\qquad\qquad-\big((\v(x,\xi)+\textbf{g}(x)\y(\theta_{\xi}\omega))\cdot\nabla\big)\textbf{g}(x)\cdot\v(x,\xi)\big|\d x\d\xi\nonumber\\&\quad+2\int_{-j}^{0}e^{\alpha\xi}\left|\y(\theta_{\xi}\omega)\right|\int_{\mathcal{O}_m\backslash\mathcal{O}_k}\big|\big((\v_m(x,\xi)+\textbf{g}(x)\y(\theta_{\xi}\omega))\cdot\nabla\big)\textbf{g}(x)\cdot\v_m(x,\xi)\big|\d x\d\xi\nonumber\\&\quad+2\int_{-j}^{0}e^{\alpha\xi}\left|\y(\theta_{\xi}\omega)\right|\int_{\mathcal{O}\backslash\mathcal{O}_k}\big|\big((\v(x,\xi)+\textbf{g}(x)\y(\theta_{\xi}\omega))\cdot\nabla\big)\textbf{g}(x)\cdot\v(x,\xi)\big|\d x\d\xi\nonumber\\&\leq2\int_{-j}^{0}e^{\alpha\xi}\left|\y(\theta_{\xi}\omega)\right|\int_{\mathcal{O}_k}\big|\big((\v_m(x,\xi)-\v(x,\xi))\cdot\nabla\big)\textbf{g}(x)\cdot\v_m(x,\xi)\big|\d x\d\xi\nonumber\\&\quad+2\int_{-j}^{0}e^{\alpha\xi}\left|\y(\theta_{\xi}\omega)\right|\int_{\mathcal{O}_k}\big|\big(\v(x,\xi)\cdot\nabla\big)\textbf{g}(x)\cdot(\v_m(x,\xi)-\v(x,\xi))\big|\d x\d\xi\nonumber\\&\quad+2\int_{-j}^{0}e^{\alpha\xi}\left|\y(\theta_{\xi}\omega)\right|^2\int_{\mathcal{O}_k}\big|\big(\textbf{g}(x)\cdot\nabla\big)\textbf{g}(x)\cdot(\v_m(x,\xi)-\v(x,\xi))\big|\d x\d\xi\nonumber\\&\quad+2\int_{-j}^{0}e^{\alpha\xi}\left|\y(\theta_{\xi}\omega)\right|\int_{\mathcal{O}_m\backslash\mathcal{O}_k}\big|\big(\v_m(x,\xi)\cdot\nabla\big)\textbf{g}(x)\cdot\v_m(x,\xi)\big|\d x\d\xi\nonumber\\&\quad+2\int_{-j}^{0}e^{\alpha\xi}\left|\y(\theta_{\xi}\omega)\right|^2\int_{\mathcal{O}_m\backslash\mathcal{O}_k}\big|\big(\textbf{g}(x)\cdot\nabla\big)\textbf{g}(x)\cdot\v_m(x,\xi)\big|\d x\d\xi\nonumber\\&\quad+2\int_{-j}^{0}e^{\alpha\xi}\left|\y(\theta_{\xi}\omega)\right|\int_{\mathcal{O}\backslash\mathcal{O}_k}\big|\big(\v(x,\xi)\cdot\nabla\big)\textbf{g}(x)\cdot\v(x,\xi)\big|\d x\d\xi\nonumber\\&\quad+2\int_{-j}^{0}e^{\alpha\xi}\left|\y(\theta_{\xi}\omega)\right|^2\int_{\mathcal{O}\backslash\mathcal{O}_k}\big|\big(\textbf{g}(x)\cdot\nabla\big)\textbf{g}(x)\cdot\v(x,\xi)\big|\d x\d\xi=:\sum_{i=1}^{7}I_{5i}(m,j,k).
\end{align}
Using H\"older's inequality and \eqref{lady}, we obtain
\begin{align*}
	&\left|I_{51}(m,j,k)\right|\nonumber\\&\leq C\|\textbf{g}\|_{\V(\mathcal{O})}\int_{-j}^{0}e^{\alpha\xi}\left|\y(\theta_{\xi}\omega)\right|\|\v_m(\xi)-\v(\xi)\|^{\frac{1}{2}}_{\L^2(\mathcal{O}_k)}\|\nabla(\v_m(\xi)-\v(\xi))\|^{\frac{1}{2}}_{\L^2(\mathcal{O}_k)}\nonumber\\&\qquad\times\|\v_m(\xi)\|^{\frac{1}{2}}_{\H(\mathcal{O}_m)}\|\v_m(\xi)\|^{\frac{1}{2}}_{\V(\mathcal{O}_m)}\d\xi\nonumber\\&\leq C\|\textbf{g}\|_{\V(\mathcal{O})}\int_{-j}^{0}e^{\alpha\xi}\left|\y(\theta_{\xi}\omega)\right|\|\v_m(\xi)-\v(\xi)\|^{\frac{1}{2}}_{\L^2(\mathcal{O}_k)}\|\v_m(\xi)\|^{\frac{1}{2}}_{\H(\mathcal{O}_m)}\|\v_m(\xi)\|_{\V(\mathcal{O}_m)}\d\xi\nonumber\\&\ \ +C\|\textbf{g}\|_{\V(\mathcal{O})}\int_{-j}^{0}e^{\alpha\xi}\left|\y(\theta_{\xi}\omega)\right|\|\v_m(\xi)-\v(\xi)\|^{\frac{1}{2}}_{\L^2(\mathcal{O}_k)}\|\v(\xi)\|^{\frac{1}{2}}_{\V(\mathcal{O})}\|\v_m(\xi)\|^{\frac{1}{2}}_{\H(\mathcal{O}_m)}\|\v_m(\xi)\|^{\frac{1}{2}}_{\V(\mathcal{O}_m)}\d\xi\nonumber\\&\leq C\|\textbf{g}\|_{\V(\mathcal{O})}\sup_{\xi\in[-j,0]}\left[\|\v_m(\xi)\|^{\frac{1}{2}}_{\H(\mathcal{O}_m)}\right]\bigg(\int_{-j}^{0}\|\v_m(\xi)-\v(\xi)\|^{2}_{\L^2(\mathcal{O}_k)}\d\xi\bigg)^{\frac{1}{4}}\nonumber\\&\qquad\times\bigg(\int_{-j}^{0}e^{\alpha\xi}\|\v_m(\xi)\|^{2}_{\V(\mathcal{O}_m)}\d\xi\bigg)^{\frac{1}{2}}\bigg(\int_{-j}^{0}e^{2\alpha\xi}\left|\y(\theta_{\xi}\omega)\right|^4\d\xi\bigg)^{\frac{1}{4}}\nonumber\\&\quad+C\|\textbf{g}\|_{\V(\mathcal{O})}\sup_{\xi\in[-j,0]}\left[\|\v_m(\xi)\|^{\frac{1}{2}}_{\H(\mathcal{O}_m)}\right]\bigg(\int_{-j}^{0}\|\v_m(\xi)-\v(\xi)\|^{2}_{\L^2(\mathcal{O}_k)}\d\xi\bigg)^{\frac{1}{4}}\nonumber\\&\qquad\times\bigg(\int_{-j}^{0}e^{\alpha\xi}\|\v(\xi)\|^{2}_{\V(\mathcal{O})}\d\xi\bigg)^{\frac{1}{4}}\bigg(\int_{-j}^{0}e^{\alpha\xi}\|\v_m(\xi)\|^{2}_{\V(\mathcal{O}_m)}\d\xi\bigg)^{\frac{1}{4}}\bigg(\int_{-j}^{0}e^{2\alpha\xi}\left|\y(\theta_{\xi}\omega)\right|^4\d\xi\bigg)^{\frac{1}{4}}.
\end{align*}
For sufficiently large $j>0$, making use of Lemmas \ref{Absorb} and \ref{Absorb1}, and strong convergence in \eqref{weak_con}, we obtain
\begin{align}\label{SC8}
	I_{51}(m,j,k)\to0 \  \text{ as }\  m\to \infty.
\end{align}
Similarly, we deduce 
	\begin{equation}\label{SC9}
	\left\{
	\begin{aligned}	
		I_{52}(m,j,k)&\to0\  \text{ as }\ m\to \infty,\\
		I_{53}(m,j,k)&\to0\  \text{ as }\  m\to \infty.
	\end{aligned}
	\right.
\end{equation} 	
Again, using H\"older's inequality and \eqref{lady}, we get 
\begin{align*}
	&\left|I_{54}(m,j,k)\right|\nonumber\\&\leq C\int_{-j}^{0}e^{\alpha\xi}\left|\y(\theta_{\xi}\omega)\right|\bigg(\int_{\mathcal{O}_m\backslash\mathcal{O}_k}\big|\v_m(x,\xi)\big|^2\d x\bigg)^{\frac{1}{2}}\bigg(\int_{\mathcal{O}_m\backslash\mathcal{O}_k}\big|\nabla\textbf{g}(x)\big|^2\d x\bigg)^{\frac{1}{2}}\nonumber\\&\qquad\times\bigg(\int_{\mathcal{O}_m\backslash\mathcal{O}_k}\big|\nabla\v_m(x,\xi)\big|^2\d x\bigg)^{\frac{1}{2}}\d\xi\nonumber\\& \leq C\bigg(\int_{\mathcal{O}_m\backslash\mathcal{O}_k}\big|\nabla\textbf{g}(x)\big|^2\d x\bigg)^{\frac{1}{2}}\sup_{\xi\in[-j,0]}\bigg[\bigg(\int_{\mathcal{O}_m\backslash\mathcal{O}_k}\big|\v_m(x,\xi)\big|^2\d x\bigg)^{\frac{1}{2}}\bigg]\bigg(\int_{-j}^{0}e^{\alpha\xi}\left|\y(\theta_{\xi}\omega)\right|^2\d\xi\bigg)^{\frac{1}{2}}\nonumber\\&\qquad\times\bigg(\int_{-j}^{0}e^{\alpha\xi}\int_{\mathcal{O}_m\backslash\mathcal{O}_k}\big|\nabla\v_m(x,\xi)\big|^2\d x\d\xi\bigg)^{\frac{1}{2}}.
\end{align*}
Lemma \ref{largeradius1} and $\textbf{g}\in\D(\A)$ imply that, for every $\varepsilon>0$, there exists some $j>0$ and $k>0$ such that
\begin{align}\label{SC10}
	I_{54}(m,j,k)\leq \frac{\varepsilon}{4}.
\end{align}
Similarly, with the help of Lemma \ref{largeradius} and \ref{largeradius1}, we have 
	\begin{equation}\label{SC11}
	\left\{
	\begin{aligned}	
		I_{55}(m,j,k)&\leq \frac{\varepsilon}{4},\\
		I_{56}(m,j,k)&\leq \frac{\varepsilon}{4},\\
		I_{57}(m,j,k)&\leq \frac{\varepsilon}{4}.
	\end{aligned}
	\right.
\end{equation} 	
From \eqref{SC7}-\eqref{SC11}, we find
\begin{align*}
	\lim_{m \to \infty}\left|I_{5}(m,j)-2 \int_{-j}^{0} e^{\alpha\xi}b_{\infty}(\v(\xi,-j;\omega,\v_{j,0})+\textbf{g}\y(\theta_{\xi}\omega),\textbf{g}\y(\theta_{\xi}\omega),\v(\xi,-j;\omega,\v_{j,0}))\d\xi\right|\leq\varepsilon.
\end{align*}
Since $\varepsilon$ is arbitrary, taking $\varepsilon\to 0$, we obtain
\begin{align}\label{SC12}
	\lim_{m \to \infty}I_{5}(m,j)=2 \int_{-j}^{0} e^{\alpha\xi}b_{\infty}(\v(\xi,-j;\omega,\v_{j,0})+\textbf{g}\y(\theta_{\xi}\omega),\textbf{g}\y(\theta_{\xi}\omega),\v(\xi,-j;\omega,\v_{j,0}))\d\xi.
\end{align}
For $I_6(m,j)$, we consider
\begin{align}\label{SC13}
	&\left|I_6(m,j)-2\beta \int_{-j}^{0} e^{\alpha\xi}\left\langle\mathcal{C}_{\infty}(\v(\xi,-j;\omega,\v_{j,0})+\textbf{g}\y(\theta_{\xi}\omega)),\textbf{g}\y(\theta_{\xi}\omega)\right\rangle\d\xi\right|\nonumber\\&\leq2\beta\bigg|\int_{-j}^{0}e^{\alpha\xi}\y(\theta_{\xi}\omega)\int_{\mathcal{O}_k}\big[|\v_m(x,\xi)+\textbf{g}(x)\y(\theta_{\xi}\omega)|^{r-1}\big(\v_m(x,\xi)+\textbf{g}(x)\y(\theta_{\xi}\omega)\big)\nonumber\\&\qquad-|\v(x,\xi)+\textbf{g}(x)\y(\theta_{\xi}\omega)|^{r-1}\big(\v(x,\xi)+\textbf{g}(x)\y(\theta_{\xi}\omega)\big)\big]\textbf{g}(x)\d x\d\xi\bigg| \nonumber\\&\quad+2\beta\int_{-j}^{0}e^{\alpha\xi}\left|\y(\theta_{\xi}\omega)\right|\int_{\mathcal{O}_m\backslash\mathcal{O}_k}|\v_m(x,\xi)+\textbf{g}(x)\y(\theta_{\xi}\omega)|^{r}\left|\textbf{g}(x)\right|\d x\d\xi\nonumber\\&\quad+2\beta\int_{-j}^{0}e^{\alpha\xi}\left|\y(\theta_{\xi}\omega)\right|\int_{\mathcal{O}\backslash\mathcal{O}_k}|\v(x,\xi)+\textbf{g}(x)\y(\theta_{\xi}\omega)|^{r}\left|\textbf{g}(x)\right|\d x\d\xi\nonumber\\&\quad=:I_{61}(m,j,k)+I_{62}(m,j,k)+I_{63}(m,j,k).
\end{align}
Using Taylor's formula (Theorem 7.9.1, \cite{PGC}), \eqref{29}, Lemmas \ref{Absorb}-\ref{Absorb1} and the final convergence in \eqref{weak_con}, we get 
\begin{align}\label{SC14}
	&I_{61}(m,j,k)\nonumber\\&\leq C\int_{-j}^{0}e^{\alpha\xi}\left|\y(\theta_{\xi}\omega)\right|\int_{\mathcal{O}_k}\big[|\v_m(x,\xi)+\textbf{g}(x)\y(\theta_{\xi}\omega)|^{r-1}+|\v(x,\xi)+\textbf{g}(x)\y(\theta_{\xi}\omega)|^{r-1}\big]\nonumber\\&\qquad\times\left|\v_m(x,\xi)-\v(x,\xi)\right|\left|\textbf{g}(x)\right|\d x\d\xi\nonumber\\&\leq C\int_{-j}^{0}e^{\alpha\xi}\left|\y(\theta_{\xi}\omega)\right|\left[\|\v_m(\xi)+\textbf{g}_m\y(\theta_{\xi}\omega)\|^{r-1}_{\wi\L^{r+1}(\mathcal{O}_m)}+\|\v(\xi)+\textbf{g}\y(\theta_{\xi}\omega)\|^{r-1}_{\wi\L^{r+1}(\mathcal{O})}\right]\nonumber\\&\qquad\times\|\v_m(\xi)-\v(\xi)\|_{\L^2(\mathcal{O}_k)} \|\textbf{g}\|_{\wi\L^{\frac{2(r+1)}{3-r}}(\mathcal{O})}\d \xi\nonumber\\&\leq C\|\textbf{g}\|_{\wi\L^{\frac{2(r+1)}{3-r}}(\mathcal{O})}\bigg[\int_{-j}^{0}e^{\frac{4\alpha\xi}{3-r}}\left|\y(\theta_{\xi}\omega)\right|^{\frac{2(r+1)}{3-r}}\bigg]^{\frac{3-r}{2(r+1)}}\bigg[\bigg(\int_{-j}^{0}e^{\alpha\xi}\|\v_m(\xi)+\textbf{g}_m\y(\theta_{\xi}\omega)\|^{r+1}_{\wi\L^{r+1}(\mathcal{O}_m)}\d\xi\bigg)^{\frac{r-1}{r+1}}\nonumber\\&\qquad+\bigg(\int_{-j}^{0}e^{\alpha\xi}\|\v(\xi)+\textbf{g}\y(\theta_{\xi}\omega)\|^{r+1}_{\wi\L^{r+1}(\mathcal{O})}\d\xi\bigg)^{\frac{r-1}{r+1}}\bigg]\bigg(\int_{-j}^{0}\|\v_m(\xi)-\v(\xi)\|^{2}_{\L^2(\mathcal{O}_k)}\d\xi\bigg)^{\frac{1}{2}}
\nonumber\\&
\to 0  \ \text{ as }\   m\to \infty, 
\end{align}
for $ 1<r<3$, and
\begin{align}\label{SC15}
	&I_{61}(m,j,k)\nonumber\\&\leq C\int_{-j}^{0}e^{\alpha\xi}\left|\y(\theta_{\xi}\omega)\right|\int_{\mathcal{O}_k}\big[|\v_m(x,\xi)+\textbf{g}(x)\y(\theta_{\xi}\omega)|^{r-1}+|\v(x,\xi)+\textbf{g}(x)\y(\theta_{\xi}\omega)|^{r-1}\big]\nonumber\\&\qquad\times\left|\v_m(x,\xi)-\v(x,\xi)\right|\left|\textbf{g}(x)\right|\d x\d\xi\nonumber\\&\leq C\int_{-j}^{0}e^{\alpha\xi}\left|\y(\theta_{\xi}\omega)\right|\left[\|\v_m(\xi)+\textbf{g}\y(\theta_{\xi}\omega)\|^{r-1}_{\wi\L^{r+1}(\mathcal{O}_m)}+\|\v(\xi)+\textbf{g}\y(\theta_{\xi}\omega)\|^{r-1}_{\wi\L^{r+1}(\mathcal{O})}\right]\nonumber\\&\qquad\times\|\v_m(\xi)-\v(\xi)\|^{\frac{1}{r-1}}_{\L^2(\mathcal{O}_k)}\|\v_m(\xi)-\v(\xi)\|^{\frac{r-2}{r-1}}_{\L^{r+1}(\mathcal{O}_k)} \|\textbf{g}\|_{\wi\L^{2(r+1)}(\mathcal{O})}\d \xi\nonumber\\&\leq C\|\textbf{g}\|_{\wi\L^{2(r+1)}(\mathcal{O})}\int_{-j}^{0}e^{\alpha\xi}\left|\y(\theta_{\xi}\omega)\right|\left[\|\v_m(\xi)+\textbf{g}_m\y(\theta_{\xi}\omega)\|^{r-1}_{\wi\L^{r+1}(\mathcal{O}_m)}+\|\v(\xi)+\textbf{g}\y(\theta_{\xi}\omega)\|^{r-1}_{\wi\L^{r+1}(\mathcal{O})}\right]\nonumber\\&\qquad\times\left[\|\v_m(\xi)+\textbf{g}_m\y(\theta_{\xi}\omega)\|^{\frac{r-2}{r-1}}_{\wi\L^{r+1}(\mathcal{O}_m)}+\|\v(\xi)+\textbf{g}\y(\theta_{\xi}\omega)\|^{\frac{r-2}{r-1}}_{\wi\L^{r+1}(\mathcal{O})}\right]\|\v_m(\xi)-\v(\xi)\|^{\frac{1}{r-1}}_{\L^2(\mathcal{O}_k)}\d \xi\nonumber\\&\leq C\|\textbf{g}\|_{\wi\L^{2(r+1)}(\mathcal{O})}\bigg[\int\limits_{-j}^{0}e^{\frac{2r\alpha\xi}{r-1}}\left|\y(\theta_{\xi}\omega)\right|^{2(r+1)}\d\xi\bigg]^{\frac{1}{2(r+1)}}\bigg[\bigg(\int\limits_{-j}^{0}e^{\alpha\xi}\|\v_m(\xi)+\textbf{g}_m\y(\theta_{\xi}\omega)\|^{r+1}_{\wi\L^{r+1}(\mathcal{O}_m)}\d\xi\bigg)^{\frac{r-1}{r+1}}\nonumber\\&\ \ +\bigg(\int\limits_{-j}^{0}e^{\alpha\xi}\|\v(\xi)+\textbf{g}\y(\theta_{\xi}\omega)\|^{r+1}_{\wi\L^{r+1}(\mathcal{O})}\d\xi\bigg)^{\frac{r-1}{r+1}}\bigg]\bigg[\bigg(\int\limits_{-j}^{0}e^{\alpha\xi}\|\v_m(\xi)+\textbf{g}_m\y(\theta_{\xi}\omega)\|^{r+1}_{\wi\L^{r+1}(\mathcal{O}_m)}\d\xi\bigg)^{\frac{r-2}{r^2-1}}\nonumber\\&\ \ +\bigg(\int\limits_{-j}^{0}e^{\alpha\xi}\|\v(\xi)+\textbf{g}\y(\theta_{\xi}\omega)\|^{r+1}_{\wi\L^{r+1}(\mathcal{O})}\d\xi\bigg)^{\frac{r-2}{r^2-1}}\bigg]\bigg(\int\limits_{-j}^{0}\|\v_m(\xi)-\v(\xi)\|^{2}_{\L^2(\mathcal{O}_k)}\d\xi\bigg)^{\frac{1}{2(r-1)}}
	\nonumber\\&
	\to 0  \ \text{ as }\   m\to \infty, 
\end{align}
for $r\geq 3$. Applying H\"older's inequality in $I_{62}(m,j,k)$, we have
\begin{align}
	I_{62}(m,j,k)&\leq2\beta \bigg(\int_{-j}^{0}e^{\alpha\xi} \int_{\mathcal{O}_m\backslash\mathcal{O}_k}|\v_m(x,\xi)+\textbf{g}(x)\y(\theta_{\zeta}\omega)|^{r+1}\d x \d\xi\bigg)^{\frac{r}{r+1}}\nonumber\\&\qquad\times \bigg(\int_{\mathcal{O}_m\backslash\mathcal{O}_k}\big|\nabla\textbf{g}(x)\big|^{r+1}\d x\bigg)^{\frac{1}{r+1}} \bigg(\int_{-j}^{0}e^{\alpha\xi}\left|\y(\theta_{\xi}\omega)\right|^{r+1}\d\xi\bigg)^{\frac{1}{r+1}}.
\end{align}
Lemma \ref{largeradius1} and $\textbf{g}\in\V(\mathcal{O}_{\infty})\cap\H^2(\mathcal{O}_{\infty})$ imply that, for every $\varepsilon>0$, there exist some $j>0$ and $k>0$ such that
\begin{align}\label{SC16}
	I_{62}(m,j,k)\leq \frac{\varepsilon}{2}.
\end{align}
Similarly, with the help of Lemma \ref{largeradius1}, we find
\begin{align}\label{SC17}
	I_{63}(m,j,k)\leq \frac{\varepsilon}{2}.
\end{align}
From \eqref{SC13}-\eqref{SC17}, we obtain
\begin{align*}
	\lim_{m \to \infty}\left|I_6(m,j)-2\beta \int_{-j}^{0} e^{\alpha\xi}\left\langle\mathcal{C}_{\infty}(\v(\xi,-j;\omega,\v_{j,0})+\textbf{g}\y(\theta_{\xi}\omega)),\textbf{g}\y(\theta_{\xi}\omega)\right\rangle\d\xi\right|\leq\varepsilon.
\end{align*}
Since $\varepsilon$ is arbitrary, taking $\varepsilon\to0$, we deduce
\begin{align}\label{SC18}
	\lim_{m \to \infty}I_6(m,j)=2\beta \int_{-j}^{0} e^{\alpha\xi}\left\langle\mathcal{C}_{\infty}(\v(\xi,-j;\omega,\v_{j,0})+\textbf{g}\y(\theta_{\xi}\omega)),\textbf{g}\y(\theta_{\xi}\omega)\right\rangle\d\xi.
\end{align}
Combining \eqref{SC3}-\eqref{SC6}, \eqref{SC12} and \eqref{SC18}, we arrive at
\begin{align}
	\limsup_{m\to\infty}\|\v_{m}(0,-j;\omega,\v_{j,m,0})\|^2_{\H(\mathcal{O}_m)}\leq e^{-\alpha j}\mathscr{L}(\theta_{-j}\omega)+ \|\v(0,-j;\omega,\v_{j,0})\|^2_{\H(\mathcal{O})}. 
\end{align}
Since $j$ can be large enough, passing $j\to\infty$, we get 
\begin{align}\label{SC19}
	\limsup_{m\to\infty}\|\v_{m}(0,-j;\omega,\v_{j,m,0})\|^2_{\H(\mathcal{O}_m)}\leq \|\v(0,-j;\omega,\v_{j,0})\|^2_{\H(\mathcal{O})}. 
\end{align}
From \eqref{SC00}, we have 
\begin{align}\label{SC20}
	\|\v(0,-j;\omega,\v_{j,0})\|^2_{\H(\mathcal{O})}\leq \liminf_{m\to\infty}\|\v_{m}(0,-j;\omega,\v_{j,m,0})\|^2_{\H(\mathcal{O}_m)}. 
\end{align}
Now, \eqref{SC19} and \eqref{SC20} give that
\begin{align}\label{SC21}
	\widetilde{\v}_m(0,-j;\omega,\v_{j,m,0})\to \v(0,-j;\omega,\v_{j,0}) \text{ in } \H(\mathcal{O}),
\end{align}
which implies \eqref{CON4}, and it completes the proof.
\end{proof}

Now, we are ready to prove the main result of this section. The following Theorem proves the upper semicontinuity of random attractors with respect to domain.
	\begin{theorem}\label{Main-T}
		For $r>1$, the sequence $\{\mathcal{A}_m(\omega)\}_{m\in{\N}}$ of random attractors associated with the system \eqref{SCBF_m} satisfies
		\begin{align}\label{US-C}
			\lim_{m \to \infty}\emph{dist}_{\H(\mathcal{O})}\big(\widetilde{\mathcal{A}}_m(\omega), \mathcal{A}_{\infty}(\omega)\big)=0, \ \text{ for all }\ \omega\in\Omega,
		\end{align}
	where $\emph{dist}_{\H(\mathcal{O})}\big(\widetilde{\mathcal{A}}_m(\omega), \mathcal{A}_{\infty}(\omega)\big)$ = $\sup\limits_{\u\in\mathcal{A}_m}\emph{dist}_{\H(\mathcal{O})}\big(\widetilde{\u}, \mathcal{A}_{\infty}(\omega)\big)$ is the Hausdorff semidistance of the space $\H(\mathcal{O})$.
	\end{theorem}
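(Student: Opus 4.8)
The plan is to deduce the Hausdorff-semidistance convergence \eqref{US-C} directly from the strong convergence of null-expanded attractor points established in Lemma \ref{StrongC}, via a routine contradiction argument; no fresh estimates are required at this step. First I would assume \eqref{US-C} fails for some $\omega$. Then there are $\varepsilon_0>0$ and a subsequence $m_k\to\infty$ with
\begin{align*}
\sup_{\u\in\mathcal{A}_{m_k}(\omega)}\mathrm{dist}_{\H(\mathcal{O})}\big(\widetilde{\u},\mathcal{A}_{\infty}(\omega)\big)\geq\varepsilon_0,\qquad k\in\N,
\end{align*}
so for each $k$ I can choose $\u_{m_k,0}\in\mathcal{A}_{m_k}(\omega)$ with $\mathrm{dist}_{\H(\mathcal{O})}(\widetilde{\u}_{m_k,0},\mathcal{A}_{\infty}(\omega))>\varepsilon_0/2$.

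Next I would apply Lemma \ref{StrongC} to the sequence $\{\u_{m_k,0}\}_{k}$ (its proof uses only $m_k\to\infty$ together with extraction of subsequences): since $\u_{m_k,0}\in\mathcal{A}_{m_k}(\omega)$ and $r>1$, there exist $\u_0\in\mathcal{A}_{\infty}(\omega)$ and a further subsequence, still written $m_k$, along which $\widetilde{\u}_{m_k,0}\to\u_0$ strongly in $\H(\mathcal{O})$. Because $\u_0\in\mathcal{A}_{\infty}(\omega)$,
\begin{align*}
\frac{\varepsilon_0}{2}<\mathrm{dist}_{\H(\mathcal{O})}\big(\widetilde{\u}_{m_k,0},\mathcal{A}_{\infty}(\omega)\big)\leq\big\|\widetilde{\u}_{m_k,0}-\u_0\big\|_{\H(\mathcal{O})}\to0\qquad\text{as }k\to\infty,
\end{align*}
which is a contradiction. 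Hence \eqref{US-C} holds for every $\omega\in\Omega$.

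I expect no genuine obstacle in this final step: all the work has been pushed into the preparatory lemmas. The delicate part was Lemma \ref{StrongC}, where the weak convergence $\widetilde{\u}_{m,0}\xrightharpoonup{w}\u_0$ of Lemma \ref{Converence_m} had to be promoted to strong convergence in $\H(\mathcal{O})$: one writes the energy balance for $\v_m$ on an interval $(-j,0)$, controls the linear and Ornstein--Uhlenbeck terms by weak convergence, passes the nonlinear contributions $I_5(m,j)$ (convective) and $I_6(m,j)$ (Forchheimer) to the limit using the uniform absorbing bounds of Lemmas \ref{Absorb}--\ref{Absorb1}, the uniform tail estimates of Lemmas \ref{largeradius}--\ref{largeradius1} to control the integrals over $\mathcal{O}\setminus\mathcal{O}_k$, and the strong $\mathrm{L}^2(-j,0;\L^2(\mathcal{O}_l))$ convergence from Lemma \ref{v_Conver}, and finally lets $j\to\infty$ to absorb the term $e^{-\alpha j}\mathscr{L}(\theta_{-j}\omega)$. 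With those lemmas in hand, Theorem \ref{Main-T} is exactly the two displays above.
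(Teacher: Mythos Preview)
Your proposal is correct and follows essentially the same contradiction argument as the paper: assume the Hausdorff semidistance does not vanish, extract a sequence $\u_{m_k,0}\in\mathcal{A}_{m_k}(\omega)$ bounded away from $\mathcal{A}_\infty(\omega)$, and then invoke Lemma~\ref{StrongC} to obtain a strongly convergent subsequence with limit in $\mathcal{A}_\infty(\omega)$, yielding the contradiction. Your write-up is in fact slightly more explicit than the paper's (you spell out the inequality $\tfrac{\varepsilon_0}{2}<\|\widetilde{\u}_{m_k,0}-\u_0\|_{\H(\mathcal{O})}\to0$), but the structure is identical.
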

	\begin{proof}
		We prove \eqref{US-C}  by contradiction. If \eqref{US-C} is not true, then there exists $\delta>0$ and a sequence $\u_{m}\in \mathcal{A}_{m}(\omega)$ ($m\in\N$) such that 
		\begin{align}\label{Dis1}
			\text{dist}_{\H(\mathcal{O})}\big(\widetilde{\u}_{m}, \mathcal{A}_{\infty}(\omega)\big)\geq \delta,  \ \ m\in\N.
		\end{align}  
	Using Lemma \ref{StrongC}, we can find a subsequence $\{\u_{m_k}\}$ of $\{\u_m\}$ such that $$\text{dist}_{\H(\mathcal{O})}\big(\widetilde{\u}_{m_k}, \mathcal{A}_{\infty}(\omega)\big)=0,$$ which is a contradiction to \eqref{Dis1} and it completes the proof.
	\end{proof}
\begin{remark}\label{r=1}
	In this section, we prove upper semicontinuity of random attractors with respect to domain for $r>1$ only.  For $r=1$, we need an additional assumption on $\textbf{g}(\cdot)$ (see Assumption 1.1, \cite{KM7}). Moreover, upper semicontinuity with respect to domain for 2D stochastic NSE (that is, $\alpha=0$ and $\beta=0$) as well as 2D SCBF equations (for $r=1$) can also be proved in similar way with a few changes in calculations.
\end{remark}
	\medskip\noindent
	{\bf Acknowledgments:}    The first author would like to thank the Council of Scientific $\&$ Industrial Research (CSIR), India for financial assistance (File No. 09/143(0938)/2019-EMR-I).  M. T. Mohan would  like to thank the Department of Science and Technology (DST), Govt of India for Innovation in Science Pursuit for Inspired Research (INSPIRE) Faculty Award (IFA17-MA110).

\end{document}